\documentclass [a4paper,10pt]{amsart}
\usepackage{longtable}
\usepackage{hyperref}
\usepackage[T1]{fontenc}
\usepackage[utf8]{inputenc}
\usepackage[english]{babel}
\usepackage{textcomp}
\usepackage{dsfont}
\usepackage{latexsym}
\usepackage{amssymb}
\usepackage{amsthm}
\usepackage{amsmath}
\DeclareMathAlphabet{\mathpzc}{OT1}{pzc}{m}{en}
\usepackage{yfonts}
\usepackage{xfrac}
\usepackage{newlfont}
\usepackage{graphicx}
\usepackage{mathtools}
\usepackage{comment}
\usepackage{indentfirst}
\usepackage{braket}
\usepackage{mathrsfs}
\usepackage{xcolor}
\usepackage{fixmath}

\usepackage{etoolbox}
\apptocmd{\lim}{\limits}{}{}
\apptocmd{\sup}{\limits}{}{}
\apptocmd{\inf}{\limits}{}{}
\apptocmd{\liminf}{\limits}{}{}
\apptocmd{\limsup}{\limits}{}{}

\textwidth16.5cm
\textheight21cm
\evensidemargin.2cm
\oddsidemargin.2cm

\addtolength{\headheight}{3.2pt}    

\usepackage{scalerel}[2014/03/10]
\usepackage[usestackEOL]{stackengine}
\newcommand{\dashint}{\,\ThisStyle{\ensurestackMath{%
			\stackinset{c}{.2\LMpt}{c}{.5\LMpt}{\SavedStyle-}{\SavedStyle\phantom{\int}}}%
		\setbox0=\hbox{$\SavedStyle\int\,$}\kern-\wd0}\int}

\DeclareMathOperator{\card}{Card}

\DeclareMathOperator{\pr}{pr}

\DeclareMathOperator{\tr}{Tr}

\DeclareMathOperator{\Hol}{Hol}

\renewcommand{\Re}{\mathrm{Re}\,}
\renewcommand{\Im}{\mathrm{Im}\,}

\newcommand{\ee}{\mathrm{e}}
\newcommand{\Aff}{\mathrm{Aff}}

\newcommand{\vect}[1]{\mathbf{{#1}}}
\newcommand{\dd}{\mathrm{d}}

\DeclarePairedDelimiter{\abs}{\lvert}{\rvert}

\DeclarePairedDelimiter{\norm}{\lVert}{\rVert}

\let\originalleft\left
\let\originalright\right
\renewcommand{\left}{\mathopen{}\mathclose\bgroup\originalleft}
\renewcommand{\right}{\aftergroup\egroup\originalright}

\newcommand{\N}{\mathds{N}}

\newcommand{\C}{\mathds{C}}

\newcommand{\R}{\mathds{R}}

\newcommand{\Ms}{\mathscr{M}}

\newcommand{\Af}{\mathfrak{A}}
\newcommand{\Bf}{\mathfrak{B}}

\newcommand{\Kf}{\mathfrak{K}}
\newcommand{\Lf}{\mathfrak{L}}

\newcommand{\Pf}{\mathfrak{P}}

\newcommand{\Ac}{\mathcal{A}}
\newcommand{\Bc}{\mathcal{B}}

\newcommand{\Ec}{\mathcal{E}}
\newcommand{\Fc}{\mathcal{F}}

\newcommand{\Hc}{\mathcal{H}}

\newcommand{\Lc}{\mathcal{L}}

\newcommand{\Nc}{\mathcal{N}}

\newcommand{\Sc}{\mathcal{S}}

\newcommand{\Uc}{\mathcal{U}}

\newcommand{\meg}{\leqslant}
\newcommand{\Meg}{\geqslant}
\newcommand{\eps}{\varepsilon}
\renewcommand{\phi}{\varphi}

\newcommand{\leftexp}[2]{{\vphantom{#2}}^{#1}{#2}} 
\newcommand{\trasp}{\leftexp{t}}
\newcommand{\Lin}{\mathscr{L}}

\begin{document}

\theoremstyle{definition}
\newtheorem{deff}{Definition}[section]

\newtheorem{oss}[deff]{Remark}

\newtheorem{ass}[deff]{Assumptions}

\newtheorem{nott}[deff]{Notation}

\theoremstyle{plain}
\newtheorem{teo}[deff]{Theorem}

\newtheorem{lem}[deff]{Lemma}

\newtheorem{prop}[deff]{Proposition}

\newtheorem{cor}[deff]{Corollary}

\title[Bergman Spaces on Homogeneous Siegel Domains]{On the Theory of Bergman Spaces on Homogeneous Siegel Domains}

\author[M. Calzi, M. M. Peloso]{Mattia Calzi, 
	Marco M. Peloso}

\date{}

\address{Dipartimento di Matematica, Universit\`a degli Studi di
	Milano, Via C. Saldini 50, 20133 Milano, Italy}
\email{{\tt mattia.calzi@unimi.it}}
\email{{\tt marco.peloso@unimi.it}}

\keywords{Bergman spaces, Bergman projections, homogeneous Siegel
  domains, atomic decomposition, boundary values.}
\thanks{{\em Math Subject Classification (2020): 32A15, 32A37, 32A50, 46E22}}
\thanks{The authors are members of the 	Gruppo Nazionale per l'Analisi
  Matematica, la Probabilit\`a e le	loro Applicazioni (GNAMPA) of
  the Istituto Nazionale di Alta Matematica (INdAM). The authors were partially funded by the INdAM-GNAMPA Project CUP\_E55F22000270001.
} 

\begin{abstract}
We consider mixed normed Bergman spaces on homogeneous
Siegel domains.  In the literature, two different approaches have been
considered and several results seem difficult to be compared.  In this
paper we compare the results available in the literature and complete
the existing ones in one of the two settings. 
The results we present are:
natural inclusions, density, completeness, reproducing properties,
sampling, atomic decomposition, duality, continuity of Bergman
projectors, boundary values, transference. 
\end{abstract}
\maketitle

\section{Introduction}

This paper deals with some spaces of holomorphic functions on
a homogeneous Siegel domain.  In order to illustrate the
kind of spaces and problems we are going to consider, we begin with
the simplest case.

Let  
$\C_+ \coloneqq \Set{z\in\C\colon \Im z>0}$ be the upper half-plane.
We can think of $\C_+$ as $\R+i(0,\infty)$, where $(0,\infty)$ is the
unique open (convex) cone in $\R$.  In several variables, the upper
half-plane can  be generalized to tube domains over convex cones.
Let $\Omega$ be a convex open cone in $\R^m$.  Then, the domain
$D=\R^m +i\Omega$ in $\C^m$ is called the tube domain over the cone
$\Omega$.  If the group of linear
transfomations of $\R^m$ that preserve $\Omega$ acts transitively on
$\Omega$ itself, then $\Omega$ is a {\em homogeneous} cone and the
domain becomes itself homogeneous, that is, the group of biholomorphic
self-maps of $D$ (the {\em automorphisms} of $D$) acts transitively on
$D$.

Another classical domain in several variables that extends
the definition and some of the main features of $\C_+$ is the
so-called Siegel upper half-space.  Consider again the cone
$(0,\infty)\subseteq\R$ and the hermitian quadratic map on $\C^n$
$\zeta\mapsto \zeta\cdot\overline\zeta=|\zeta|^2$. Then, the Siegel
upper half-space is the domain $\Uc$ in $\C^n\times\C$
\[
\Uc \coloneqq  \Set{(z,\zeta)\colon \, \Im z -\abs{\zeta}^2\in (0,\infty) }.
\]

The {\em homogeneous Siegel domains} are then introduced as
follows -- we refer to Section~\ref{sec:2} for complete definitions.
Let 
a homogeneous cone $\Omega\subset \R^m$ and a suitable hermitian quadratic map
$\Phi\colon \C^n\to \C^m$ be given.  Then, the homogeneous Siegel
domain $D\subseteq\C^n\times \C^m$ is 
\[
D=\Set{(\zeta,z)\in \C^n\times \C^m\colon \Im z-\Phi(\zeta)\in \Omega}.
\] 
(again, cf.~Section~\ref{sec:2} for definitions).
Notice that if $n=0$, then $D$ is the tube domain over the given cone
$\Omega$. 

\smallskip

On a  homogeneous Siegel domain $D$ as above,
various (mixed norm) weighted Bergman spaces have been considered in the literature. On the one hand, in~\cite{RicciTaibleson} (for the upper half-plane) and~\cite{CalziPeloso} (for the general case), the following mixed norm weighted Bergman spaces are considered:
\[
A^{p,q}_{\vect s}\coloneqq \Set{f\in \Hol(D)\colon \norm{h\mapsto \Delta^{\vect s}_\Omega(h) \norm{f_h}_{L^p(\Nc)}}_{L^q(\nu_\Omega)}<\infty  },
\]
where $\Delta^{\vect s}_\Omega$ are `generalized power functions' on $\Omega$ ($\vect s\in \R^r$), $\nu_\Omega$ is `the' invariant measure on $\Omega$, $\Nc=\C^n\times \R^m$ and $f_h\colon (\zeta,x)\mapsto f(\zeta,x+i\Phi(\zeta)+i h)$. On the other hand, e.g.~in~\cite{Bekolle,Bonami,Bekolleetal,BekolleBonamiGarrigosRicci,Debertol,Nana,BekolleGonessaNana,Bekolleetal2,BekolleGonessaNana2}, the following mixed norm weighted Bergman spaces are considered:
\[
\Af^{p,q}_{\vect s}\coloneqq \Set{f\in \Hol(D)\colon \norm{h\mapsto \norm{f_h}_{L^{p,q}(\R^m,\C^n)}}_{L^q(\Delta_\Omega^{\vect s+\vect b/2}\cdot\nu_\Omega)}<\infty  },
\]
where $\vect b$ is a suitable element of $\R^r$ and $L^{p,q}(\R^m,\C^n)=\Set{f\colon \norm{\zeta \mapsto \norm{f(\zeta,\,\cdot\,)}_{L^p(\R^m)}  }_{L^q(\C^n)}<\infty}$.

Two parallel theories then arise, and different conventions have been adopted. For example, the definition of the spaces $\Af^{p,q}_{\vect s}$ suggests a natural comparison between the spaces $\Af^{p,q}_{\vect s}$ for a fixed $\vect s$, which in turn highlights the role played by `the' Bergman projector $\Pf_{\vect s}$, namely the Bergman projector of the corresponding space $\Af^{2,2}_{\vect s}$.  On the other hand, the comparison of the spaces $A^{p,q}_{\vect s}$ for fixed $\vect s$ appears to be less natural, so that more general Bergman projectors are naturally investigated. 
Besides that, in the study of various properties of the spaces
$\Af^{p,q}_{\vect s}$ (such as, for instance, the continuity of
$\Pf_{\vect s}$ on the space $\Lf^{p,q}_{\vect s}$, which is defined
as $\Af^{p,q}_{\vect s}$ replacing holomorphic functions with
equivalence classes of measurable functions) greater attention is
posed on $p$ and $q$, rather than $\vect s$, whereas in the study of
the spaces $A^{p,q}_{\vect s}$ greater attention is posed on $\vect
s$, rather than $p$ or $q$, so that even when describing the same
phenomena, the two parallel theories may appear quite different from
on another, and hard to compare. 

\smallskip

This is the main reason which motivated us to write this work, where we describe the various analogous results for the spaces $A^{p,q}_{\vect s}$ and $\Af^{p,q}_{\vect s}$ for a direct comparison. In addition, we deepen the study of the spaces $\Af^{p,q}_{\vect s}$ proving those results which, to the best of our knowledge, do not appear in the literature in the present generality. In order to tackle these issues, we are naturally led to introduce a new family of spaces $\Ac^{p,q}_{\vect s}$, which is defined in the spirit of the spaces $A^{p,q}_{\vect s}$ (and has, therefore, some technical advantages), but allows to treat   also the spaces $\Af^{p,q}_{\vect s}$ by means of straightforward substitutions.

\smallskip

Function theory and analysis of function spaces on homogeneous Siegel
domains is a classical area of research in which complex analysis,
harmonic analysis, geometry of convex cones, representation theory all
play a fundamental role, see e.g.~\cite{Gindikin,Murakami,OV,PS,RV} and
also~\cite{FarautKoranyi}.  In more recent times, it was shown
in~\cite{Bekolleetal3} and then generalized in~\cite{Bekolleetal2}, that in
order to prove the $L^p$-boundedness of the Bergman
projector on some homogeneous Siegel domain of tube-type (see
Subsection~\ref{tube-domain:subsec}) 
it was necessary to exploit the cancellations of the kernel,
phenomenon that had never observed before, and in fact these examples
remain the only instances of such behavior.  We mention in passing that, in order to
exploit the cancellation of the Bergman kernel, the mixed-norm
spaces were considered, hence showing their naturality in this context.  It is also worth mentioning that the question of the
$L^p$-boundeness of the Bergman projector on homoegeneous Siegel
domains is tightly
connected to 
 the sharp $\ell^2$-decoupling inequality of Bourgain and
 Demeter~\cite{BourgainDemeter}, see~\cite{BekolleGonessaNana}.  We
 refer also to the Introduction in~\cite{Paralipomena} for a through
 discussion of this and related questions, and
 to~\cite{Nana,BekolleIshiNana,BekolleGonessaNana,BekolleGonessaNana2,CalziPeloso,Toeplitz-Cesaro,Carleson-reverseCarleson,Paralipomena}
 for some very recent works on the subject of the current paper.

 \smallskip

The  paper is organized as follows.
In Section~\ref{sec:2}, we recall some basic facts and introduce some notation to deal with homogeneous Siegel domains. In order to facilitate readers who are accostumed to different conventions, we introduce our notation axiomatically, allowing the reader to identify the (hopefully minimal) modifications needed. 
In Section~\ref{sec:3}, we briefly list several resuts for the spaces $A^{p,q}_{\vect s}$ and $\Af^{p,q}_{\vect s}$, deferring the proofs to Section~\ref{sec:proofs}, where a more detailed treatment of the spaces $\Ac^{p,q}_{\vect s}$ is presented. We nontheless present only those proofs which require substantial modifications to the arguments presented for the corresponding proofs in~\cite{CalziPeloso,Paralipomena}. 
The topics we shall present include: natural inclusions, density, completeness, reproducing properties, sampling, atomic decomposition, duality, continuity of Bergman projectors, boundary values, transference.

\section{Homogeneous Siegel Domains}\label{sec:2}

We denote by $E$ a complex Hilbert space of dimension $n$, by $F$ a real Hilbert space of dimension $m$, by $\Omega$ an open convex cone in $F$ not containing affine lines, and by $\Phi\colon E\times E\to F_\C$ a non-degenerate hermitian map such that $\Phi(\zeta)\coloneqq \Phi(\zeta,\zeta)\in \overline\Omega$ for every $\zeta\in E$.
We define
\[
\rho\colon E\times F_\C\ni (\zeta,z)\mapsto \Im z-\Phi(\zeta)\in F,
\] 
and denote by 
\[
D=\Set{(\zeta,z)\in E\times F_\C\colon \Im z-\Phi(\zeta)\in \Omega}=\rho^{-1}(\Omega)
\]
the Siegel domain associated with $\Omega$ and $\Phi$. We shall assume that $D$ is homogeneous, that is, that the group of its biholomorphisms acts transitively on it. It is known (cf., e.g.,~\cite[Proposition 1]{Besov}) that $D$ is homogeneous if and only if there is a triangular\footnote{This means that all the eigenvalues of every element of $T_+$ are real. Equivalently, there is a basis of $F$ with respect to which every element of $T_+$ is represented by an upper triangular matrix, cf.~\cite{Vinberg2}.} subgroup $T_+$ of $GL(F)$ which  acts simply transitively on $\Omega$, and for every $t\in T_+$ there is $g\in GL(E)$ such that $t\circ \Phi=\Phi\circ(g\times g)$. 
In this case, any other triangular subgroup of $GL(F)$ with the same properties is conjugated to $T_+$ by an element of $GL(F)$ which preserves $\Omega$. In addition, $T_+$ acts simply transitively on the right on $\Omega'$, by transposition (cf.~\cite[Theorem 1]{Vinberg}). We shall denote this latter action by $\lambda\cdot t$, for $\lambda\in \Omega'$ and $t\in T_+$; we shall consequently write $t\cdot h$ instead of $t h$ for $t\in T_+$ and $h\in \Omega$. We shall still denote by $t\,\cdot $ and $\cdot\, t$ the actions of $t$ on $F_\C$ and $F'_\C$, respectively, for every $t\in T_+$.

\subsection{Analysis on $\Omega$}

It is possible to describe the structure of $T_+$ and of its action on $\Omega$ using the theory of $T$-algebras, cf.~\cite{Vinberg}, or the theory of (normal) $j$-algebras, cf.~\cite{PS,RV}. In order to keep the exposition as simple as possible, we shall avoid a thorough description of the structure of $T_+$ and proceed axiomatically. We refer the reader to~\cite{CalziPeloso} for a more detailed treatment of the following considerations. 
We first observe that there are $r\in \N$ (called the rank of $\Omega$) and a surjective homomorphism of Lie groups
\[
\Delta\colon T_+\to (\R_+^*)^r,
\]
with kernel $[T_+,T_+]$,
such that, if we fix base-points $e_\Omega\in \Omega$ and $e_{\Omega'}\in \Omega'$ and define
\begin{equation}\label{eq:1}
\Delta_\Omega^{\vect s}(t\cdot e_\Omega)= \Delta^{\vect s}_{\Omega'}(e_{\Omega'}\cdot t)=\Delta^{\vect s}(t)=\prod_{j=1}^r \Delta_j(t)^{s_j}
\end{equation}
for every $\vect s\in \C^r$ and for every $t\in T_+$, then $\Delta_\Omega^{\vect s}$ (and $\Delta_{\Omega'}^{\vect s}$) is bounded on bounded subsets if and only if $\Re\vect s\in \R_+^r$ (cf.~\cite[Lemma 2.34]{CalziPeloso}). We shall further require that $\Delta(a\,\cdot\,)=(a,\dots,a)$ for every $a>0$, where $a\,\cdot\,$ denotes the homothety of ratio $a$ (which necessarily belongs to $T_+$).
We remark explicitly that  these conditions determine $\Delta$ up to a permutation  of the coordinates (in $(\R_+^*)^r$).\footnote{To see this fact, observe that, if $\Delta'\colon T_+\to (\R_+^*)^r$ is another homomorphism with the same properties, then there is $A\in GL(\R^n)$ such that $\log\Delta'=A\log \Delta$.  In addition, given $\vect s\in \R^r$, both $\sum_j s_j\log \Delta'_j=\sum_j (\trasp A\vect s)_j \log \Delta_j$ and $\sum_j s_j \log \Delta_j$ induce functions which are bounded above on the bounded subsets of $\Omega$ if and only if $\vect s \in \R_+^*$, so that $\trasp A\R_+^*=\R_+^*$ and therefore $A$ must be the composition of a permutation of the coordinates and a diagonal dilation $(x_1,\dots,x_r)\mapsto (\lambda_1 x_1,\dots, \lambda_r x_r)$, $\lambda_1,\dots, \lambda_r>0$. Since $\Delta(a\,\cdot\,)=\Delta'(a\,\cdot\,)=(a,\dots,a)$ for every $a>0$, we then see that $A$ must induce the identity on the line $\R\vect 1_r$, so that it must be a permutation of the coordinates. } Consequently, we may apply the results of~\cite[Chapter 2]{CalziPeloso} without (essential) changes, even if a different choice of $T_+$ and $\Delta$ is made. 
Notice that $\Delta_\Omega^{\vect s}$ and $\Delta_{\Omega'}^{\vect s}$ extend to holomorphic functions on $\Omega+i F$ and $\Omega'+i F'$, respectively, for every $\vect s\in \C^r$ (cf.~\cite[Corollary 2.25]{CalziPeloso}). 

When $\Omega$ is symmetric, that is, self-dual with respect to the scalar product of $F$, then the functions $\Delta_{\vect s}$ considered in~\cite{FarautKoranyi} coincide with the functions $\Delta^{\vect s}_{\Omega}$ defined in~\eqref{eq:1} for an appropriate choice of $\Delta$ (cf.~\cite[Chapter VI, \S\ 3]{FarautKoranyi}); in particular, the `determinant' polynomial concides with $\Delta^{\vect 1_r}_\Omega$. Generally speaking, the works which deal with the case in which $\Omega$ is symmetric generally adhere to the conventions of~\cite{FarautKoranyi}, possibly with slightly different notation, whereas the works which deal with general homogeneous cones generally adhere to the conventions described above, possibly with different notation (for example, $\Delta_\Omega^{\vect s}=Q^{\vect s}$ and $\Delta_{\Omega'}^{\vect s}=(Q^*)^{\vect s}$ in the notation of~\cite{Nana,NanaTrojan,BekolleIshiNana,BekolleGonessaNana})

To simplify the notation, we state the following definition.

\begin{deff}
	We define two order relations on $\R^r$. On the one hand, we write $\vect s \meg \vect{s'}$  to mean $s_j\meg s'_j$ for every $j=1,\dots,r$ (equivalently, $\vect {s'}-\vect s\in\R_+^r$). On the other hand, we write $\vect s \preceq \vect{s'}$  to mean $\vect s=\vect{s'}$ or $s_j< s'_j$ for every $j=1,\dots,r$. 
\end{deff}

Thus, $\vect s\prec \vect{s'}$ (that is, $\vect s\preceq \vect s'$ and $\vect s\neq \vect s'$) if and only if $\vect{s'}-\vect s\in (\R_+^*)^r$, that is, $s_j<s'_j$ for every $j=1,\dots, r$.

\begin{deff} We denote by $\Hc^k$ the $k$-dimensional Hausdorff measure.
	There are $\vect d\prec \vect 0$ and $\vect b\meg \vect0$ such that
	\begin{equation}\label{eq:2}
	\nu_\Omega\coloneqq\Delta_\Omega^{\vect d}\cdot \Hc^m, \qquad \nu_{\Omega'}\coloneqq\Delta_{\Omega'}^{\vect d}\cdot \Hc^m, \qquad \text{and} \qquad \nu_D\coloneqq(\Delta_{\Omega}^{\vect b+2 \vect d}\circ \rho)\cdot \Hc^{2n+2m}
	\end{equation}
	are the unique measures on $\Omega$, $\Omega'$, and $D$ (up to a multiplicative constant) which are invariant under all linear automorphisms of $\Omega$ and $\Omega'$, and all biholomorphisms of $D$, respectively (cf.~\cite[Propositions 2.19 and 2.44]{CalziPeloso}, and~\cite[Proposition I.3.1]{FarautKoranyi}). 
\end{deff}	
	
\begin{oss}
	Notice that $\Delta^{-\vect b}(t)= \abs{\det_\C g}^2$ for every $t\in T_+$ and for every $g\in GL(E)$ such that $t\cdot \Phi=\Phi\circ (g\times g)$. Further, $\Delta^{-\vect d}(a\,\cdot\,)=a^m$, and $\Delta^{-\vect b}(a\,\cdot\,)=a^n$ for every $a>0$.
	
	We observe explicitly that $\vect d=d$ and $\vect b=-q$ in the notation of~\cite{Gindikin,Bekolle,BekolleKagou}, whereas $\vect d=-\tau$ and $\vect b=-b$ in the notation of~\cite{NanaTrojan,Nana,BekolleIshiNana,BekolleGonessaNana}. In particular, there is no general agreement on the sign of $\vect d$.
\end{oss}

\begin{deff}
	There are $\vect m,\vect{m'}\Meg \vect 0$ such that  $\Delta_\Omega^{\vect s}\cdot \nu_\Omega$ and $\Delta_{\Omega'}^{\vect s}\cdot \nu_{\Omega'}$ induce Radon measures on $F$ and $F'$, respectively, if and only if $\Re \vect s\succ\frac 1 2 \vect m$ and $\Re \vect s\succ \frac 1 2 \vect{m'}$, respectively (cf.~\cite[Proposition 2.19]{CalziPeloso}).  
\end{deff}

\begin{oss}
	Notice that  $\vect d=-(\vect 1_r+\frac 1 2 \vect m+\frac 1 2 \vect m')$ (cf.~\cite[Definition 2.8]{CalziPeloso} and the preceding remarks).
	We observe explicitly that $\vect m=(m_1,\dots, m_r)$ and $\vect m'=(n_1,\dots,n_r)$ in the notation of~\cite{BekolleKagou,NanaTrojan,Nana,BekolleIshiNana,BekolleGonessaNana,BekolleGonessaNana2}.
\end{oss}

\begin{deff}
	For every $\vect s,\vect s'\in \C^r $ such that $\Re\vect s\succ \frac 1 2 \vect m$ and $\Re\vect s'\succ\frac 1 2 \vect m'$, we define $\Gamma_\Omega(\vect s)$ and $\Gamma_{\Omega'}(\vect s')$ so that 
	\[
	\Lc(\Delta_\Omega^{\vect s}\cdot \nu_\Omega)=\Gamma_\Omega(\vect s)\Delta^{-\vect s}_{\Omega'} \qquad  \text{and}\qquad \Lc(\Delta_{\Omega'}^{\vect s'}\cdot \nu_{\Omega'})=\Gamma_{\Omega'}(\vect s')\Delta^{-\vect s'}_\Omega,
	\]
	respectively, where $\Lc$ denotes the Laplace transform.
\end{deff}

\begin{oss}
	Notice that $\Gamma_\Omega(\vect s)=\Lc(\Delta_\Omega^{\vect s}\cdot \nu_\Omega)(e_{\Omega'})=c \prod_{j=1}^r \Gamma\big(s_j-\frac 1 2 m_j\big)$ and $\Gamma_{\Omega'}(\vect s')=\Lc(\Delta_{\Omega'}^{\vect s'}\cdot \nu_{\Omega'})(e_\Omega)=c' \prod_{j=1}^r \Gamma\big(s'_j-\frac 1 2 m'_j\big)$ for some constants $c,c'>0$ which depend on the choice of $e_\Omega$ and $e_{\Omega'}$.
\end{oss}

\begin{deff}
	There are two uniquely determined holomorphic families $(I^{\vect s}_\Omega)_{\vect s\in \C^r}$ and $(I^{\vect s}_{\Omega'})_{\vect s\in \C^r}$ of tempered distributions on $F$ and $F'$, respectively, such that $\Lc I^{\vect s}_\Omega=\Delta^{-\vect s}_{\Omega'}$ and $\Lc I^{\vect s}_{\Omega'}=\Delta^{-\vect s}_\Omega$ (cf.~\cite[Lemma 2.26 and Proposition 2.28]{CalziPeloso}). 
\end{deff}

\begin{oss}
	Notice that $I^{\vect s}_\Omega=\frac{1}{\Gamma_\Omega(\vect s)}\Delta_\Omega^{\vect s}\cdot \nu_\Omega$ and $I^{\vect s}_{\Omega'}=\frac{1}{\Gamma_{\Omega'}(\vect s)}\Delta^{\vect s}_{\Omega'}\cdot \nu_{\Omega'}$ when $\Re \vect s\succ\frac 1 2 \vect m$ and $\Re \vect s\succ \frac  1 2 \vect{m'}$, respectively. In addition, $I^{\vect s}_\Omega$ and $I^{\vect s}_{\Omega'}$ are supported in $\overline \Omega$ and $\overline{\Omega'}$, respectively, for every $\vect s\in \C^r$ (cf.~\cite[Proposition 2.28]{CalziPeloso}).
\end{oss}

\begin{deff}
	We denote by $\N_\Omega$ and $\N_{\Omega'}$ the sets of $\vect s\in \R^r$ such that $\Delta_\Omega^{\vect s}$ and $\Delta_{\Omega}^{\vect s}$ extend to polynomials on $F$ and $F'$, respectively.
\end{deff}

\begin{oss}
	Notice that $I^{\vect s}_{\Omega}$ and $I^{\vect s}_{\Omega'}$ are supported in $\Set{0}$ if and only if $\vect s\in -\N_{\Omega'}$ and $\vect s\in -\N_\Omega$, respectively.
	Then, $\Phi_*(\Hc^{2n})=c I^{-\vect b}_{\Omega}$ for a suitable constant $c>0$ which depends on the choice of $e_{\Omega'}$ (cf.~\cite[Proposition 2.30]{CalziPeloso}). 
	We observe explicitly that, when $\Omega$ is symmetric, then $\vect 1_r\in \N_\Omega$ and the differential operator $f\mapsto f*I^{-\vect 1_r}$ is simply  differential operator associated with the determinant polynomial $\Delta^{\vect 1_r}_\Omega$ by means of the scalar product. This latter operator is often denoted by $\square$. In addition, if $\Omega$ is symmetric and irreducible, then $\N_\Omega=\Set{\vect s\in \N^r\colon s_1\Meg \cdots \Meg s_r}$, for an appropriate choice of $\Delta$. This latter condition completely determines $\Delta$ in this case.
\end{oss}

\subsection{Fourier Analysis on the \v Silov Boundary}

We now pass to the analysis of the \v Silov boundary of $D$ (cf.~\cite{OV} for a more general treatment of this topic).
We endow $E\times F_\C$ with the $2$-step nilpotent Lie group structure whose product is given by
\[
(\zeta,z)\cdot (\zeta',z')\coloneqq (\zeta+\zeta', z+z'+2 i \Phi(\zeta',\zeta)),
\]
for every $(\zeta,z),(\zeta',z')\in E\times F_\C$. If we identify $\Nc\coloneqq E\times F$ with the \v Silov boundary $\rho^{-1}(0)$ of $D$ by means of the mapping $(\zeta,x)\mapsto (\zeta,x+i\Phi(\zeta))$, then $\Nc$ becomes a $2$-step nilpotent Lie group with product
\[
(\zeta,x)(\zeta',x')=(\zeta+\zeta',x+x'+2 \Im \Phi(\zeta,\zeta'))
\]
for every $(\zeta,x),(\zeta',x')\in \Nc$.

Define $W\coloneqq\Set{\lambda\in F'\colon \exists v\in F\setminus \Set{0}\:\: \langle \lambda, \Im \Phi(\,\cdot\,,v)\rangle=0}$, so that $W$ is a proper algebraic variety in $F'$ since $\Phi$ is non-degenerate and $\overline \Omega$-positive. Then, for every $\lambda\in F'\setminus W$, the quotient of $\Nc$ modulo the central subgroup $\ker \lambda$ is isomorphic to a Heisenberg group (to $\R$, if $E=\Set{0}$), so that the Stone--Von Neuman theorem (cf., e.g.,~\cite[Theorem 1.50]{Folland}) ensures the existence of a unique (up to unitary equivalence) irreducible continuous unitary representation $\pi_\lambda$ of $\Nc$ in some Hilbert space $H_\lambda$ such that $\pi_\lambda(0,x)=\ee^{-i\langle \lambda, x \rangle}$ for every $x\in F$. One then has the Plancherel identity (cf.~\cite[Corollary 1.17 and Proposition 2.30]{CalziPeloso}):
\[
\norm{f}^2_{L^2(\Nc)}= c \int_{F'\setminus W} \norm{\pi_\lambda(f)}^2_{\Lin^2(H_\lambda)}\abs{\Delta^{-\vect b}_{\Omega'}(\lambda)}\,\dd \lambda
\]
for every $f\in L^1(\Nc)\cap L^2(\Nc)$, where $c>0$ is a suitable constant (which depends on the choice of $e_{\Omega'}$) and $\Lin^2(H_\lambda)$ denotes the space of Hilbert--Schmidt endomorphisms of $H_\lambda$. Note that $\Delta^{-\vect b}_{\Omega'}$ is positive on $\Omega'$ and extends to a polynomial on $F'$, so that the above formula is meaningful (cf.~\cite[Proposition 2.30]{CalziPeloso}).

\subsection{The CR Structure of $\Nc$}

For every $v\in E$, denote by $Z_v$ the left-invariant vector field on $\Nc$ which induces the Wirtinger derivative $\frac 1 2 (\partial_v-i \partial_{iv})$ at $(0,0)$. Then, the $Z_v$, for $v\in E$, induce a subbundle of the complexified tangent bundle of $\Nc$ which endows $\Nc$ with the structure of a CR manifold (cf.~\cite[Section 7.4]{Boggess}). In particular, a distribution $u$ on $\Nc$ is said to be CR if $\overline{Z_v}u=0$ for every $v\in E$ (cf.~\cite[Sections 9.1 and 17.2]{Boggess}). 
Note that an element $f$ of $L^2(\Nc)$ is CR is and only if
\[
\pi_\lambda(f)=\chi_{\Lambda_+}(\lambda)\pi_\lambda(f) P_{\lambda,0}
\] 
for almost every $\lambda\in F'\setminus W$, where $\Lambda_+$ is the interior of the polar of $\Phi(E)$ (that is, 
\[
\Set{\lambda\in F'\colon \forall \zeta\in E\setminus \Set{0}\:\: \langle \lambda, \Phi(\zeta)\rangle>0}\text{),}
\] and $P_{\lambda,0}$ is an orthoprojector of rank one in $H_\lambda$, for every $\lambda\in F'\setminus W$ (cf., e.g.,~\cite{OV} or~\cite[Proposition 1.19]{CalziPeloso} and~\cite[Proposition 2.6]{PWS}).

\subsection{Metrics}

We endow $D$ with a complete Riemannian metric which is invariant under the action of affine biholomorphisms (for example, the Bergman metric is complete and invariant under all biholomorphisms of $D$, cf.~\cite[Proposition 2.44]{CalziPeloso}), and the associated distance $d$. Since the balls with respect to $d$ will only be used for bounded radii, it will not matter which distance is chosen, as long as it satisfies the preceding conditions.

We endow $\Omega$ with the Riemannian metric induced by that on $D$ by means of the submersion $\rho$ (interpreted as the projection of $D$ onto its quotient modulo the action of $\Nc$), and $\Omega'$ with the metric induced by the diffeomorphism $\Omega\ni t\cdot e_\Omega \mapsto e_{\Omega'}\cdot t^{-1} \in \Omega'$. We denote by $d_\Omega$ and $d_{\Omega'}$ the corresponding distances, and by $B_\Omega(h,R)$ and $B_{\Omega'}(\lambda,R)$ the correspondings balls of centre $h\in \Omega$ and $\lambda\in \Omega'$, respectively, and radius $R>0$. 
Notice that also in this case one may choose general complete $T_+$-invariant Riemannian distances without (essentially) compromising the results which follow. Nonetheless, the relationships between $d$ and $d_\Omega$ will be useful in some places (such as in the definition of lattices given below).

Analogously, we endow $E\times \Omega$ with the the Riemannian metric induced by the one on $D$ by means of the submersion 
\[
\rho'\colon D\ni (\zeta,z)\mapsto (\zeta,\rho(\zeta,z))\in E\times \Omega,
\]
interpreted as the projection of $D$ onto its quotient modulo the action of he centre $F$ of $ \Nc$. We denote by $d_{E\times \Omega}$ the corresponding distance, and by $B_{E\times \Omega}((\zeta,h),R)$ the corresponding ball of centre $(\zeta,h)\in E\times \Omega$ and radius $R>0$. 

We observe explicitly that both $\Nc$ and its centre $F$ are normal subgroups of the group $G_\Aff$ of affine automorphisms of $D$ (cf.~\cite[Proposition 2.1]{Murakami}). Hence, $d_\Omega$ and $d_{E\times \Omega}$ are $(G_\Aff/\Nc)$- and $(G_\Aff/F)$-invariant, respectively. In particular, $d_\Omega$ and $d_{\Omega'}$ are $T_+$-invariant, while $d_{E\times \Omega}$ is invariant under the affine automorphisms of the form
\[
(\zeta,h)\mapsto (g\zeta+\zeta', t \cdot h),
\]
with $\zeta'\in E$,  $t\in T_+$, and $g\in GL(E)$ such that $t\cdot \Phi=\Phi\circ (g\times g)$. We define $\nu_{E\times \Omega}\coloneqq (\Delta_\Omega^{-\vect b-\vect d}\circ \pr_\Omega)\cdot \Hc^{2n+m}$, so that $\nu_{E\times \Omega}$ is $(G_\Aff/F)$-invariant.

\subsection{Lattices}

By a $(\delta,R)$-lattice on $\Omega$, with $\delta>0$ and $R>1$, we mean a family  $(h_k)_{k\in K}$ of elements of $\Omega$ such that the balls $B_\Omega(h_k,\delta)$ are pairwise disjoint while the balls $B_\Omega(h_k,R\delta)$ cover $\Omega$. We define lattices on $\Omega'$ and $E\times \Omega$ analogously.
Notice that every maximal family of elements of $\Omega$ whose mutual distances are $\Meg 2 \delta$ is necessarily a $(\delta,2)$-lattice (and conversely), so that $(\delta,2)$-lattices on $\Omega$, $\Omega'$, and $E\times \Omega$ always exist.

By an $\Nc$-$(\delta,R)$-lattice on $D$, with $\delta>0$ and $R>1$, we mean a family $(\zeta_{j,k},z_{j,k})_{j\in J,k\in K}$ of elements of $D$ such that the balls $B((\zeta_{j,k},z_{j,k}),\delta)$ are pairwise disjoint, the balls $B((\zeta_{j,k},z_{j,k}),R\delta)$ cover $D$, and there is a $(\delta,R)$-lattice $(h_k)_{k\in K}$ on $\Omega$ such that $\rho(\zeta_{j,k},z_{j,k})=h_k$ for every $j\in J$ and for every $k\in K$.

By an $F$-$(\delta,R)$-lattice on $D$, with $\delta>0$ and $R>1$, we mean a family $(\zeta_{k},z_{j,k})_{j\in J,k\in K}$ of elements of $D$ such that the balls $B((\zeta_{k},z_{j,k}),\delta)$ are pairwise disjoint, the balls $B((\zeta_{k},z_{j,k}),R\delta)$ cover $D$, and there is a $(\delta,R)$-lattice $(\zeta_k,h_k)_{k\in K}$ on $E\times\Omega$ such that $\rho(\zeta_k,z_{j,k})=h_k$ for every $j\in J$ and for every $k\in K$.

By a modification of the previous argument, one may show that $\Nc$- and $F$-$(\delta,4)$-lattices always exist on $D$ (cf.~\cite[Lemma 2.55]{CalziPeloso}).

\subsection{The Associated Tube Domain}\label{tube-domain:subsec}

We denote by
\[
T_\Omega=F+i \Omega
\]
the tube domain associated with $\Omega$.
Given a function $f$ on $D$, we define
\[
f_h\colon \Nc \ni(\zeta,x)\mapsto f(\zeta,x+i \Phi(\zeta)+i h)
\]
for every $h\in \Omega$, and
\[
f^{(\zeta)}\colon T_\Omega\ni z\mapsto f(\zeta,z+i\Phi(\zeta))
\]
for every $\zeta\in E$. Thus,
\[
f^{(\zeta)}_h\colon F\ni x\mapsto f(\zeta,x+i \Phi(\zeta)+i h) 
\]
for every $\zeta\in E$ and for every $h\in \Omega$. 

\section{Statement of the Main Comparison Results}\label{sec:3}

We now introduce the different definitions of mixed-norm Bergman spaces.

In~\cite{RicciTaibleson,CalziPeloso}, mixed-norm weighted Bergman spaces are defined as
\[
A^{p,q}_{\vect s}(D)=\Set{f\in \Hol(D)\colon \norm{h\mapsto\Delta_\Omega^{\vect s}(h) \norm{f_h}_{L^p(\Nc)}}_{L^q(\nu_\Omega)} <\infty }.
\]
On the one hand, this definition highlights the role played by the \v Silov boundary of $D$ and gives rise to the usual Hardy spaces when $q=\infty$ and $\vect s=\vect 0$ (that is, $\Delta_\Omega^{\vect s}=1$). In particular, the non-commutative Fourier analysis on $\Nc$ comes into play.
On the other hand, the weight $\Delta_\Omega^{\vect s}\circ \rho$ is considered as a multiplier of the function, and not of the measure, and the `base measure' is chosen in such a way that it induces the invariant measures on $\Nc$ and $\Omega$. When $q=\infty$, this allows to treat a whole class of spaces which would not appear otherwise, and which play a relevant role in the duality theory of the spaces $A^{p,q}_{\vect s}(D)$ when $q\meg 1$. 

\medskip

In~\cite{Nana,BekolleGonessaNana} (to cite only a few), mixed norm weighted Bergman spaces are defined as 
\[
\Af^{p,q}_{\vect s}(D)\coloneqq \Set{f\in \Hol(D)\colon \norm{\zeta\mapsto \norm{f^{(\zeta)}}_{A^{p,q}_{(\vect s+\vect b/2)/q}(T_\Omega) }   }_{L^q(E)}<\infty  }.
\]
On the one hand, this definition highlights the role played by the contre $F$ of the \v Silov boundary of $D$, so that the usual (commutative) Fourier analysis on $F$ comes into play. In addition, this definition also allows to think $D$ as the union of the translates $(\zeta,i\Phi(\zeta))+T_\Omega$ of the tube domain $T_\Omega$ (identified with $\Set{0}\times T_\Omega\subseteq D$), so that some of the analysis on $\Af^{p,q}_{\vect s}(D)$ may be reduced to a simpler analysis on $\Af^{p,q}_{\vect s}(T_\Omega)$.
On the other hand, the  weight $\Delta_\Omega^{\vect s}\circ \rho$ is considered as a multiplier of the `base measure' $(\Delta_\Omega^{\vect b/2+\vect d}\circ \rho )\cdot \Hc^{2n+2m}= (\Delta_\Omega^{-\vect b/2-\vect d}\circ \rho )\cdot \nu_D$, and not of the function. In this way the self-adjoint projector of $\Lf^{2,2}_{\vect s}(D)$ (defined as $\Af^{2,2}_{\vect s}(D)$, but allowing $f$ to be a meausurable function modulo negligible functions) onto $\Af^{2,2}_{\vect s}(D)$ is highlighted as the `canonical choice' when looking for a projector of $\Lf^{p,q}_{\vect s}(D)$ onto $\Af^{p,q}_{\vect s}(D)$ for different $p,q\in [1,\infty]$. 

We mention that $\Af^{p,\infty}_{\vect s}(D)=\Af^{p,\infty}_{\vect 0}(D)$ for every $\vect s\in \R^r$. Because of this fact, the case $q=\infty$ is somewhat pathological and seldom considered. For similar reasons, the duality theory for the space $\Af^{p,q}_{\vect s}(D)$, when $q\meg 1$, is treated separately (cf., e.g.,~\cite{Bekolle}).

We also observe that 
\[
\Af^{p,q}_{\vect s}(D)=\Set{f\in \Hol(D)\colon \norm{h\mapsto \norm{f_h}_{L^{p,q}(F,E)}   }_{L^q(\Delta^{\vect s+\vect b/2}_\Omega\cdot \nu_\Omega)}<\infty  },
\]
where 
\[
\norm{g}_{L^{p,q}(F,E)}\coloneqq \norm{\zeta \mapsto \norm{g(\zeta,\,\cdot\,)}_{L^p(F)}  }_{L^q(E)}
\]
for every measurable function $g\colon \Nc\to \C$.

We now compare a number of results concerning both the spaces $A^{p,q}_{\vect s}(D)$ and $\Af^{p,q}_{\vect s}$. Proofs can be found in~\cite{CalziPeloso,Paralipomena}, or in Section~\ref{sec:proofs}.

\subsection{Comparisons and Inclusions}

The following result is a consequence of~\cite[Proposition 3.5]{CalziPeloso}, Remark~\ref{oss:4}, and Lemma~\ref{oss:5}.

\begin{oss}\label{oss:6}
	Take $p,q\in (0,\infty]$ and $\vect s \in \R^r$. Then, the following hold:
	\begin{itemize}
		\item $A^{p,q}_{\vect s}(D)\neq \Set{0}$ if and only if $\vect s \succ \frac{1}{2 q }\vect m$ or $q=\infty$ and $\vect s \Meg \vect 0$;
		
		\item $\Af^{p,q}_{\vect s}(D)\neq \Set{0}$ if and only if $\vect s \succ \frac 1 2 (\vect m-\vect b)$ or $q=\infty$;
		
		\item if  $\vect s \succ \frac 1 2(\vect m-\vect b)$ or $q=\infty$, then
		\[
		A^{p,q}_{(\vect s+\vect b/2)/q}(D)=\Af^{p,q}_{\vect s}(D)
		\]
		if and only if $p=q$ or $E=\Set{0}$.\footnote{More precisely, if $p\neq q$ and $E\neq \Set{0}$, then $A^{p,q}_{(\vect s+\vect b/2)/q}(D)\not \subseteq\Af^{p,q}_{\vect s}(D)$ and $A^{p,q}_{(\vect s+\vect b/2)/q}(D)\not \supseteq\Af^{p,q}_{\vect s}(D)$.}
	\end{itemize}
\end{oss}

The following result is a consequence of~\cite[Proposition 3.2]{CalziPeloso} and Proposition~\ref{prop:2}. The first assertion is contained in~\cite[Proposition 2.2]{RicciTaibleson} when $D=\C_+$, while the second assertion is contained in~\cite[Lemma 2.1]{Rochberg} when $p_1=q_1$ and $p_2=q_2$, and is essentially contained in~\cite[Proposition 3.22]{Bekolleetal} when $n=0$. 

\begin{prop}
	Take $p_1,p_2,q_1,q_2\in (0,\infty]$ with $p_1\meg p_2, q_1\meg q_2$ and $\vect{s_1},\vect{s_2}\in \R^r$. Then, the following hold:
	\begin{itemize}
		\item $A^{p_1,q_1}_{\vect{s_1}}(D)\subseteq A^{p_2,q_2}_{\vect{s_2}}(D)$ continuously, provided that $\vect{s_2}=\vect{s_1}+\left(\frac{1}{p_2}-\frac{1}{p_1}  \right)(\vect b+\vect d)$;
		
		\item $\Af^{p_1,q_1}_{\vect{s_1}}(D)\subseteq \Af^{p_2,q_2}_{\vect{s_2}}(D)$ continuously, provided that $q_2<\infty$ and $\vect{s_2}=\frac{q_2}{q_1}\vect{s_1}+\left(\frac{q_2}{p_2}-\frac{q_2}{p_1}  \right)\vect d+\left(\frac{1}{2}-\frac{q_2}{2 q_1}  \right)\vect b$.
	\end{itemize}
\end{prop}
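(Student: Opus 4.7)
The plan is to derive both inclusions from a single subharmonic mean-value estimate for holomorphic functions on $D$, suitably iterated over the substructures appearing in the mixed norms. For $f\in\Hol(D)$ and $w_0=(\zeta_0,z_0)\in D$ with $\rho(w_0)=h_0$, I would integrate the plurisubharmonic function $|f|^{p_1}$ over a $d$-ball of bounded radius centred at $w_0$. Since $\nu_D=(\Delta_\Omega^{\vect b+2\vect d}\circ\rho)\cdot\Hc^{2n+2m}$ is biholomorphism-invariant, such a ball has bounded $\nu_D$-volume while $\Delta_\Omega$ varies by only a bounded multiplicative factor on it. Passing to Lebesgue measure (which incurs a factor $\Delta_\Omega^{\vect b+2\vect d}(h_0)$), bounding the inner $\Nc$-integral by $\norm{f_h}_{L^{p_1}(\Nc)}^{p_1}$, and estimating the remaining Lebesgue $h$-integral (whose ball-mass is $\sim\Delta_\Omega^{-\vect d}(h_0)$) by the supremum of $h\mapsto\norm{f_h}_{L^{p_1}(\Nc)}^{p_1}$ over $B_\Omega(h_0,\delta)$, one obtains the pointwise estimate
\[
|f(w_0)|^{p_1}\leq C\,\Delta_\Omega^{\vect b+\vect d}(h_0)\sup_{h'\in B_\Omega(h_0,\delta)}\norm{f_{h'}}_{L^{p_1}(\Nc)}^{p_1}.
\]

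For the $A$-case, I would interpolate $\norm{f_h}_{L^{p_2}}^{p_2}\leq\norm{f_h}_{L^\infty}^{p_2-p_1}\norm{f_h}_{L^{p_1}}^{p_1}$ and apply the pointwise estimate to the $L^\infty$-factor; the hypothesis $\vect{s_2}-\vect{s_1}=(1/p_2-1/p_1)(\vect b+\vect d)$ then exactly cancels the resulting $\Delta_\Omega$-factor, yielding the pointwise comparison $\Delta_\Omega^{\vect{s_2}}(h)\norm{f_h}_{L^{p_2}}\leq C\sup_{h'\in B_\Omega(h,\delta)}\Delta_\Omega^{\vect{s_1}}(h')\norm{f_{h'}}_{L^{p_1}}$, which integrates against $\nu_\Omega$ to give $A^{p_1,q_1}_{\vect{s_1}}\hookrightarrow A^{p_2,q_1}_{\vect{s_2}}$. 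The subsequent passage $q_1\to q_2$ with the same weight $\vect{s_2}$ is not a measure-theoretic inclusion since $\nu_\Omega$ is infinite; I would instead exploit that $g(h):=\Delta_\Omega^{\vect{s_1}}(h)\norm{f_h}_{L^{p_1}}$ is essentially constant up to a bounded factor on $d_\Omega$-balls of bounded radius, a fact that follows from the standard monotonicity of $h\mapsto\norm{f_h}_{L^{p_1}}$ under translation by elements of $\Omega$ combined with the bounded variation of $\Delta_\Omega^{\vect{s_1}}$ on such balls. This yields a pointwise domination of $g(h_0)^{q_1}$ by the local $\nu_\Omega$-average of $g^{q_1}$ and hence $\norm{g}_{L^\infty}\leq C\norm{g}_{L^{q_1}(\nu_\Omega)}$; interpolating with the trivial $L^{q_1}$-bound produces $\norm{g}_{L^{q_2}(\nu_\Omega)}\leq C'\norm{g}_{L^{q_1}(\nu_\Omega)}$.

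For the $\Af$-case, the same program applies after substituting $\norm{f_h}_{L^{p}(\Nc)}$ by the mixed norm $\norm{f_h}_{L^{p,q}(F,E)}$ and working with the alternative presentation $\norm{f}_{\Af^{p,q}_{\vect s}}^{q}=\int_\Omega\norm{f_h}_{L^{p,q}(F,E)}^{q}\Delta_\Omega^{\vect s+\vect b/2}\,\dd\nu_\Omega$. The mean-value step must now be performed in two stages: first in the $F$-direction, using that $f^{(\zeta)}\in\Hol(T_\Omega)$ and that the biholomorphism-invariant measure on $T_\Omega$ has density $\Delta_\Omega^{2\vect d}$ (producing only $\vect d$-factors); then in the $E$-direction, where the relation $\Phi_*\Hc^{2n}\sim\Delta_\Omega^{-\vect b}\cdot\Hc^m$ recorded in the excerpt produces the $\vect b$-factors. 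Combining these with the outer $L^{q_2}(\nu_\Omega)$-integration --- where, crucially, the $\Af$-weight $\Delta_\Omega^{\vect s+\vect b/2}$ itself sits inside the $q$-th power, so a change from $q_1$ to $q_2$ effectively rescales the weight by $q_2/q_1$ --- yields exactly $\vect{s_2}=(q_2/q_1)\vect{s_1}+(q_2/p_2-q_2/p_1)\vect d+(1/2-q_2/(2q_1))\vect b$. The hypothesis $q_2<\infty$ ensures that the last integral is a genuine $L^{q_2}$-norm.

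The principal difficulty lies in the precise bookkeeping of the many $\Delta_\Omega$-exponents in the $\Af$-case: four separate averaging or interpolation steps each contribute a factor, and the $q_2/q_1$-scaling in $\vect{s_2}$ reflects that, unlike for the $A$-norms, the outer $L^{q}$-exponent in the $\Af$-norm coincides with the exponent of the weighted measure, so rescaling $q$ rescales the weight. A secondary technical point is the handling of the quasi-Banach range $p<1$ and the endpoint $q_1=\infty$ in the $A$-part, where Minkowski's inequality in the outer integration must be replaced by a direct subharmonicity argument.
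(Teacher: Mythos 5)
Your underlying strategy — subharmonic mean-value estimates together with the endpoint interpolation $\|f_h\|_{L^{p_2}}\le\|f_h\|_{L^\infty}^{1-p_1/p_2}\|f_h\|_{L^{p_1}}^{p_1/p_2}$ — is exactly the mechanism the paper invokes by citing \cite[Propositions~3.2, 3.7]{CalziPeloso}, and your exponent bookkeeping in the $A$-case is right. The paper's actual route, however, is shorter: it observes $\Af^{p,q}_{\vect s}(D)=\Ac^{p,q}_{(\vect s+\vect b/2)/q}(D)$ (Lemma~\ref{oss:5}) and proves the inclusion once for the $\Ac$-spaces (Proposition~\ref{prop:2}), where the condition reads $\vect{s_2}=\vect{s_1}+\left(\frac{1}{p_2}-\frac{1}{p_1}\right)\vect d+\left(\frac{1}{q_2}-\frac{1}{q_1}\right)\vect b$; substituting $(\vect s_i+\vect b/2)/q_i$ for $\vect{s_i}$ and clearing $q_2$ gives exactly the $\Af$-exponent in the statement, including the $q_2/q_1$ factor you correctly attribute to the weight sitting inside the outer $q$-th power.

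There is a genuine gap in your $\Af$-step as written. You say ``the same program applies after substituting $\|f_h\|_{L^p(\Nc)}$ by $\|f_h\|_{L^{p,q}(F,E)}$,'' but the one-shot interpolation $\|f_h\|_{L^{p_2,q_2}}\le\|f_h\|_{L^{\infty,\infty}}^{1-\theta}\|f_h\|_{L^{p_1,q_1}}^{\theta}$ is only available when a single $\theta$ satisfies both $\theta=p_1/p_2$ and $\theta=q_1/q_2$, i.e., $p_1/p_2=q_1/q_2$, which is not assumed. The argument must be split: first change $p_1\to p_2$ at fixed $q$ by interpolating the inner $L^p(F)$-norm and using subharmonicity of the \emph{slices} $f^{(\zeta)}\in\Hol(T_\Omega)$ (for each fixed $\zeta$, producing only $\vect d$-factors); then change $q_1\to q_2$ by a mean-value estimate on $E\times\Omega$ as in Lemma~\ref{lem:1}, where $\nu_{E\times\Omega}=(\Delta_\Omega^{-\vect b-\vect d}\circ\pr_\Omega)\cdot\Hc^{2n+m}$ is what produces the $\vect b$-factor (your invocation of $\Phi_*\Hc^{2n}\sim\Delta_\Omega^{-\vect b}$ is related but not the directly used fact). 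Also, a minor imprecision: $h\mapsto\|f_h\|_{L^{p_1}}$ is \emph{not} ``essentially constant'' on $d_\Omega$-balls (two-sided comparability is generally false); what holds, and what you actually use, is the one-sided mean-value inequality $\|f_{h_0}\|^q\lesssim\dashint_{B_\Omega(h_0,\delta)}\|f_h\|^q\,\dd\nu_\Omega$, which follows from plurisubharmonicity of $|f|^\ell$ (monotonicity alone is not enough and is only used to pass from $\Ac^{p,q}$-finiteness to $\Ac^{p,\infty}$-finiteness at a shifted weight, cf.\ Corollary~\ref{oss:1}).
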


\subsection{Reproducing Kernels}

Define the auxiliary function
\[
B^{\vect s}_{(\zeta',z')}(\zeta,z)\coloneqq \Delta^{\vect s}_{\Omega}\left( \frac{z-\overline{z'}}{2 i}-\Phi(\zeta,\zeta')  \right)
\]
for every $(\zeta,z),(\zeta',z')\in (D\times\overline D)\cup(\overline D\times D)$, where $\overline D$ denotes the closure of $D$ in $E\times F_\C$ (note that conjugation on $E$ is \emph{not} defined).

Then, by~\cite[Theorem 5.4]{Gindikin} and~\cite[Theorem II.6]{BekolleKagou} (cf., also,~\cite[Proposition 3.11]{CalziPeloso}) and Remark~\ref{oss:6}, the following result hold.

\begin{prop}
	If $\vect s \succ \frac 1 4 \vect m$, then $A^{2,2}_{\vect s}(D)$ is a reproducing kernel Hilbert space with reproducing kernel
	\[
	K^{\vect s}\colon ((\zeta,z),(\zeta',z'))\mapsto c_{\vect s} B^{\vect b+\vect d-2 \vect s}_{(\zeta',z')}(\zeta,z)
	\]
	for a suitable $c_{\vect s}\neq 0$. Analogously, if $\vect s \succ \frac 1 2 \vect m$, then $\Af^{2,2}_{\vect s}(D)$ is a reproducing kernel Hilbert space with reproducing kernel
	\[
	\Kf^{\vect s}\colon ((\zeta,z),(\zeta',z'))\mapsto c_{\vect s/2+\vect b/4} B^{\vect b/2+\vect d- \vect s}_{(\zeta',z')}(\zeta,z).
	\]
\end{prop}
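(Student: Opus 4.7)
The approach splits naturally into two steps: first establish the formula for $A^{2,2}_{\vect s}(D)$, then deduce the formula for $\Af^{2,2}_{\vect s}(D)$ from it. For the reduction, one checks directly from the definitions that when $p=q=2$ the two norms coincide under $\vect s' = \vect s/2 + \vect b/4$, so that $\Af^{2,2}_{\vect s}(D) = A^{2,2}_{\vect s/2+\vect b/4}(D)$ with matching norms (consistently with Remark~\ref{oss:6}). Substituting $\vect s' = \vect s/2+\vect b/4$ into the formula for $K^{\vect s'}$ yields $B^{\vect b+\vect d-2\vect s'} = B^{\vect b/2+\vect d-\vect s}$, matching the stated expression for $\Kf^{\vect s}$ up to renaming the constant.

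For the main assertion on $A^{2,2}_{\vect s}(D)$, I first verify continuity of point evaluations. Given $(\zeta_0,z_0)\in D$, the sub-mean-value inequality for $|f|^2$ over a small polydisc inside $D$ centred at $(\zeta_0,z_0)$, combined with Cauchy--Schwarz on the $L^2(\Nc)$ fibre integral and the local equivalence of the weight $\Delta_\Omega^{\vect s}$ on a small ball in $\Omega$, produces a bound $|f(\zeta_0,z_0)| \meg C(\zeta_0,z_0)\,\|f\|_{A^{2,2}_{\vect s}}$. Non-triviality under $\vect s \succ \tfrac{1}{4}\vect m$ is supplied by Remark~\ref{oss:6}, so $A^{2,2}_{\vect s}(D)$ is a genuine RKHS.

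To identify the kernel explicitly, the Plancherel identity on $\Nc$ gives
\[
\|f_h\|^2_{L^2(\Nc)} = c\int_{F'\setminus W} \|\pi_\lambda(f_h)\|^2_{\Lin^2(H_\lambda)}\, |\Delta^{-\vect b}_{\Omega'}(\lambda)|\,\dd\lambda,
\]
while the holomorphy of $f$ together with the tangential CR equations on $\Nc$ yields the factorisation $\pi_\lambda(f_h) = \chi_{\Lambda_+}(\lambda)\, e^{-\langle \lambda, h\rangle} T_\lambda$ for a fixed rank-one operator $T_\lambda$ on $H_\lambda$ depending only on $f_0$. Under $\vect s \succ \tfrac{1}{4}\vect m$, Fubini together with the Laplace transform formula $\int_\Omega \Delta_\Omega^{2\vect s}(h)\, e^{-2\langle \lambda, h\rangle}\,\dd\nu_\Omega(h) = \Gamma_\Omega(2\vect s)\, \Delta_{\Omega'}^{-2\vect s}(2\lambda)$ then transforms the $A^{2,2}_{\vect s}$ norm into a weighted Paley--Wiener integral of $\|T_\lambda\|^2_{\Lin^2(H_\lambda)}$ against $\Delta_{\Omega'}^{-\vect b-2\vect s}\,\dd\lambda$ on $\Lambda_+$.

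Finally, the reproducing kernel is extracted by Fourier inversion: writing $f(\zeta,z)$ as the integral of the Schwartz kernel of $\pi_\lambda(f_0)$ paired with $e^{i\langle \lambda, z\rangle}$ and matching against the Paley--Wiener description, one reduces the identification of $K^{\vect s}$ to integrating $e^{i\langle \lambda, z-\overline{z'}\rangle - 2\langle \lambda,\Phi(\zeta,\zeta')\rangle}$ against $\Delta_{\Omega'}^{\vect b+2\vect s+\vect d}\,\dd\lambda$ on $\Lambda_+$. The analytic continuation of the Laplace transform identity $\Lc(\Delta_{\Omega'}^{\vect s'}\cdot \nu_{\Omega'}) = \Gamma_{\Omega'}(\vect s')\Delta_\Omega^{-\vect s'}$, evaluated at $(z-\overline{z'})/(2i) - \Phi(\zeta,\zeta')$, collapses this to $c_{\vect s}\, B^{\vect b+\vect d-2\vect s}_{(\zeta',z')}(\zeta,z)$. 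The main technical obstacle is the rigorous derivation of the rank-one factorisation of $\pi_\lambda(f_h)$; once this is established via the Stone--von Neumann theorem applied to the Heisenberg fibres $\Nc/\ker\lambda$, the remaining steps are routine invocations of Plancherel, Fubini, and the Laplace transform identities catalogued in Section~\ref{sec:2}.
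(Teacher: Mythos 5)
Your proposal correctly identifies the structure of the proof: reduce $\Af^{2,2}_{\vect s}(D)$ to $A^{2,2}_{\vect s/2+\vect b/4}(D)$ (which is exactly Lemma~\ref{oss:5} for $p=q$), verify continuity of point evaluations by the sub-mean-value argument, and identify the kernel via Plancherel on $\Nc$, the rank-one CR factorisation, and the Laplace transform identities. This is a genuine derivation, whereas the paper simply cites Gindikin, B\'ekoll\'e--Temgoua Kagou, and its companion paper for the first part, so your route is more self-contained; it is, however, essentially the same Paley--Wiener argument those references use, so I would not call it a different proof strategy.

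There are two concrete errors in the final paragraph that must be fixed. First, the exponent: with $w=\frac{z-\overline{z'}}{2i}-\Phi(\zeta,\zeta')$ one has $-2\langle\lambda,w\rangle=i\langle\lambda,z-\overline{z'}\rangle+2\langle\lambda,\Phi(\zeta,\zeta')\rangle$, so the sign in front of $2\langle\lambda,\Phi(\zeta,\zeta')\rangle$ should be $+$, not $-$. Second, the weight: the norm computation gives $\|f\|^2\asymp\int_{\Lambda_+}\|T_\lambda\|^2_{\Lin^2}\Delta_{\Omega'}^{-\vect b-2\vect s}(\lambda)\,\dd\lambda$, and the Paley--Wiener inversion formula for the Cauchy--Szeg\H o kernel carries another factor $\Delta_{\Omega'}^{-\vect b}(\lambda)\,\dd\lambda$; balancing these forces the kernel's coefficient to involve $\Delta_{\Omega'}^{2\vect s}$, and the resulting $\lambda$-integral for $K^{\vect s}$ is against $\Delta_{\Omega'}^{2\vect s-\vect b}(\lambda)\,\dd\lambda$, not $\Delta_{\Omega'}^{\vect b+2\vect s+\vect d}(\lambda)\,\dd\lambda$. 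Indeed this is what the Laplace identity demands: $\Lc(\Delta_{\Omega'}^{2\vect s-\vect b-\vect d}\cdot\nu_{\Omega'})=\Gamma_{\Omega'}(2\vect s-\vect b-\vect d)\,\Delta_\Omega^{\vect b+\vect d-2\vect s}$, and since $\nu_{\Omega'}=\Delta_{\Omega'}^{\vect d}\cdot\Hc^m$ the density against $\dd\lambda$ is $\Delta_{\Omega'}^{2\vect s-\vect b}$. With the hypothesis $\vect s\succ\frac14\vect m$ one checks $2\vect s-\vect b-\vect d\succ\frac12\vect m'$ (using $-\vect d=\vect 1_r+\frac12\vect m+\frac12\vect m'$ and $\vect b\meg\vect 0$), so the transform converges. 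Once these two exponents are corrected the argument is sound; the ``rank-one exponential'' factorisation $\pi_\lambda(f_h)=\chi_{\Lambda_+}(\lambda)e^{-\langle\lambda,h\rangle}T_\lambda$ that you flag as the main obstacle is indeed the crux, and it combines the CR rank-one constraint with holomorphy in $z$ rather than following from Stone--von~Neumann alone.
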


In~\cite{CalziPeloso}, for notational convenience the corresponding integral operators are based on $B^{\vect s}$ rather than $K^{\vect s}$, so that the operators
\[
P_{\vect s}\colon f \mapsto c_{(\vect b+\vect d-\vect s)/2} \int_D f(\zeta,z) B^{\vect s}_{(\zeta,z)}\Delta_\Omega^{-\vect s}(\rho(\zeta,z))\,\dd \nu_D(\zeta,z)
\]
are considered. 

Then, 
\[
\Pf_{\vect s}= P_{\vect b/2+\vect d-\vect s}\colon f \mapsto c_{\vect s/2+\vect b/4}\int_D f(\zeta,z) \Kf^{\vect s}_{(\zeta,z)} \Delta^{\vect s+\vect b/2+\vect d}_\Omega(\rho (\zeta,z))\,\dd (\zeta,z)
\]
is the orthoprojector of $\Lf^{2,2}_{\vect s}(D)$ onto $\Af^{2,2}_{\vect s}(D)$.

The following result is a consequence of~\cite[Proposition 3.13]{CalziPeloso} and Proposition~\ref{prop:3}. The first assertion is contained in~\cite[Theorem 3.1]{RicciTaibleson} when $D=\C_+$. 

\begin{prop}
	Take $p,q\in (0,\infty]$ and $\vect s,\vect s'\in \R^r$.
	If:
	\begin{itemize}
		\item $\vect s\succ \frac 1 p (\vect b+\vect d)+\frac{1}{2 q'}\vect m'$;
		
		\item $\vect s'\prec \frac{1}{p'} (\vect b+\vect d)-\frac{1}{2 p'}\vect m',\vect b+\vect d-\frac 1 2 \vect m$;
		
		\item $\vect s+\vect s'\prec \frac{1}{\min(1,p)}(\vect b+\vect d)-\frac{1}{2 q'}\vect m'$ or $q'=\infty$ and $\vect s+\vect s'\meg \frac{1}{\min(1,p)}(\vect b+\vect d)$;
	\end{itemize}	
	then $P_{\vect s'}f=f$  for every $f\in A^{p,q}_{\vect s}(D)$.
	
	If $q<\infty$ and:
	\begin{itemize}
%
%
%
		\item $\vect s\succ \frac 1 2 \vect b+\frac q p \vect d+\frac{q}{2q'}\vect m'$;
		
		\item $\vect s'\succ \frac 1 2 \vect b +\frac{1}{\max(1,p)}\vect d+\frac{1}{2p'}\vect m',\frac 1 2 (\vect m-\vect b)$;
		
		\item $\vect s'-\frac 1 q\vect s\succ -\frac{1}{2 q'}\vect b-\left(\frac 1 p-1\right)_+\vect d+\frac{1}{2 q'}\vect m$ or $q'=\infty$ and $\vect s'-\frac 1 q\vect s\Meg -\left(\frac{1}{2 q}-\frac 1 2\right)\vect b-\left(\frac 1 p-1\right)_+\vect d $;
	\end{itemize} 
	then $\Pf_{\vect s'}f=f$  for every $f\in \Af^{p,q}_{\vect s}(D)$.
\end{prop}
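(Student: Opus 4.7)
The plan is to reduce both assertions to a common pointwise-convergence-plus-density scheme. The basic mechanism is: show that the defining integral for $P_{\vect s'}f$ (resp. $\Pf_{\vect s'}f$) converges absolutely at every $(\zeta,z)\in D$ and every $f$ in the given space; verify the reproducing identity on a dense subspace of functions where it is easy to prove (for instance, functions lying simultaneously in an $L^2$-Bergman space $A^{2,2}_{\vect t}$ or $\Af^{2,2}_{\vect t}$ for a conveniently chosen $\vect t$); and pass to the limit using continuity of the operator on the ambient space.

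For the first assertion, this is essentially~\cite[Proposition 3.13]{CalziPeloso}. The pointwise estimate reduces to an integrability statement for the kernel $B^{\vect b+\vect d-\vect s-\vect s'}_{(\zeta,z)}$ against the weight of the pre-dual $A^{p',q'}$-space, which is controlled by the standard integrability criteria for $B$-kernels (cf.~\cite[Lemma 2.38]{CalziPeloso} and subsequent lemmas); the two conditions on $\vect s'$ together with the third one on $\vect s+\vect s'$ arise exactly from these criteria. Reproduction on the dense subspace uses the reproducing property of $K^{\vect t}$ on $A^{2,2}_{\vect t}$ combined with a Forelli--Rudin/Laplace-transform argument to exchange the kernel $K^{\vect t}$ with $K^{\vect s'}$ inside the integral.

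For the second assertion, the guiding identity is $\Pf_{\vect s'}=P_{\vect b/2+\vect d-\vect s'}$. When $p=q$, Remark~\ref{oss:6} gives the identification $\Af^{p,p}_{\vect s}(D)=A^{p,p}_{(\vect s+\vect b/2)/p}(D)$, and the three conditions stated for $\Pf_{\vect s'}$ are precisely what one obtains by substituting $\vect s\rightsquigarrow(\vect s+\vect b/2)/p$ and $\vect s'\rightsquigarrow \vect b/2+\vect d-\vect s'$ into the three conditions for $P_{\vect s'}$. For general $p\neq q$ the spaces $\Af^{p,q}_{\vect s}$ no longer coincide with any $A^{p,q}_{\vect t}$, but the auxiliary family $\Ac^{p,q}_{\vect s}$ introduced in Section~\ref{sec:proofs} covers both settings uniformly; one then repeats the same convergence/density/continuity argument in that framework.

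The main obstacle I expect is the third (sharp) condition. Its left-hand side is $\vect s+\vect s'$ in the $A$-setting but $\vect s'-\vect s/q$ in the $\Af$-setting, and the $\min(1,p)$ versus $\max(1,p)$ discrepancy between the two reflects the fact that in the $\Af$-definition the outer $L^q$ norm sees the weight $\Delta_\Omega^{\vect s+\vect b/2}$, whereas in the $A$-definition the weight multiplies the function. Tracking this interaction carefully in the regimes $p\meg 1$, $1<p\meg q$, and $p>q$ separately, and handling the endpoint $q'=\infty$ via duality against the pre-dual space, is the main technical work; conceptually, however, it is a bookkeeping exercise once the common framework via $\Ac^{p,q}_{\vect s}$ is in place.
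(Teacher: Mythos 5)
Your proposal is correct and matches the paper's route: the first assertion is cited from~\cite[Proposition 3.13]{CalziPeloso}, and the second is obtained by first proving the analogous statement for $\Ac^{p,q}_{\vect s}(D)$ (Proposition~\ref{prop:3}, proved by arguing as in~\cite[Proposition~3.13]{CalziPeloso} using Propositions~\ref{prop:2} and~\ref{prop:4}) and then substituting $\vect s\rightsquigarrow(\vect s+\vect b/2)/q$ and $\vect s'\rightsquigarrow \vect b/2+\vect d-\vect s'$ via $\Af^{p,q}_{\vect s}=\Ac^{p,q}_{(\vect s+\vect b/2)/q}$ and $\Pf_{\vect s'}=P_{\vect b/2+\vect d-\vect s'}$. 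Your identification of $\Ac^{p,q}_{\vect s}$ as the unifying framework and your reading of where the $\min(1,p)$ versus $\max(1,p)$ asymmetry comes from are exactly right; the only nuance is that the paper handles all $p,q$ uniformly through $\Ac^{p,q}_{\vect s}$ rather than treating $p=q$ separately, but that is a presentation choice, not a gap.
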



\subsection{Sampling}

The following sampling theorems are consequences of~\cite[Theorem 3.22]{CalziPeloso} and Theorem~\ref{teo:1}, where more precise versions of these results are proved. The second assertion is contained in~\cite[Theorem 5.6]{Bekolleetal} when $n=0$, $p=q$, and $\Omega$ is symmetric, in~\cite[Theorem 5.2]{BekolleIshiNana} when $p=q$, and in~\cite[Theorem 3.3]{BekolleGonessaNana2} when $n=0$ and $\Omega$ is symmetric. We denote by $\ell^{p,q}(J,K)$ the space of $\lambda\in \C^{J\times K}$ such that $\norm{\norm{\lambda_{j,k}}_{\ell^p(J)}}_{\ell^q(K)}<\infty$, with some abuse of notation.

\begin{teo}
	Take $p,q\in (0,\infty]$, $\vect s\in \R^r$ and $R_0>1$. Then, there is $\delta_0>0$ such that, for every $\Nc$-$(\delta,R)$-lattice $(\zeta_{j,k},z_{j,k})_{j\in J,k\in K}$ on $D$, with $\delta\in (0,\delta_0]$ and $R\in (1,R_0]$,  the mapping 
	\[
	f \mapsto \Delta_\Omega^{\vect s-(\vect b+\vect d)/p}(\rho(\zeta_{j,k},z_{j,k})) f(\zeta_{j,k},z_{j,k})
	\]
	induces an isomorphism of $A^{p,q}_{\vect s}(D)$ onto a closed subspace of $\ell^{p,q}(J,K)$, and such that, for every $F$-$(\delta',R')$-lattice $(\zeta'_{k'},z'_{j',k'})_{j'\in J',k'\in K'}$  on $D$, with $\delta'\in (0,\delta_0]$ and $R'\in (1,R_0]$,  the mapping 
	\[
	f \mapsto \Delta_\Omega^{\vect s/q-\vect b/(2q)-\vect d/p}(\rho(\zeta'_{k'},z'_{j',k'})) f(\zeta'_{k'},z'_{j',k'})
	\]
	induces an isomorphism of $\Af^{p,q}_{\vect s}(D)$ onto a closed subspace of $\ell^{p,q}(J,K)$.
\end{teo}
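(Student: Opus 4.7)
\emph{Proof plan.} The first assertion is precisely~\cite[Theorem 3.22]{CalziPeloso}; we therefore outline only the proof of the second assertion, which is formally presented in Section~\ref{sec:proofs}. The strategy mirrors that of the first, the main adaptations being dictated by the mixed-norm structure of $L^{p,q}(F,E)$ and the product-like structure of $F$-lattices.

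\textbf{Upper sampling inequality.} Since $|f|^p$ is plurisubharmonic on $D$, an invariant sub-mean value estimate gives, for $\delta$ smaller than some absolute $\delta_0$,
\[
|f(\zeta_k',z_{j,k}')|^p\lesssim \Delta_\Omega^{\vect b+2\vect d}(h_k)\int_{B((\zeta_k',z_{j,k}'),\delta)}|f|^p\,\dd\Hc^{2n+2m},
\]
with implicit constant uniform in $(j,k)$ by the $T_+$-invariance of $d$ and the transformation properties of Lebesgue measure under the affine automorphism group. By the defining property of an $F$-lattice, for fixed $k$ the points $x_{j,k}':=\Re z_{j,k}'$ are separated in $F$ at a scale $\sim \Delta_\Omega^{-\vect d}(h_k)^{1/m}$, while the pairs $(\zeta_k',h_k)$ are separated in $E\times\Omega$. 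Summing first over $j$ using disjointness, one controls $\sum_j|f(\zeta_k',z_{j,k}')|^p$ by an integral over a $B_{E\times\Omega}$-ball of $\|f_h\|_{L^p(F)}^p$ evaluated at $\zeta$ near $\zeta_k$, with $h$ near $h_k$, times $\Delta_\Omega^{\vect b+\vect d}(h_k)$. Raising this estimate to the $q/p$ power, summing over $k$ and using the disjointness of the $E\times\Omega$-balls together with the approximate constancy of $\Delta_\Omega^{\vect s/q-\vect b/(2q)-\vect d/p}$ on them, one recognises the right-hand side as an integral dominated by $\|f\|_{\Af^{p,q}_{\vect s}(D)}^q$. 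The precise exponent in the weight is forced by this unit-counting: powers of $\Delta_\Omega^{-\vect b}$ and $\Delta_\Omega^{-\vect d}$ arising from the $E$- and $F$-ball volumes must exactly compensate the weight $\Delta_\Omega^{\vect s+\vect b/2+\vect d}$ after taking the $q$-th root.

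\textbf{Reconstruction and closed range.} For the reverse inequality, one exploits that, for sufficiently small $\delta$, holomorphic functions are approximately constant on invariant balls of radius $R\delta$: applying the sub-mean value to a first derivative of $f$ and the mean value theorem along invariant geodesics yields, for every $(\zeta,z)\in B((\zeta_k',z_{j,k}'),R\delta)$,
\[
\bigl||f(\zeta,z)|-|f(\zeta_k',z_{j,k}')|\bigr|\leqslant CR\delta\bigl(\text{local }L^p\text{ average of }|f|\text{ on an enlarged ball}\bigr).
\]
Integrating this against the weight $\Delta_\Omega^{\vect s+\vect b/2+\vect d}$ and invoking the covering property of the $F$-lattice together with Minkowski's inequality (in the direction dictated by whether $q\geqslant p$ or $q<p$) produces the estimate $\|f\|_{\Af^{p,q}_{\vect s}(D)}\lesssim \|\text{samples}\|_{\ell^{p,q}(J,K)}+CR\delta\,\|f\|_{\Af^{p,q}_{\vect s}(D)}$. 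Choosing $\delta$ small enough to absorb the error yields the lower sampling bound; combined with the upper bound and the completeness of $\Af^{p,q}_{\vect s}(D)$ (established elsewhere in Section~\ref{sec:proofs}), the image of the sampling map is closed in $\ell^{p,q}(J,K)$.

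\textbf{Main obstacle.} The genuinely delicate step is the reconstruction phase: the oscillation estimate must be arranged so that it interacts well with the iterated norm $L^{p,q}(F,E)$ and the split structure of an $F$-lattice, so that the error term can be absorbed uniformly in $p$, $q$ and $\vect s$. The case distinction $q\geqslant p$ versus $q<p$ must be treated separately, following the template of~\cite[Theorem 3.22]{CalziPeloso}, but with Minkowski's inequality applied in the $E$-direction rather than on $\Nc$.
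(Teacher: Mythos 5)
Your proposal follows essentially the same strategy as the paper's proof of the precise version, Theorem~\ref{teo:1}: a local sub-mean-value estimate (the paper packages this, together with the passage from pointwise to mixed-norm bounds, in Lemma~\ref{lem:1}) for the upper sampling bound, and an oscillation estimate (via~\cite[Lemmas~3.24 and~3.25]{CalziPeloso}) followed by absorption for the lower bound. Two caveats worth recording. First, Theorem~\ref{teo:1} proves more than the statement you are asked to establish: it shows that $S_- f\in\ell^{p,q}(J,K)$ together with the a priori growth hypothesis $f\in\Ms_{\vect s'}$ already forces $f\in\Ac^{p,q}_{\vect s}(D)$. Your direct absorption $\norm{f}_{\Af^{p,q}_{\vect s}(D)}\lesssim\norm{\textnormal{samples}}_{\ell^{p,q}}+CR\delta\,\norm{f}_{\Af^{p,q}_{\vect s}(D)}$ requires knowing $\norm{f}_{\Af^{p,q}_{\vect s}(D)}<\infty$ in advance, which the statement provides, so your simpler argument does suffice for it; but the paper needs the regularizations $G(\eps)=f\,g^{(\eps)}$ and the truncations $\chi_\ell=\chi_{e_\Omega/2^\ell+\Omega}\circ\rho$ together with a telescoping iteration precisely because no such finiteness is available under the weaker hypothesis. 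Second, in the reconstruction step you should note that the enlarged ball in the oscillation estimate has radius $2R\delta+R_1'$ with $R_1'$ a \emph{fixed} constant coming from the derivative estimate, so the radius does not shrink with $\delta$; the absorption works because it is the prefactor $R\delta$ that vanishes, and the integrated error is controlled by the bounded-overlap property of these fixed-radius balls in $E\times\Omega$ via Lemma~\ref{lem:1}. With these points spelled out, your plan matches the paper's.
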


Here we mention that the transpose of the sampling maps defined above is often considered an atomic decomposition map, especially when the duals of $A^{p,q}_{\vect s}(D)$ and $\Af^{p,q}_{\vect s}(D)$ may be identified with $A^{p',q'}_{\vect s'}(D)$ and $\Af^{p',q'}_{\vect s'}(D)$, respectively, for some $\vect s'$. See~\cite{CalziPeloso,Paralipomena} and Subsection~\ref{sec:atomic} and Theorem~\ref{teo:3} for more information. Since, in particular, the problem of determining the validity of the aforementioned atomic decomposition and duality is equivalent (when $p,q\in [1,\infty]$) to the problem of determining the range of boundedness of the Bergman projectors, we shall only discuss the latter problem in this section.

\subsection{Boundary Values}

We now consider the problem of determining the boundary values of the various weighted Bergman spaces.
We recall some definitions and results from~\cite{CalziPeloso}, with some slight changes motivated by~\cite{Besov,PWS}.

\begin{deff}
	We define $\Sc_{\overline{\Omega'}}(\Nc)$  as the space of CR $\phi\in \Sc(\Nc)$ such that  $\Fc_F(\phi(\zeta,\,\cdot\,))$ is supported in $\overline{\Omega'}$ for every $\zeta\in E$, endowed with the topology induced by $\Sc(\Nc)$. We define $\Sc_{\overline{\Omega'}}'(\Nc)$ as the dual of the conjugate of $\Sc_{\overline{\Omega'}}(\Nc)$.
	In addition, we define $\widetilde \Sc_{\overline{\Omega'}}(\Nc)$ as the space of $\phi\in \Sc(\Nc)$ such that $\pi_\lambda(\phi)=\chi_{\Omega'}(\lambda) P_{\lambda,0}\pi_\lambda(\phi)P_{\lambda,0}$ for every $\lambda\in F'\setminus W$.
	We define
	\[
	\Fc_\Nc\colon \widetilde \Sc_{\overline{\Omega'}}(\Nc)\ni \phi\mapsto [\lambda \mapsto \tr(\pi_\lambda(\phi))].
	\]
\end{deff}

Notice that $\Sc_{\overline{\Omega'}}(\Nc)$ may be equivalently defined as the set of $\phi\in \Sc(\Nc)$ such that $\pi_\lambda(\phi)=\chi_{\Omega'}(\lambda) \pi_\lambda(\phi)P_{\lambda,0}$ for every $\lambda\in F'\setminus W$, thanks to~\cite[Proposition 2.7]{PWS}.
In addition, $\Fc_\Nc$ induces an isomorphism of $\widetilde \Sc_{\overline{\Omega'}}(\Nc)$ onto the space of Schwartz functions on $F'$ supported in $\overline{\Omega'}$ (cf.~\cite[Proposition 5.2]{PWS}).

\begin{deff}\label{def:2}
	Take $p,q\in (0,\infty]$ and $\vect s\in \R^r$. We define $B^{\vect s}_{p,q}(\Nc,\Omega)$ as the space of $u\in \Sc'_{\overline{\Omega'}}(\Nc)$ such that
	\[
	(\Delta_{\Omega'}^{\vect s}(\lambda_k) u*\psi_k)\in \ell^q(K;L^p(\Nc)),
	\]
	where $(\lambda_k)_{k\in K}$ is a $(\delta,R)$-lattice on $\Omega'$ and $(\psi_k)$ is a family of elements of $\widetilde \Sc_{\overline{\Omega'}}(\Nc)$ such that $((\Fc_\Nc \psi_k)(\,\cdot\, t_k))$ is a bounded family of positive elements of $C^\infty_c(\Omega')$,\footnote{Notice that this means that the $(\Fc_\Nc \psi_k)(\,\cdot\, t_k)$ are supported in a fixed compact subset of $\Omega'$, and are uniformly bounded with every derivative.} with $\lambda_k=e_{\Omega'}\cdot t_k$, and 
	\[
	\sum_k \Fc_\Nc \psi_k\Meg 1
	\]
	on $\Omega'$.
	
	We define $\Bf^{\vect s}_{p,q}(\Nc,\Omega)$ as the space of $u\in \Sc'_{\overline{\Omega'}}(\Nc)$ such that
	\[
	(\Delta^{(\vect s-\vect b/2)/q}_{\Omega'}(\lambda_k) u*\psi_k)\in \ell^q(K; L^{p,q}(F,E)),
	\]
	endowed with the corresponding topology. 
\end{deff}

The definition of $B^{p,q}_{\vect s}(\Nc,\Omega)$ does not depend on the choice of $\delta$, $R$, $(\lambda_k)$, and $(\psi_k)$ (cf.~\cite[Lemma 4.14]{CalziPeloso}). In addition, $B^{p,q}_{\vect s}(\Nc,\Omega)$ is a quasi-Banach space (cf.~\cite[Proposition 4.16]{CalziPeloso} and~\cite[Proposition 7.12]{Besov}). Analogous assertions hold for $\Bf^{p,q}_{\vect s}(\Nc,\Omega)$ (cf.~Proposition~\ref{prop:12}).

Observe that, by~\cite[the remarks following the statement of Lemma 5.1]{CalziPeloso}, there is a constant $c>0$ such that
\[
f(\zeta,z)=c \int_\Nc f_0(\zeta,x) B^{\vect b+\vect d}_{(\zeta,x+i\Phi(\zeta))}(\zeta,z)\,\dd (\zeta,x)
\]
for every $f\in H^2(D)=A^{2,\infty}_{\vect 0}(D)$ and for every $(\zeta,z)\in D$, where $f_0$ is the limit of $(f_h)$ in $L^2(\Nc)$ for $h\to 0$, $h\in \Omega$. In other words, 
\[
S_{(\zeta,z)}\coloneqq c  \left( B^{\vect b+\vect d}_{(\zeta,z)}\right)_0
\]
is (the boundary values of) the Cauchy--Szeg\H o kernel. 

The following result is a consequence of~\cite[Proposition 4.20, Theorem 4.23, and Lemma 5.1]{CalziPeloso}. 

\begin{prop}
	Take $(\lambda_k)$ and $(\psi_k)$ as in Definition~\ref{def:2}, in such a way that $\sum_k (\Fc_\Nc \psi_k)^2=1$ on $\Omega'$. Then, there is a  continuous sesquilinear form
	\[
	\langle \,\cdot\,\vert \,\cdot \rangle\colon B^{\vect s}_{p,q}(\Nc,\Omega)\times B^{-\vect s-(1/p-1)_+(\vect b+\vect d)}_{p',q'}(\Nc,\Omega)\ni (u,u')\mapsto \sum_k \langle u*\psi_k\vert u'*\psi_k\rangle\in \C
	\]
	which induces an antilinear isomorphism of $B^{-\vect s-(1/p-1)_+(\vect b+\vect d)}_{p',q'}(\Nc,\Omega)$ onto the dual of the closure of  $\Sc_{\overline{\Omega'}}(\Nc)$ in $B^{\vect s}_{p,q}(\Nc,\Omega)$. 
	
	In addition, $S_{(\zeta,z)}\in B^{-\vect s-(1/p-1)_+(\vect b+\vect d)}_{p',q'}(\Nc,\Omega)$ for every $(\zeta,z)\in D$ if $\vect s\succ\frac{1}{p}(\vect b+\vect d)+\frac{1}{2 q'}\vect{m'}$.
\end{prop}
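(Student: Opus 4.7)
The plan is to piece together three established tools from \cite{CalziPeloso}: the abstract duality theorem for Besov spaces on $\Nc$ (Proposition 4.20), the Besov-norm estimate for boundary values of holomorphic kernels (Theorem 4.23), and the identification of the Cauchy--Szeg\H o kernel with the boundary of a power of the Bergman kernel (Lemma 5.1). No new technique is required; the task is essentially one of transposing already available results to the present formulation.

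First I would verify that the sesquilinear form is well defined and jointly continuous. The hypothesis $\sum_k (\Fc_\Nc \psi_k)^2=1$ guarantees that $u*\psi_k$ and $u'*\psi_k$ are honest measurable functions for any $u,u'\in \Sc'_{\overline{\Omega'}}(\Nc)$. H\"older's inequality in $L^p\times L^{p'}$ applied pointwise in $k$, followed by H\"older in $\ell^q\times \ell^{q'}$ with the weights $\Delta_{\Omega'}^{\vect s}(\lambda_k)$ and $\Delta_{\Omega'}^{-\vect s-(1/p-1)_+(\vect b+\vect d)}(\lambda_k)$, bounds $\sum_k \abs{\langle u*\psi_k\vert u'*\psi_k\rangle}$ by the product of the two Besov norms. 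The correction $(1/p-1)_+$ reflects the inclusion $B^{\vect s}_{p,q}(\Nc,\Omega)\hookrightarrow B^{\vect s-(1/p-1)(\vect b+\vect d)}_{1,q}(\Nc,\Omega)$ for $p<1$, which reduces the quasi-Banach case to the Banach one.

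Next, to obtain the antilinear isomorphism onto the dual of the closure of $\Sc_{\overline{\Omega'}}(\Nc)$, I would argue injectivity by density: the reproducing identity $u'=\sum_k u'*\psi_k*\psi_k$ (from $\sum_k (\Fc_\Nc \psi_k)^2=1$) combined with the density of $\Sc_{\overline{\Omega'}}(\Nc)$ in the closure forces vanishing of the pairing to yield $u'=0$. For surjectivity, given a continuous antilinear functional $\Lambda$, I would represent $\Lambda$ on each $k$-block by an $L^{p'}$-function $v_k$ via the Riesz theorem for mixed-norm weighted spaces, then form $u'\coloneqq \sum_k v_k*\psi_k$ and check membership in the target Besov space by estimating $(u'*\psi_{k'})_{k'}$ through an almost-orthogonality argument exploiting the finite overlap of the supports of the $\Fc_\Nc \psi_k$ on $\Omega'$. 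This follows the scheme of \cite[Proposition 4.20]{CalziPeloso} adapted to this slightly adjusted family of norms.

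For the final statement, I would apply Lemma 5.1 of \cite{CalziPeloso} to write $S_{(\zeta,z)}=c\,(B^{\vect b+\vect d}_{(\zeta,z)})_0$ and then invoke Theorem 4.23 of \cite{CalziPeloso}, which characterizes the range of weights $\vect t$ for which such boundary kernels belong to $B^{\vect t}_{p',q'}(\Nc,\Omega)$. Substituting $\vect t=-\vect s-(1/p-1)_+(\vect b+\vect d)$ should yield precisely the condition $\vect s\succ \tfrac{1}{p}(\vect b+\vect d)+\tfrac{1}{2q'}\vect{m'}$. I expect the main obstacle to lie not in the duality argument or the kernel estimate themselves, but in the careful bookkeeping of the weights through the $(1/p-1)_+$ correction and in ensuring that the almost-orthogonality estimate in the surjectivity argument is uniform across all admissible lattices.
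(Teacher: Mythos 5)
Your proposal takes essentially the same route the paper does: the paper's entire ``proof'' of this proposition is the single remark that it ``is a consequence of [Proposition 4.20, Theorem 4.23, and Lemma 5.1] of \cite{CalziPeloso},'' and you have correctly identified those three ingredients and how they fit together. Since the spaces $B^{\vect s}_{p,q}(\Nc,\Omega)$ appearing here are literally the same spaces as in \cite{CalziPeloso} (unlike the $\Bc^{\vect s}_{p,q}$ and $\Af^{p,q}_{\vect s}$ spaces, which do require new arguments), there is in fact nothing to transpose or adapt: the result is a verbatim quote. Your detailed re-derivation of the duality via H\"older, the reproducing identity $u'=\sum_k u'*\psi_k*\psi_k$, Riesz representation on blocks, and almost-orthogonality is a faithful (if unnecessary) reconstruction of how one proves the cited proposition. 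One small bibliographic slip: you attribute the boundary-kernel membership estimate to Theorem 4.23 and the Cauchy--Szeg\H o identification to Lemma 5.1, but judging from how the present paper cites these elsewhere (Theorem 4.23 for density of $\Sc_{\overline{\Omega'}}(\Nc)$ in $\mathring B^{\vect s}_{p,q}$, Lemma 5.1 for the bound on $S_{(\zeta,z)}$ in the Besov norm, with the identification $S_{(\zeta,z)} = c\,(B^{\vect b+\vect d}_{(\zeta,z)})_0$ in the remarks following Lemma 5.1), the roles appear to be the other way around; this does not affect the substance of your argument.
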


A similar result holds for the spaces $\Bf^{\vect s}_{p,q}(\Nc,\Omega)$ as well (cf.~Lemma~\ref{lem:4} and the remarks following the statement of Proposition~\ref{prop:7}). Nonetheless, because of the pathological behaviour of the spaces $\Bf^{\vect s}_{p,q}(\Nc,\Omega)$ for $q=\infty$, it is not possible to state the analogous result here without defining additional spaces. We therefore refer the reader to the aforementioned results for more precise information.


\begin{deff}
	Given $\vect s\succ\frac{1}{p}(\vect b+\vect d)+\frac{1}{2 q'}\vect{m'} $, we define a continuous linear operator
	\[
	\Ec \colon B^{-\vect s}_{p,q}(\Nc,\Omega)\ni u\mapsto [(\zeta,z)\mapsto \langle u\vert S_{(\zeta,z)}\rangle]\in A^{\infty,\infty}_{\vect s-(\vect b+\vect d)/p}(D),
	\]
	and denote by $\widetilde A^{p,q}_{\vect s}(D)$ its image, endowed with the corresponding topology.
	
	If $q<\infty$, given $\vect s\succ \frac 1 2 \vect b+\frac q p \vect d+\frac{q}{2q'}\vect m'$, we define a continuous linear mapping 
	\[
	\Ec \colon \Bf^{-\vect s}_{p,q}(\Nc,\Omega)\ni u\mapsto [(\zeta,z)\mapsto \langle u\vert S_{(\zeta,z)}\rangle]\to A^{\infty,\infty}_{(\vect s-\vect b/2)/ q-\vect d/p}.
	\] 
	We denote by $\widetilde \Af^{p,q}_{\vect s}(D)$ the image of $\Bf^{-\vect s}_{p,q}(\Nc,\Omega)$ under $\Ec$, endowed with the corresponding topology.
\end{deff}

Notice that $(\Ec u)_h\to u$ in $\Sc'_{\overline{\Omega'}}(\Nc)$ for $h\to 0$, $h\in \Omega'$, for every $u\in B^{-\vect s}_{p,q}(\Nc,\Omega)$ and for every $u\in\Bf^{-\vect s}_{p,q}(\Nc,\Omega)$  (cf.~\cite[Theorem 5.2]{CalziPeloso},~\cite[Proposition 7.12]{Besov}, and Proposition~\ref{prop:11}), so that $\Ec$ is one-to-one and $ B^{-\vect s}_{p,q}(\Nc,\Omega)$ and $\Bf^{-\vect s}_{p,q}(\Nc,\Omega) $ are the spaces of boundary values of $\widetilde A^{p,q}_{\vect s}(D)$ and $\widetilde \Af^{p,q}_{\vect s}(D)$, respectively (when defined).

The following result is a consequence of~\cite[Proposition 5.4 and Corollary 5.11]{CalziPeloso}, and of Proposition~\ref{prop:6}. The second assertion is essentially contained in~\cite[Theorem 1.7]{BekolleBonamiGarrigosRicci} when $p,q\Meg 1$, $n=0$,  $\Omega$ is symmetric, and $\vect s\in \R\vect d$, and in~\cite[Corollary 4.7]{Debertol} when $p,q> 1$, $n=0$, and  $\Omega$ is symmetric.
Notice that the inclusion $\Ec(\Sc_{\overline{\Omega'}}(\Nc))\subseteq A^{p,q}_{\vect s}(D)$ does not follow from~\cite[Proposition 5.4]{CalziPeloso}, but follows easily from~\cite[Theorem 4.2]{PWS}.

\begin{teo}\label{teo:4}
	Take $p,q\in (0,\infty]$ and $\vect s\in \R^r$. 
	Then, the following hold:
	\begin{enumerate}
		\item[\textnormal{(1)}] if $\vect s\succ\frac{1}{p}(\vect b+\vect d)+\frac{1}{2 q'}\vect{m'}$ and either $\vect s\succ \frac{1}{2 q}\vect m$ or $q=\infty$ and $\vect s\Meg \vect 0$, then
		\[
		\Ec(\Sc_{\overline{\Omega'}}(\Nc))\subseteq A^{p,q}_{\vect s}(D)\subseteq \widetilde A^{p,q}_{\vect s}(D)
		\]
		continuously, with equality in the second inclusion if 
		\[
		\vect s \succ \frac{1}{2 q}\vect m+\left( \frac{1}{2 \min(p,p')}-\frac{1}{2 q}\right)_+\vect{m'};
		\]
		
		\item[\textnormal{(2)}] if $q<\infty$ and $\vect s  \succ \frac 1 2 \vect b+\frac q p \vect d+\frac{q}{2q'}\vect m',\frac 1 2 (\vect m-\vect b)$, then
		\[
		\Ec(\Sc_{\overline{\Omega'}}(\Nc))\subseteq \Af^{p,q}_{\vect s}(D)\subseteq \widetilde \Af^{p,q}_{\vect s}(D)
		\]
		continuously, with equality in the second inclusion if 
		\[
		\vect s \succ \frac{1}{2}(\vect m-\vect b)+\left( \frac{q}{2 \min(p,p')}-\frac{1}{2 }\right)_+\vect{m'}.
		\]
	\end{enumerate}
\end{teo}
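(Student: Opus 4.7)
The plan is to treat the two parts in parallel. Part (1), concerning the scale $A^{p,q}_{\vect s}(D)$, follows essentially from existing results: the continuous inclusion $A^{p,q}_{\vect s}(D) \subseteq \widetilde A^{p,q}_{\vect s}(D)$ is precisely~\cite[Proposition 5.4]{CalziPeloso}, and the equality under the sharper assumption on $\vect s$ is~\cite[Corollary 5.11]{CalziPeloso}. What is not covered there is the inclusion $\Ec(\Sc_{\overline{\Omega'}}(\Nc)) \subseteq A^{p,q}_{\vect s}(D)$, for which I would invoke~\cite[Theorem 4.2]{PWS}: given $\phi \in \Sc_{\overline{\Omega'}}(\Nc)$, that theorem controls $(\Ec\phi)_h$ by Schwartz seminorms of $\phi$ uniformly in $h$, in a way sufficient to bound the $A^{p,q}_{\vect s}$-norm by integrating against $\Delta_\Omega^{\vect s}$ on $\Omega$.

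For part (2), I would run the analogous argument for the $\Af$-scale, replacing $B^{\vect s}_{p,q}$ by $\Bf^{\vect s}_{p,q}$ and~\cite[Proposition 5.4]{CalziPeloso} by Proposition~\ref{prop:6}. Given $f \in \Af^{p,q}_{\vect s}(D)$, Proposition~\ref{prop:6} expresses the $\Af^{p,q}_{\vect s}$-norm equivalently in terms of the convolutions $f_h * \psi_k$ against a partition of unity on $\Omega'$; from this one sees that $f_h$ has a limit $u \in \Bf^{-\vect s}_{p,q}(\Nc,\Omega)$ in $\Sc'_{\overline{\Omega'}}(\Nc)$ as $h \to 0$, and that $\Ec u = f$, giving the continuous inclusion $\Af^{p,q}_{\vect s}(D) \subseteq \widetilde \Af^{p,q}_{\vect s}(D)$. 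For the converse under the stronger condition on $\vect s$, I would estimate $\norm{(\Ec u)_h}_{L^{p,q}(F,E)}$ by expanding $u$ along $(\psi_k)$, applying Young's inequality on $\Nc$ together with the decay of the Cauchy--Szeg\H o kernel $S_{(\zeta,z)}$ convolved with $\psi_k$, and reassembling to an $L^q$-bound in the weight $\Delta^{\vect s+\vect b/2}_\Omega\cdot\nu_\Omega$. The inclusion $\Ec(\Sc_{\overline{\Omega'}}(\Nc)) \subseteq \Af^{p,q}_{\vect s}(D)$ is again obtained from~\cite[Theorem 4.2]{PWS}.

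The main obstacle will be obtaining the sharp range of $\vect s$ in the equality statement of part (2): the extra term $\bigl(\tfrac{q}{2\min(p,p')} - \tfrac{1}{2}\bigr)_+ \vect{m'}$ encodes the loss arising when passing from the $\ell^q$-estimate on $\norm{u*\psi_k}_{L^{p,q}(F,E)}$ to the pointwise $L^{p,q}(F,E)$-estimate on $(\Ec u)_h$ for an individual $h$. Tracking this loss requires the volume growth of the balls $B_{\Omega'}(\lambda,R)$ together with a Schur-type lemma on $\Omega'$, and a separation of the regimes $p \leq q$ and $p > q$ responsible for the $\min(p,p')$. In contrast, the direct inclusion $\Af^{p,q}_{\vect s}(D) \subseteq \widetilde \Af^{p,q}_{\vect s}(D)$ already holds in the wider range stated, since it only needs the reproducing identity provided by Proposition~\ref{prop:6} and the quasi-Banach structure of $\Bf^{-\vect s}_{p,q}(\Nc,\Omega)$.
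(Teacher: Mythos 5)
Your treatment of part (1) matches the paper's: the two inclusions come from \cite[Proposition 5.4 and Corollary 5.11]{CalziPeloso}, and the inclusion $\Ec(\Sc_{\overline{\Omega'}}(\Nc))\subseteq A^{p,q}_{\vect s}(D)$ from \cite[Theorem 4.2]{PWS}. Part (2), however, has a genuine gap.

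The paper deduces part (2) by first rewriting $\Af^{p,q}_{\vect s}(D) = \Ac^{p,q}_{(\vect s+\vect b/2)/q}(D)$ via Lemma~\ref{oss:5} (a step your argument omits), after which Proposition~\ref{prop:6} items (3) and (6) give precisely the stated inclusions and the equality in the asserted range by a direct substitution of parameters. The substantive content therefore sits in Proposition~\ref{prop:6}, especially item (6), whose proof proceeds by \emph{transference}: Remark~\ref{oss:3} shows that $\Bc^{\vect s}_{p,q}(\Nc,\Omega)$ is characterised fiberwise by $\norm{\zeta\mapsto\norm{u(\zeta,\,\cdot\,)}_{B^{\vect s}_{p,q}(F,\Omega)}}_{L^q(E)}<\infty$, and Proposition~\ref{prop:11}(1) shows that $\Ec$ acts fiberwise, $(\Ec u)(\zeta,z)=[\Ec(u(\zeta,\,\cdot\,))](z-i\Phi(\zeta))$, so the equality $\Ac^{p,q}_{\vect s}(D)=\widetilde\Ac^{p,q}_{\vect s}(D)$ reduces, fiber by fiber, to the tube-domain identity $A^{p,q}_{\vect s}(T_\Omega)=\widetilde A^{p,q}_{\vect s}(T_\Omega)$ of \cite[Corollary 5.11]{CalziPeloso}.

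Your proposal instead tries a \emph{direct} estimate of $\norm{(\Ec u)_h}_{L^{p,q}(F,E)}$ by expanding $u$ along $(\psi_k)$ and applying Young's inequality on $\Nc$. This is not carried out, and it is not clear it closes: the group law on $\Nc$ couples the $E$ and $F$ variables through $\Phi$, so there is no off-the-shelf Young inequality for the mixed norm $L^{p,q}(F,E)$ under $\Nc$-convolution; you yourself flag the exponent tracking as the ``main obstacle'' without resolving it. The paper's fiberwise reduction is exactly what avoids this difficulty, since on each fiber only abelian $F$-convolution appears. You also misdescribe Proposition~\ref{prop:6}: it concerns boundary-value spaces, the density of $\Sc_{\overline{\Omega'}}(\Nc)$, and the transference statement, not a norm characterisation via $f_h*\psi_k$ (that role is played by Definition~\ref{def:1} and Remark~\ref{oss:3}). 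Without the fiberwise transference step, your argument for the equality in part (2) is incomplete.
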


\begin{oss}
We observe explicitly that $\vect s \succ \frac{1}{2}(\vect m-\vect b)+\left( \frac{q}{2 \min(p,p')}-\frac{1}{2 }\right)_+\vect{m'}$ if and only if $\vect s \succ \frac{1}{2}(\vect m-\vect b)$ and
\[
q<q_{\vect s}(p)\coloneqq \min(p,p')\min_{j=1,\dots, r} \frac{2s_j+ b_j+m'_j- m_j}{m'_j}.
\]
\end{oss}

We also have transference results (cf.~\cite[Theorem 6.3]{Paralipomena} and Proposition~\ref{prop:6}).  

\begin{prop}
	Take $p,q\in (0,\infty]$ and $\vect s\in \R^r$. 
	Then the following hold:
	\begin{enumerate}
		\item[\textnormal{(1)}] if   $\frac 1 q\vect s\succ-\frac{1}{2 q} \vect b+\frac 1 p \vect d+\frac{1}{2q'}\vect{m'} $ and $\Af^{p,q}_{\vect s+\vect b/2}(T_\Omega)=\widetilde \Af^{p,q}_{\vect s+\vect b/2}(T_\Omega)$, then $\Af^{p,q}_{\vect s}(D)=\widetilde \Af^{p,q}_{\vect s}(D)$;
		
		\item[\textnormal{(2)}] if $\vect s \succ \frac{1}{p}(\vect b+\vect d)+\frac{1}{2 q'}\vect{m'}$ and $A^{p,q}_{\vect s}(D)=\widetilde A^{p,q}_{\vect s}(D)$, then $A^{p,q}_{\vect s-\vect b/p}(T_\Omega)=\widetilde A^{p,q}_{\vect s-\vect b/p}(T_\Omega)$. 
	\end{enumerate}
\end{prop}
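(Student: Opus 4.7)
The plan is to exploit the fiberwise structure $f\mapsto f^{(\zeta)}$ that links $D$ to its associated tube $T_\Omega$. By the very definition of $\Af^{p,q}_{\vect s}(D)$ one has
\[
\|f\|_{\Af^{p,q}_{\vect s}(D)}=\bigl\|\zeta\mapsto \|f^{(\zeta)}\|_{\Af^{p,q}_{\vect s+\vect b/2}(T_\Omega)}\bigr\|_{L^q(E)},
\]
and throughout I would use the identification of $\widetilde \Af^{p,q}_{\vect s}(D)$ and $\widetilde A^{p,q}_{\vect s}(T_\Omega)$ with the images under $\Ec$ of the corresponding Besov spaces on $\Nc$ and on $F$.

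\textbf{Proof of (1).} The nontrivial inclusion is $\widetilde \Af^{p,q}_{\vect s}(D)\subseteq \Af^{p,q}_{\vect s}(D)$, the other being provided by Theorem~\ref{teo:4}~(2). Take $f=\Ec u$ with $u\in \Bf^{-\vect s}_{p,q}(\Nc,\Omega)$, and fix a Besov resolution $(\psi_k)$ on $\Nc$ as in Definition~\ref{def:2}, adapted to a $(\delta,R)$-lattice $(\lambda_k)$ on $\Omega'$. The key observation is that the CR condition on $u$, together with a suitable tensor structure of $\psi_k$ on the Fourier--Plancherel side, makes the partial convolutions $(u*\psi_k)(\zeta,\,\cdot\,)$, for a.e.\ $\zeta\in E$, an admissible tube Besov resolution of the boundary value of $f^{(\zeta)}$. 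Rearranging the quasi-norm defining $\Bf^{-\vect s}_{p,q}(\Nc,\Omega)$ as an iterated $L^q(E)$--$L^p(F)$--$\ell^q(K)$ norm and applying Minkowski's inequality then yields
\[
\bigl\|\zeta\mapsto \|f^{(\zeta)}\|_{\widetilde \Af^{p,q}_{\vect s+\vect b/2}(T_\Omega)}\bigr\|_{L^q(E)}\lesssim \|u\|_{\Bf^{-\vect s}_{p,q}(\Nc,\Omega)}.
\]
The tube hypothesis upgrades $\widetilde \Af^{p,q}_{\vect s+\vect b/2}(T_\Omega)$ to $\Af^{p,q}_{\vect s+\vect b/2}(T_\Omega)$ with equivalent norms; integrating in $L^q(E)$ then gives $f\in \Af^{p,q}_{\vect s}(D)$.

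\textbf{Proof of (2).} Here the direction is reversed and a CR extension from $F$ to $\Nc$ is required. Given $v$ in the tube Besov space with $g=\Ec v\in \widetilde A^{p,q}_{\vect s-\vect b/p}(T_\Omega)$, I would fix a nonzero positive $\varphi\in \widetilde \Sc_{\overline{\Omega'}}(\Nc)$ and build $u\in B^{-\vect s}_{p,q}(\Nc,\Omega)$ by combining $\varphi$ in the $E$-direction with $v$ in the $F$-direction (for example by prescribing, at the level of the Plancherel decomposition on $\Nc$, that $\pi_\lambda(u)$ acts as $(\Fc_F v)(\lambda)\,\pi_\lambda(\varphi)$ on the relevant subspace). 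The quantitative estimate to verify is
\[
\|u\|_{B^{-\vect s}_{p,q}(\Nc,\Omega)}\lesssim \|v\|_{B^{-\vect s+\vect b/p}_{p,q}(F,\Omega)},
\]
the shift $\vect b/p$ arising from the homogeneity $\Delta^{-\vect b}(a\,\cdot\,)=a^n$ when $L^p(E)$-norms of $\varphi$ are taken on $T_+$-orbits. By hypothesis $\tilde g:=\Ec u\in A^{p,q}_{\vect s}(D)$; writing $\|\tilde g\|_{A^{p,q}_{\vect s}(D)}$ as an iterated integral in $(\zeta,x,h)$, Fubini's theorem factors the finite quantity $\|\varphi(\zeta,\,\cdot\,)\|_{L^p(F)}$-in-$\zeta$ from the tube $A^{p,q}$-norm of $g$ with the shifted parameter, yielding $g\in A^{p,q}_{\vect s-\vect b/p}(T_\Omega)$.

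The main obstacle in both parts is the rigorous handling of slicing of Besov distributions in (1), and of CR extension in (2), in a way that is compatible with the partition-of-unity resolution from Definition~\ref{def:2}. Once this is controlled, the parameter shifts $\vect b/2$ in (1) and $\vect b/p$ in (2) are forced by the respective $L^p$-homogeneities in the $\zeta$-direction.
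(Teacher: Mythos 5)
For part (1) your approach is essentially the paper's. The nontrivial content is exactly what the paper records as Remark~\ref{oss:3} (the $\Bc^{\vect s}_{p,q}(\Nc,\Omega)$ quasi-norm equals the $L^q(E)$-norm of the fiberwise $B^{\vect s+\vect b/q}_{p,q}(F,\Omega)$ quasi-norms of the slices) and Proposition~\ref{prop:11}(1) (the extension operator $\Ec$ commutes with slicing). Since $L^{p,q}(F,E)=L^q(E;L^p(F))$ and the outer summation is $\ell^q(K)$, the rearrangement from $\ell^q(K;L^q(E;L^p(F)))$ to $L^q(E;\ell^q(K;L^p(F)))$ is an \emph{identity} (Tonelli for $q<\infty$), not an application of Minkowski's inequality; your invocation of Minkowski there is harmless but unnecessary and may obscure the fact that no loss occurs. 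Coupled with the translation $\Af^{p,q}_{\vect s}(D)=\Ac^{p,q}_{(\vect s+\vect b/2)/q}(D)$ from Lemma~\ref{oss:5}, this is precisely Proposition~\ref{prop:6}(4) applied with $\tilde{\vect s}=(\vect s+\vect b/2)/q$.

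For part (2) there is a genuine gap, and the obstruction explains why the paper defers this direction to~\cite[Theorem 6.3]{Paralipomena} rather than deriving it from the slicing mechanism (note that the in-paper version, Proposition~\ref{prop:6}(5), is proved only for $p=q$). Two problems with your construction. First, prescribing $\pi_\lambda(u)=(\Fc_F v)(\lambda)\,\pi_\lambda(\varphi)$ with a \emph{fixed} $\varphi\in\widetilde\Sc_{\overline{\Omega'}}(\Nc)$ amounts to $u=\widetilde v*\varphi$, where $\widetilde v$ is the CR lift $\pi_\lambda(\widetilde v)=(\Fc_F v)(\lambda)P_{\lambda,0}$; since $\Fc_\Nc\varphi$ is Schwartz on $F'$, the high-frequency Besov pieces of $u$ are crushed relative to those of $v$, so the estimate $\norm{u}_{B^{-\vect s}_{p,q}(\Nc,\Omega)}\lesssim \norm{v}_{B^{-\vect s+\vect b/p}_{p,q}(F,\Omega)}$ fails to capture the needed two-sided scaling. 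The CR condition forces the transverse ($\zeta$) profile of each Besov piece to have width tied to the frequency scale $\lambda_k$ — precisely the mechanism behind the $\vect b/p$ shift — and a single $\varphi$ cannot reproduce that frequency-dependent profile. Second, even granting a product structure $(\tilde g)_h(\zeta,x)=c(\zeta)g_h(x)$ (which the actual CR lift does \emph{not} satisfy), Fubini factors $\norm{c}_{L^p(E)}$ from $\norm{g}_{A^{p,q}_{\vect s}(T_\Omega)}$ with the \emph{same} $\vect s$, producing no shift at all. More fundamentally, the norm of $B^{-\vect s}_{p,q}(\Nc,\Omega)$ is $\ell^q(K;L^p(E;L^p(F)))$, which cannot be rearranged to $L^q(E)$ of a fiberwise Besov norm when $p\neq q$ (unlike the $\Bf/\Bc$ case used in (1)), so the very factorization you rely on is unavailable. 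Your homogeneity intuition for the exponent $\vect b/p$ is correct, but producing the required $u$ with the right two-sided norm control requires a different device (in the cited reference, not a pure tensor with a fixed Schwartz $\varphi$).
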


\subsection{Bergman Projectors}

Concerning the boundedness of Bergman projectors, we have the following results.

\begin{prop}
	Take $p,q\in [1,\infty]$ and $\vect s,\vect s' \in \R^r$. Then, the following hold:
	\begin{enumerate}
	\item[\textnormal{(1)}] if $\vect s'\prec \vect b+\vect d-\frac 1 2 \vect m$ and $P_{\vect{s'}}$ induces a continuous linear projector of $L^{p,q}_{\vect s}(D)$ onto $A^{p,q}_{\vect s}(D)$, then:
	\begin{itemize}
		\item $\vect s \succ\frac 1 p(\vect b+\vect d)+\frac{1}{2 q'}\vect{m'} $, and $\vect s\succ \frac{1}{2 q}\vect m$ or $q=\infty$ and $\vect s \Meg \vect 0$;
		
		\item $\vect{s'}\prec \frac{1}{\min(p,p')}(\vect b+\vect d-\frac 1 2\vect{m'})$;
		
		\item $\vect s+\vect s'\prec \frac 1 p (\vect b+\vect d)-\frac{1}{2 q}\vect m'$, and $\vect s+\vect s'\prec \vect b+\vect d-\frac{1}{2 q'}\vect m$ or $q=1$ and $\vect s+\vect s'\meg \vect b+\vect d$.
	\end{itemize}
	
	\item[\textnormal{(2)}] if $q<\infty$, $\vect s'\succ \frac 1 2 (\vect m-\vect b)$, and $\Pf_{\vect{s'}}$ induces a continuous linear projector of $\Lf^{p,q}_{\vect s}(D)$ onto $\Af^{p,q}_{\vect s}(D)$, then:
	\begin{itemize}
		\item $\vect s\succ \frac 1 2 (\vect m-\vect b), \frac 1 2 \vect b+\frac q p \vect d+\frac{q-1}{2}\vect{m'}$;
		
		\item $\vect{s'}\succ \frac 1 2 \vect b+\frac{1}{\max(p,p')}\vect d+\frac{1}{2 \min(p,p')}\vect{m'}$;
		
		\item $\vect{s'}-\frac1 q\vect s\succ  \frac{1}{2 q'} \vect b+\frac{1}{p'}\vect d+\frac{1}{2 q}\vect{m'}$ and $\vect{s'}-\frac1 q\vect s\succ  -\frac{1}{2 q'} \vect b+\frac{1}{2 q'}\vect{m}$ or $q=1$ and $\vect{s'}-\vect s\Meg  \vect 0$.
	\end{itemize}
	\end{enumerate}
\end{prop}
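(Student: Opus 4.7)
The necessary conditions split into three, reflecting three distinct obstructions: non-triviality of the range, continuity of point evaluation on the range realized via a Cauchy--Szeg\H o type kernel, and homogeneity/convergence of the integral operator under the $T_+$-action. I would treat part (1) in detail; part (2) follows mutatis mutandis by exploiting the identity $\Pf_{\vect s'}=P_{\vect b/2+\vect d-\vect s'}$ together with the fibered-norm description $\Af^{p,q}_{\vect s}(D)=\{f\in\Hol(D):\|h\mapsto\|f_h\|_{L^{p,q}(F,E)}\|_{L^q(\Delta^{\vect s+\vect b/2}_\Omega\cdot\nu_\Omega)}<\infty\}$ given after the definition, with the corresponding shifts of exponents.

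For the first bullet: the portion $\vect s\succ\frac{1}{2q}\vect m$ (or $q=\infty$ and $\vect s\Meg\vect 0$) is immediate from Remark~\ref{oss:6}, since as a surjective projector $P_{\vect s'}$ forces $A^{p,q}_{\vect s}(D)$ to be nontrivial. For the complementary inequality $\vect s\succ\frac{1}{p}(\vect b+\vect d)+\frac{1}{2q'}\vect{m'}$, the plan is as follows: boundedness of the projector together with its reproducing property on the range implies that each point evaluation $f\mapsto f(\zeta,z)$ on $A^{p,q}_{\vect s}(D)$ is realized as integration against the (boundary value of the) Cauchy--Szeg\H o type kernel $S_{(\zeta,z)}$, forcing $S_{(\zeta,z)}\in B^{-\vect s}_{p',q'}(\Nc,\Omega)$; invoking the sharpness of the inclusion in the proposition relating $S_{(\zeta,z)}$ to Besov spaces yields the required inequality.

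For the remaining two bullets, I would test $P_{\vect s'}$ on the family of generalized reproducing-kernel functions $f^{\vect t}_{(\zeta_0,z_0)}\coloneqq B^{\vect t}_{(\zeta_0,z_0)}$, for $(\zeta_0,z_0)\in D$ and suitably chosen $\vect t\in\R^r$. Via the Laplace-transform identity $\Lc(\Delta_\Omega^{\vect\alpha}\cdot\nu_\Omega)=\Gamma_\Omega(\vect\alpha)\Delta^{-\vect\alpha}_{\Omega'}$, both $\|f^{\vect t}_{(\zeta_0,z_0)}\|_{L^{p,q}_{\vect s}(D)}$ and $\|P_{\vect s'}f^{\vect t}_{(\zeta_0,z_0)}\|_{L^{p,q}_{\vect s}(D)}$ admit closed-form expressions as power functions of $\rho(\zeta_0,z_0)$ with explicit gamma-type prefactors, whose finiteness pins down half-space conditions on $\vect t$. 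Requiring the ratio of norms to remain bounded as $(\zeta_0,z_0)$ ranges over the $T_+$-orbit of $(0,ie_\Omega)$ translates $T_+$-homogeneity into equalities of exponents and into inequalities on the remaining parameters; optimizing over the admissible $\vect t$ yields the conditions on $\vect s'$ and on $\vect s+\vect s'$. The appearance of $\min(p,p')$ in the condition on $\vect s'$ comes from running the argument in parallel for the formal $L^2_{\vect s}$-adjoint of $P_{\vect s'}$, which is also a bounded projector onto a companion Bergman space and produces the conjugate constraint.

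The principal obstacle is the endpoint analysis at $q=1$ (and $q=\infty$ in part (1)), where some strict inequalities degenerate to non-strict ones, as reflected in the final clauses of the third bullet in each part. Here the test-function argument must be supplemented by a weak-$*$ density argument to capture the limiting behavior, and the $(1/p-1)_+$ correction from the Besov duality statement must be tracked carefully to align the dual pairings.
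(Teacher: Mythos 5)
Your general framework — testing on kernel functions, exploiting $T_+$-homogeneity, invoking duality for the $\min(p,p')$ condition, and deducing part (2) from part (1) via $\Pf_{\vect s'}=P_{\vect b/2+\vect d-\vect s'}$ and the fibered-norm description — is the correct skeleton, and the paper's own proof (deferred in Section~\ref{sec:proofs} to the argument of \cite[Propositions 5.20 and 5.21]{CalziPeloso}) proceeds in the same spirit. However, your derivation of the condition $\vect s\succ\frac{1}{p}(\vect b+\vect d)+\frac{1}{2q'}\vect{m'}$ has a genuine gap: you route it through the Cauchy--Szeg\H o kernel $S_{(\zeta,z)}$ and a ``sharpness'' of the inclusion $S_{(\zeta,z)}\in B^{-\vect s}_{p',q'}(\Nc,\Omega)$, but that inclusion is only stated in the paper as a \emph{sufficient} condition, and no converse is claimed or needed here. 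The Cauchy--Szeg\H o kernel plays no role in the boundedness analysis of $P_{\vect s'}$; the operator is the integral operator with kernel $B^{\vect s'}_{(\zeta_0,z_0)}$.

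The correct source of that inequality is the same place your other conditions come from: for the defining integral of $P_{\vect s'}f(\zeta_0,z_0)$ to converge for $f\in L^{p,q}_{\vect s}(D)$, and for the reproducing identity $P_{\vect s'}f=f$ to hold on $A^{p,q}_{\vect s}(D)$, one needs $B^{\vect s'}_{(\zeta_0,z_0)}\in A^{p',q'}_{\vect b+\vect d-\vect s-\vect s'}(D)$. The $A$-space analogue of Proposition~\ref{prop:4} then gives three membership conditions; the first is the non-triviality $\vect s+\vect s'\prec\vect b+\vect d-\frac{1}{2q'}\vect m$, the second is $\vect s'\prec\frac{1}{p'}(\vect b+\vect d)-\frac{1}{2p'}\vect m'$, and the third — after the $\vect s'$ cancellation — is precisely $\vect s\succ\frac{1}{p}(\vect b+\vect d)+\frac{1}{2q'}\vect{m'}$. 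Combining this with the self-adjointness relation (which shows that $P_{\vect s'}$ is also a bounded projector on $L^{p',q'}_{\vect b+\vect d-\vect s-\vect s'}(D)$, yielding the conjugate conditions) produces the full list, including the two halves of the $\min(p,p')$ condition and the condition $\vect s\succ\frac{1}{2q}\vect m$. You should replace the Cauchy--Szeg\H o/Besov-sharpness step with this argument; as written, it appeals to a converse the paper does not supply, and it mistakes which kernel carries the point evaluation for the projected space. Your treatment of the endpoints ($q=1,\infty$) via a limiting/density argument and your substitution $\vect s\mapsto(\vect s+\vect b/2)/q$ for part (2) are consistent with the paper's scheme.
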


In particular, if $\vect s\succ\frac 1 2 (\vect m-\vect b)$, $p,q\in (1,\infty)$, and $\Pf_{\vect{s}}$ induces a continuous linear projector of $\Lf^{p,q}_{\vect s}(D)$ onto $\Af^{p,q}_{\vect s}(D)$, then:
\[
p_{\vect s}'<p<p_{\vect s} \qquad\text{and} \qquad Q'_{\vect s}(p')<q<Q_{\vect s}(p),
\]
where
\[
p_{\vect s}\coloneqq \min_{j=1,\dots, r} \frac{ m'_j-2 d_j }{(b_j+m'_j-2 s_j)_+} \qquad \text{and} \qquad Q_{\vect s}(p)\coloneqq \min_{j=1,\dots,r} \frac{s_j-\frac 1 2 b_j+\frac 1 2 m'_j}{\left( \frac 1 p d_j+\frac 1 2 m'_j \right)_+}.
\]
%

There are also transference results (cf.~\cite{Paralipomena,BekolleGonessaNana} and Corollary~\ref{cor:2}).

\begin{prop}
	Take $p,q\in [1,\infty]$ and $\vect s,\vect s'\in \R^r$.
	Then, the following hold:
	\begin{itemize}
		\item if $\vect{s'}\prec \vect b+\vect d-\frac 1 2 \vect m$ and $P_{\vect{s'}}$ induces a continuous linear projector of $L^{p,q}_{\vect s}(D)$ onto $A^{p,q}_{\vect s}(D)$, then $P_{\vect{s'}}$ induces a continuous linear projector of $L^{p,q}_{\vect s-\vect b/p}(T_\Omega)$ onto $A^{p,q}_{\vect s-\vect b/p}(T_\Omega)$;
		
		\item if $\vect{s'}\succ \frac 1 2 (\vect m-\vect b)$  and $\Pf_{\vect{s'}+\vect b/2}$ induces a continuous linear projector of $\Lf^{p,q}_{\vect s+\vect b/2}(T_\Omega)$ onto $\Ac^{p,q}_{\vect s+\vect b/2}(T_\Omega)$, then $\Pf_{\vect{s'}}$ induces a continuous linear projector of $\Lf^{p,q}_{\vect s}(D)$ onto $\Af^{p,q}_{\vect s}(D)$.
	\end{itemize}
\end{prop}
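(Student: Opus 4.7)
Both assertions are transference results relating Bergman-type projectors on the Siegel domain $D$ to those on the associated tube domain $T_\Omega$. The geometric input is the diffeomorphism $D\to E\times T_\Omega$ given by $(\zeta,z)\mapsto (\zeta,w)$ with $w:=z-i\Phi(\zeta)$, under which the invariant measure disintegrates as $d\nu_D=\Delta^{\vect b}_\Omega(\Im w)\,d\zeta\,d\nu_{T_\Omega}(w)$. The extra $\Delta^{\vect b}$ factor is the source of the weight shifts $\vect b/p$ and $\vect b/2$ that appear in the two parts.

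For the first assertion I would build the extension operator
\[
\Ec_\phi g(\zeta,z):=\phi(\zeta)\,\Delta^{-\vect b/p}_\Omega(\rho(\zeta,z))\,g(z-i\Phi(\zeta))
\]
for a fixed nonzero $\phi\in \Sc(E)$ with small support near $0$. A direct Fubini computation, exploiting the disintegration above, yields $\|\Ec_\phi g\|_{L^{p,q}_{\vect s}(D)}=\|\phi\|_{L^p(E)}\,\|g\|_{L^{p,q}_{\vect s-\vect b/p}(T_\Omega)}$, so $\Ec_\phi$ is a continuous injection and accounts for the $\vect b/p$ shift. Plugging into the defining integral of $P^D_{\vect{s'}}$ and making the same change of variable $w=z'-i\Phi(\zeta')$ in the outer integral expresses $P^D_{\vect{s'}}(\Ec_\phi g)(\zeta_0,z_0)$ as an $E$-integral, against $\phi(\zeta')$, of a perturbed tube-type Bergman kernel: the perturbation is the quadratic cross-term $\tfrac12\Phi(\zeta')-\Phi(\zeta_0,\zeta')$ inside $\Delta^{\vect{s'}}_\Omega$. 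Since this perturbation vanishes at $\zeta'=\zeta_0=0$, shrinking $\supp\phi$ near $0$ and exploiting the analyticity of $\Delta^{\vect{s'}}_\Omega$ on $\Omega+iF$ allows the perturbation to be absorbed in a Schur-type estimate, reducing the integral to $P^T_{\vect{s'}}g$ up to a nonzero multiplicative constant (and an invertible $\Delta^{\vect b/p'}_\Omega(\Im\cdot)$ multiplier coming from the disintegration). Combining with the hypothesized continuity of $P^D_{\vect{s'}}$ and the reproducing property on $T_\Omega$ (the tube analogue of the Proposition on reproducing kernels) gives the required continuity of $P^T_{\vect{s'}}$ as a projector onto $A^{p,q}_{\vect s-\vect b/p}(T_\Omega)$.

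For the second assertion the direction is reversed, and I would argue by a slicewise decomposition. By the very definition of the spaces, the slicing $f\leftrightarrow(f^{(\zeta)})_{\zeta\in E}$ identifies $\Lf^{p,q}_{\vect s}(D)$ with an $L^q(E)$-valued tube space, and similarly for $\Af^{p,q}_{\vect s}(D)$, with the index shift $\vect s\mapsto (\vect s+\vect b/2)/q$ built into the definition. The disintegration of $\nu_D$ then rewrites $\Pf^D_{\vect{s'}}f$ as an $E$-integral of $\Pf^T_{\vect{s'}+\vect b/2}$ applied to translated copies of the slices $f^{(\zeta')}$, where the translation on $T_\Omega$ absorbs the cross-term $\Phi(\zeta',\zeta_0)$ via the two-step nilpotent product on $\Nc$ and is an isometry of the tube $\Lf$-spaces. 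Applying the hypothesized tube boundedness of $\Pf^T_{\vect{s'}+\vect b/2}$ slicewise, together with Minkowski's inequality in the $E$-direction, gives the desired $\Lf^{p,q}_{\vect s}(D)\to \Af^{p,q}_{\vect s}(D)$ continuity.

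The principal technical obstacle in both parts is the quadratic cross-term $\Phi(\zeta',\zeta_0)$ in $B^{\vect{s'}}_{(\zeta_0,z_0)}(\zeta',z')=\Delta^{\vect{s'}}_\Omega((z_0-\overline{z'})/(2i)-\Phi(\zeta_0,\zeta'))$, which obstructs a naive product decomposition of the projector on $E\times T_\Omega$: in the first part it is handled by concentrating $\phi$ near $0\in E$, and in the second by the translation covariance on the Heisenberg-type group $\Nc$. A secondary point, especially in the second part, is the endpoint $q=\infty$, where the $\Af^{p,\infty}$ spaces degenerate; this is either outside the scope of the hypothesis or must be treated by approximation with $q<\infty$.
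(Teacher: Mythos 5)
Your argument takes a genuinely different route from the paper, and both halves of it contain gaps. The paper does \emph{not} argue at the kernel level; it derives both transference statements from the boundary-value machinery: Theorem~\ref{teo:3} characterizes boundedness of the Bergman projectors in terms of the conditions $\Ac^{p,q}_{\vect s}(D)=\widetilde\Ac^{p,q}_{\vect s}(D)$ (together with the dual condition), Proposition~\ref{prop:6}, items (4) and (5), transfers these conditions between $D$ and $T_\Omega$ via the slicewise description of the Besov-type boundary spaces (Remark~\ref{oss:3} and Proposition~\ref{prop:10}, where the identification $u\mapsto u(\zeta,\cdot)$ maps $\Bc^{\vect s}_{p,q}(\Nc,\Omega)$ into $B^{\vect s+\vect b/q}_{p,q}(F,\Omega)$), and Corollary~\ref{cor:2} records the resulting projector transference; the present statement is then the same assertion rewritten in the $\Lf/\Af/\Pf$ normalization, with the first bullet citing~\cite{Paralipomena} for the $A^{p,q}$-scale.

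Concretely, for the first bullet, your norm identity $\norm{\Ec_\phi g}_{L^{p,q}_{\vect s}(D)}=\norm{\phi}_{L^p(E)}\norm{g}_{L^{p,q}_{\vect s-\vect b/p}(T_\Omega)}$ is correct, but the claimed reduction of $P_{\vect s'}(\Ec_\phi g)$ to $P^{T_\Omega}_{\vect s'}g$ ``up to a nonzero constant and an invertible $\Delta^{\vect b/p'}_\Omega(\Im\cdot)$ multiplier'' cannot hold as stated: $P_{\vect s'}(\Ec_\phi g)$ is holomorphic on $D$, and a factor $\Delta^{\vect b/p'}_\Omega(\rho(\cdot))$ is not. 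Moreover the perturbation $\frac12\Phi(\zeta_0)+\frac12\Phi(\zeta')-\Phi(\zeta_0,\zeta')$ depends on the \emph{output} variable $\zeta_0$, which ranges over all of $E$, so shrinking $\supp\phi$ near $0$ does not make it small; no uniform Schur estimate is available without substantial further work. For the second bullet, the claimed slicewise reduction to $\Pf^{T_\Omega}_{\vect s'+\vect b/2}$ is index-mismatched. Writing $\Pf^D_{\vect s'}=P^D_{\vect s''}$ with $\vect s''=\frac12\vect b+\vect d-\vect s'$, a direct substitution $w=z-i\Phi(\zeta)$ gives
\[
B^{\vect s''}_{(\zeta',z')}(\zeta_0,z_0)=\Delta^{\vect s''}_\Omega\left(\frac{w_0-\overline{w'}}{2i}+\frac12\Phi(\zeta_0-\zeta')-i\Im\Phi(\zeta_0,\zeta')\right),
\]
so the slicewise kernel has exponent $\vect s''=\frac12\vect b+\vect d-\vect s'$, whereas $\Pf^{T_\Omega}_{\vect s'+\vect b/2}=P^{T_\Omega}_{\vect d-\vect s'-\vect b/2}$ has kernel exponent $\vect d-\vect s'-\vect b/2$; these differ by $\vect b$, and the attendant weights on $\Im w'$ differ by the same amount. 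Also the shift $i\Phi(\zeta_0-\zeta')$ is a displacement \emph{into} $\overline\Omega$, not a real translation, so it is not an isometry of the mixed-norm $\Lf$-spaces (at best a contraction on the holomorphic part, cf.~Corollary~\ref{oss:1}). The $\vect b$-gap is indeed absorbed by the $E$-integration — this is the identity $\Phi_*(\Hc^{2n})=cI^{-\vect b}_\Omega$ — but that integration is inseparably coupled to the $\zeta'$-dependence of $f^{(\zeta')}$ and of the translated argument $w_0+2\Im\Phi(\zeta_0,\zeta')+i\Phi(\zeta_0-\zeta')$, so the operator does not factor through slicewise applications of the tube projector. The paper avoids this difficulty precisely because the slicewise decomposition that \emph{does} hold cleanly (with the $\vect b/q$ index shift built in) lives on the Besov boundary spaces, and only then does one pass back and forth to projectors via Theorem~\ref{teo:3}.
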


The following result is a consequence of~\cite[Corollary 5.27]{CalziPeloso} (cf.~also~\cite[Corollary 4.7]{Paralipomena}), Proposition~\ref{prop:6}, and Theorem~\ref{teo:3}. The second assertion is contained in~\cite[Theorem 1.9]{BekolleBonamiGarrigosRicci} when $p,q\Meg 1$, $\vect s=\vect s'$, $\Omega$ is symmetric, and $\vect s\in \R\vect d$, and in~\cite[Corollary 1.4]{Debertol} when $p,q\Meg 1$, $\vect s=\vect s'$, and $\Omega$ is symmetric.

\begin{teo}
	Take $p,q\in [1,\infty]$ and $\vect s,\vect s'\in \R^r$. Then, the following hold:
	\begin{enumerate}
		\item[\textnormal{(1)}] if:
	\begin{itemize}
		\item $\vect s\succ \frac{1}{2 q}\vect m+\left( \frac{1}{2 \min(p,p')}-\frac{1}{2 q} \right)_+\vect{m'}$;
		
		\item $\vect b+\vect d-\vect s-\vect{s'}\succ\frac{1}{2 q'}\vect m+\left( \frac{1}{2 \min(p,p')}-\frac{1}{2 q'} \right)_+\vect{m'}$;
	\end{itemize}
	then $P_{\vect{s'}}$ induces a continuous linear projector of $L^{p,q}_{\vect s}(D)$ onto $A^{p,q}_{\vect s}(D)$;
	
	\item[\textnormal{(2)}] if $q<\infty$ and:
	\begin{itemize}
		\item $\vect s\succ \frac{1}{2}(\vect m-\vect b)+\left( \frac{q}{2 \min(p,p')}-\frac{1}{2} \right)_+\vect{m'}$;
		
		\item $\vect{s'}-\frac 1 q\vect s\succ\frac{1}{2 q'}(\vect m-\vect b)+\left( \frac{1}{2 \min(p,p')}-\frac{1}{2 q'} \right)_+\vect m'$;
	\end{itemize}
	then $\Pf_{\vect{s'}}$ induces a continuous linear projector of $\Lf^{p,q}_{\vect s}(D)$ onto $\Af^{p,q}_{\vect s}(D)$.
	\end{enumerate}
\end{teo}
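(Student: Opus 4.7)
Both assertions follow by combining Theorem~\ref{teo:4} (identification of the Bergman space with its boundary-value realization) with the duality of the boundary Besov spaces and the reproducing property from Proposition~\ref{prop:3}, as announced in the citations to~\cite[Corollary 5.27]{CalziPeloso}, Proposition~\ref{prop:6}, and Theorem~\ref{teo:3}.

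For (1), the first hypothesis on $\vect s$ is precisely the condition in Theorem~\ref{teo:4}(1) ensuring $A^{p,q}_{\vect s}(D)=\widetilde A^{p,q}_{\vect s}(D)$, so every $f\in A^{p,q}_{\vect s}(D)$ is realized as $\Ec u$ for a unique $u\in B^{-\vect s}_{p,q}(\Nc,\Omega)$. Setting $\vect{\widetilde s}\coloneqq \vect b+\vect d-\vect s-\vect{s'}$, and noting that $\min(p,p')=\min(p',(p')')$, the second hypothesis reads exactly as the first applied to $(p',q',\vect{\widetilde s})$, so that also $A^{p',q'}_{\vect{\widetilde s}}(D)=\widetilde A^{p',q'}_{\vect{\widetilde s}}(D)$. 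Now one interprets $P_{\vect{s'}}$, via the Besov pairing of Proposition~\ref{prop:7} applied to the exponents $-\vect s$ and $\vect s+\vect{s'}-\vect b-\vect d$, as first taking boundary values in $B^{-\vect s}_{p,q}(\Nc,\Omega)$, pairing against $S_{(\zeta,z)}$, and extending via $\Ec$. Continuity of $P_{\vect{s'}}$ on $L^{p,q}_{\vect s}(D)$ then follows from Hölder's inequality on this pairing, while the projection property is the reproducing formula of Proposition~\ref{prop:3}. This reproduces the argument of~\cite[Corollary 5.27]{CalziPeloso}.

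For (2), the argument is entirely analogous, now using Theorem~\ref{teo:4}(2), the $\Bf$-space analogue of Proposition~\ref{prop:7} (alluded to in the text following its statement, and to be established via Lemma~\ref{lem:4} and the remarks thereafter), and the reproducing formula of Proposition~\ref{prop:3} for $\Pf_{\vect{s'}}$. The first hypothesis on $\vect s$ yields the identification $\Af^{p,q}_{\vect s}(D)=\widetilde \Af^{p,q}_{\vect s}(D)$; the hypothesis on $\vect{s'}-\frac{1}{q}\vect s$ is the same condition applied to the dual parameters $(p',q')$ and the dual weight. The asymmetric way the weight rescales (by a factor $1/q$ rather than being unchanged, as in the $A$-scale) is exactly what produces the difference $\vect{s'}-\frac{1}{q}\vect s$. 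Once the correct dual weight is identified, continuity on $\Lf^{p,q}_{\vect s}(D)$ and the projection property follow as in (1), with boundedness on the dense subspace promoted to the full space by Theorem~\ref{teo:3}.

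\textbf{Main obstacle.} The delicate point is the bookkeeping of the dual parameters for the $\Af$-scale: because the weight index in $\Af^{p,q}_{\vect s}$ is divided by $q$ in the extrinsic characterization (see the equivalent definition via $L^{p,q}(F,E)$ and $\Delta^{\vect s+\vect b/2}_\Omega\cdot\nu_\Omega$), the duality pairing couples $\vect s$ with $q\vect{s'}-\vect s$ rather than with $\vect b+\vect d-\vect s-\vect{s'}$, which is the source of the combination $\vect{s'}-\tfrac1q\vect s$ in (2). Verifying that this combination, after applying Theorem~\ref{teo:4}(2) to the dual pair, produces precisely the stated inequality, and that the Hölder estimate on the $\Bf$-space pairing yields the correct homogeneous weights, is the only non-routine computation; the remaining steps are parallel to those in~\cite[Section~5]{CalziPeloso}.
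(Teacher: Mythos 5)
Your proof is correct and follows essentially the same route as the paper: establish the boundary-value identifications $A^{p,q}_{\vect s}=\widetilde A^{p,q}_{\vect s}$ and $A^{p',q'}_{\vect b+\vect d-\vect s-\vect s'}=\widetilde A^{p',q'}_{\vect b+\vect d-\vect s-\vect s'}$ (and, via $\Af^{p,q}_{\vect s}=\Ac^{p,q}_{(\vect s+\vect b/2)/q}$ and Proposition~\ref{prop:6}(6), the $\Af$-analogues) under the stated hypotheses, and then invoke Theorem~\ref{teo:3} to pass to continuity of the Bergman projector. The only minor slip is in your obstacle note: the $\Af$-dual weight works out to $q'\bigl(\vect s'-\tfrac1q\vect s\bigr)$ rather than $q\vect s'-\vect s$, but since you correctly extract the relevant condition on $\vect s'-\tfrac1q\vect s$, the argument is unaffected.
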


In particular, if $\vect s=\vect{s'}\succ\frac 1 2 (\vect m-\vect b)$ and 
\[
q_{\vect s}'(p)<q<q_{\vect s}(p),
\]
then  $\Pf_{\vect{s'}}$ induces a continuous linear projector of $\Lf^{p,q}_{\vect s}(D)$ onto $\Af^{p,q}_{\vect s}(D)$ (cf.~the remarks after the statement of Theorem~\ref{teo:4}).

\section{An Auxiliary Space}\label{sec:proofs}

For technical reasons, we shall  consider the spaces
\[
\Ac^{p,q}_{\vect s}(D)\coloneqq\Set{f\in \Hol(D)\colon \norm{h\mapsto \norm{f^{(\zeta)}}_{A^{p,q}_{\vect s}(T_\Omega)}}_{L^q(E)} <\infty }
\]
and
\[
\Ac^{p,q}_{\vect s,0}(D)\coloneqq\Hol(D)\cap \Lc^{p,q}_{\vect s,0}(D),
\]
where $\Lc^{p,q}_{\vect s,0}(D)$ denotes the closure of $C_c(D)$ in $\Lc^{p,q}_{\vect s}(D)$ (defined as $\Ac^{p,q}_{\vect s}(D)$ replacing $\Hol(D)$ with the space of measurable functions modulo negligible functions).

\begin{oss}\label{oss:4}
	Take $p,q\in(0,\infty]$ and $\vect s\in \R^r$.  Then, $\Ac^{p,q}_{\vect s,0}(D)\neq \Set{0}$ (resp.\ $\Ac^{p,q}_{\vect s}(D)\neq \Set{0}$)  if and only if $\vect s\succ \frac{1}{2 q}\vect m$ (resp.\ $\vect s\Meg \vect 0$ if $q=\infty$).
\end{oss}

\begin{proof}
	Argue as in~\cite[Proposition 3.5]{CalziPeloso}.
\end{proof} 

\begin{lem}\label{oss:5}
	Take $p,q\in(0,\infty]$ and $\vect s\in \R^r$. Then,
	\[
	\Af^{p,q}_{\vect s}(D)=\Ac^{p,q}_{(\vect s+\vect b/2)/q}(D).
	\]
	In addition, if either $\vect s \succ \frac {1}{2 q}\vect m$, or $q=\infty$ and $\vect s \Meg \vect 0$, then
	\[
	A^{p,q}_{\vect s}(D)=\Ac^{p,q}_{\vect s}(D)
	\]
	if and only if $p=q$ or $E=\Set{0}$.
\end{lem}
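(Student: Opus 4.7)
The first identity $\Af^{p,q}_{\vect s}(D)=\Ac^{p,q}_{(\vect s+\vect b/2)/q}(D)$ is obtained by directly unfolding the definitions: $\Af^{p,q}_{\vect s}(D)$ is defined (cf.~Section~\ref{sec:3}) as the space of $f\in\Hol(D)$ with $\norm{\zeta\mapsto\norm{f^{(\zeta)}}_{A^{p,q}_{(\vect s+\vect b/2)/q}(T_\Omega)}}_{L^q(E)}<\infty$, which is exactly the condition defining $\Ac^{p,q}_{(\vect s+\vect b/2)/q}(D)$.

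For the second assertion, the `if' direction is immediate: when $p=q$, Fubini gives $\norm{g}_{L^{p,p}(F,E)}=\norm{g}_{L^p(\Nc)}$ for every measurable $g$ on $\Nc=E\times F$, so unfolding $\norm{f}_{\Ac^{p,q}_{\vect s}(D)}^q$ via Fubini and inserting this identity shows that the $A^{p,q}_{\vect s}(D)$- and $\Ac^{p,q}_{\vect s}(D)$-norms agree pointwise on $\Hol(D)$. When $E=\{0\}$ there is no $\zeta$-variable and both definitions collapse to $\norm{f^{(0)}}_{A^{p,q}_{\vect s}(T_\Omega)}$.

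For the `only if' direction, I assume $p\neq q$ and $E\neq\{0\}$ and argue by contradiction. The key idea is to exploit the dilation automorphism $\delta_a\colon(\zeta,z)\mapsto(a\zeta,a^2 z)$ of $D$, $a>0$ (well-defined since $\Phi$ is hermitian of total degree $2$ and $\Omega$ is a cone), and compare how the two norms scale. A direct change of variables, using the real dimensions $2n$ of $E$ and $m$ of $F$, the homogeneity $\Delta_\Omega^{\vect s}(a^2 h)=a^{2\sum_j s_j}\Delta_\Omega^{\vect s}(h)$, and the invariance of $\nu_\Omega$ under $h\mapsto a^2 h$, yields
\[
\norm{f\circ\delta_a}_{A^{p,q}_{\vect s}(D)}=a^{\alpha_1}\norm{f}_{A^{p,q}_{\vect s}(D)},\qquad \norm{f\circ\delta_a}_{\Ac^{p,q}_{\vect s}(D)}=a^{\alpha_2}\norm{f}_{\Ac^{p,q}_{\vect s}(D)},
\]
where $\alpha_1=-\frac{2(n+m)}{p}-2\sum_j s_j$ and $\alpha_2=-\frac{2m}{p}-\frac{2n}{q}-2\sum_j s_j$, so that $\alpha_1-\alpha_2=2n\bigl(\frac{1}{q}-\frac{1}{p}\bigr)\neq 0$. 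If the two spaces were equal, the closed graph theorem (applied to these quasi-Banach spaces with continuous point evaluations) would force the two norms to be equivalent; applying this equivalence to $f\circ\delta_a$ for a fixed nonzero $f$ in the common space, which exists under the hypothesis on $\vect s$ together with Remark~\ref{oss:4}, would then force $a^{\alpha_1-\alpha_2}$ to remain uniformly bounded for every $a>0$, a contradiction.

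The main technical step is the scaling computation, in particular keeping track of the Jacobian exponents for $\dd\zeta$ and $\dd x$ (real dimensions $2n$ and $m$) and the dilation behaviour of $\Delta_\Omega^{\vect s}$ and $\nu_\Omega$; once these exponents are pinned down, the result reduces to a clean comparison of the exponents $\alpha_1$ and $\alpha_2$.
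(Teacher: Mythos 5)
Correct, and essentially the paper's approach: both prove non-comparability of the two norms by exhibiting a family of affine automorphisms of $D$ under which the $A^{p,q}_{\vect s}$- and $\Ac^{p,q}_{\vect s}$-norms scale with different exponents, and then conclude via the open-mapping/closed-graph theorems. You specialize the paper's general $(g\times t)$, $t\in T_+$, to the one-parameter dilation family $(\zeta,z)\mapsto(a\zeta,a^2z)$ (i.e.\ $t=a^2\,\cdot$, $g=a\,\cdot$) and compute the exponents by hand; since $\sum_j b_j=-n$ and $\sum_j d_j=-m$, your $\alpha_1$ and $\alpha_2$ coincide with $\Delta^{(\vect b+\vect d)/p-\vect s}(a^2\,\cdot)$ and $\Delta^{\vect b/q+\vect d/p-\vect s}(a^2\,\cdot)$, so the arguments are the same.
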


Note that, as the proof shows, if $p\neq q$ and $E\neq \Set{0}$, then $\Ac^{p,q}_{\vect s}(D)\not \subseteq A^{p,q}_{\vect s}(D)$ and $\Ac^{p,q}_{\vect s}(D)\not \supseteq A^{p,q}_{\vect s}(D)$.

\begin{proof}
	The first assertion is clear. As for what concerns the second assertion,
	it is clear that $A^{p,q}_{\vect s}(D)=\Ac^{p,q}_{\vect s}(D)$ if $p=q$ (by Fubini's theorem) and if $E=\Set{0}$. Conversely, assume that $p\neq q$ and that $E\neq \Set{0}$, so that $\vect b\neq 0$. Observe that, given $t\in T_+$ and $g\in GL(E)$ such that $t\cdot \Phi=\Phi\circ(g\times g)$ and $f\in \Hol(D)$, one has
	\[
	\norm{f\circ (g\times t)}_{A^{p,q}_{\vect s}(D)}=\Delta^{(\vect b+\vect d)/p-\vect s}(t) \norm{f}_{A^{p,q}_{\vect s}(D)}
	\]
	and
	\[
	\norm{f\circ (g\times t)}_{\Ac^{p,q}_{\vect s}(D)}=\Delta^{\vect b/q+\vect d/p-\vect s}(t)\norm{f}_{\Ac^{p,q}_{\vect s}(D)}
	\]
	so that, letting $t\to \infty$, we see that the norms on the quasi-Banach spaces $\Ac^{p,q}_{\vect s}(D)$ and $A^{p,q}_{\vect s}(D)$ cannot be comparable. The assertion follows from the open mapping and the closed graph theorems.
\end{proof}

\begin{prop}\label{prop:2}
	Take $p_1, p_2, q_1, q_2\in(0,\infty]$ and $\vect{s_1},\vect{s_2}\in \R^r$. If
	\[
	p_1\meg p_2, \qquad q_1\meg q_2, \qquad \text{and}\qquad \vect{s_2}=\vect{s_1}+\left(\frac{1}{p_2}-\frac{1}{p_1}  \right)\vect d+\left(\frac{1}{q_2}-\frac{1}{q_1}  \right)\vect b,
	\]
	then $\Ac^{p_1,q_1}_{\vect{s_1}}(D)\subseteq\Ac^{p_2,q_2}_{\vect{s_2}}(D)$ and $\Ac^{p_1,q_1}_{\vect{s_1},0}(D)\subseteq\Ac^{p_2,q_2}_{\vect{s_2},0}(D)$.
	
	In addition, the mappings $\Ac^{p_1,q_1}_{\vect{s_1}}(D)\ni f \mapsto f^{(\zeta)}\in A^{p_1,q_1}_{\vect{s_1}-\vect b/q_1}(T_\Omega)$, as $\zeta$ runs through $E$, are equicontinuous and map $\Ac^{p_1,q_1}_{\vect{s_1},0}(D)$ into $A^{p_1,q_1}_{\vect{s_1}-\vect b/q_1,0}(T_\Omega)$.
\end{prop}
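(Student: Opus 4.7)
My plan is to split the inclusion $\Ac^{p_1, q_1}_{\vect{s_1}}(D) \subseteq \Ac^{p_2, q_2}_{\vect{s_2}}(D)$ into two elementary steps: a ``$p$-direction'' inclusion (where $p_1 \leq p_2$ and $q_1 = q_2$) and a ``$q$-direction'' inclusion (where $p_1 = p_2$ and $q_1 \leq q_2$). The $p$-direction will follow immediately from Fubini's theorem applied slice-by-slice to the tube-domain inclusion in~\cite[Proposition 3.2]{CalziPeloso}, which in that setting has $\vect b = 0$ and hence gives the parameter shift $(1/p_2 - 1/p_1)\vect d$. The $q$-direction, together with the equicontinuity of the slice maps, will require a direct mean value argument in the $\zeta$-direction; composing the two steps then yields the full inclusion.

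The key ingredient is the observation that, for each fixed $z_0 \in F_\C$, the map $\zeta \mapsto f(\zeta, z_0)$ is holomorphic on $U_{z_0} \coloneqq \{\zeta : (\zeta, z_0) \in D\}$, even though $\zeta \mapsto f^{(\zeta)}(z) = f(\zeta, z + i\Phi(\zeta))$ is \emph{not} holomorphic in $\zeta$, due to the hermitian nature of $\Phi$. Given $(\zeta_0, z) \in D$ with $\Im z = h$, I would set $z_0 = z + i \Phi(\zeta_0)$ and choose a ball $B \coloneqq B_E(0, r(h))$ with $\zeta_0 + B \subseteq U_{z_0}$, with radius $r(h)$ scaled so that $|B| \asymp \Delta_\Omega^{-\vect b}(h)$ (the natural scale, consistent with $\Delta^{-\vect b}(a \,\cdot\,) = a^n$ and the Bergman geometry). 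Subharmonicity of $|f(\,\cdot\,, z_0)|^p$, combined with the identity
\[
f(\zeta_0 + \zeta', z_0) = f^{(\zeta_0 + \zeta')}(z - 2 i \Re \Phi(\zeta_0, \zeta') - i \Phi(\zeta'))
\]
and the translation invariance of Lebesgue measure on $F$, then yields
\[
\|f^{(\zeta_0)}(\,\cdot\, + ih)\|_{L^p(F)}^p \leq \frac{1}{|B|} \int_B \|f^{(\zeta_0 + \zeta')}(\,\cdot\, + i(h - \Phi(\zeta')))\|_{L^p(F)}^p \, d\zeta'.
\]

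To deduce the equicontinuity, I would raise this inequality to the $q_1/p$-th power (via Jensen's inequality when $q_1 \geq p$, or using a variant of the mean value with exponent $q_1$ in place of $p$ when $q_1 < p$), multiply by $\Delta^{q_1 \vect{s_1} - \vect b + \vect d}_\Omega(h)$, integrate over $h \in \Omega$, swap integrals by Fubini, and perform the change of variable $h \mapsto h + \Phi(\zeta')$ (the weight changes by a uniformly bounded factor for $\zeta' \in B$). The scaling $|B| \asymp \Delta_\Omega^{-\vect b}(h)$ then absorbs the remaining $\Delta^{-\vect b}$ factor and yields the uniform bound $\|f^{(\zeta_0)}\|_{A^{p_1,q_1}_{\vect{s_1} - \vect b/q_1}(T_\Omega)} \lesssim \|f\|_{\Ac^{p_1,q_1}_{\vect{s_1}}(D)}$. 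The $q$-direction inclusion is analogous, with an outer $L^{q_2}(E)$ integration in $\zeta_0$ after first raising the mean value to the $L^{q_2}$ scale by Jensen (since $q_2/q_1 \geq 1$); the extra $(1/q_2 - 1/q_1)\vect b$ in $\vect{s_2}$ is then precisely what is needed to absorb the scaling discrepancy between the two weights. The density assertion for the subspaces $\Ac^{p,q}_{\vect s, 0}$ follows by approximation, applying the inclusion to truncations of $f$ along an exhaustion of $D$ by compact sets. The main obstacle will be the careful geometric verification of the scaling $|B| \asymp \Delta_\Omega^{-\vect b}(h)$ and of the weight comparison after the change of variable, together with the separate handling of the case $q_1 < p$ in the mean value step.
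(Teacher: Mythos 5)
Your overall plan — split the inclusion into a $p$-direction step done slice-by-slice on $T_\Omega$ (where $\vect b=\vect 0$) and a $q$-direction/equicontinuity step done by a sub-mean-value argument in the $\zeta$-variable — is the right kind of approach, and the $p$-direction step is fine. However, the central mean-value inequality you write down is not correct as stated. From your own identity $f(\zeta_0+\zeta',z_0)=f^{(\zeta_0+\zeta')}\big(z-i\Phi(\zeta')-2i\Re\Phi(\zeta_0,\zeta')\big)$, note that $2\Re\Phi(\zeta_0,\zeta')$ lies in $F$, so $-2i\Re\Phi(\zeta_0,\zeta')$ is a \emph{purely imaginary} translation in the tube variable: it is \emph{not} absorbed by the $L^p(F)$-integration over $\Re z$. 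What subharmonicity and Fubini actually give is
\[
\norm{f^{(\zeta_0)}_h}_{L^p(F)}^p\meg \frac{1}{\abs{B}}\int_B \norm*{f^{(\zeta_0+\zeta')}_{\,h-\Phi(\zeta')-2\Re\Phi(\zeta_0,\zeta')}}_{L^p(F)}^p\,\dd\zeta',
\]
and the extra shift $2\Re\Phi(\zeta_0,\zeta')$ grows like $\abs{\zeta_0}\,\abs{\zeta'}$. In particular, the requirement $\zeta_0+B\subseteq U_{z_0}$ forces the radius $r$ to shrink like $1/\abs{\zeta_0}$ as $\abs{\zeta_0}\to\infty$ with $h$ fixed, so $\abs{B}\asymp\Delta_\Omega^{-\vect b}(h)$ cannot hold with $r$ depending on $h$ alone, and the factor $1/\abs{B}$ blows up. You thus lose precisely the uniformity in $\zeta_0$ that the equicontinuity statement requires.

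The missing ingredient is invariance under the \v Silov boundary group $\Nc$. The Heisenberg translation $L_{(\zeta_0,0)}$ is an affine biholomorphism of $D$ under which $g\coloneqq f\circ L_{(\zeta_0,0)}$ satisfies $g^{(\zeta)}(w)=f^{(\zeta_0+\zeta)}\big(w+2\Im\Phi(\zeta_0,\zeta)\big)$, a \emph{real} translate in $T_\Omega$; hence $\norm{g}_{\Ac^{p_1,q_1}_{\vect{s_1}}(D)}=\norm{f}_{\Ac^{p_1,q_1}_{\vect{s_1}}(D)}$ and $\norm{g^{(0)}}_{A^{p_1,q_1}_{\vect{s_1}-\vect b/q_1}(T_\Omega)}=\norm{f^{(\zeta_0)}}_{A^{p_1,q_1}_{\vect{s_1}-\vect b/q_1}(T_\Omega)}$, so one may assume $\zeta_0=0$ from the outset, where the cross term vanishes and your inequality becomes correct. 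With that reduction, the ball $B$ should be taken as $g_t(B_E(0,r_0))$ for a fixed $r_0$ and $t\in T_+$ with $h=t\cdot e_\Omega$, rather than a Euclidean ball of radius $r(\abs{h})$; this gives $\abs{B}\asymp\Delta_\Omega^{-\vect b}(h)$ cleanly and, by $T_+$-equivariance, yields the Harnack-type comparison $\Delta_\Omega^{\vect{s}}(h-\Phi(\zeta'))\asymp\Delta_\Omega^{\vect{s}}(h)$ for $\zeta'\in B$ after the change of variables. Finally, for the case $q_1<p$ you should start from subharmonicity of $\abs{f}^\ell$ with $\ell=\min(1,p,q_1)$ and then use Minkowski's integral inequality with exponent $p/\ell$ and Jensen with exponent $q_1/\ell$, exactly in the spirit of Lemma~\ref{lem:1}, rather than an unspecified ``variant'' of the mean value.
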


\begin{proof}
	Argue as in~\cite[Propositions 3.2 and 3.7]{CalziPeloso} and~\cite[Proposition 6.5]{Paralipomena}.	
\end{proof}

\begin{cor}\label{oss:1}
	Take $p,q\in(0,\infty]$, $\vect s\in \R^r$, and $f\in \Ac^{p,q}_{\vect s}(D)$. Then, the function $h\mapsto \norm{f^{(\zeta)}_h}_{L^p(F)}$ is decreasing (for the order induced by $\overline \Omega$) for every $\zeta\in E$.
\end{cor}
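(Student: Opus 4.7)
My plan is to reduce the statement to the tube-domain case $E=\Set{0}$, where the corresponding decreasing property is classical. First, I would fix $\zeta\in E$ arbitrarily and apply the last assertion of Proposition~\ref{prop:2} to deduce that $f^{(\zeta)}\in A^{p,q}_{\vect s-\vect b/q}(T_\Omega)$. Setting $G\coloneqq f^{(\zeta)}$, one has $G_h(x)=G(x+ih)=f(\zeta,x+i\Phi(\zeta)+ih)=f^{(\zeta)}_h(x)$, so it suffices to prove the following: for every $p,q\in (0,\infty]$, $\vect t\in \R^r$, and $G\in A^{p,q}_{\vect t}(T_\Omega)$, the function $h\mapsto \norm{G_h}_{L^p(F)}$ is decreasing on $\Omega$ with respect to the order induced by $\overline\Omega$.

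For the tube-domain assertion, I would fix $h_0\in \Omega$ and $v\in \overline\Omega$ and study the function $\widetilde H(b)\coloneqq \int_F\abs{G(x+i(h_0+bv))}^p\,\dd x$ for $b\Meg 0$. The first step is to establish convexity of $\widetilde H$. For each fixed $x\in F$, the map $z\mapsto G(x+ih_0+zv)$ is holomorphic in $z\in \C_+$, so $\abs{G(x+ih_0+zv)}^p$ is subharmonic in $z$. A translation in the real variable $x$ shows that the integral $\int_F\abs{G(x+ih_0+zv)}^p\,\dd x=\widetilde H(\Im z)$ depends only on $\Im z$; Fubini then forces the function $z\mapsto \widetilde H(\Im z)$ to be subharmonic on $\C_+$, which is equivalent to the convexity of $\widetilde H$ on $[0,\infty)$.

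The main obstacle is to upgrade convexity to monotone decrease: a non-negative convex function on $[0,\infty)$ which is bounded above is automatically non-increasing, so what I really need is to prevent superlinear (or even linear) growth of $\widetilde H$ at infinity. This is where the integrability hypothesis $G\in A^{p,q}_{\vect t}(T_\Omega)$ enters, through the submean value inequality for the plurisubharmonic function $\abs{G}^p$: this provides pointwise control of $\widetilde H$ along the ray $\Set{h_0+bv\colon b\Meg 0}$ in terms of local averages, ruling out behaviour incompatible with summability against the weight $\Delta^{\vect t\cdot q}_\Omega\cdot \nu_\Omega$. In practice, the quantitative step is already carried out in the tube-domain setting in~\cite{CalziPeloso}, to which the reduction at the start of the proof appeals directly.
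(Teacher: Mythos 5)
Your proposal is correct and follows essentially the same route as the paper: both reduce to the tube-domain case via the last assertion of Proposition~\ref{prop:2} (which gives $f^{(\zeta)}\in A^{p,q}_{\vect s-\vect b/q}(T_\Omega)$ for every $\zeta$) and then invoke the monotonicity result from~\cite[Corollary 3.3]{CalziPeloso} applied with $E=\Set{0}$. The subharmonicity/convexity-plus-boundedness mechanism you sketch for the tube-domain step is a correct description of what underlies the cited corollary, but since you ultimately defer to that reference for the quantitative step, the argument is not genuinely different.
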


\begin{proof}
	 This follows from Proposition~\ref{prop:2} and~\cite[Corollary 3.3]{CalziPeloso}.
\end{proof}

\begin{prop}\label{prop:1}
	Take $p_1,p_2,q_1,q_2\in (0,\infty]$, and $\vect{s_1},\vect{s_2}\in \R^r$. If $\Ac^{p_2,q_2}_{\vect{s_2}}(D)\neq \Set{0}$, then  $\Ac^{p_1,q_1}_{\vect{s_1},0}(D)\cap \Ac^{p_2,q_2}_{\vect{s_2}}(D)$ is dense in $\Ac^{p_1,q_1}_{\vect{s_1},0}(D)$ (resp.\ $\Ac^{p_1,q_1}_{\vect{s_1}}(D)\cap \Ac^{p_2,q_2}_{\vect{s_2}}(D)$ is dense in $\Ac^{p_1,q_1}_{\vect{s_1}}(D)$ for the weak topology $\sigma(\Ac^{p_1,q_1}_{\vect{s_1}}(D),\Lc^{p_1',q_1'}_{-\vect{s_1}+(1/q_1-1)_+\vect b+(1/p_1-1)_+\vect d}(D))$).
	Analogous assertions hold with $\Ac^{p_2,q_2}_{\vect{s_2},0}(D)$ in place of $\Ac^{p_2,q_2}_{\vect{s_2}}(D)$.
\end{prop}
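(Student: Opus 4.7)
The plan is to follow the pattern of \cite[Proposition 3.7]{CalziPeloso}, adapted to the spaces $\Ac^{p,q}_{\vect s}(D)$. Take $f \in \Ac^{p_1,q_1}_{\vect{s_1}, 0}(D)$ and construct approximants by multiplying $f$ by a holomorphic factor $M_\epsilon$ that tends to $1$ and provides enough integrable decay to land $M_\epsilon f$ in $\Ac^{p_2,q_2}_{\vect{s_2}}(D)$. A convenient choice is
\[
M_\epsilon(\zeta, z) := \Delta^{-\epsilon \vect t}_\Omega\bigl((z + i e_\Omega)/(2i)\bigr),\qquad \epsilon \in (0,1],
\]
with $\vect t \succ \vect 0$ to be calibrated below. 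For $(\zeta,z)\in D$ the point $(z+ie_\Omega)/(2i)$ has real part $(e_\Omega + \Phi(\zeta)+\rho(\zeta,z))/2 \in \Omega$, so $M_\epsilon$ is holomorphic on $D$; and by standard estimates for $|\Delta^{\vect s}_\Omega|$ on $\Omega + iF$ (cf.\ \cite[Chapter 2]{CalziPeloso}),
\[
\abs{M_\epsilon(\zeta,z)} \le C\,\Delta^{-\epsilon \vect t}_\Omega\bigl((e_\Omega + \Phi(\zeta) + \rho(\zeta,z))/2\bigr),
\]
which is uniformly bounded on $D$ by a constant $C_\epsilon\to 1$ as $\epsilon\to 0^+$, and $M_\epsilon \to 1$ locally uniformly.

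Convergence $M_\epsilon f \to f$ in $\Ac^{p_1,q_1}_{\vect{s_1},0}$ then follows from dominated convergence in the mixed Lebesgue space defining the $\Ac$-norm. To verify $M_\epsilon f \in \Ac^{p_2,q_2}_{\vect{s_2}}(D)$ for each $\epsilon>0$, note that by Remark~\ref{oss:4} the hypothesis $\Ac^{p_2,q_2}_{\vect{s_2}}(D)\ne\{0\}$ places $\vect{s_2}$ in the range $\vect{s_2}\succ\tfrac{1}{2q_2}\vect m$ (or the $q_2=\infty$ variant). Inserting the pointwise bound above into the defining norm and using Corollary~\ref{oss:1} (the monotonicity of $h\mapsto \norm{f^{(\zeta)}_h}_{L^{p_1}}$) reduces the finiteness of $\norm{M_\epsilon f}_{\Ac^{p_2,q_2}_{\vect{s_2}}}$ to convergence of an explicit integral of powers of $\Delta_\Omega$ over $E\times\Omega$; for $\vect t\succ \vect 0$ large enough (depending on $\vect{s_1}$, $\vect{s_2}$, $p_i$, $q_i$) and any $\epsilon>0$, the integral converges. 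When $p_2,q_2<\infty$, the same decay shows $M_\epsilon f\in \Lc^{p_2,q_2}_{\vect{s_2},0}(D)$, yielding the analogous assertion with $\Ac^{p_2,q_2}_{\vect{s_2},0}$ in place of $\Ac^{p_2,q_2}_{\vect{s_2}}$.

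The weak-topology statement is obtained by a Banach--Alaoglu argument. The family $(M_\epsilon f)_{\epsilon\in(0,1]}$ is bounded in $\Ac^{p_1,q_1}_{\vect{s_1}}(D)$ (from the uniform bound $|M_\epsilon|\le C_\epsilon$), hence has weakly convergent subnets against $\Lc^{p_1',q_1'}_{-\vect{s_1}+(1/q_1-1)_+\vect b+(1/p_1-1)_+\vect d}(D)$; pointwise convergence on $D$ together with the equicontinuity in Proposition~\ref{prop:2} identifies any weak limit with $f$. The main obstacle is the pointwise decay bound on $|M_\epsilon|$ and calibrating $\vect t$ so that $M_\epsilon f$ lies in the target space without spoiling convergence in the source space; once this is in hand, the remaining estimates are routine.
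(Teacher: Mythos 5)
Your multiplier $M_\epsilon(\zeta,z)=\Delta^{-\epsilon\vect t}_\Omega\bigl((z+ie_\Omega)/(2i)\bigr)$ controls decay at infinity (in $\zeta$, $\Re z$, and $\rho(\zeta,z)$), but it does \emph{not} decay as $\rho(\zeta,z)\to 0$: its argument has real part $(e_\Omega+\Phi(\zeta)+\rho(\zeta,z))/2 \succeq e_\Omega/2$, so $\abs{M_\epsilon}$ stays bounded away from zero near the boundary. Consequently $\norm{(M_\epsilon f)^{(\zeta)}_h}_{L^{p_2}(F)}$ inherits the same growth as $\norm{f^{(\zeta)}_h}_{L^{p_2}(F)}$ as $h\to 0$, which for $f\in\Ac^{p_1,q_1}_{\vect{s_1}}$ is of order $\Delta^{\vect b/q_1-\vect{s_1}}_\Omega(h)$ (or worse). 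When $\vect{s_2}$ is small — the hypothesis is only $\Ac^{p_2,q_2}_{\vect{s_2}}(D)\neq\Set{0}$, i.e.\ $\vect{s_2}\succ\tfrac{1}{2q_2}\vect m$ — the weight $\Delta^{q_2\vect{s_2}}_\Omega\,d\nu_\Omega$ does not compensate for this blow-up, and $M_\epsilon f\notin\Ac^{p_2,q_2}_{\vect{s_2}}(D)$. A concrete counterexample on $\C_+$: $f(z)=z^{-10}e^{iz}\in A^{2,2}_{10}(\C_+)$, $\vect{s_2}=1$; then $\norm{(M_\epsilon f)_h}_{L^2(\R)}\asymp h^{-9.5}$ as $h\to 0^+$, so $M_\epsilon f\notin A^{2,2}_1(\C_+)$ for every $\epsilon>0$. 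There is no way to fix this by choosing $\vect t$ larger; the obstruction is intrinsic: a single holomorphic multiplier tending to $1$ cannot simultaneously improve boundary decay.

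The missing ingredient is a translation into the interior, $f\mapsto f^{(\delta)}:(\zeta,z)\mapsto f(\zeta,z+i\delta e_\Omega)$, which is precisely where Corollary~\ref{oss:1} is actually needed: monotonicity gives $\norm{(f^{(\delta)})^{(\zeta)}_h}_{L^{p_1}(F)}=\norm{f^{(\zeta)}_{h+\delta e_\Omega}}_{L^{p_1}(F)}\leq\min\bigl(\norm{f^{(\zeta)}_h}_{L^{p_1}(F)},\norm{f^{(\zeta)}_{\delta e_\Omega}}_{L^{p_1}(F)}\bigr)$, so the translation is a contraction on $\Ac^{p_1,q_1}_{\vect{s_1}}$ and simultaneously kills the blow-up near $h=0$. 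Combining a translation with a decaying factor (of the kind you propose, or the $g^{(\epsilon)}$ of \cite[Lemma 1.22]{CalziPeloso}) then lands in $\Ac^{p_2,q_2}_{\vect{s_2}}(D)$ for every admissible $\vect{s_2}$. Your citation of Corollary~\ref{oss:1} is in the right spirit but is not doing the needed work in the argument as written. On a secondary note, the weak-density part can be streamlined: rather than Banach--Alaoglu plus identification of subnet limits via ``equicontinuity'', one checks directly that $\langle T_\eta f - f, g\rangle\to 0$ for every $g$ in the specified predual, by dominated convergence, where $T_\eta$ is the norm-contractive approximation operator; this avoids the issue that point evaluations need not be $\sigma$-continuous.
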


\begin{proof}
	Argue as in~\cite[Proposition 3.9]{CalziPeloso}, using Corollary~\ref{oss:1}.
\end{proof}

\subsection{Reproducing Kernels}

Notice that $\Ac^{2,2}_{\vect s}(D)=A^{2,2}_{\vect s}(D)$, so that $P_{\vect b+\vect d-2\vect s}$ is the orthogonal projector of $\Lc^{2,2}_{\vect s}(D)$ onto $\Ac^{2,2}_{\vect s}(D)$.

\begin{prop}\label{prop:4}
	Take $p,q\in (0,\infty]$ and $\vect s,\vect s'\in \R^r$. Then, $B^{\vect s'}_{(\zeta,z)}\in \Ac^{p,q}_{\vect s,0}(D)$ (resp.\ $B^{\vect s'}_{(\zeta,z)}\in \Ac^{p,q}_{\vect s}(D)$) for some/every $(\zeta,z)\in D$ if and only if the following conditions hold:
	\begin{itemize}
		\item $\vect s \succ \frac{1}{2 q}\vect m $ (resp.\ $\vect s \Meg \vect 0$ if $q=\infty$);
		
		\item $\vect s'\prec \frac 1 p \vect d-\frac{1}{2p} \vect m'$ (resp.\ $\vect s'\meg \vect 0$ if $p=\infty$);
		
		\item $\vect s+\vect s'\prec \frac 1 q \vect b+\frac 1 p \vect d-\frac{1}{2 q}\vect m'$ (resp.\ $\vect s+\vect s'\meg\frac 1 p \vect d$ if $q=\infty$).
	\end{itemize}
	In this case,
	\[
	\norm{B^{\vect s'}_{(\zeta,z)}}_{\Ac^{p,q}_{\vect s}(D)}=\norm{B^{\vect s'}_{(0, i e_\Omega)}}_{\Ac^{p,q}_{\vect s}(D)} \Delta_\Omega^{\vect s+\vect s'-\vect b/q-\vect d/p}(\rho(\zeta,z))
	\]
	for every $(\zeta,z)\in D$.
\end{prop}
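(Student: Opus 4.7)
The approach is to use the transitive affine action on $D$ to reduce the norm of $B^{\vect s'}_{(\zeta,z)}$ to its value at the base point $(0, ie_\Omega)$, and then to reduce that value to a tube-domain computation followed by an integration over $E$.

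\emph{Reduction to the base point.} Given $(\zeta_0, z_0) \in D$, I would choose $t \in T_+$ with $t \cdot e_\Omega = \rho(\zeta_0, z_0)$ and $g \in GL(E)$ satisfying $t \cdot \Phi = \Phi \circ (g \times g)$, and form the affine biholomorphism
\[
\phi(\zeta, z) = \bigl(\zeta_0 + g\zeta,\ \Re z_0 + t \cdot z + 2i\Phi(g\zeta, \zeta_0) + i\Phi(\zeta_0)\bigr),
\]
which sends $(0, ie_\Omega)$ to $(\zeta_0, z_0)$. Using $\tfrac{\Re z_0 - \overline{z_0}}{2i} = \tfrac{\Im z_0}{2}$ and the homogeneity $\Delta_\Omega^{\vect s'}(t \cdot v) = \Delta^{\vect s'}(t)\, \Delta_\Omega^{\vect s'}(v)$ (extended holomorphically to $\Omega + iF$), a direct calculation gives $B^{\vect s'}_{(\zeta_0, z_0)} \circ \phi = \Delta^{\vect s'}(t)\, B^{\vect s'}_{(0, ie_\Omega)}$. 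The proof of Lemma~\ref{oss:5} then supplies $\|f \circ \phi\|_{\Ac^{p,q}_{\vect s}(D)} = \Delta^{\vect b/q + \vect d/p - \vect s}(t)\, \|f\|_{\Ac^{p,q}_{\vect s}(D)}$ (the $\Nc$-translation component of $\phi$ is norm-preserving, via a unimodular change of variables in $E$ and translation invariance in $F$). Combining these two identities yields exactly the stated norm formula and reduces everything to determining when $\|B^{\vect s'}_{(0, ie_\Omega)}\|_{\Ac^{p,q}_{\vect s}(D)} < \infty$.

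\emph{Reduction to the tube.} Since $\Phi(\zeta, 0) = 0$, the function $B^{\vect s'}_{(0, ie_\Omega)}(\zeta, z) = \Delta_\Omega^{\vect s'}\bigl(\tfrac{z+ie_\Omega}{2i}\bigr)$ does not depend on $\zeta$, and each slice
\[
(B^{\vect s'}_{(0, ie_\Omega)})^{(\zeta)}(w) = \Delta_\Omega^{\vect s'}\Bigl(\tfrac{w - \overline{i(\Phi(\zeta) + e_\Omega)}}{2i}\Bigr)
\]
is the tube-domain Bergman-type kernel at the point $i(\Phi(\zeta) + e_\Omega) \in T_\Omega$. The $E = \{0\}$ specialisation of the statement (the tube case, already available in the spirit of~\cite[Proposition 3.5]{CalziPeloso} via Laplace-transform analysis on $\Omega$) yields
\[
\|(B^{\vect s'}_{(0, ie_\Omega)})^{(\zeta)}\|_{A^{p,q}_{\vect s}(T_\Omega)} = C_0\, \Delta_\Omega^{\vect s + \vect s' - \vect d/p}\bigl(\Phi(\zeta) + e_\Omega\bigr),
\]
with $C_0$ finite exactly when the first two bullet conditions hold; indeed, these bullets involve only $\vect m$ and $\vect m'$ and are intrinsically tube-domain conditions on $\vect s$ and $\vect s'$ individually.

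\emph{The $E$-integration.} The whole problem thereby reduces to analysing
\[
\int_E \Delta_\Omega^{q(\vect s + \vect s' - \vect d/p)}\bigl(\Phi(\zeta) + e_\Omega\bigr)\,d\zeta
\]
(and its $\sup$ variant for $q = \infty$). Using $|\det_\C g|^2 = \Delta^{-\vect b}(t)$ and $\Phi(g\zeta) = t \cdot \Phi(\zeta)$, the change of variables $\zeta = g\zeta''$ shows that, at any $h_0 = t \cdot e_\Omega$, replacing $e_\Omega$ by $h_0$ scales the integral by $\Delta_\Omega^{q(\vect s + \vect s' - \vect d/p) - \vect b}(h_0)$; this homogeneity pins down the power behaviour and reduces the convergence question to the single scalar integral at the base point. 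Its finiteness threshold is precisely the third bullet $\vect s + \vect s' \prec \tfrac{1}{q}\vect b + \tfrac{1}{p}\vect d - \tfrac{1}{2q}\vect{m'}$ (respectively $\vect s + \vect s' \meg \tfrac{1}{p}\vect d$ when $q = \infty$). Identifying this threshold sharply is the main delicate step; it can be carried out by estimating $\Delta_\Omega^{\vect t}(\Phi(\zeta) + e_\Omega)$ from above and below in the cell decomposition of $E$ induced by the triangular $T_+$-action, in the style of the Gindikin gamma-integral computations underlying Section~\ref{sec:2}. Finally, for the $\Ac^{p,q}_{\vect s, 0}(D)$ statement: when $p, q < \infty$ and the non-triviality conditions of Remark~\ref{oss:4} hold, the density in Proposition~\ref{prop:1} gives $\Ac^{p,q}_{\vect s, 0}(D) = \Ac^{p,q}_{\vect s}(D)$, so the two cases coincide; when $p = \infty$ or $q = \infty$, direct pointwise decay estimates on $B^{\vect s'}_{(\zeta, z)}$ as $\rho(\zeta, z) \to \partial \Omega$ or $|\zeta| \to \infty$ translate non-membership in the closure of $C_c(D)$ into the strict $\succ$, $\prec$ inequalities of the statement.
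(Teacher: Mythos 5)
Your outline (affine reduction to the base point $(0, ie_\Omega)$, slice decomposition using that the kernel at the base point depends only on $z$, a tube-domain computation for each slice, and an integration over $E$ using $\Phi_*\Hc^{2n} = c\,I^{-\vect b}_\Omega$) is essentially the content of the lemmas the paper cites, so the route is the same; and the affine reduction does yield the stated norm formula exactly as you say.

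There is, however, one inaccurate assertion in the middle step. You claim that $C_0 := \|B^{\vect s'}_{ie_\Omega}\|_{A^{p,q}_{\vect s}(T_\Omega)}$ is finite ``exactly when the first two bullet conditions hold,'' on the grounds that those are the conditions on $\vect s$ and $\vect s'$ individually. In fact, the tube specialisation ($\vect b = \vect 0$) of the proposition has all three conditions, and $C_0 < \infty$ also requires the tube-level coupling condition $\vect s + \vect s' \prec \tfrac1p\vect d - \tfrac1{2q}\vect m'$. Concretely, one can satisfy both individual bullets while making $\vect s + \vect s'$ too large (the bullets are one-sided in opposite directions), and then the slice norm diverges. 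Your final equivalence is nonetheless correct: since $\vect b \meg \vect 0$, the full third bullet $\vect s + \vect s' \prec \tfrac1q\vect b + \tfrac1p\vect d - \tfrac1{2q}\vect m'$ implies the tube-level coupling condition, so whenever you need both $C_0 < \infty$ and the $E$-integral $\int_E \Delta_\Omega^{q(\vect s + \vect s' - \vect d/p)}(\Phi(\zeta) + e_\Omega)\,d\zeta < \infty$ to hold, the tube third condition is automatically absorbed. You should state that absorption explicitly rather than assert the false equivalence; a reader following your text literally would deduce that the tube third condition is never needed, and would have no way to repair the ``only if'' direction in a component where only the first two bullets are in force.

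The remaining steps (the sharp threshold for the $E$-integral, which factors through the beta integral $\int_\Omega \Delta_\Omega^{-\vect b}\,\Delta_\Omega^{\vect t}(\cdot + e_\Omega)\,d\nu_\Omega$, and the endpoint discussion distinguishing $\Ac^{p,q}_{\vect s,0}$ from $\Ac^{p,q}_{\vect s}$ when $p = \infty$ or $q = \infty$) are sketched but not carried out; these are precisely what \cite[Lemmas 2.32, 2.35, 2.39 and Corollaries 2.22, 2.36]{CalziPeloso}, cited in the paper, provide. Filling those in, your argument would be complete.
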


\begin{proof}
	This follows from~\cite[Lemmas 2.32, 2.35, and 2.39, and Corollaries 2.22 and 2.36]{CalziPeloso}.
\end{proof}

\begin{prop}\label{prop:3}
	Take $p,q\in (0,\infty]$ and $\vect s,\vect s'\in \R^r$ such that the following hold:
	\begin{itemize}
		\item $\vect s\succ \frac 1 q \vect b+\frac{1}{p}\vect d+\frac{1}{2 q'}\vect m'$;
		
		\item $\vect s'\prec \frac{1}{p'}\vect d-\frac{1}{2p'}\vect m', \vect b+\vect d-\frac 1 2 \vect m$;

		\item $\vect s+\vect s'\prec \frac{1}{\min(1,q)}\vect b+\frac{1}{\min(1,p)}\vect d-\frac{1}{2 q'}\vect m$ or $\vect s+\vect s'\meg \frac{1}{q}\vect b+\frac{1}{\min(1,p)}\vect d$  if $q'=\infty$.
	\end{itemize}
	Then, $P_{\vect{s'}}f=f$ for every $f\in \Ac^{p,q}_{\vect s}(D)$.
\end{prop}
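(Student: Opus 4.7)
The plan is to adapt the argument of \cite[Proposition 3.13]{CalziPeloso} — which establishes the analogous reproducing formula for the spaces $A^{p,q}_{\vect s}(D)$ — to the setting of $\Ac^{p,q}_{\vect s}(D)$. The three hypotheses work in concert: the first ensures $\Ac^{p,q}_{\vect s}(D)\neq \Set{0}$ by Remark~\ref{oss:4} (using the identity $-\vect d = \vect 1_r + \tfrac 1 2\vect m + \tfrac 1 2\vect{m'}$); the second part of the second hypothesis, $\vect{s'}\prec \vect b + \vect d - \tfrac 1 2 \vect m$, is exactly the condition that $(\vect b+\vect d-\vect{s'})/2\succ \tfrac 1 4 \vect m$, so that the normalization constant $c_{(\vect b+\vect d-\vect{s'})/2}$ appearing in the definition of $P_{\vect{s'}}$ is well-posed; the remaining inequalities are precisely those needed to make the integral defining $P_{\vect{s'}}f(\zeta_0,z_0)$ absolutely convergent against an arbitrary $f\in \Ac^{p,q}_{\vect s}(D)$.

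The technical core is the convergence estimate. Writing $\dd\nu_D=(\Delta_\Omega^{\vect b+2\vect d}\circ\rho)\,\dd\Hc^{2n+2m}$ and parametrizing $D$ via $(\zeta',x',h)\mapsto (\zeta',x'+i\Phi(\zeta')+ih)$, I would apply H\"older first in the mixed norm $L^{p',q'}$ on $T_\Omega$ at fixed $\zeta'\in E$, and then in $L^{q'}(E)$ over $\zeta'$, to obtain
\[
|P_{\vect{s'}}f(\zeta_0,z_0)|\leq c_{(\vect b+\vect d-\vect{s'})/2}\,\|f\|_{\Ac^{p,q}_{\vect s}(D)}\cdot \Kc(\zeta_0,z_0),
\]
where $\Kc(\zeta_0,z_0)$ is essentially an $\Ac^{p',q'}_{\vect u}(D)$-norm of the map $(\zeta',z')\mapsto B^{\vect{s'}}_{(\zeta',z')}(\zeta_0,z_0)$ for the weight $\vect u$ obtained by dualizing $-\vect s$ and absorbing the factor $\Delta_\Omega^{-\vect{s'}+\vect b+2\vect d}$ coming from $\dd\nu_D$. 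Proposition~\ref{prop:4} provides necessary and sufficient conditions on $(\vect u,\vect{s'})$ for this norm to be finite, together with its explicit dependence on $\rho(\zeta_0,z_0)$ as a generalized power function; after the dualizing index substitution, its three admissibility conditions translate verbatim into the three hypotheses of the present statement, the non-strict alternative in the third hypothesis matching the $q'=\infty$ endpoint of Proposition~\ref{prop:4}.

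With this continuity estimate in hand, I would prove $P_{\vect{s'}}f=f$ first on the dense subspace $\Ac^{p,q}_{\vect s,0}(D)\cap \Ac^{2,2}_{\vect t}(D)$ for a $\vect t\succ \tfrac 1 4 \vect m$ chosen large enough that the specific choice $\vect{s'}=\vect b+\vect d-2\vect t$ lies in the (convex, hence connected) open admissible region defined by hypotheses 2 and 3, and that Proposition~\ref{prop:1} guarantees density. At this initial parameter the identity holds because $P_{\vect b+\vect d-2\vect t}$ is the orthoprojector of $\Lc^{2,2}_{\vect t}(D)$ onto $\Ac^{2,2}_{\vect t}(D)$ (as recorded just before Proposition~\ref{prop:4}); for fixed $f$ and $(\zeta_0,z_0)$, the function $\vect{s'}\mapsto P_{\vect{s'}}f(\zeta_0,z_0)-f(\zeta_0,z_0)$ is holomorphic on the admissible region — by dominated convergence based on the uniform-in-$\vect{s'}$ version of the bound of step two on compact subsets — and vanishes in a neighborhood of $\vect b+\vect d-2\vect t$, hence throughout. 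Density and the Hölder bound then yield the identity on all of $\Ac^{p,q}_{\vect s,0}(D)$, and the weak-$*$ density clause of Proposition~\ref{prop:1}, combined with the continuity of the evaluation $f\mapsto P_{\vect{s'}}f(\zeta_0,z_0)$ from step two, extends it to $\Ac^{p,q}_{\vect s}(D)$ in the cases $p=\infty$ or $q=\infty$. The main obstacle is the careful bookkeeping in step two — tracking how, under the duality pairing with respect to $\nu_D$ and the weight $\Delta_\Omega^{-\vect{s'}}$, the three admissibility conditions of Proposition~\ref{prop:4} are carried to the three hypotheses of the current statement — with particular care at the endpoint $q'=\infty$, where the strict inequality must be relaxed to the non-strict alternative.
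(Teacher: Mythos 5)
Your proposal is essentially correct and follows the same route as the paper's proof, which simply adapts~\cite[Proposition 3.13]{CalziPeloso} using Propositions~\ref{prop:2} and~\ref{prop:4}. The three ingredients you identify — the H\"older estimate paired with Proposition~\ref{prop:4} to control the integral defining $P_{\vect{s'}}f(\zeta_0,z_0)$, the density furnished by Proposition~\ref{prop:1} via a nonzero space $\Ac^{2,2}_{\vect t}(D)$, and the orthoprojector property of $P_{\vect b+\vect d-2\vect t}$ on $\Lc^{2,2}_{\vect t}(D)$ — are indeed the correct ones, and the weak-topology extension clause is the right way to cover the endpoint cases.

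The only remark worth making is that the analytic continuation step is superfluous. You already observe that the hypothesis $\vect{s'}\prec\vect b+\vect d-\tfrac 1 2\vect m$ is equivalent to $(\vect b+\vect d-\vect{s'})/2\succ\tfrac 1 4\vect m$; but you use this only to justify the normalization constant, and then select an unrelated $\vect t$ for the density step, forcing you to continue holomorphically in $\vect{s'}$ from $\vect b+\vect d-2\vect t$ to the actual $\vect{s'}$. If instead you take $\vect t=(\vect b+\vect d-\vect{s'})/2$ directly, then $\vect t\succ\tfrac 1 4\vect m$ by the hypothesis, so $\Ac^{2,2}_{\vect t}(D)\neq\Set{0}$ and Proposition~\ref{prop:1} already gives density of $\Ac^{p,q}_{\vect s,0}(D)\cap\Ac^{2,2}_{\vect t}(D)$ (resp.\ weak density in $\Ac^{p,q}_{\vect s}(D)$); and by construction $P_{\vect{s'}}=P_{\vect b+\vect d-2\vect t}$ is \emph{literally} the orthoprojector of $\Lc^{2,2}_{\vect t}(D)$ onto $\Ac^{2,2}_{\vect t}(D)=A^{2,2}_{\vect t}(D)$, so the identity $P_{\vect{s'}}f=f$ holds pointwise for $f$ in that dense subspace with no continuation at all. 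This avoids having to justify holomorphy of $\vect{s'}\mapsto P_{\vect{s'}}f(\zeta_0,z_0)$ on a complexified admissible region, the dominated-convergence argument required for it, and the bookkeeping at the $q'=\infty$ endpoint where the admissible region fails to be open.
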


\begin{proof}
	Argue as in the proof of~\cite[Proposition 3.13]{CalziPeloso}, using Propositions~\ref{prop:2} and~\ref{prop:4}.
\end{proof}

\subsection{Sampling}

\begin{deff}
	For every $\vect{s}\in \R^r$, we define $\Ms_{\vect s}$ as the space of $f\in \Hol(D)$ such that the function $(\zeta,z)\mapsto \Delta^{\vect s}_\Omega(\rho(\zeta,z)) \ee^{-\abs{\zeta}^{2 \alpha}-\abs{\Re z}^\alpha-\abs{\rho(\zeta,z)}^{\alpha}} f(\zeta,z)$ is bounded on $D$ for some $\alpha\in [0,1/2)$.
\end{deff}

Observe that $\Ms_{\vect s}\subseteq \Ms_{\vect s'}$ for $\vect s \meg \vect s'$, and that
\[
\Ac^{p,q}_{\vect s}(D)\subseteq \Ac^{\infty,\infty}_{\vect s-\vect b/q-\vect d/p}(D)\subseteq \Ms_{\vect s-\vect b/q-\vect d/p}
\]
for every $\vect s\in \R^r$, thanks to Proposition~\ref{prop:2}.

\begin{teo}\label{teo:1}
	Take $p,q\in (0,\infty]$, $\vect s\in \R^r$, $R_0>1$ and $\delta_+>0$, and $\vect s'\Meg \vect s-\frac 1 q\vect b-\frac 1 p\vect d$. Then, there are $\delta_-,C>0$ such that, for every $F$-$(\delta,R)$-lattice $(\zeta_k,z_{j,k})_{j\in J,k\in K}$ on $D$, with $R\in (1,R_0]$, if we define
	\[
	S_+\colon \Hol(D)\ni f \mapsto \bigg(\Delta_\Omega^{\vect s-\vect b/q-\vect d/p}(h_k) \max_{\overline B((\zeta_k,z_{j,k}),R\delta)} \abs{f}\bigg)\in \C^{J\times K}
	\]
	and
	\[
	S_-\colon \Hol(D)\ni f \mapsto \bigg(\Delta_\Omega^{\vect s-\vect b/q-\vect d/p}(h_k) \min_{\overline B((\zeta_k,z_{j,k}),R\delta)} \abs{f}\bigg)\in \C^{J\times K},
	\]
	where $h_k\coloneqq \rho(\zeta_k,z_{j,k})$ for every $j\in J$ and for every $k\in K$, then
	\[
	\frac{1}{C}\norm{f}_{\Ac^{p,q}_{\vect s}(D)}\meg \delta^{m/p+(2n+m)/q} \norm*{S_\eps f}_{\ell^{p,q}(J,K)}\meg C\norm{f}_{\Ac^{p,q}_{\vect s}(D)}
	\]
	for every $f\in \Hol(D)$ and for every $\delta\in (0,\delta_+]$ if $\eps=+$, and for every $f\in \Ms_{\vect s'}$ and for every $\delta\in (0,\delta_-]$ if $\eps=-$.
	In addition,
	\[
	\Ac^{p,q}_{\vect s,0}(D)=\Hol(D)\cap S_+^{-1}(\ell^{p,q}_0(J,K)) \qquad \text{and} \qquad\Ac^{p,q}_{\vect s,0}(D)=\Ms_{\vect s'}(D)\cap S_-^{-1}(\ell^{p,q}_0(J,K)),
	\]
	where the second equality holds provided that $\delta\meg \delta_-$.
\end{teo}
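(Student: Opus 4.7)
The plan is to adapt the sampling argument of~\cite[Theorem 3.22]{CalziPeloso} (where $\Nc$-lattices are used for $A^{p,q}_{\vect s}(D)$) to the $F$-lattice setting, which is intrinsic to the mixed-norm definition of $\Ac^{p,q}_{\vect s}(D)$. The key geometric point is that in an $F$-$(\delta,R)$-lattice the coordinate $\zeta_k$ depends only on $k$, the pairs $(\zeta_k,h_k)$ form a $(\delta,R)$-lattice on $E\times \Omega$, and for each fixed $k$ the sub-family $(z_{j,k})_{j\in J}$ discretizes the affine coset $\zeta_k+F$. Writing
\[
\norm{f}_{\Ac^{p,q}_{\vect s}(D)}^q=\int_E\int_\Omega \norm{f^{(\zeta)}_h}_{L^p(F)}^q \Delta_\Omega^{q\vect s}(h)\,\dd \nu_\Omega(h)\,\dd\Hc^{2n}(\zeta),
\]
this naturally matches the mixed $\ell^{p,q}(J,K)$-norm: the inner $\ell^p_j$-sum discretizes the $L^p(F)$-integral and the outer $\ell^q_k$-sum the remaining integration over $E\times\Omega$.

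For $S_+$, both inequalities rest on the comparison between pointwise values, maxima on small balls, and local averages. The upper bound $\delta^{m/p+(2n+m)/q}\norm{S_+f}_{\ell^{p,q}(J,K)}\meg C\norm{f}_{\Ac^{p,q}_{\vect s}(D)}$ follows from the holomorphic submean inequality applied on slightly enlarged Bergman balls $B'_{j,k}$ having bounded overlap, together with the approximate constancy of $\Delta_\Omega^{\vect s}\circ\rho$ on any ball of radius at most $R_0\delta_+$. Summing in $j$ with fixed $k$ (using bounded overlap along the $F$-fiber) controls the inner $\ell^p_j$-sum by the $L^p(F)$-norm of $f^{(\zeta_k)}_{h_k}$ up to the expected volume factor, and summing in $k$ via bounded overlap of the $(\zeta_k,h_k)$-lattice in $E\times\Omega$ recovers the defining integral of $\norm{f}_{\Ac^{p,q}_{\vect s}(D)}^q$. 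The reverse inequality $\norm{f}_{\Ac^{p,q}_{\vect s}(D)}\meg C\delta^{m/p+(2n+m)/q}\norm{S_+f}_{\ell^{p,q}(J,K)}$ is a direct consequence of the covering property: every $(\zeta,z)\in D$ lies in some $B((\zeta_k,z_{j,k}),R\delta)$, whence $|f(\zeta,z)|$ is dominated by $(S_+f)_{j,k}\Delta_\Omega^{-\vect s+\vect b/q+\vect d/p}(h_k)$, and integration yields the bound.

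For $S_-$ the upper bound is immediate from $\min\meg \max$. The lower bound is the heart of the proof; the key input is the oscillation inequality
\[
\max_{B((\zeta_k,z_{j,k}),R\delta)}|f|\meg \min_{B((\zeta_k,z_{j,k}),R\delta)}|f|+C_0\delta \max_{B((\zeta_k,z_{j,k}),R'\delta)}|f|,
\]
valid for some fixed $R'>R$ and $\delta$ small enough, and obtained by combining a Cauchy-type bound on the derivatives of $f$ with the submean inequality. Taking $\ell^{p,q}(J,K)$-norms and invoking the already-established $S_+$ comparison at the enlarged radius $R'\delta$ yields, schematically,
\[
\norm{f}_{\Ac^{p,q}_{\vect s}(D)}\meg C\delta^{m/p+(2n+m)/q}\norm{S_-f}_{\ell^{p,q}(J,K)} + C_1\delta\norm{f}_{\Ac^{p,q}_{\vect s}(D)};
\]
choosing $\delta_-$ so that $C_1\delta_-<1/2$ absorbs the excess term. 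The hypothesis $f\in\Ms_{\vect s'}$ with $\vect s'\Meg \vect s-\vect b/q-\vect d/p$ is used precisely to ensure that the norms in this absorption argument are a priori finite (cf.\ the inclusion $\Ac^{p,q}_{\vect s}(D)\subseteq\Ms_{\vect s-\vect b/q-\vect d/p}$ following from Proposition~\ref{prop:2}), so the absorption is legitimate. Finally, the characterization $\Ac^{p,q}_{\vect s,0}(D)=\Hol(D)\cap S_\pm^{-1}(\ell^{p,q}_0(J,K))$ follows from the sampling isomorphism together with the density of finitely-supported sequences in $\ell^{p,q}_0(J,K)$ and of $C_c(D)$ in $\Lc^{p,q}_{\vect s,0}(D)$. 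The principal obstacle lies in proving that all comparison constants (bounded-overlap counts for $B'_{j,k}$, the variation of $\Delta_\Omega^{\vect s}$ on balls, and the Cauchy estimate constant $C_0$) can be taken uniform in $\delta\in(0,\delta_+]$ and $R\in(1,R_0]$; this requires a careful rewriting of the corresponding step in~\cite[Theorem 3.22]{CalziPeloso} in terms of the $F$-lattice geometry and the projection $\rho'$.
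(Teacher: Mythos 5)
Your outline of Steps I--IV (the $S_+$ comparison) tracks the paper's argument correctly: local holomorphic submean inequality on slightly enlarged balls, bounded overlap of the balls and of their projections under $\rho'$, approximate constancy of $\Delta_\Omega^{\vect s}$ on balls of bounded radius, and the covering property for the reverse inequality. That part is sound.

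The gap is in the $S_-$ lower bound, specifically the absorption step. You write the schematic inequality $\norm{f}_{\Ac^{p,q}_{\vect s}(D)}\meg C\delta^{m/p+(2n+m)/q}\norm{S_-f}_{\ell^{p,q}} + C_1\delta\norm{f}_{\Ac^{p,q}_{\vect s}(D)}$ and absorb the last term by choosing $\delta_-$ small. This absorption requires the left-hand side to be \emph{finite a priori}, and you claim that $f\in\Ms_{\vect s'}$ supplies this. It does not: $\Ms_{\vect s'}$ is a growth class (it only asserts $|f|\meg C\,\Delta_\Omega^{-\vect s'}\circ\rho\cdot\ee^{|\zeta|^{2\alpha}+|\Re z|^\alpha+|\rho|^\alpha}$), and it properly contains $\Ac^{p,q}_{\vect s}(D)$ rather than being contained in it — the inclusion you invoke from Proposition~\ref{prop:2} runs in the opposite direction. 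In fact the whole point of this half of the theorem is to \emph{deduce} $f\in\Ac^{p,q}_{\vect s}(D)$ from $f\in\Ms_{\vect s'}$ together with $S_-f\in\ell^{p,q}$, so finiteness of $\norm{f}_{\Ac^{p,q}_{\vect s}(D)}$ cannot be assumed.

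The paper circumvents this in a way your sketch does not capture. First, $f$ is multiplied by a decaying factor $g^{(\eps)}$ (as in~\cite[Lemma 1.22]{CalziPeloso}), so that $G(\eps)=fg^{(\eps)}$ lands in $\Ac^{p,\infty}_{\vect{s'}}(D)$ and one has $S_-G(\eps)\meg S_- f$; second, the relevant mixed norm is further truncated by the cut-offs $\chi_\ell=\chi_{e_\Omega/2^\ell+\Omega}\circ\rho$, each truncated quantity being finite because $f\in\Ms_{\vect s'}$ combined with $\vect s-\vect{s'}+\vect d/q\meg\vect 0$ yields a polynomial bound in $2^\ell$. Crucially, the recursion proved there is of the form $\norm{\chi_\ell G(\eps)}\meg (\text{data})+\delta C_6''\norm{\chi_{\ell+1}G(\eps)}$, which \emph{moves inward} in the cone ($h\mapsto h/2$), and it is resolved not by naive absorption but by a telescoping series with a carefully chosen $N'>\sum_j(s'_j-s_j)+m/q$, exploiting the polynomial-in-$2^\ell$ a priori bound on each truncated norm. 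Only then does one let $\ell\to\infty$ and $\eps\to 0^+$. Your direct absorption would be legitimate only for $f$ already known to lie in $\Ac^{p,q}_{\vect s}(D)$, which is circular here. A similar (and subtler) version of this issue recurs in the proof that $S_-f\in\ell^{p,q}_0$ forces $f\in\Ac^{p,q}_{\vect s,0}(D)$ when $q=\infty$, which your outline also leaves unaddressed.
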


Before we pass to the proof, we need an analogue of~\cite[Lemma 3.26]{CalziPeloso}.

\begin{lem}\label{lem:1}
	There are $R'_0>0$ and a constant $C>0$ such that, for every $p,q\in (0,\infty]$, for every ${R'}\in (0,R'_0]$, for every $f\in \Hol(D)$ and for every $(\zeta,h)\in E\times\Omega$,
	\[
	\norm{f_h^{(\zeta)}}_{L^p(F)}\meg C^{1/\min(1,p,q)} \left( \dashint_{B_{E\times\Omega}((\zeta,h), {R'})}  \norm{f_{h'}^{(\zeta')}}_{L^p(F)}^q\,\dd \nu_{E\times\Omega}(\zeta,h)\right) ^{1/q}.
	\]
	(modification if $q=\infty$).
\end{lem}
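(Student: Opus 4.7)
I would adapt the strategy of~\cite[Lemma 3.26]{CalziPeloso}, which handles the analogous estimate for $L^p(\Nc)$-norms averaged over $\Omega$, to the setting of $L^p(F)$-norms averaged over $E\times\Omega$. By the $G_\Aff/F$-invariance of $d_{E\times\Omega}$ and $\nu_{E\times\Omega}$, together with the equivariance of $\norm{\,\cdot\,}_{L^p(F)}$ under the corresponding action on $D$ (translations in $F$ being measure-preserving, and the admissible changes of variable in $\zeta$ being unimodular), it suffices to prove the estimate at $(\zeta,h)=(0,e_\Omega)$. For $R'_0$ small enough, the ball $B_{E\times\Omega}((0,e_\Omega),R')$ contains a Euclidean product $B_E(0,r_1)\times B_F(e_\Omega,r_2)$ with $r_1,r_2$ proportional to $R'$, and each polydisc $B_E(0,r_1)\times B_{F_\C}(x+ie_\Omega,r_2)$ lies in $D$ for every $x\in F$ (since $\Phi(\zeta)$ is quadratic in $\zeta$).

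The pointwise step uses that $\abs{f}^\alpha$ is plurisubharmonic on $D$ for $\alpha:=\min(1,p,q)$. The Euclidean mean value inequality on the above polydisc, raised to the power $p/\alpha\Meg 1$ and combined with Jensen's inequality on the normalized polydisc, yields $\abs{f(0,x+ie_\Omega)}^p$ bounded by a constant $C^{p/\alpha}$ times the average of $\abs{f}^p$ over the polydisc. Integrating in $x\in F$, applying Fubini's theorem (noting that for each $(\zeta,u+iv)$ the admissible set $\Set{x\in F\colon (\zeta,u+iv)\in B_E(0,r_1)\times B_{F_\C}(x+ie_\Omega,r_2)}$ has $F$-volume $\lesssim r_2^m$), and changing variables $h=v-\Phi(\zeta)$ (unit Jacobian in $v$), one obtains
\[
\norm{f_{e_\Omega}^{(0)}}_{L^p(F)}^p\meg C^{p/\alpha}\dashint_{B_{E\times\Omega}((0,e_\Omega),R')}\norm{f_h^{(\zeta)}}_{L^p(F)}^p\,\dd \nu_{E\times\Omega}(\zeta,h),
\]
where the bounded factor $\Delta_\Omega^{-\vect b-\vect d}(h)$ has been absorbed into the constant. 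If $q\Meg p$, raising to $q/p\Meg 1$ and applying Jensen's inequality to the right-hand side yields the lemma with constant $C^{1/\alpha}$.

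The main obstacle is the case $q<p$, where Jensen's inequality reverses direction. To handle it I would use that the function $\psi(\zeta,y):=\int_F\abs{f(\zeta,x+iy)}^p\,\dd x$, defined on the open set $\Set{(\zeta,y)\in E\times F\colon y-\Phi(\zeta)\in\Omega}$, is jointly subharmonic: plurisubharmonic in $\zeta$ for each fixed $y$ (as an integral in $x$ of PSH-in-$\zeta$ functions, since for fixed $y$ the map $\zeta\mapsto f(\zeta,x+iy)$ is holomorphic wherever defined) and logarithmically convex in $y$ for each fixed $\zeta$ by the classical three-lines theorem for $L^p$ norms of holomorphic functions on the translated tube $F+i(\Phi(\zeta)+\Omega)$, hence subharmonic in $y$. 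The standard ``$L^s$-to-$L^\infty$'' mean value inequality for subharmonic functions with $s=q/p\in(0,1)$, applied on a slightly larger Euclidean ball in $(\zeta,y)$-coordinates and transferred back to $(\zeta,h)$-coordinates via $h=y-\Phi(\zeta)$, then yields the claimed estimate for $q<p$ as well. I expect the most delicate technical point to be this joint subharmonicity of $\psi$ and the uniformity of constants in the $L^s$-to-$L^\infty$ estimate as $s$ varies over $(0,\infty)$.
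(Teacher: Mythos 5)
Your argument agrees in spirit with the paper's up to a crucial technical choice, and then diverges in a way that introduces a real gap.

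The paper raises the sub-mean value inequality to the power $\ell:=\min(1,p,q)$ and \emph{stays there}: writing $\norm{f_h^{(\zeta)}}_{L^p(F)}^\ell=\norm{|f_h^{(\zeta)}|^\ell}_{L^{p/\ell}(F)}$, it applies Minkowski's integral inequality (valid because $p/\ell\Meg 1$) together with Young's inequality to pass from a pointwise average of $|f|^\ell$ to an average of the sliced $L^p$-quasi-norms (to the power $\ell$), and then applies Jensen's inequality with exponent $q/\ell\Meg 1$. Both exponents are $\Meg 1$ for \emph{all} $p,q\in(0,\infty]$, so no case distinction between $q\Meg p$ and $q<p$ is needed and the constant naturally comes out as $C^{1/\ell}$. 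You instead raise to the power $p$ too early, which is what forces you to treat $q<p$ separately. For $q\Meg p$ your polydisc/Fubini argument is essentially correct; the problem is the other case.

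For $q<p$ the proposed route has two unresolved issues, both of which you flag but neither of which is resolvable without substantial extra work. First, ``separately subharmonic $\Rightarrow$ jointly subharmonic'' is false in general: Wiegerinck constructed a function on $\C^2$ that is subharmonic in each variable separately yet not jointly subharmonic (not even locally integrable). The implication does hold under additional hypotheses (local boundedness above, in the Lelong--Avanissian--Arsove vein), but here $\psi(\zeta,y)=\int_F|f(\zeta,x+iy)|^p\,\dd x$ may take the value $+\infty$ on a large set for arbitrary $f\in\Hol(D)$, which disqualifies $\psi$ from being subharmonic at all ($[-\infty,+\infty)$-valued and upper semicontinuous); a truncation/regularization scheme would be required, and is not supplied. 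Second, even granting subharmonicity, the $L^s$-to-$L^\infty$ sub-mean value estimate for nonnegative subharmonic functions degenerates as $s\to 0$, and to recover the lemma's stated constant $C^{1/\min(1,p,q)}$ (with $C$ independent of $p,q$) one needs the sharp dependence of that constant on $s$ to be of the form $C_0^{1/s}$ and then to trace it through the change of variables with $s=q/p$; this is not a standard quotable fact at that level of precision. In short, the $q<p$ branch is a genuine gap, and the fix is exactly the paper's device: keep the estimate at the $\ell$-th power and let Minkowski (at $p/\ell$) and Jensen (at $q/\ell$) do all the work.
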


\begin{proof}
	Set $\ell\coloneqq \min(1,p,q)$ to simplify the notation.
	By~\cite[Lemma 3.24]{CalziPeloso}, there are $R_0>0$ and $C'>0$ such that
	\[
	\abs{f(\zeta,z)}^{\ell}\meg C' \dashint_{B((\zeta,z),R)} \abs{f}^{\ell}\,\dd \nu_D
	\]
	for every $f\in \Hol(D)$, for every $(\zeta,z)\in D$, and for every $R\in (0,R_0]$.  Then, applying Minkowski's integral inequality (with exponent $\frac{p}{\ell}$) and Young's inequality,
	\[
	\begin{split}
		\norm{f_h^{(\zeta)}}_{L^p(F)}^\ell&\meg C' C'_R  \dashint_{B_{E\times\Omega}((\zeta,h),R)} \norm*{\abs{f_{h'}^{(\zeta')}}^{\ell}*[(\chi_{B((\zeta, i\Phi(\zeta)+i h),R)})_{h'}^{(\zeta')}]\check{\;}  }_{L^{p/\ell}(F)}  \Delta_\Omega^{\vect d}(h') \,\dd \nu_{E\times\Omega}(\zeta',h')\\
		&\meg C''  \dashint_{B_{E\times\Omega}((\zeta,h),R)} \norm*{f^{(\zeta')}_{h'}}_{L^{p}(F)}^\ell \frac{\Delta_\Omega^{\vect d}(h') }{\Delta_\Omega^{\vect d}(h) } \,\dd \nu_{E\times\Omega}(\zeta',h')
	\end{split}
	\]
	for every $f\in \Hol(D)$ and for every $h\in \Omega$,
	where 
	\[
	C'_R\coloneqq \frac{\nu_{E\times\Omega}(B_{E\times\Omega}((0,e_\Omega),R))}{\nu_D(B((0, i e_\Omega),R))}
	\]
	and 
	\[
	C''\coloneqq C' \sup\limits_{0<R\meg R_0} \sup\limits_{(\zeta',h')\in E\times\Omega} C'_R \norm*{(\chi_{B((0,i e_{\Omega}),R)})_{h'}^{(\zeta')}}_{L^1(F)}.
	\]
	By~\cite[Corollary 2.49]{CalziPeloso}, there is a constant $C>0$ such that, for every $f\in \Hol(D)$ and for every $h\in \Omega$,
	\[
	\begin{split}
		\norm{f_h^{(\zeta)}}_{L^p(F)}^\ell&\meg C\dashint_{B_{E\times\Omega}((\zeta,h),R)} \norm*{f^{(\zeta')}_{h'}}_{L^{p}(F)}^\ell  \,\dd \nu_{E\times\Omega}(\zeta',h'),
	\end{split}
	\]
	if $R\in (0,R_0]$. Then, Jensen's inequality (with exponent $ \frac{q}{\ell}$) leads to the conclusion.
\end{proof}

\begin{proof}[Proof of Theorem~\ref{teo:1}.]
	For the sake of simplicity, we shall generally present the computations as if $p,q<\infty$. We leave to the reader  the (purely formal) modifications which are necessary when $\max(p,q)=\infty$. Throughout the proof, for every $t\in T_+$ we shall denote by $g_t$ an element of $GL(E)$ such that $t\cdot \Phi=\Phi\circ(g_t\times g_t)$.
	
	\textsc{Step I.} Define, for every $R'>0$, for every $\zeta\in E$, and for every $h\in \Omega$, 
	\[
	M_{R'}(\zeta,h)\coloneqq \norm*{\left(\chi_{B((0,i e_\Omega),  {R'})}\right)_h^{(\zeta)}}_{L^1(F)}\meg \Hc^m(\pr_F(B((0,i e_\Omega),  {R'})))<\infty
	\]
	Then, for every $\ell\in (0,\infty]$, for every $t'\in T_+ $, and for every $(\zeta',x')\in \Nc$,
	\[
	\norm*{\left(\chi_{B((\zeta',x'+i\Phi(\zeta)+i t'\cdot e_\Omega), {R'})}\right)^{(\zeta)}_h}_{L^\ell(F)}=\Delta^{-\vect d/\ell}(t') M_{R'}(g_{t'}^{-1}(\zeta-\zeta'),t'^{-1}\cdot h)^{1/\ell},
	\]
	with the convention $0^0=0$. 
	In particular, 
	\[
	\norm*{\left(\chi_{B((\zeta_{k},z_{j,k}), {R'})}\right)^{(\zeta)}_h}_{L^\ell(F)}= \chi_{B_{E\times\Omega}((\zeta_k,h_k),{R'})}(\zeta,h) 
	\Delta_\Omega^{-\vect d/\ell}(h_k) M_{R'}(g_{t_k}^{-1}(\zeta-\zeta_k), t_k^{-1}\cdot h)^{1/\ell}
	\]
	for every $(\zeta,h)\in E\times\Omega$ and for every $k\in K$, where $t_k\in T_+$ is such that $h_k=t_k\cdot e_\Omega$. 
	In addition,  
	\[
	\norm{M_{R'}}_{L^\infty(\nu_{E\times\Omega})}\asymp {R'}^{m} \text{ for ${R'}\to 0^+$}.
	\]
	For every $(\zeta,h)\in E\times\Omega$, define 
	\[
	K_{\zeta,h}\coloneqq \Set{k\in K\colon (\zeta,h)\in B_{E\times\Omega}((\zeta_k,h_k),R\delta)},
	\]
	and observe that there is $N\in\N$ such that $ \card(K_{\zeta,h})\meg N$ for every $(\zeta,h)\in E\times\Omega$, provided that $R\meg R_0$ and $\delta\meg \delta_+$. We may also assume that every $(\zeta,h)\in E\times\Omega$ is contained in at most $N$ balls $B_{E\times \Omega}((\zeta_k,h_k),2 R\delta)  $, $k\in K$, and that every $(\zeta,z)\in D$ is contained in at most $N$ balls $B((\zeta_{k},z_{j,k}),2 R\delta)$, $(j,k)\in J\times K$, provided that $R\meg R_0$ and $\delta\meg \delta_+$. 
	Finally, set $\ell\coloneqq \min (1,p,q)$.
	
	\textsc{Step II.} Let us prove that $S_+$ maps $ \Ac^{p,q}_{\vect s}(D)$  into $\ell^{p,q}(J,K)$. Take $f\in \Ac^{p,q}_{\vect s}(D)$ and define 
	\[
	C_{D,{R'}}\coloneqq \nu_D(B((0,i e_\Omega),{R'})), \quad C_{E\times \Omega,{R'}}\coloneqq \nu_{E\times \Omega}(B_{E\times \Omega}((0,e_\Omega),{R'})), \quad \text{and} \quad C_{\Omega,{R'}}\coloneqq \nu_\Omega(B_\Omega(e_\Omega,{R'}))
	\] 
	for every ${R'}>0$ to simplify the notation.
	Then,~\cite[Lemma 3.24]{CalziPeloso} implies that there are $R'_0\in (0,1/2]$ and $C_1>0$ such that
	\[
	\max_{\overline B((\zeta_{k},z_{j,k}),R\delta)} \abs{f}^p\meg  \frac{C_1}{ C_{D,R'_0 \delta}} \int_{B((\zeta_{k},z_{j,k}), (R+R'_0)\delta)} \abs{f}^p\,\dd \nu_D
	\]
	for every $(j,k)\in J\times K$.
	Therefore,~\cite[Corollary 2.49]{CalziPeloso} implies that there is a constant $C_{2}>0$ such that
	\[
	(S_+ f)_{j,k}^p \meg \frac{C_{2}}{C_{D,R'_0\delta}}\Delta_\Omega^{p\vect s-(p/q)\vect b}(h_k) \int_{E\times\Omega} \int_F \abs{(\chi_{B((\zeta_{k},z_{j,k}),(R+R'_0)\delta)}f)^{(\zeta)}_h(x)}^p\,\dd x\,\dd \nu_{E\times\Omega}(\zeta, h)
	\]
	for every $(j,k)\in J\times K$.
	Hence, 
	\[
	\sum_{j\in J}(S_+ f)_{j,k}^p \meg \frac{C_{2}}{C_{D,R'_0\delta}} N\Delta_\Omega^{p\vect s-(p/q)\vect b}(h_k) \int_{B_{E\times\Omega}((\zeta_k,h_k), (R+R'_0)\delta)} \norm{f_h^{(\zeta)}}_{L^p(F)}^p\,\dd \nu_{E\times\Omega}(\zeta,h)
	\]
	for every $k\in K$.
	Now,  Lemma~\ref{lem:1} shows that there is a constant $C_{3}>0$ such that 
	\[
	\begin{split}
		&\int_{B_{E\times\Omega}((\zeta_k,h_k), (R+R'_0)\delta)} \norm{f_h^{(\zeta)}}_{L^p(F)}^p\,\dd \nu_{E\times\Omega}(\zeta,h)\\
		&\qquad\meg C_3 \int_{B_{E\times\Omega}((\zeta_k,h_k), (R+R'_0)\delta)}\left(  \dashint_{B_{E\times\Omega}((\zeta',h'), R'_0\delta)}  \norm{f_{h}^{(\zeta)}}_{L^p(F)}^q\,\dd \nu_{E\times\Omega}(\zeta,h)\right) ^{p/q}\,\dd\nu_{E\times\Omega}(\zeta',h')\\
		&\qquad \meg C_3 \frac{C_{E\times\Omega, (R+R'_0)\delta}}{C_{E\times\Omega,R'_0\delta}^{p/q}  } \left(  \int_{B_{E\times\Omega}((\zeta_k,h_k), (R+2R'_0)\delta)}  \norm{f_{h}^{(\zeta)}}_{L^p(F)}^q\,\dd \nu_{E\times\Omega}(\zeta,h)\right) ^{p/q}
	\end{split}
	\]
	for every $k\in K$.
	Therefore, another application of~\cite[Corollary 2.49]{CalziPeloso}  shows that there is a constant $C_2'>0$ such that
	\[
	\norm{S_+ f}_{\ell^{p,q}(J,K)}\meg \frac{C_2' C_{E\times\Omega,(R+R'_0)\delta}^{1/p} }{C_{D,R'_0\delta}^{1/p}C_{E\times\Omega, R'_0\delta}^{1/q} } N^{1/p+1/q} \norm{f}_{\Ac^{p,q}_{\vect s}(D)}.
	\]
	
	Next, let us prove that $S_+(\Ac^{p,q}_{\vect s,0}(D))\subseteq \ell^{p,q}_0(J,K)$. Observe that we may assume that $\vect{s}\succ \frac{1}{2 q}\vect{m}$. Then, take $\tilde p\in (0,p)$ and $\tilde q\in (0,q)$ so that $ \vect{s''}\coloneqq \vect s-\big(\frac 1 q-\frac{1}{\tilde q}\big)\vect b-\big(\frac 1 p-\frac{1}{\tilde p}\big)\vect d\succ\frac{1}{2 \tilde q}\vect{m}$, and observe that the preceding computations show that $S_+(\Ac^{p,q}_{\vect s,0}(D)\cap \Ac^{\tilde p,\tilde q}_{\vect s''}(D))\subseteq  \ell^{\tilde p,\tilde q}(J,K)\subseteq \ell^{p,q}_0(J,K)$, so that the assertion follows by means of  Proposition~\ref{prop:1}.
	
	\textsc{Step III.}  Now, take $f\in \Hol(D)$ and assume that $S_+ f\in \ell^{p,q}_0(J,K)$ (resp.\ $S_+ f\in \ell^{p,q}(J,K)$), and let us prove that $f\in \Ac^{p,q}_{\vect s,0}(D)$ (resp.\ $f\in \Ac^{p,q}_{\vect s}(D)$).
	Observe first that
	\[
	\abs{f_h^{(\zeta)}}\meg \sum_{(j,k)\in J\times K_{\zeta,h}} \Delta_\Omega^{\vect b/q+\vect d/p-\vect s}(h_k) \left( \chi_{B((\zeta_{k},z_{j,k}),R\delta)} \right)_h^{(\zeta)} (S_+ f)_{j,k}
	\]
	on $F$, for every $(\zeta,h)\in E\times \Omega$, so that $f_h^{(\zeta)}\in L^p_0(F)$ (resp.\ $f_h^{(\zeta)}\in L^p(F)$) for every $(\zeta,h)\in E\times \Omega$. In addition,
	\[
	\begin{split}
		\norm{f_h^{(\zeta)}}_{L^p(F)}&\meg N^{1/p'} \norm*{\Big(\Delta_\Omega^{\vect b/q-\vect s}(h_k) M_{R\delta}(g_{t_k}^{-1}(\zeta-\zeta_k),t_k^{-1}\cdot h )^{1/p}(S_+ f)_{j,k}\Big)_{j,k}}_{\ell^p(J\times K_{\zeta,h})}    \\
		&\meg   N^{1/p'+(1/p-1/q)_+}\norm{M_{R\delta}}_\infty^{1/p} \norm*{ \left( \Delta_\Omega^{\vect b/q-\vect s}(h_k)\norm*{  ((S_+ f)_{j,k})_j}_{\ell^p(J)}\right) _k}_{\ell^q(K_{\zeta,h})}  
	\end{split}
	\]
	for every $(\zeta,h)\in E\times \Omega$, so that $(\zeta,h)\mapsto \norm{f_h^{(\zeta)}}_{L^p(F)} $ belongs to $L^q_0(E\times \Omega)$ and there is a constant $C_2''>0$ (cf.~\cite[Corollary 2.49]{CalziPeloso}) such that
	\[
	\norm{f}_{\Ac^{p,q}_{\vect s}(D)}\meg N^{1/p'+\max(1/p,1/q)}\norm{M_{R\delta}}_\infty^{1/p}  C_{E\times \Omega,R\delta}^{1/q} C_2'' \norm{S_+ f}_{\ell^{p,q}(J,K)}.
	\]

	\textsc{Step IV.} Observe that, if $(\zeta'_{k'},z'_{j',k'})_{j'\in J',k'\in K'}$ is an $F$-$(\delta,R')$-lattice on $D$, then there are two mappings $\iota_1\colon K'\to K$ and $\iota_2\colon J'\times K' \to J$ such that, setting $h'_{k'}\coloneqq \rho(\zeta'_{k'},z'_{j',k'})$ for every $j'\in J'$ and for every $k'\in K'$, 
	\[
	(\zeta'_{k'},h'_{k'})\in B_{E\times \Omega}((\zeta_{\iota_1(k')},h_{\iota_1(k')}),R\delta)
	\]
	and
	\[
	(\zeta'_{k'},z'_{j',k'})\in B((\zeta_{\iota_1(k')},z_{\iota_2(j',k'),\iota_1(k')}),R\delta)
	\]
	for every $j'\in J'$ and for every $k'\in K'$. Define
	\[
	S_-'\colon \Hol(D)\ni f \mapsto \bigg(\Delta_\Omega^{\vect s-\vect b/q-\vect d/p}(h'_{k'}) \min_{\overline B((\zeta'_{k'},z'_{j',k'}),(R+R')\delta)} \abs{f}\bigg)\in \C^{J'\times K'},
	\]
	and observe that, by~\cite[Corollary 2.49]{CalziPeloso} and the preceding remarks, there is a constant $C'>0$ such that
	\[
	(S'_- f)_{j',k'}\meg C' (S_- f)_{\iota_2(j',k'),\iota_1(k')}
	\]
	for every $f\in \Hol(D)$, for every $j'\in J'$ and for every $k'\in K'$. In addition, there is $N'\in \N$ such that the fibres of $\iota_1$ and $(j',k')\mapsto (\iota_2(j',k'),\iota_1(k'))$  have at most $N'$ elements. Consequently, 
	\[
	\norm{S'_- f}_{\ell^{p,q}(J',K')}\meg C' N'^{1/p+1/q} \norm{S_- f}_{\ell^{p,q}(J,K)}
	\]
	for every $f\in \Hol(D)$. In addition, if $S_- f\in \ell^{p,q}_0(J,K)$, then $S_-' f\in \ell_0^{p,q}(J',K')$. 
	
	Observe, furthermore, that if $R'\Meg 8$, then we may choose $(\zeta'_{k'},z'_{j',k'})_{j'\in J',k'\in K'}$ such that $K'=K'_1\times K'_2$, such that $h'_{(k'_1,k'_2)}=h'_{(k''_1,k'_2)}$ for every $k'_1,k''_1\in K'_1$ and for every $k_2'\in K'_2$, and such that $(h'_{(k_1',k'_2)})_{k'_2\in K'_2}$ is a $(\delta,R')$-lattice on $\Omega$ for some/every $k'_1\in K_1'$ (argue as in the proof of~\cite[Lemma 2.55]{CalziPeloso}).
	
	\textsc{Step V.} Take  $f\in \Ms_{\vect s'}(D)$ such that $S_- f\in \ell^{p,q}(J,K)$ and let us prove that
	\[
	\norm{f}_{\Ac^{p,q}_{\vect s}(D)}\meg C \delta^{m/p+(2 n+m)/q} \norm{S_- f}_{\ell^{p,q}(J,K)}
	\] 
	for a suitable constant $C>0$ (depending only on $\delta_-$ and $R_0$), provided that $\delta_-$ is sufficiently small. 
	Observe first that, by~\textsc{step IV}, up to replacing $R$ with $R+8$, we may assume that $(\zeta_{k},z_{j,k})$ is an $F$-$(\delta,R)$-lattice, that $K=K'\times K''$, that $h_{(k',k'')}$ only depends on $k''$ (so that we also write $h_{k''}$ instead of $h_{(k',k'')}$), and that $(h_{k''})$ is a $(\delta,R)$-lattice in $\Omega$.	
	Observe that, for every $(j,k)\in J\times K$, we may find $(\zeta'_{j,k},z'_{j,k})\in \overline B((\zeta_{k},z_{j,k}),R\delta)$ such that
	\[
	\abs{f(\zeta'_{j,k},z'_{j,k})}=\min_{\overline B((\zeta_{k},z_{j,k}),R\delta)}\abs{f}.
	\]
	Now,~\cite[Lemmas 3.24 and 3.25]{CalziPeloso}  imply that there are $R'_1\in (0,R_0']$ and  $C_3>0$ such that, for every $j\in J$, for every $k\in K$, for every $(\zeta,x)\in \Nc$, and for every $h\in \Omega$ such that $d((\zeta_{k},z_{j,k}),(\zeta,x+i\Phi(\zeta)+i h) )<R\delta$,
	\[
	\abs{f_h^{(\zeta)}(x)}\meg\abs{f(\zeta'_{j,k},z'_{j,k})}+C_{3} R \delta \norm{\chi_{B((\zeta,x+i\Phi(\zeta)+i h), 2 R \delta+R'_1)} f }_{L^p(\nu_D)},
	\]
	provided that $R\delta \meg R'_1$.
	Then,
	\[
	\begin{split}
		\norm{f_h^{(\zeta)}}_{L^p(F)}^p&\meg 2^{(p-1)_+}\norm{M_{R\delta}}_{L^\infty(E\times\Omega)} \sum_{(j,k)\in J\times K_{\zeta,h}}\Delta_\Omega^{-\vect d}(h_k) \abs{f(\zeta'_{j,k},z'_{j,k}) }^p\\
		&\qquad+2^{(p-1)_+} (C_3 R \delta)^p \Theta_1(\zeta,h)  ,
	\end{split}
	\]
	where  
	\[
	\begin{split}
		\Theta_1(\zeta,h)\coloneqq\sum_{(j,k)\in J\times K_{\zeta,h}} \int_F (\chi_{B((\zeta_{k},z_{j,k}),R\delta)})_h^{(\zeta)}(x)\int_D \chi_{B((\zeta,x+i\Phi(\zeta)+i h),2 R\delta+R'_1)} \abs{f }^p\,\dd \nu_D \,\dd x.
	\end{split}
	\]
	Now, set 
	\[
	K''_{h}\coloneqq \Set{k''\in K''\colon h\in B_\Omega(h_{k''},R\delta)  }
	\]
	for every $h\in \Omega$, and observe that we may assume that, for every $h\in \Omega$,
	\[
	\card K''_h\meg N,
	\] 
	provided that $\delta\meg \delta_+$, as in~\textsc{step I}. 
	Then,~\cite[Corollary 2.49]{CalziPeloso} implies that there is a constant $C_4>0$ such that, if $R\delta \meg R'_1$,
	\[
	\begin{split}
		& \int_{E}  \left(\sum_{(j,k)\in J\times K_{\zeta,h}} \Delta_\Omega^{-\vect d}(h_k) \abs{f(\zeta'_{j,k},z'_{j,k}) }^p\right)^{q/p}\,\dd\zeta\\
		&\qquad \meg N^{(q/p-1)_+} \sum_{k''\in K''_h} \Delta_\Omega^{-(q/p)\vect d}(h_k)\int_{E} \sum_{k'\colon (k',k'')\in K_{\zeta,h}}  \left(\sum_{j\in J}  \abs{f(\zeta'_{j,(k',k'')},z'_{j,(k',k'')}) }^p\right)^{q/p}\,\dd \zeta \\
		&\qquad\meg C_{E,R\delta}  N^{\max(1,q/p)} \sum_{k''\in K''_h} \Delta_\Omega^{-q(\vect b/q+ \vect d/p)}(h_{k''})\sum_{k'\in K'}  \left(\sum_{j\in J}  \abs{f(\zeta'_{j,(k',k'')},z'_{j,(k',k'')}) }^p\right)^{q/p} \\
		&\qquad \meg C_4 \delta^{2n} N^{\max(1,q/p)}\Delta_\Omega^{-q \vect s}(h)  \sum_{k''\in K''_h}\sum_{k'\in K'}  \left(\sum_{j\in J}  \abs{(S_- f)_{j,(k',k'')} }^p\right)^{q/p}
	\end{split}
	\]
	where the first inequality follows from the convexity or subadditivity of the mapping $x\mapsto x^{q/p}$ on $\R_+$, while the second one follows from Tonelli's theorem, setting $C_{E,R\delta}\coloneqq \Hc^{2n}(\pr_E(B_{E\times \Omega}((0,e_\Omega),R\delta)))$.
	
	Now, observe that
	\[
	\begin{split}
		&\Theta_1(\zeta,h)=\int_D  \abs{f(\zeta',z')}^p \Theta_2(\zeta',z',\zeta,h)  \,\dd \nu_D(\zeta',z'),
	\end{split}
	\]
	where
	\[
	\Theta_2(\zeta',z',\zeta,h)\coloneqq\int_F\sum_{(j,k)\in J\times K_{\zeta,h}} (\chi_{B((\zeta_{k},z_{j,k}),R\delta)\cap B((\zeta',z'),2 R\delta+R'_1)})^{(\zeta)}_h(x)   \,\dd x .
	\]
	In addition, for every $(\zeta',z')\in D$ and for every $(\zeta,h)\in E\times\Omega$, setting $h'\coloneqq \rho(\zeta',z')$, one has
	\[
	\begin{split}
		\Theta_2(\zeta',z',\zeta,h)&\meg N \norm{(\chi_{B((\zeta',z'),2 R\delta+R'_1)})_h^{(\zeta)}  }_{L^1(F)}=N M_{2 R\delta+R'_1}(g_{t'}^{-1}(\zeta-\zeta'),t'^{-1}\cdot h) \Delta_\Omega^{-\vect d}(h'),
	\end{split}
	\]
	provided that $R\meg R_0$ and $\delta\meg \delta_+$, where $t'\in T_+$ is such that $h'=t'\cdot e_\Omega$.
	Therefore, by~\textsc{step I} we see that
	\[
	\Theta_1(\zeta,h) \meg N \norm{M_{2R\delta+R'_1}}_{L^\infty(E\times \Omega)}\int_{B_{E\times\Omega}((\zeta,h),2 R\delta+R'_1)} \norm{f_{h'}^{(\zeta')}}_{L^p(F)}^p\,\dd \nu_{E\times\Omega}(\zeta',h'),
	\]
	provided that $R\meg R_0$ and $\delta\meg \delta_+$.
	Then, applying Lemma~\ref{lem:1} as in~\textsc{step II}, we see that there is a constant $C_5>0$ such that
	\[
	\int_{E} \Theta_1(\zeta,h)^{q/p} \,\dd\zeta\meg C_5 \int_{ B_\Omega(h,2 R\delta+2 R'_1) } \norm{f_{h'}}_{L^{p,q}(F,E)}^q\,\dd \nu_\Omega(h')
	\]
	provided that $R\meg R_0$ and $\delta\meg \delta_+$.
	
	Therefore, there is a constant $C_6>0$ such that
	\[
	\norm{f_h}_{L^{p,q}(F,E)}\meg C_6 \Delta^{-\vect s}_\Omega(h)\delta^{m/p+2n/q} \norm{S_- f}_{\ell^{p,q}(J,K'\times K''_h)}+\delta C_6\left(  \int_{ B_\Omega(h,2 R\delta+2 R'_1) } \norm{f_{h'}}^q_{L^{p,q}(F,E)}\,\dd \nu_\Omega(h')\right)^{1/q} ,
	\]
	provided that $R\meg R_0$ and $\delta\meg \min(R_1'/R_0,\delta_+)$.
	
	Now, by assumption, there is $\alpha'\in (0,1/2)$ such that
	\[
	\sup_{(\zeta,z)\in D} \Delta_\Omega^{\vect{s'}}(\rho(\zeta,z)) \ee^{-\abs{\zeta}^{2\alpha'}-\abs{\Re z}^{\alpha'}-\abs{\rho(\zeta,z)}^{\alpha'}} \abs{f(\zeta,z)}<\infty.
	\]
	Then, take $(g^{(\eps)})_{\eps>0}$ as in~\cite[Lemma 1.22]{CalziPeloso} for some $\alpha\in (\alpha',1/2)$, so that $G(\eps)\coloneqq f g^{(\eps)}\in \Ac^{p,\infty}_{\vect{s'}}(D)$ and  $S_- G(\eps)\meg S_- f$ for every $\eps>0$. In particular,  the mapping $h\mapsto \norm{G(\eps)_h}_{L^{p,q}(F,E)}$ is (finite and) decreasing on $\Omega$, thanks to Corollary~\ref{oss:1}, for every $\eps>0$.
	In addition, observe that we may take $\delta_1\in (0,\min(R'_1/R_0,\delta_+)]$ and $R'_1$ so small that $B_\Omega(e_\Omega, 2 R_0\delta_1+2R'_1)\subseteq e_\Omega/2+\Omega$. Then, by homogeneity,
	\[
	B_\Omega(h, 2 R_0\delta_1+2R'_1)\subseteq h/2+\Omega
	\]
	for every $h\in \Omega$. Then, the preceding estimates (applied to $G(\eps)$) show that there is a constant $C_6'>0$ such that
	\[
	\begin{split}
		\norm*{G(\eps)_h}_{L^{p,q}(F,E)}&\meg C_6' \Delta_\Omega^{-\vect s}(h)\delta^{m/p+2n/q} \norm{S_- f}_{\ell^{p,q}(J, K'\times K''_{h})}+\delta C_6' 	\norm*{G(\eps)_{h/2}}_{L^{p,q}(F,E)},
	\end{split}
	\]
	for every $\eps>0$, provided that $\delta\meg \delta_1$ and $R\meg R_0$.
	If we define 
	\[
	\chi_\ell\colon D\ni (\zeta,z)\mapsto \chi_{e_\Omega/2^\ell+\Omega }(\rho( \zeta,z))\in \R_+
	\]
	for every $\ell\in \N$, then there is a constant $C_6''>0$ such that
	\[
	\norm*{\chi_\ell G(\eps)}_{\Lc^{p,q}_{\vect s}(D)}\meg C_6'' \delta^{m/p+(2n+m)/q}\norm{S_- f}_{\ell^{p,q}(J,K)}+\delta C_6'' \norm*{\chi_{\ell+1}G(\eps)}_{\Lc^{p,q}_{\vect s}(D)}
	\]
	for every $\eps>0$ and for every $\ell\in \N$, provided that $\delta\meg \delta_1$ and $R\meg R_0$. 
	Now, define 
	\[
	e_{\alpha'}\colon D\ni (\zeta,z)\mapsto \ee^{\abs{\zeta}^{2\alpha'}+\abs{\Re z}^{\alpha'}+\abs{\Im z-\Phi(\zeta)}^{\alpha'}}\in \R_+,
	\]
	and observe that
	\[
	\begin{split}
		\norm*{G(\eps) \chi_{\ell}}_{\Lc^{p,q}_{\vect s}(D)}&\meg \norm{e_{\alpha'}^{-1} f}_{\Lc^{\infty,\infty}_{\vect{s'}}(D)} \norm*{e_{\alpha'} g^{(\eps)}\chi_{\ell}}_{\Lc^{p,q}_{\vect s-\vect{s'}}(D)}\\
		&\meg C_{7,\eps} \norm{e_{\alpha'}^{-1} f}_{\Lc^{\infty,\infty}_{\vect{s'}}(D)} \norm*{\chi_{e_\Omega/2^{\ell}+\Omega} \Delta_\Omega^{\vect s-\vect{s'}}\ee^{- C_{7,\eps}\abs{\,\cdot\,}^\alpha}}_{L^q(\nu_\Omega)}\\
		& \meg C_{7,\eps}' \norm{e_{\alpha'}^{-1} f}_{\Lc^{\infty,\infty}_{\vect{s'}}(D)} \norm{\chi_{e_\Omega/2^{\ell}+\Omega} \Delta_\Omega^{\vect s-\vect{s'}+\vect d/q}}_{L^\infty(\Omega)} \norm{\ee^{- C_{7,\eps}\abs{\,\cdot\,}^\alpha}}_{L^q(\Omega)}\\
		&\meg  C_{7,\eps}'' 2^{(\vect{s'}-\vect{s}-\vect d/q)\ell}\norm{e_{\alpha'}^{-1} f}_{\Lc^{\infty,\infty}_{\vect{s'}}(D)}
	\end{split}
	\]
	for suitable constants $C_{7,\eps},C_{7,\eps}',C_{7,\eps}''>0$, since $\vect s-\vect{s'}+\vect d/q\meg \vect 0$ (cf.~\cite[Corollary 2.36]{CalziPeloso}). Then, fix $N'> \sum_{j=1}^r(s'_j-s_j)+m/q$ and choose $\delta_-\in (0,\delta_1]$ so that $ C_6''\delta_- \meg 2^{-N'}$. Observe that the preceding computations show that, if $\delta\in (0,\delta_-]$, then
	\[
	\begin{split}
		\norm*{G(\eps)\chi_\ell}_{\Lc^{p,q}_{\vect s}(D)}&=\sum_{\ell'\in\N} 2^{-\ell' N'}\left(\norm*{G(\eps)\chi_{\ell+\ell'}}_{\Lc^{p,q}_{\vect s}(D)}-\frac{1}{2^{N'}} \norm*{G(\eps)\chi_{\ell+\ell'+1}}_{\Lc^{p,q}_{\vect s}(D)}\right)\\
		&\meg \frac{C_6''}{1-2^{-N'}}\delta^{m/p+(2n +m)/q}\norm{S_- f}_{\ell^{p,q}(J,K)}
	\end{split}
	\]
	for every $\eps>0$ and for every $\ell\in \N$. Passing to the limit for $\ell\to \infty$, we then infer that $G(\eps)\in \Ac^{p,q}_{\vect s}(D)$ and that
	\[
	\norm*{G(\eps)}_{\Ac^{p,q}_{\vect s}(D)}\meg \frac{C_6''}{1-2^{-N'}}\delta^{m/p+(2n +m)/q}\norm{S_- f}_{\ell^{p,q}(J,K)}
	\]
	for every $\eps>0$.
	Then, passing to the limit for $\eps\to 0^+$, we infer that $f\in \Ac^{p,q}_{\vect s}(D)$ and that
	\[
	\norm{f}_{\Ac^{p,q}_{\vect s}(D)}\meg \frac{C_6''}{1-2^{-N'}}\delta^{m/p+(2n +m)/q}\norm{S_- f}_{\ell^{p,q}(J,K)}.
	\]
	
	\textsc{Step VI.} It only remains to prove that $f\in \Ac^{p,q}_{\vect s,0}(D)$ for every $f\in \Ac^{p,q}_{\vect s}(D)$ such that $S_- f\in \ell^{p,q}_0(J,K)$, provided that $\delta_-$ is sufficiently small.
	Observe first that the preceding computations show that
	\[
	\norm{\chi_\ell (f-G(\eps))}_{\Lc^{p,q}_{\vect s}(D)}\meg  \frac{C_6''}{1-2^{-N'}} \delta^{m/p+(2 n+m)/p} \norm{S_-(f-G(\eps))}_{\ell^{p,q}(J,K)}
	\]
	for every $\ell\in\N$ and for every $\eps>0$.
	Since $S_-(f-G(\eps))\meg (S_- f) \widetilde S_+(1-g^{(\eps)})$, where 
	\[
	\big[\widetilde S_+(1-g^{(\eps)})\big]_{j,k}\coloneqq\max_{B((\zeta_{k},z_{j,k}),R\delta)} \abs*{1-g^{(\eps)}}
	\]
	for every $(j,k)\in J\times K$, and since $1-g^{(\eps)}\to 0$ locally uniformly, it is readily seen that $\chi_\ell (f-G(\eps))\to 0$ in $\Lc^{p,q}_{\vect s}(D)$ for $\eps\to 0^+$, for every $\ell\in\N$. 
	In particular, $f_h\in L^{p,q}_0(F,E)$ for every $h\in \Omega$, and the mapping $h\mapsto \chi_{e_\Omega/2^\ell+\Omega}(h)\Delta^{\vect s}_\Omega(h)\norm{f_h}_{L^{p,q}(F,E)} $ belongs to $L^q_0(\nu_\Omega)$ for every $\ell\in \N$. To conclude, it will essentially suffice to show that, if $q=\infty$, then $\Delta^{\vect s}_\Omega(h)\norm{f_h}_{L^{p,\infty}(F,E)}\to 0$ as $h$ approaches the boundary of $\Omega$.
	Observe that, by the preceding computations, there is a constant $C_8>0$ (namely, $C_6'\max(1,\delta_-^{m/p})$) such that
	\[
	\norm{f_h}_{L^{p,\infty}(F,E)}\meg C_8 \Delta_\Omega^{-\vect s}(h)\norm{S_- f}_{\ell^{p,\infty}(J, K'\times K''_h)}+\delta C_8 \norm{f_{h/2}}_{L^{p,\infty}(F,E)}
	\]
	for every $h\in \Omega$. Observe that 
	\[
	\Delta_\Omega^{\vect s}(h) \norm{f_{h/2}}_{L^{p,\infty}(F,E)}= 2^{\vect s} \Delta_\Omega^{\vect s}(h/2) \norm{f_{h/2}}_{L^{p,\infty}(F,E)}\meg  2^{\vect s}  \norm{f}_{\Ac^{p,\infty}_{\vect s}(D)}
	\] 
	for every $h\in \Omega$. Therefore, assuming that $\delta_-$ is so small that $\delta_- C_8 2^{\vect s}\meg 1/2 $,
	\[
	\begin{split}
		\Delta_\Omega^{\vect s}(h)\norm{f_h}_{L^{p,\infty}(F,E)}&= \sum_{\ell\in \N} 2^{-\ell}\left(\Delta_\Omega^{\vect s}(h/2^{\ell}) \norm{f_{h/2^{\ell}}}_{L^{p,\infty}(F,E)}-\frac{1}{2 \cdot 2^{\vect s}}\Delta_\Omega^{\vect s}(h/2^{\ell}) \norm{f_{h/2^{\ell+1}}}_{L^{p,\infty}(F,E)}\right)\\
		&\meg C_8  \sum_{\ell\in \N} 2^{-\ell}\norm{S_- f}_{\ell^{p,\infty}(J, K'\times K''_{h/2^\ell})}
	\end{split}
	\]
	for every $h\in \Omega$. Now, observe that $\eta\coloneqq \min_{\ell\in \N} d_\Omega(e_\Omega,e_\Omega/2^{\ell+1})>0$, and that $\eta=\min_{\ell\in \N} d_\Omega(h,h/2^{\ell+1})$ for every $h\in\Omega$, by homogeneity. Therefore, if $\delta_-$ is so small that $2 R_0\delta_-<\eta$, then the sets $K''_{h/2^\ell}$, as $\ell$ runs through $\N$, are pairwise disjoint for every $h\in \Omega$. Hence,
	\[
	\Delta_\Omega^{\vect s}(h)\norm{f_h}_{L^{p,\infty}(F,E)}\meg 2 C_8  \norm{S_- f}_{\ell^{p,\infty}(J, K'\times K'''_h)}
	\]
	for every $h\in\Omega$, where $K'''_h\coloneqq \bigcup_{\ell\in \N} K''_{h/2^\ell}$. Since $K'''_h$ is contained in the complement of every \emph{fixed} finite subset of $K$ if $h\in\Omega\setminus (e_\Omega/2^\ell+\Omega)$ and $\ell$ is sufficiently large, this and the preceding remarks prove that $\Delta_\Omega^{\vect s}(h) \norm{f_h}_{L^p(\Nc)}\to 0$ as $h\to \infty$ in $\Omega$, provided that $\delta_-$ is sufficiently small (independently of $f$). The proof is complete. 
\end{proof}

\subsection{Atomic Decomposition and Duality}\label{sec:atomic}

\begin{deff}\label{38}
	Take $p,q\in (0,\infty]$ and $\vect s,\vect{s'}\in \R^r$.
	Then, we say that property $(\Lc)^{p,q}_{\vect s,\vect{s'},0}$ (resp.\ $(\Lc)^{p,q}_{\vect s,\vect{s'}}$) holds if for every $\delta_0>0$ there is an $F$-$(\delta,4)$-lattice $(\zeta_{k},z_{j,k})_{j\in J, k\in K}$, with $\delta\in (0,\delta_0]$,  such that, defining $h_k\coloneqq \rho(\zeta_{j,k},z_{j,k})$ for every $j\in J$ and for every $k\in K$, the mapping
	\[
	\Psi\colon\lambda \mapsto \sum_{j,k} \lambda_{j,k} B_{(\zeta_{k},z_{j,k})}^{\vect{s'}} \Delta_\Omega^{\vect b/q+\vect d/p-\vect s-\vect{s'}}(h_k)
	\]
	is well defined (with locally uniform convergence of the sum) and maps $\ell^{p,q}_0(J,K) $ into $\Ac^{p,q}_{\vect s,0}(D)$ continuously (resp.\ maps $\ell^{p,q}(J,K) $ into $\Ac^{p,q}_{\vect s}(D)$ continuously).
	
	If we may take $(\zeta_{k},z_{j,k})_{j\in J, k\in K}$, for every $\delta_0>0$ as above, in such a way that the corresponding mapping $\Psi$ is onto, then we say that property $(\Lc')^{p,q}_{\vect s,\vect{s'},0}$ (resp.\ $(\Lc')^{p,q}_{\vect s,\vect{s'}}$) holds.
\end{deff}

Arguing as in~\cite[Proposition 4.4 and Corollary 4.7]{Paralipomena}, one may actually show that properties $(\Lc)^{p,q}_{\vect s,\vect{s'},0}$ (resp.\ $(\Lc)^{p,q}_{\vect s,\vect{s'}}$) and $(\Lc')^{p,q}_{\vect s,\vect{s'},0}$ (resp.\ $(\Lc')^{p,q}_{\vect s,\vect{s'}}$) are actually equivalent when $p,q\in [1,\infty]$, and that properties $(\Lc)^{p,q}_{\vect s,\vect{s'},0}$ and $(\Lc)^{p,q}_{\vect s,\vect{s'}}$ are equivalent when $\vect s \succ \vect 0$ (which is a necessary condition for property $(\Lc)^{p,q}_{\vect s,\vect{s'},0}$ to hold). Cf.~also~Theorem~\ref{teo:3} below.

\begin{lem}\label{lem:46}
	Take $p,q\in (0,\infty]$ and $\vect s,\vect{s'}\in \R^r$ such that  property $(\Lc)^{p,q}_{\vect s,\vect{s'},0}$ (resp.\ $(\Lc)^{p,q}_{\vect s,\vect{s'}}$) holds.
	Then, the following hold:
	\begin{itemize}
		\item $\vect s\succ \frac{1}{2 q}\vect m $ (resp.\  $\vect s\Meg \vect 0$ if $q=\infty$) and $\vect s \succ\frac 1 q \vect b+\frac 1 p\vect d +\frac{1}{2 q'} \vect{m'}$;
		
		\item $\vect{s'}\in \frac{1}{\min(p,p')}\vect d-\frac{1}{2 \min(p,p')} \vect{m'}$;
		
		\item $\vect s+\vect{s'}\prec  \frac{1}{\min(1,q)}\vect b+\frac{1}{\min(1,p)}\vect d-\frac{1}{2 q'}\vect m$ or $\vect s+\vect{s'}\meg  \frac{1}{q}\vect b+\frac{1}{\min(1,p)}\vect d$ if $q'=\infty$, and $\vect s+\vect s'\prec \frac 1 q \vect b+\frac 1 p\vect d-\frac{1}{2 q} \vect{m'}$.
	\end{itemize}
\end{lem}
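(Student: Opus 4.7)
My plan is to extract each of the three bullets by testing property $(\Lc)^{p,q}_{\vect s,\vect{s'},0}$ (or $(\Lc)^{p,q}_{\vect s,\vect{s'}}$) against carefully chosen sequences $\lambda\in \ell^{p,q}$ (or $\ell^{p,q}_0$) and then reading off the resulting constraints by appealing to Proposition~\ref{prop:4} and Remark~\ref{oss:4}. Since $\Psi$ is nonzero on, e.g., a Dirac sequence (the atoms $B^{\vect{s'}}_{(\zeta_k,z_{j,k})}$ being linearly independent), the target space is nontrivial, so Remark~\ref{oss:4} immediately yields $\vect s\succ \frac{1}{2q}\vect m$ (resp.\ $\vect s\Meg \vect 0$ if $q=\infty$). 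Evaluating $\Psi$ on $\delta_{(j_0,k_0)}$ produces $\Delta_\Omega^{\vect b/q+\vect d/p-\vect s-\vect{s'}}(h_{k_0})B^{\vect{s'}}_{(\zeta_{k_0},z_{j_0,k_0})}$, whose $\Ac^{p,q}_{\vect s}$-norm is a positive constant \emph{independent} of $(j_0,k_0)$ by the homogeneity formula of Proposition~\ref{prop:4}; hence $B^{\vect{s'}}_{(\zeta_{k_0},z_{j_0,k_0})}\in \Ac^{p,q}_{\vect s,0}$ (resp.\ $\Ac^{p,q}_{\vect s}$), and Proposition~\ref{prop:4} supplies $\vect{s'}\prec \frac{1}{p}\vect d-\frac{1}{2p}\vect{m'}$ and the second half of the third bullet, $\vect s+\vect{s'}\prec \frac 1 q\vect b+\frac 1 p\vect d-\frac{1}{2q}\vect{m'}$.

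To obtain the remaining ``dual-type'' constraints---the lower bound $\vect s\succ \frac 1 q\vect b+\frac 1 p\vect d+\frac{1}{2q'}\vect{m'}$, the sharpening of the $\vect{s'}$-bound from $\frac 1 p$ to $\frac{1}{\min(p,p')}$, and the first half of the third bullet with $\min(1,q)$ and $\min(1,p)$---the plan is to compose $\Psi$ with the sampling isomorphism $S$ from Theorem~\ref{teo:1} (applied on an $F$-$(\delta,4)$-lattice). The composition $S\circ \Psi$ is a bounded operator on $\ell^{p,q}_0$ whose matrix kernel is, up to a normalizing scalar,
\[
K_{(j,k),(j',k')}=\Delta_\Omega^{\vect s-\vect b/q-\vect d/p}(h_k)\Delta_\Omega^{\vect b/q+\vect d/p-\vect s-\vect{s'}}(h_{k'})B^{\vect{s'}}_{(\zeta_{k'},z_{j',k'})}(\zeta_k,z_{j,k}).
\]
Column sums of $K$ in $(j,k)$ reproduce the single-atom bounds already obtained, whereas row sums in $(j',k')$ are (by symmetry of $B^{\vect{s'}}$ in its two arguments) controlled exactly by the membership of an analogous atom in $\Ac^{p',q'}_{\vect{\tilde s}}$ for the dual exponents. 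Proposition~\ref{prop:4} applied with $(p',q')$ in place of $(p,q)$ then gives the same three inequalities with $p$ replaced by $p'$ and $\vect{m'}$ by $\vect m$; intersecting the two sets of constraints produces the $\min(p,p')$ in the second bullet, the dual lower bound on $\vect s$, and (by quasi-Banach interpolation with the $\ell^p$ quasi-triangle inequality, accounting for a factor $N^{1/\min(1,p)-1/p}$ when $\lambda$ is supported on an $N$-block) the $\min(1,p),\min(1,q)$ factors in the first half of the third bullet.

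The main obstacle will be making this Schur-type row/column argument fully rigorous across the entire range $p,q\in (0,\infty]$. For $p,q\in [1,\infty]$ the adjoint $(S\circ\Psi)^*$ is bounded on $\ell^{p',q'}$ and the row estimate is immediate from duality; for $p$ or $q$ in $(0,1)$ duality is unavailable and one must instead test $\Psi$ directly against finite uniform sequences $\lambda=\sum_{(j,k)\in E_N}\delta_{(j,k)}$ supported on blocks $E_N$ of size $N$, estimate the resulting function pointwise using the off-diagonal decay of $B^{\vect{s'}}$ (cf.\ Proposition~\ref{prop:4}), and let $N\to \infty$; the loss incurred by the quasi-triangle inequality is precisely what forces $\min(1,p)$ and $\min(1,q)$ to appear in the numerology. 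Once this reduction to Proposition~\ref{prop:4} in both variables is carried out the remaining bookkeeping is a routine matching of exponents.
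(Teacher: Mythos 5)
The first half of your plan matches the paper: testing $\Psi$ on Dirac masses and invoking the homogeneity formula from Proposition~\ref{prop:4} shows that the normalized atoms $B^{\vect{s'}}_{(\zeta_k,z_{j,k})}$ lie in $\Ac^{p,q}_{\vect s,0}(D)$ (resp.\ $\Ac^{p,q}_{\vect s}(D)$), and Proposition~\ref{prop:4} plus Remark~\ref{oss:4} yield the ``direct'' constraints. That part is correct and is exactly what the paper does.

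Your ``dual'' step is where the argument drifts. The paper does \emph{not} form the matrix operator $S\circ\Psi$ or run any Schur-type row/column estimate. It simply composes $\Psi$ with the single point-evaluation functional $f\mapsto f(0,ie_\Omega)$, which is continuous on $\Ac^{p,q}_{\vect s}(D)$ for \emph{every} $p,q\in(0,\infty]$. This gives a continuous linear functional on $\ell^{p,q}_0(J,K)$, and the dual of $\ell^{p,q}_0$ is $\ell^{p',q'}$ for \emph{all} $p,q\in(0,\infty]$ (with $p'=\infty$ when $p\meg1$); there is no obstruction when $p$ or $q$ drops below $1$, so the block-sum device you propose is unnecessary. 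Using the hermitian symmetry $B^{\vect{s'}}_{(\zeta_k,z_{j,k})}(0,ie_\Omega)=\overline{B^{\vect{s'}}_{(0,ie_\Omega)}(\zeta_k,z_{j,k})}$ and the sampling Theorem~\ref{teo:1} applied with $(p',q')$ in place of $(p,q)$ then gives $B^{\vect{s'}}_{(0,ie_\Omega)}\in\Ac^{p',q'}_{\vect b/\min(1,q)+\vect d/\min(1,p)-\vect s-\vect{s'}}(D)$; a second application of Proposition~\ref{prop:4} finishes.

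Two concrete flaws in your sketch. First, you attribute the $\min(1,p),\min(1,q)$ factors to a quasi-triangle-inequality loss on $N$-blocks; in fact they arise purely from the sampling weights $\Delta^{\vect b/q'+\vect d/p'}$ and $\Delta^{\vect b/q+\vect d/p}$ being added, and $1/p+1/p'=1/\min(1,p)$ for all $p\in(0,\infty]$ (with $p'=\infty$ when $p\meg1$). Second, applying Proposition~\ref{prop:4} with $(p',q')$ does \emph{not} interchange $\vect m$ and $\vect m'$; the proposition as stated produces $\frac{1}{2q'}\vect m$, $\frac{1}{2p'}\vect m'$, $\frac{1}{2q'}\vect m'$ and one must read off the inequalities exactly as written, not by a formal $\vect m\leftrightarrow\vect m'$ symmetry. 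As written, the second half of your plan (the Schur/block machinery) would need to be replaced wholesale by the simple point-evaluation duality argument, and the exponent bookkeeping corrected, before it would prove the lemma.
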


\begin{proof}
	By Proposition~\ref{prop:4}, it will suffice to observe that $ B^{\vect{s'}}_{(0, i e_\Omega)}\in \Ac^{p,q}_{\vect s,0}(D)$ (resp.\ $ B^{\vect{s'}}_{(0, i e_\Omega)}\in \Ac^{p,q}_{\vect s}(D)$), and to show that 
	\[
	B^{\vect{s'}}_{(0, i e_\Omega)}\in \Ac^{p',q'}_{\vect b/\min(1,q)+\vect d/\min(1,p)-\vect s-\vect{s'}}(D).
	\]
	Take  $\delta_0>0$. Then, there is an $F$-$(\delta,4)$-lattice $(\zeta_{k},z_{j,k})_{j\in J, k\in K}$, with $\delta\meg \delta_0$, such that the mapping
	\[
	\ell^{p,q}_0(J,K)\ni \lambda \mapsto \sum_{j,k} \lambda_{j,k} B^{\vect{s'}}_{(\zeta_{k},z_{j,k})} \Delta^{\vect b/q+\vect d/p-\vect s-\vect{s'}}_\Omega(h_k)\in \Ac^{p,q}_{\vect s}(D)
	\]
	is well defined and continuous, where $h_k\coloneqq \rho(\zeta_{k},z_{j,k})$ for every $j\in J$ and for every $k\in K$.
	Observe that the continuity of the mapping $f\mapsto f(0,i e_\Omega)$ on $\Ac^{p,q}_{\vect s}(D)$ implies that there is a constant $C_1>0$ such that
	\[
	\abs*{\sum_{j,k} \lambda_{j,k} B^{\vect{s'}}_{(\zeta_{k},z_{j,k})}(0,i e_\Omega) \Delta^{\vect b/q+\vect d/p-\vect s-\vect{s'}}_\Omega(h_k)}\meg C_1 \norm{\lambda}_{\ell^{p,q}_0(J,K)}
	\]
	for every $\lambda\in \ell^{p,q}_0(J,K)$. 
	Therefore, 
	\[
	\Big(B^{\vect{s'}}_{(0,i e_\Omega)}(\zeta_{j,k},z_{j,k}) \Delta^{\vect b/q+\vect d/p-\vect s-\vect{s'}}_\Omega(h_k)\Big)=\overline{\Big(B^{\vect{s'}}_{(\zeta_{j,k},z_{j,k})}(0,i e_\Omega) \Delta^{\vect b/q+\vect d/p-\vect s-\vect{s'}}_\Omega(h_k)\Big)}\in \ell^{p',q'}(J,K),
	\]
	so that the conclusion follows from  Theorem~\ref{teo:1} and~\cite[Theorem 2.47]{CalziPeloso}.
\end{proof}

\begin{teo}\label{teo:2}
	Take $p,q\in(0,\infty]$ and $\vect s,\vect s'\in \R^r$ such that the following hold:
	\begin{itemize}
		\item $\vect s\succ \frac {1}{2 q}\vect m+\left( \frac{1}{2 \min(1,p)}-\frac{1}{2q} \right)_+\vect m'$;

		\item $\vect s+\vect s'\prec \frac{1}{\min(1,p,q)}\vect b+\frac{1}{\min(1,p)}\vect d-\frac{1}{2 q}\vect m'-\left( \frac{1}{2 \min(1,p)}-\frac{1}{2 q} \right)_+\vect m$.
	\end{itemize}
	Then, properties $(\Lc')^{p,q}_{\vect s,\vect s',0}$ and $(\Lc')^{p,q}_{\vect s,\vect s'}$ hold.
\end{teo}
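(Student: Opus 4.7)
The plan is to combine the reproducing formula of Proposition~\ref{prop:3} with the sampling Theorem~\ref{teo:1} and a standard discretization argument, in the spirit of~\cite[Theorem 4.2]{CalziPeloso} and~\cite[Proposition 4.4]{Paralipomena}. First, one checks that the hypotheses of the statement imply those of Proposition~\ref{prop:3}, so that $P_{\vect{s'}}f=f$ for every $f\in\Ac^{p,q}_{\vect s}(D)$. Fixing an $F$-$(\delta,4)$-lattice $(\zeta_k,z_{j,k})$ with $\delta\in(0,\delta_0]$ and a Borel partition $(U_{j,k})$ of $D$ with $(\zeta_k,z_{j,k})\in U_{j,k}\subseteq B((\zeta_k,z_{j,k}),4\delta)$, one replaces on each $U_{j,k}$ the integrand of the reproducing formula
\[
f(\zeta',z')=c\int_D f(\zeta,z)\, B^{\vect{s'}}_{(\zeta,z)}(\zeta',z')\Delta_\Omega^{-\vect{s'}}(\rho(\zeta,z))\,\dd\nu_D(\zeta,z)
\]
by its value at $(\zeta_k,z_{j,k})$. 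This produces the decomposition $f=\Psi(\Sigma f)+E_\delta f$, where $\Sigma$ is (a scalar multiple of) the sampling map of Theorem~\ref{teo:1}, $\Psi$ is the atomic synthesis of Definition~\ref{38}, and $E_\delta f$ is the resulting discretization error.

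Continuity of $\Sigma\colon\Ac^{p,q}_{\vect s}(D)\to\ell^{p,q}(J,K)$ (and of its restriction to the subscript-$0$ subspace) is Theorem~\ref{teo:1} applied to $S_+$. Continuity of $\Psi\colon\ell^{p,q}(J,K)\to\Ac^{p,q}_{\vect s}(D)$ is proved by a Schur-type summation estimate using the pointwise behaviour of the atoms $B^{\vect{s'}}_{(\zeta_k,z_{j,k})}$ provided by Proposition~\ref{prop:4}; the hypotheses on $\vect s,\vect{s'}$ in the statement are precisely those which make the required sums converge (with the additional $\bigl(\tfrac{1}{2\min(1,p)}-\tfrac{1}{2q}\bigr)_+$-shifts in $\vect m,\vect{m'}$ arising, when $\min(p,q)<1$, from substituting the triangle inequality by its $\min(1,p,q)$-quasi-form), cf.\ Lemma~\ref{lem:46}.

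For surjectivity one controls the operator norm of $E_\delta$ on $\Ac^{p,q}_{\vect s}(D)$ via the Lipschitz continuity, in the invariant distance $d$, of the map $(\zeta,z)\mapsto B^{\vect{s'}}_{(\zeta,z)}(\zeta',z')\Delta_\Omega^{-\vect{s'}}(\rho(\zeta,z))$ together with the mean-value-type estimates of~\cite[Lemma 3.24]{CalziPeloso}. This operator norm is $O(\delta)$ as $\delta\to 0^+$, so for $\delta$ sufficiently small $\Psi\circ\Sigma=I-E_\delta$ is invertible on $\Ac^{p,q}_{\vect s}(D)$ by Neumann series. In particular $\Psi$ is onto, giving property $(\Lc')^{p,q}_{\vect s,\vect{s'}}$; the same argument applied on the subscript-$0$ subspace yields $(\Lc')^{p,q}_{\vect s,\vect{s'},0}$.

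The main obstacle is the delicate arithmetic verification that the stated inequalities on $\vect s$ and $\vect{s'}$ simultaneously satisfy all the conditions appearing above: the hypotheses of the reproducing formula (Proposition~\ref{prop:3}), the `two-sided' membership requirements on the reproducing kernels (Proposition~\ref{prop:4} and Lemma~\ref{lem:46}), and the bounds required to make both the Schur-type series and the Neumann series for $I-E_\delta$ converge uniformly for small $\delta$. The quasi-Banach regime $\min(p,q)<1$ is the most subtle part: it is what forces the additional $\vect m$- and $\vect m'$-shifts in the hypotheses and requires a particularly careful choice of exponents when applying the mean-value estimate of~\cite[Lemma 3.24]{CalziPeloso}.
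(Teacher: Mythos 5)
Your proposal follows the same strategy as the paper's: discretize the reproducing formula of Proposition~\ref{prop:3} over a Borel partition subordinate to the lattice, establish continuity of the synthesis map $\Psi$, bound the discretization error $E_\delta=I-\Psi\Sigma$ by $O(\delta)$ via the mean-value and Lipschitz estimates of~\cite[Lemmas 3.24 and 3.25]{CalziPeloso}, and invert $\Psi\Sigma$ by Neumann series for $\delta$ small, obtaining a continuous linear section of $\Psi$ and hence surjectivity. Two remarks. First, the continuity of $\Psi$ is not a single Schur-type estimate: the paper's Steps I--IV split into four cases according to the position of $p$ and $q$ relative to $1$, and the Schur estimate of~\cite[Lemma 3.35]{CalziPeloso} appears only when $q\Meg\min(1,p)$; the other cases rely on subadditivity of power functions and direct Young/Minkowski computations. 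Second, your parenthetical attributes the $\bigl(\tfrac{1}{2\min(1,p)}-\tfrac{1}{2q}\bigr)_+$-shifts on $\vect m,\vect{m'}$ to the quasi-Banach regime $\min(p,q)<1$, but these shifts are already present in the Banach case $p,q\Meg 1$ whenever $q>1$: they come from the Schur estimate, not from the $\ell$-quasi-triangle inequality.
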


More precisely, the proof shows that the mapping $\Psi$ of Definition~\ref{38} has a continuous linear section for $\delta$ sufficiently small and $R$ bounded.

\begin{proof}
	Take an $F$-$(\delta,R)$-lattice $(\zeta_{k},z_{j,k})_{j\in J,k\in K}$ on $D$ for some $\delta>0$ and some $R>1$. We shall further assume, as in the proof of Theorem~\ref{teo:1}, that $K=K'\times K''$, and that there is a $(\delta,R)$-lattice $(h_{k''})_{k''\in K''}$ on $\Omega$ such that $h_{k''}= \rho(\zeta_{(k',k'')}, z_{j,(k',k'')})$ for every $j\in J$ and for every $(k',k'')\in K'\times K''$. We shall also write $h_{(k',k'')}$ instead of $h_{k''}$ when it simplifies the notation.
	
	In addition, define 
	\[
	B_{j,k}^{\vect{s''}}\coloneqq B^{\vect{s''}}_{(\zeta_{k},z_{j,k})} 
	\]
	for every $\vect{s''}\in \R^r$ and for every $(j,k)\in J\times K$, in order to simplify the notation. 
	Further, for every $\lambda\in \C^{J\times K}$ define
	\[
	\Psi_+(\lambda)\coloneqq \sum_{j,k} \abs{\lambda_{j,k} B^{\vect {s'}}_{j,k} } \Delta_\Omega^{\vect b/q+\vect d/p-\vect s-\vect{s'}}(h_k)\in [0,\infty]^D,
	\]
	and, for every $\lambda\in \C^{(J\times K)}$,
	\[
	\Psi(\lambda)\coloneqq \sum_{j,k} \lambda_{j,k} B^{\vect {s'}}_{j,k} \Delta_\Omega^{\vect b/q+\vect d/p-\vect s-\vect{s'}}(h_k)\in \Hol(D).
	\]
	We shall first prove that $\lambda \mapsto \norm{\Psi_+(\lambda)}_{\Lc^{p,q}_{\vect s}(D)}$ is a continuous quasi-norm on $\ell^{p,q}(J,K)$. Arguing as in~\cite[Proposition 3.32]{CalziPeloso} and using Proposition~\ref{prop:4}, this will prove that $\Psi$ induces continuous linear mappings $\ell^{p,q}_0(J,K)\to \Ac^{p,q}_{\vect s,0}(D)$ and $\ell^{p,q}(J,K)\to \Ac^{p,q}_{\vect s}(D)$. We shall then prove that these mappings are onto and have continuous linear sections.
	
	\textsc{Step I.} Assume first that $q\meg p\meg 1$. Then, for every $\zeta\in E$ and for every $h\in \Omega$,
	\[
	\norm{\Psi_+(\lambda)_h^{(\zeta)}}_{L^p(F)}^p\meg\sum_{j,k}\abs{\lambda_{j,k}}^p   \Delta_\Omega^{(p/q)\vect b+\vect d-p(\vect s+\vect{s'})}(h_k)  \norm{(B^{\vect {s'}}_{j,k}) _h^{(\zeta)} }_{L^p(F)}^p.
	\]
	In addition,~\cite[Lemma 2.39]{CalziPeloso} shows that there is a constant $C_1>0$ such that 
	\[
	\norm{(B^{\vect {s'}}_{j,k}) _h^{(\zeta)} }_{L^p(F)}^p =C_1 \Delta_\Omega^{p\vect{s'}-\vect d}(h+h_k+\Phi(\zeta-\zeta_{k})) 
	\]
	for every $(j,k)\in J\times K$, for every $\zeta\in E$, and for every $h\in \Omega$.
	Therefore, using the subadditivity of the mapping $x\mapsto x^{q/p}$ on $\R_+$,
	\[
	\begin{split}
		\norm{\Psi_+(\lambda)}_{\Lc^{p,q}_{\vect s}(D)}^q&\meg C_1^{q/p} \sum_{k}  \left( \sum_j \abs{\lambda_{j,k}}^p\right) ^{q/p} \Delta_\Omega^{ \vect b+(q/p)\vect d-q(\vect s+\vect{s'})}(h_k) \times \\
		&\qquad \times\int_\Omega \int_E  \Delta_\Omega^{q\vect{s'}-(q/p)\vect d}(h+h_k+\Phi(\zeta-\zeta_{k}))\,\dd \zeta\, \Delta^{q\vect s}_\Omega(h)\,\dd \nu_\Omega(h).
	\end{split}
	\]
	Now,~\cite[Corollary 2.22 and Lemma 2.32]{CalziPeloso} imply that  there is a constant $C_2>0$ such that
	\[
	\int_\Omega \int_E  \Delta_\Omega^{q\vect{s'}-(q/p)\vect d}(h+h_k+\Phi(\zeta-\zeta_{k}))\,\dd \zeta\, \Delta^{q\vect s}_\Omega(h)\,\dd \nu_\Omega(h)= C_2 \Delta_\Omega^{ q(\vect s+\vect{s'}-\vect b/q-\vect d/p)}(h_k)
	\]
	for every $k\in K$. Hence,
	\[
	\norm{\Psi_+(\lambda)}_{\Lc^{p,q}_{\vect s}(D)}\meg C_1^{1/p} C_2^{1/q} \norm{\lambda }_{\ell^{p,q}(J,K)},
	\]
	whence  our claim in this case.
	
	\textsc{Step II.}  Assume, now, that $q\Meg p\meg 1$. For every $k\in K$, choose $\tau_k\in C_c(E\times \Omega)$ such that
	\[
	\chi_{B_{E\times \Omega}((\zeta_k,h_k),\delta/2)}\meg \tau_k\meg \chi_{B_{E\times \Omega}((\zeta_k,h_k),\delta)}.
	\]
	Observe that by the computations of~\textsc{step I} and~\cite[Corollary 2.49]{CalziPeloso}, there is a constant $C_1'>0$ such that
	\[
	\norm{\Psi_+(\lambda)_h^{(\zeta)}}_{L^p(F)}^p\meg C_1' \int_{E\times \Omega}\sum_{k} \lambda'_k \tau_k(\zeta',h')  \Delta_\Omega^{(p/q)\vect b+\vect d-p(\vect s+\vect{s'})}(h')  \Delta_\Omega^{p\vect{s'}-\vect d}(h+h'+\Phi(\zeta-\zeta'))\,\dd \nu_{E\times \Omega}(\zeta',h') ,
	\]
	where $\lambda'=\left( \sum_{j}\abs{\lambda_{j,k}}^p   \right)_k$.
	Hence, by Minkowski's integral inequality and Young's inequality, and by~\cite[Lemma 2.32]{CalziPeloso},
	\[
	\begin{split}
	&\norm{\Psi_+(\lambda)_h}_{L^{p,q}(F,E)}^p\\
		&\meg C_1' \norm*{\int_{E\times \Omega}\sum_{k} \lambda'_k \tau_k(\zeta',h')  \Delta_\Omega^{(p/q)\vect b+\vect d-p(\vect s+\vect{s'})}(h')  \Delta_\Omega^{p\vect{s'}-\vect d}(h+h'+\Phi(\,\cdot\,-\zeta'))\,\dd \nu_{E\times \Omega}(\zeta',h') }_{L^{q/p}(E)}\\
		&\meg C_1' \int_\Omega  \norm*{\Big( \sum_{k} \lambda'_k \tau_k(\,\cdot\,,h') \Big) * \Delta_\Omega^{p\vect{s'}-\vect d}(h+h'+\Phi(\,\cdot\,)) }_{L^{q/p}(E)}\Delta_\Omega^{[(p/q)+1]\vect b+\vect d-p(\vect s+\vect{s'})}(h')\,\dd \nu_{ \Omega}(h')\\
		&\meg C_1'  \int_{\Omega} \sum_{k''} \norm*{\sum_{k'}\lambda'_{(k',k'')} \tau_{(k',k'')}(\,\cdot\,,h')}_{L^{q/p}(E)}   \norm{\Delta_\Omega^{p\vect{s'}-\vect d}(h+h'+\Phi(\,\cdot\,))}_{L^1(E)}\Delta_\Omega^{[(p/q)+1]\vect b+\vect d-p(\vect s+\vect{s'})}(h') \,\dd \nu_{\Omega}(h') \\
		&\meg C_1''\int_{\Omega} \sum_{k''} \chi_{B_\Omega(h_{k''},\delta)}(h') \norm*{\big(\lambda'_{(k',k'')}\big)_{k'}}_{L^{q/p}(K')}  \Delta_\Omega^{p\vect{s'}-\vect b-\vect d}(h+h')\Delta_\Omega^{\vect b+\vect d-p(\vect s+\vect{s'})}(h') \,\dd \nu_{\Omega}(h') .
	\end{split}
	\]
	If we define $f\coloneqq  \sum_{k''} \chi_{B_\Omega(h_{k''},\delta)}(h') \norm*{\big(\lambda'_{(k',k'')}\big)_{k'}}_{L^{q/p}(K')}$, then it is clear that 
	\[
	\norm{f}_{L^{q/p}(\nu_\Omega)}\meg \nu_\Omega(B_\Omega(e_\Omega,\delta))^{p/q} \norm{ \lambda'}_{L^{q/p}(K)}= \nu_\Omega(B_\Omega(e_\Omega,\delta))^{p/q} \norm{ \lambda}_{L^{p,q}(J,K)}^p.
	\]
	Then,~\cite[Lemma 3.35]{CalziPeloso} implies that there is a constant $C_3>0$ such that
	\[
	\norm{\Psi_+(\lambda)}_{\Lc^{p,q}_{\vect s}(D)}\meg C_3 \norm{\lambda}_{L^{p,q}(J,K)},
	\]
	which completes the proof of our claim in this case.
	
	\textsc{Step III.} Assume, now, that $p,q\Meg 1$. For every $(j,k)\in J\times K$, choose $\tau_{j,k}\in C_c(\Omega)$ so that 
	\[
	\chi_{B((\zeta_{k},z_{j,k}),\delta/2)}\meg \tau_{j,k}\meg \chi_{B((\zeta_{k},z_{j,k}),\delta)},
	\]
	and define 
	\[
	C_4\coloneqq \sup\limits_{(\zeta,h)\in E\times\Omega} \int_F \left( \chi_{B((0,i e_\Omega),\delta)}\right)_h^{(\zeta)}\,\dd \Hc^{m}.
	\]
	Define
	\[
	\Psi'\colon \ell^{p,q}(J,K)\ni \lambda \mapsto \sum_{j,k} \lambda_{j,k} \tau_{j,k} \Delta_\Omega^{\vect b/q+\vect d/p-\vect s}\circ \rho  \in C(D),
	\]
	and let us prove that $\Psi'$ maps continuously 
	$\ell^{p,q}(J,K)$ into 
	$\Lc^{p,q}_{\vect s}(D)$
	. Indeed, take $\lambda\in \ell^{p,q}(J,K)$, and observe that
	\[
	\begin{split}
		\norm{(\Psi'( \lambda))_h^{(\zeta)}}_{L^p(F)}&\meg \Delta_\Omega^{\vect b/q+\vect d/p-\vect s}(h) \sum_{k\in K} \norm*{\sum_{j\in J} \abs{\lambda_{j,k}}\left( \chi_{B((\zeta_{k},z_{j,k}),\delta)} \right)_h ^{(\zeta)} }_{L^p(F)}\\
		&\meg C_4^{1/p}\sum_{k\in K}\frac{\Delta_\Omega^{\vect b/q+\vect d/p-\vect s}(h)}{\Delta_\Omega^{\vect d/p}(h_k)}\chi_{B((\zeta_k,h_k),\delta)}(\zeta,h) \norm*{(\lambda_{j,k})_j}_{\ell^p(J)}
	\end{split}
	\]
	for every $h\in \Omega$, so that by~\cite[Corollary 2.49]{CalziPeloso} there is a constant $C_4'>0$ such that
	\[
	\begin{split}
		\norm{\Psi'(\lambda)}_{\Lc^{p,q}_{\vect s}(D)}&\meg  C_4' \norm*{\sum_{k\in K}\chi_{B((\zeta_k,h_k),\delta)} \norm*{(\lambda_{j,k})_j}_{\ell^p(J)}  }_{L^q(\nu_{E\times\Omega})}\\
		&= C_4' \nu_{E\times\Omega}(B_{E\times\Omega}((0,e_\Omega),\delta))^{1/q} \norm{\lambda}_{\ell^{p,q}(J,K)}.
	\end{split}
	\]
	Thus, $\Psi'$ induces a continuous linear mapping $\ell^{p,q}(J,K)\to \Lc^{p,q}_{\vect s}(D)$.
	
	Now, observe that~\cite[Theorem 2.47 and Corollary 2.49]{CalziPeloso} imply that there is a constant $C_4''>0$ such that
	\[
	\begin{split}
		\abs{\Psi_+(\lambda)_h(\zeta,x)}&\meg  C_4'' \int_\Omega \int_\Nc \Psi'(\abs{\lambda})_{h'}(\zeta',x') \abs*{\left(B_{(\zeta',x'+i\Phi(\zeta')+i h')}^{\vect{s'}} \right)_{h}(\zeta,x) } \,\dd (\zeta',x')\Delta_\Omega^{\vect b+\vect d-\vect{s'}}(h')\,\dd \nu_\Omega(h')  \\
		&= C_4'' \int_\Omega \left( \Psi'(\abs{\lambda})_{h'}* \abs*{\left(B_{(0, i h')}^{\vect{s'}} \right)_{h}} \right) (\zeta,x) \Delta_\Omega^{\vect b+\vect d-\vect{s'}}(h')\,\dd \nu_\Omega(h')  \\
	\end{split}
	\]
	for every $\lambda\in \ell^{p,q}(J,K)$, for every $h\in \Omega$, and for every $(\zeta,x)\in \Nc$.
	Therefore, Minkowski's integral inequality, Young's inequality (applied twice),  and~\cite[Lemma 2.39]{CalziPeloso} show that there is a constant $C_5>0$ such that
	\[
	\norm{\Psi_+(\lambda)_h}_{L^{p,q}(F,E)}\meg C_5 \int_\Omega \norm{\Psi'(\abs{\lambda})_{h'}}_{L^{p,q}(F,E)}\Delta_\Omega^{\vect{s'}-(\vect b+\vect d)}(h+h')\Delta_\Omega^{\vect b+\vect d-\vect{s'}}(h') \,\dd \nu_\Omega(h')
	\]
	for every $\lambda\in \ell^{p,q}(J,K)$, and for every $h\in \Omega$.
	
	Define 
	\[
	T'\colon  f \mapsto \Delta_\Omega^{\vect s} \int_\Omega f(h')\Delta_\Omega^{\vect{s'}-(\vect b+\vect d)}(\,\cdot\,+h')\Delta_\Omega^{\vect b+\vect d-\vect s-\vect{s'}}(h') \,\dd \nu_\Omega(h').
	\]
	so that $T'$ induces an endomorphism of 
	$L^q(\nu_\Omega)$ by~\cite[Lemma 3.35]{CalziPeloso}. Then,
	\[
	\begin{split}
		\norm{\Psi_+(\lambda)}_{\Lc^{p,q}_{\vect s}(D)}&\meg C_5 \norm{T'}_{\Lin(L^q(\nu_\Omega))} \norm{\Psi'(\abs{\lambda})}_{\Lc^{p,q}_{\vect s}(D)}\\
		&\meg C_5 \norm{T'}_{\Lin(L^q(\nu_\Omega))} C_4' \nu_{E\times\Omega}(B_{E\times\Omega}((0,e_\Omega),\delta))^{1/q} \norm{\lambda}_{\ell^{p,q}(J,K)}.
	\end{split}
	\] 
	Our claim then follows also in this case.

	\textsc{Step IV.} Finally, assume that $p\Meg 1\Meg q$. For every $(j,k)\in J\times K$, choose $\tau'_{j,k}\in C_c(F)$ so that
	\[
	(\chi_{B((\zeta_k,z_{j,k}),\delta/2)})^{(\zeta_k)}_{h_k}\meg \tau'_{j,k}\meg (\chi_{B((\zeta_k,z_{j,k}),\delta)})^{(\zeta_k)}_{h_k},
	\]
	and define 
	\[
	\Psi''\colon \ell^{p,q}(J,K)\ni \lambda \mapsto  \left(\sum_{j} \lambda_{j,k} \tau'_{j,k} \Delta_\Omega^{\vect d/p}(h_k)\right)_k\in C_c(F)^K.
	\]
	As in~{step III}, one may show that $\Psi''$ induces a continuous linear mapping of $\ell^{p,q}(J,K)$ into $\ell^q(K; L^p(F))$. In addition, by means of~\cite[Theorem 2.47]{CalziPeloso} we see that there is a constant $C_6>0$ such that
	\[
	\begin{split}
		\abs{\Psi_+(\lambda)_h^{(\zeta)}(x)}&\meg  C_6 \sum_{k\in K}\Delta_\Omega^{\vect b/q+\vect d-\vect s-\vect{s'}}(h_k)\int_F \Psi''(\abs{\lambda})_k (x') \abs*{\left(B_{(\zeta_k,x'+i\Phi(\zeta_k)+i h_k)}^{\vect{s'}} \right)_{h}^{(\zeta)}(x) } \,\dd x'\\
		&= C_6 \sum_{k\in K}\Delta_\Omega^{\vect b/q+\vect d-\vect s-\vect{s'}}(h_k)\left(  \Psi''(\abs{\lambda})_k *\abs*{\left(B_{(\zeta_k,i\Phi(\zeta_k)+i h_k)}^{\vect{s'}} \right)_{h}^{(\zeta)} }\right)  (x)\\
	\end{split}
	\]
	for every $\lambda\in \ell^{p,q}(J,K)$, for every $(\zeta,h)\in E\times\Omega$, and for every $x\in F$. Then, by  Minkowski's inequality, Young's inequality, and~\cite[Lemma 2.39]{CalziPeloso}, there is a constant $C_6'>0$ such that
	\[
	\norm{\Psi_+(\lambda)_h^{(\zeta)}}_{L^{p}(F)}\meg C_6' \sum_{k\in K} \norm{\Psi''(\abs{\lambda})_k}_{L^{p}(F)}\Delta_\Omega^{\vect b/q+\vect d-\vect s-\vect{s'}}(h_k) \Delta_\Omega^{\vect{s'}-\vect d}(h+h_k+\Phi(\zeta-\zeta_k))
	\]
	for every $\lambda\in \ell^{p,q}(J,K)$ and for every $(\zeta,h)\in E\times\Omega$.
	Then, by the subadditivity of the mapping $x\mapsto x^q$ on $\R_+$, 
	\[
	\begin{split}
		\norm{\Psi_+(\lambda)}_{\Lc^{p,q}_{\vect s}(D)}^q&\meg C_6'^q \sum_{k} \norm{\Psi''(\abs{\lambda})_k}_{L^{p}(F)}^q \Delta_\Omega^{\vect b+q(\vect d-\vect s -\vect{s'})}(h_{k}) \\
			&\qquad \times\int_{E\times\Omega} \Delta_\Omega^{q\vect s-\vect b}(h) \Delta_\Omega^{q(\vect{s'}-\vect d)}(h+h_{k}+\Phi(\zeta-\zeta_k)) \,\dd\nu_{E\times\Omega}(\zeta,h).
	\end{split}
	\]
	Now, by homogeneity,
	\[
	\begin{split}
	&\Delta_\Omega^{\vect b+q(\vect d-\vect s-\vect{s'})}(h_{k})\int_{E\times\Omega} \Delta_\Omega^{q\vect s-\vect b}(h) \Delta_\Omega^{q(\vect{s'}-\vect d)}(h+h_{k}+\Phi(\zeta-\zeta_k)) \,\dd\nu_{E\times\Omega}(\zeta,h)\\
		&\qquad=\int_{E\times\Omega} \Delta_\Omega^{q\vect s-\vect b}(h) \Delta_\Omega^{q(\vect{s'}-\vect d)}(h+e_\Omega+\Phi(\zeta)) \,\dd\nu_{E\times\Omega}(\zeta,h)
	\end{split}
	\]
	for every $k\in K$, and the last integral is finite by~\cite[Corollary 2.22 and Lemma 2.32]{CalziPeloso}.
	Therefore, there is a constant $C_7>0$ such that
	\[
	\norm{\Psi_+(\lambda)}_{\Lc^{p,q}_{\vect s}(D)}\meg C_7 \norm{\Psi''(\abs{\lambda})}_{\ell^q(K;L^p(F))}, 
	\]
	whence our claim also in this case. 

	
	\textsc{Step V.} 	Put a well-ordering on $J\times K$, and define 
	\[
	U_{j,k} \coloneqq B((\zeta_{k},z_{j,k}),R\delta)\setminus \left(\bigcup_{(j',k')<(j,k)} B((\zeta_{k'},z_{j',k'}),R\delta) \right)
	\]
	for every $(j,k)\in J\times K$, so that $(U_{j,k})_{(j,k)\in J\times K}$ is a Borel measurable partition of $D$ (since $J$ and $K$ are countable). 
	In addition, define $c_{j,k}\coloneqq c \nu_D(U_{j,k})$ for every $(j,k)\in J\times K$, where $c>0$ is defined so that $P_{\vect s'} f=c\int_D f(\zeta,z) B_{(\zeta,z)}^{\vect s}\Delta^{-\vect s'}(\rho(\zeta,z))\,\dd \nu_D(\zeta,z)$ for every $f\in C_c(D)$. Then, 
	\[
	c\,\nu_D(B((0,i e_\Omega),\delta))\meg c_{j,k}\meg c\,\nu_D(B((0,i e_\Omega),R\delta))
	\]
	for every $(j,k)\in J\times K$.
	Then, define 
	\[
	S\colon \Ac^{p,q}_{\vect s}(D)\ni f \mapsto \left(c_{j,k}\Delta_\Omega^{\vect s-\vect b/q+\vect d/p}(h_k) f(\zeta_{k},z_{j,k}) \right)\in \ell^{p,q}(J,K),
	\]
	so that Theorem~\ref{teo:1} shows that $S$ is well defined and continuous, and maps $\Ac^{p,q}_{\vect s,0}(D)$ into $\ell^{p,q}_0(J,K)$. Define $S'\coloneqq \Psi S$.
	Then, Proposition~\ref{prop:3} implies that, for every $ f\in \Ac^{p,q}_{\vect s}(D) $,
	\[
	\begin{split}
		f- S' f&= c\sum_{j,k}\int_{ U_{j,k} } \Big(f(\zeta',z') B^{\vect{s'}}_{(\zeta',z')} \Delta_\Omega^{-\vect{s'}}(\rho(\zeta',z'))  -f(\zeta_{k},z_{j,k}) B^{\vect{s'}}_{(\zeta_{k},z_{j,k})} \Delta_\Omega^{-\vect{s'}}(h_k) \Big) \,\dd \nu_D(\zeta',z').
	\end{split}
	\]
	Hence,~\cite[Theorem 2.47, Corollary 2.49, and Lemma 3.25]{CalziPeloso} imply that there are $R'_0>0$ and $C_8>0$ such that
	\[
	\begin{split}
		&\abs{(f-S' f)(\zeta,z)}\meg C_8 R\delta\sum_{j,k} c_{j,k}\sup\limits_{(\zeta',z')\in B((\zeta_{k},z_{j,k}), R\delta+R'_0)} \abs{f(\zeta',z')} \abs{B^{\vect{s'}}_{(\zeta_{k},z_{j,k})}(\zeta,z)} \Delta_\Omega^{-\vect{s'}}(h_k) 
	\end{split}
	\]
	for every $(\zeta,z)\in D$. 
	Fix an $F$-$(1,4)$-lattice $(\zeta'_{k'},z'_{j',k'})_{j'\in J', k'\in K'}$ on $D$, and observe that the proof of~\cite[Proposition 2.56]{CalziPeloso}, together with~\cite[Theorem 2.47 and Corollary 2.49]{CalziPeloso} again, implies that there is a constant $C_9>0$ such that
	\[
	\begin{split}
		\abs{(f-S' f)(\zeta,z)}\meg C_9 R \delta\sum_{j',k'}\sup\limits_{(\zeta',z')\in B\big((\zeta'_{k'},z'_{j',k'}), R\delta+R'_0+4\big)} \abs{f(\zeta',z')} \abs{B^{\vect{s'}}_{(\zeta'_{k'},z'_{j',k'})}(\zeta,z)} \Delta_\Omega^{-\vect{s'}}(h'_{k'}) 
	\end{split}
	\]
	for every $(\zeta,z)\in D$, where $h'_{k'}= \rho(\zeta'_{k'},z'_{j',k'})$ for  every $j'\in J'$ and for every $k'\in K'$.
	Hence, Theorem~\ref{teo:1} and  the preceding steps show that there is a constant $C_{10}>0$ such that, if $R\meg R_0$ and $\delta\meg 1$, then
	\[
	\norm{f-S' f}_{\Ac^{p,q}_{\vect s}(D)}\meg C_{10} \delta \norm{f}_{\Ac^{p,q}_{\vect s}(D)}.
	\]
	Take $\delta_0>0$ so that $C_{10} \delta_0\meg \frac 1 2$, and assume that $\delta\meg \delta_0$. Then, 
	\[
	\norm*{\sum_{j\Meg k}   (I-S')^j f }_{\Ac^{p,q}_{\vect s}(D)}^{\min(1,p,q)}\meg \sum_{j\Meg k} 2^{-\min(1,p,q) j} \norm{f}_{\Ac^{p,q}_{\vect s}(D)}^{\min(1,p,q)}
	\]
	for every $k\in \N$, so that $\sum_{j\in \N} (I- S')^j$ induces well defined endomorphisms of $\Ac^{p,q}_{\vect s,0}(D) $ and $\Ac^{p,q}_{\vect s}(D)$,  which are inverses of $S'$. 
	Hence, 
	\[
	\Psi'''\coloneqq S\sum_{j\in \N} (I-S')^j
	\]
	induces well defined and continuous linear mappings from $\Ac^{p,q}_{\vect s,0}(D)$ into $\ell^{p,q}_0(J,K)$ and from $\Ac^{p,q}_{\vect s}(D)$ into $\ell^{p,q}(J,K)$, and $\Psi \Psi'''= S'\sum_{j\in \N}(I-S')^j=I$. The proof is complete.
\end{proof}

Arguing as in~\cite[Proposition 3.37]{CalziPeloso}, one may prove the following result.

\begin{prop}\label{prop:9}
	Take $p,q\in (0,\infty]$ and $\vect s,\vect s'\in \R^r$ such that property $(\Lc')^{p,q}_{\vect s,\vect s',0}$ holds. Then, the sesquilinear mapping
	\[
	(f,g)\mapsto \int_D f \overline g (\Delta^{-\vect s'}\circ \rho)\,\dd \nu_D
	\]
	induces an antilinear isomophism of $\Ac^{p',q'}_{\vect b/\min(1,q)+\vect d/\min(1,p)-\vect s-\vect s'}(D)$ onto $\Ac^{p,q}_{\vect s,0}(D)'$.
\end{prop}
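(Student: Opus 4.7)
The strategy is to parallel the proof of~\cite[Proposition 3.37]{CalziPeloso}: use property $(\Lc')^{p,q}_{\vect s,\vect{s'},0}$ to transfer the duality question from $\Ac^{p,q}_{\vect s,0}(D)$ to $\ell^{p,q}_0(J,K)$, whose dual is $\ell^{p',q'}(J,K)$, and identify functionals with holomorphic functions via testing on the kernels $B^{\vect{s'}}_{(\zeta_0,z_0)}$. Throughout, write $\vect{s''}\coloneqq\vect b/\min(1,q)+\vect d/\min(1,p)-\vect s-\vect{s'}$ and $\iota\colon g\mapsto[f\mapsto\int_D f\bar g\,(\Delta_\Omega^{-\vect{s'}}\circ\rho)\,\dd\nu_D]$. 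The conjugation identity $\overline{B^{\vect{s'}}_{(\zeta_0,z_0)}(\zeta,z)}=B^{\vect{s'}}_{(\zeta,z)}(\zeta_0,z_0)$ (valid since $\vect{s'}\in\R^r$) combined with the reproducing formula $P_{\vect{s'}}g=g$ from Proposition~\ref{prop:3} yields the key identity
\[
\int_D B^{\vect{s'}}_{(\zeta_0,z_0)}(\zeta,z)\,\overline{g(\zeta,z)}\,\Delta_\Omega^{-\vect{s'}}(\rho(\zeta,z))\,\dd\nu_D(\zeta,z)=\tfrac{1}{c}\,\overline{g(\zeta_0,z_0)}
\]
for every $g\in\Ac^{p',q'}_{\vect{s''}}(D)$ and $(\zeta_0,z_0)\in D$. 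The parameter constraints of Lemma~\ref{lem:46} (necessary for $(\Lc')^{p,q}_{\vect s,\vect{s'},0}$) simultaneously verify the hypotheses of Proposition~\ref{prop:4} (so that $B^{\vect{s'}}_{(\zeta_0,z_0)}\in\Ac^{p,q}_{\vect s,0}(D)$) and of Proposition~\ref{prop:3} on $\Ac^{p',q'}_{\vect{s''}}(D)$.

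For continuity of $\iota$, take $f\in\Ac^{p,q}_{\vect s,0}(D)$ and use $(\Lc')^{p,q}_{\vect s,\vect{s'},0}$ to write $f=\Psi(\lambda)$ with $\norm{\lambda}_{\ell^{p,q}_0}\lesssim\norm{f}_{\Ac^{p,q}_{\vect s,0}}$. Substituting into $\langle f,g\rangle$ and applying the key identity term by term gives
\[
\langle f,g\rangle=\tfrac{1}{c}\sum_{j,k}\lambda_{j,k}\,\Delta_\Omega^{\vect b/q+\vect d/p-\vect s-\vect{s'}}(h_k)\,\overline{g(\zeta_k,z_{j,k})},
\]
after which H\"older's inequality for $\ell^{p,q}\times\ell^{p',q'}$ together with the pointwise sampling characterization of $\Ac^{p',q'}_{\vect{s''}}(D)$ from Theorem~\ref{teo:1} (whose sampling weight coincides, after elementary reduction, with $\Delta_\Omega^{\vect b/q+\vect d/p-\vect s-\vect{s'}}$) yields $\abs{\langle f,g\rangle}\lesssim\norm{f}\,\norm{g}_{\Ac^{p',q'}_{\vect{s''}}}$.

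Injectivity is immediate: if $\iota(g)=0$, testing with $f=B^{\vect{s'}}_{(\zeta_0,z_0)}$ and invoking the key identity forces $g(\zeta_0,z_0)=0$ for every $(\zeta_0,z_0)\in D$. For surjectivity, given $\phi\in\Ac^{p,q}_{\vect s,0}(D)'$, define
\[
g(\zeta_0,z_0)\coloneqq c\,\overline{\phi\bigl(B^{\vect{s'}}_{(\zeta_0,z_0)}\bigr)},
\]
which is holomorphic because $(\zeta_0,z_0)\mapsto B^{\vect{s'}}_{(\zeta_0,z_0)}$ is antiholomorphic as an $\Ac^{p,q}_{\vect s,0}(D)$-valued function (the dependence is through $\bar z_0$ and through the conjugate-linear slot of $\Phi$). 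Proposition~\ref{prop:4} gives the pointwise growth bound $\abs{g(\zeta_0,z_0)}\lesssim\norm{\phi}\,\Delta_\Omega^{\vect s+\vect{s'}-\vect b/q-\vect d/p}(\rho(\zeta_0,z_0))$, so $g\in\Ms_{\vect b/q+\vect d/p-\vect s-\vect{s'}}$. Moreover, the functional $\lambda\mapsto\phi(\Psi(\lambda))$ on $\ell^{p,q}_0(J,K)$ has norm $\lesssim\norm{\phi}$ and, by direct expansion via the definition of $\Psi$ and of $g$, is represented by the sequence $\mu_{j,k}=(1/c)\,\Delta_\Omega^{\vect b/q+\vect d/p-\vect s-\vect{s'}}(h_k)\,\overline{g(\zeta_k,z_{j,k})}$ lying in $\ell^{p',q'}(J,K)$. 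Theorem~\ref{teo:1} applied to $g$ (using its $\Ms$-membership for the $S_-$ direction) therefore places $g\in\Ac^{p',q'}_{\vect{s''}}(D)$ with $\norm{g}\lesssim\norm{\phi}$. Finally, $\iota(g)$ and $\phi$ coincide on every finitely supported $\Psi(\lambda)$ by construction, and hence on all of $\Ac^{p,q}_{\vect s,0}(D)$ by continuity and surjectivity of $\Psi$.

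The main technical obstacle is checking that all parameter thresholds align: Lemma~\ref{lem:46}, Proposition~\ref{prop:4}, Proposition~\ref{prop:3}, and the weight-matching identity $\vect{s''}-\vect b/q'-\vect d/p'=\vect b/q+\vect d/p-\vect s-\vect{s'}$ (with appropriate interpretation when $p<1$ or $q<1$) must all hold simultaneously for both $(p,q,\vect s)$ and the dual parameters $(p',q',\vect{s''})$. These calibrations work out precisely because $\vect{s''}$ is tailored to the H\"older duality through the weight $\Delta_\Omega^{-\vect{s'}}$ of the pairing, exactly as in~\cite[Proposition 3.37]{CalziPeloso}.
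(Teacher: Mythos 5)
Your proposal is correct and follows essentially the same route the paper invokes, namely the argument of \cite[Proposition~3.37]{CalziPeloso}: use property $(\Lc')^{p,q}_{\vect s,\vect{s'},0}$ to transfer the duality to $\ell^{p,q}_0(J,K)\,'=\ell^{p',q'}(J,K)$, identify functionals with holomorphic functions by evaluating on the kernels $B^{\vect{s'}}_{(\zeta,z)}$, and close the loop with the sampling theorem (Theorem~\ref{teo:1}) and the reproducing formula (Proposition~\ref{prop:3}). The conjugation identity $\overline{B^{\vect{s'}}_{(\zeta_0,z_0)}(\zeta,z)}=B^{\vect{s'}}_{(\zeta,z)}(\zeta_0,z_0)$ and the weight-matching computation $\vect{s''}-\vect b/q'-\vect d/p'=\vect b/q+\vect d/p-\vect s-\vect{s'}$ (both for $p,q\Meg 1$ and for $p$ or $q$ below $1$, with the convention $p'=\infty$ when $p\meg 1$) are verified correctly, and the constraints of Lemma~\ref{lem:46} do dominate those needed for Propositions~\ref{prop:3} and~\ref{prop:4} in both $(p,q,\vect s)$ and $(p',q',\vect{s''})$.

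Two small points worth flagging, neither of which is a gap. First, the interchange of the sum and integral in the step from $\langle\Psi(\lambda),g\rangle$ to the discrete sum should be justified by first taking finitely supported $\lambda$ and then extending by the continuity of $\iota(g)$ and the density of finitely supported sequences in $\ell^{p,q}_0(J,K)$; you implicitly use this but it deserves a sentence. Second, to pass from the bound $\abs{\iota(g)(\Psi(\lambda))}\lesssim\norm{\lambda}\norm{g}$ to $\norm{\iota(g)}\lesssim\norm{g}$ you are using that $\Psi$ has a continuous linear section, which is exactly what the paper records after Theorem~\ref{teo:2}; alternatively, the open mapping theorem for $F$-spaces suffices. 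With these remarks, the argument is complete.
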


\subsection{Boundary Values}

\begin{lem}\label{lem:2}
	 The continuous linear mappings $\Sc_{\overline{\Omega'}}(\Nc)\ni u \mapsto u(\zeta,\,\cdot\,)\in\Sc_{\overline{\Omega'}}(F)$, $\zeta \in E$, induce uniquely determined continuous linear mappings $\Sc'_{\overline{\Omega'}}(\Nc)\ni u \mapsto u(\zeta,\,\cdot\,)\in \Sc'_{\overline{\Omega'}}(F)$ such that the following hold:
	 \begin{itemize}
	 	\item[\textnormal{(1)}] for every $u\in \Sc'_{\overline{\Omega'}}(\Nc)$, for every $\zeta\in E$, and for every $\phi \in \Sc(F')$ supported in $\overline{\Omega'}$,
	 	\[
	 	(u*\Fc_\Nc^{-1}(\phi))(\zeta,\,\cdot\,)= u(\zeta,\,\cdot\,)*\Fc_F^{-1}(\phi);
	 	\]
	 	
	 	\item[\textnormal{(2)}]  for every $u\in \Sc'_{\overline{\Omega'}}(\Nc)$, the mapping $E \ni \zeta \mapsto u(\zeta,\,\cdot\,)\in \Sc'_{\overline{\Omega'}}(F)$ is of class $C^\infty$.
	 \end{itemize}
\end{lem}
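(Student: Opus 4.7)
The plan is to take identity~(1) as the defining property of the extension, which immediately yields uniqueness: if $v(\zeta,\,\cdot\,)\in \Sc'_{\overline{\Omega'}}(F)$ satisfies $v(\zeta,\,\cdot\,)*\Fc_F^{-1}(\phi)=0$ for every $\phi\in\Sc(F')$ supported in $\overline{\Omega'}$, then since such $\Fc_F^{-1}(\phi)$ have dense span in $\Sc_{\overline{\Omega'}}(F)$ (the span already contains $\Fc_F^{-1}(C^\infty_c(\Omega'))$), we conclude $v(\zeta,\,\cdot\,)=0$. For existence, the starting point is that by~\cite[Proposition~5.2]{PWS}, $\Fc_\Nc^{-1}(\phi)\in\widetilde\Sc_{\overline{\Omega'}}(\Nc)\subseteq\Sc(\Nc)$ for every $\phi\in\Sc(F')$ with $\supp\phi\subseteq\overline{\Omega'}$. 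Hence $u*\Fc_\Nc^{-1}(\phi)$ is a smooth function of tempered growth on $\Nc=E\times F$ for every $u\in\Sc'_{\overline{\Omega'}}(\Nc)$, with joint $C^\infty$-dependence on $(\zeta,x)$, so that its slice $(u*\Fc_\Nc^{-1}(\phi))(\zeta,\,\cdot\,)$ is a well-defined smooth tempered function on $F$ depending continuously on $u$, providing the intended right-hand side of~(1).

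The crucial technical step is the compatibility
\[
\bigl[(u*\Fc_\Nc^{-1}(\phi_1))(\zeta,\,\cdot\,)\bigr]*\Fc_F^{-1}(\phi_2)=(u*\Fc_\Nc^{-1}(\phi_1\phi_2))(\zeta,\,\cdot\,)
\]
for $\phi_1,\phi_2\in\Sc(F')$ supported in $\overline{\Omega'}$. I would verify this first for $u\in\Sc_{\overline{\Omega'}}(\Nc)$, where everything is classical: taking the partial Fourier transform $\Fc_F$ in the centre reduces the identity to a reproducing-kernel-type statement for the rank-one factorization $\pi_\lambda(\Fc_\Nc^{-1}(\phi))=\phi(\lambda)P_{\lambda,0}$ together with the CR-support constraint $\pi_\lambda(u)P_{\lambda,0}=\pi_\lambda(u)$ (cf.~\cite[Proposition~2.7]{PWS}), combined with the twisted convolution arising from the group law on $\Nc$; this is in turn a consequence of the Plancherel identity on $\Nc$. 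The identity then extends to all $u\in\Sc'_{\overline{\Omega'}}(\Nc)$ by duality and the continuity of convolution by $\widetilde\Sc_{\overline{\Omega'}}(\Nc)$.

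Once compatibility is available, fix a locally finite partition of unity $(\phi_k)_{k\in K}\subseteq\Sc(F')$ with $\sum_{k\in K}\phi_k=1$ on $\overline{\Omega'}$, and define $u(\zeta,\,\cdot\,)\in\Sc'_{\overline{\Omega'}}(F)$ by
\[
\langle u(\zeta,\,\cdot\,),\bar g\rangle\coloneqq\sum_{k\in K}\bigl\langle (u*\Fc_\Nc^{-1}(\phi_k))(\zeta,\,\cdot\,),\overline{g*\Fc_F^{-1}(\phi_k)}\bigr\rangle
\]
for $g\in\Sc_{\overline{\Omega'}}(F)$. Local finiteness of the sum and the tempered-growth estimates on each slice ensure that $u(\zeta,\,\cdot\,)$ is a well-defined element of $\Sc'_{\overline{\Omega'}}(F)$ depending continuously on $u$; the compatibility identity gives both independence of the chosen partition and property~(1). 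Property~(2) is then obtained by transferring the joint $C^\infty$-regularity of $(\zeta,x)\mapsto(u*\Fc_\Nc^{-1}(\phi_k))(\zeta,x)$ on $\Nc$ to smoothness of $\zeta\mapsto u(\zeta,\,\cdot\,)\in\Sc'_{\overline{\Omega'}}(F)$ through the partition-of-unity formula. The main obstacle is the compatibility identity, which encodes the nontrivial interaction between convolution on the noncommutative group $\Nc$ and ordinary Euclidean convolution on its centre $F$.
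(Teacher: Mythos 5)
Your construction is a genuinely different route from the paper's. The paper extends the Schwartz slice map by a density and completeness argument: the key identity (which you also derive, from $\pi_\lambda(\Fc_\Nc^{-1}\phi)=\phi(\lambda)P_{\lambda,0}$ versus $\pi_\lambda(\delta_0\otimes\Fc_F^{-1}\phi)=\phi(\lambda)I$) gives $\langle u(\zeta,\,\cdot\,)\mid\psi'\rangle=\langle u\mid\psi((\zeta,0)^{-1}\,\cdot\,)\rangle$, which makes the Schwartz slice map continuous for the topologies induced by the duals; then one uses that $\Sc_{\overline{\Omega'}}(\Nc)$ is dense in $\Sc'_{\overline{\Omega'}}(\Nc)$ (reflexivity plus trivial polar) and that $\Sc'_{\overline{\Omega'}}(F)$ is complete, and assertion~(2) falls out since $\zeta\mapsto(u*\psi^*)(\zeta,0)$ is $C^\infty$. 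You instead build the slice directly via a Littlewood--Paley synthesis, which is closer to the mechanism underlying Remark~\ref{oss:3} but substantially heavier than what is needed here.

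There is, however, a genuine defect in your defining formula. Testing it on $u\in\Sc_{\overline{\Omega'}}(\Nc)$ and applying Plancherel on $F$ gives
\[
\sum_{k}\bigl\langle(u*\Fc_\Nc^{-1}(\phi_k))(\zeta,\,\cdot\,),\overline{g*\Fc_F^{-1}(\phi_k)}\bigr\rangle
=c\int_{F'}\Fc_F(u(\zeta,\,\cdot\,))\,\overline{\Fc_F(g)}\,\sum_k\abs{\phi_k}^2\,\dd\lambda,
\]
so that with $\sum_k\phi_k=1$ the right-hand side is the pairing of $u(\zeta,\,\cdot\,)*\Fc_F^{-1}\big(\sum_k\abs{\phi_k}^2\big)$ with $\bar g$, not that of $u(\zeta,\,\cdot\,)$; the construction therefore does not extend the Schwartz slice map, and in particular property~(1) fails. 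Either impose $\sum_k\phi_k^2=1$ (the normalization the paper does use, e.g.\ for the sesquilinear form on $\Bc^{\vect s}_{p,q}(\Nc,\Omega)\times\Bc^{-\vect s}_{p',q'}(\Nc,\Omega)$), or drop the second convolution and set $\langle u(\zeta,\,\cdot\,),\bar g\rangle\coloneqq\sum_k\langle(u*\Fc_\Nc^{-1}(\phi_k))(\zeta,\,\cdot\,),\bar g\rangle$, which does pair correctly with $\sum_k\phi_k=1$ since $\Fc_F(g)$ is supported in $\overline{\Omega'}$. Separately, ``local finiteness of the sum'' is not the reason the series converges: $\Fc_F(g)$ is Schwartz, hence supported in $\overline{\Omega'}$ but not compactly there, so infinitely many $k$ contribute, and convergence must be argued by balancing the polynomially growing Schwartz seminorms of $\Fc_\Nc^{-1}(\phi_k)$ against the rapid decay of $g*\Fc_F^{-1}(\phi_k)$ as $\lambda_k$ runs off to infinity. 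Both issues are fixable, but as written they are gaps in the existence part of the argument.
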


As a consequence, we shall identify each $u\in \Sc'_{\overline{\Omega'}}(\Nc)$ with a map $E\ni\zeta \mapsto u(\zeta,\,\cdot\,)\in \Sc'_{\overline{\Omega'}}(F)$ of class $C^\infty$.

\begin{proof}
	Fix $\phi \in \Sc(F')$ supported in $\overline{\Omega'}$, and define $\psi\coloneqq \Fc_\Nc^{-1}(\phi)$ and $\psi'\coloneqq \Fc_F^{-1}(\phi)$. Then,  $\pi_\lambda(\psi)=\phi(\lambda) P_{\lambda,0}$ and $\pi_\lambda(\delta_0\otimes \psi')=\phi(\lambda) I$ for every $\lambda\in \Lambda_+$, so that
	\[
	u*\psi=u*(\delta_0\otimes\psi')
	\] 
	for every $u\in \Sc_{\overline{\Omega'}}(\Nc)$, that is,
	\[
	(u*\psi)(\zeta,\,\cdot\,)= u(\zeta,\,\cdot\,)*\psi'
	\]
	for every $\zeta\in E$. In particular,
	\[
	\langle u(\zeta,\,\cdot\,)\vert \psi'\rangle=  (u(\zeta,\,\cdot\,)*\psi'^*)(0)=(u*\psi^*)(\zeta,0)=\langle u\vert \psi((\zeta,0)^{-1}\,\cdot\,)\rangle,
	\]
	so that, by the arbitrariness of $\phi$, the mapping $\Sc_{\overline{\Omega'}}(\Nc)\ni u \mapsto u(\zeta,\,\cdot\,)\ni \Sc_{\overline{\Omega'}}(F)$ is continuous for the topologies induced by $\Sc_{\overline{\Omega'}}'(\Nc)$ and $\Sc_{\overline{\Omega'}}'(F)$ on $\Sc_{\overline{\Omega'}}(\Nc)$ and $ \Sc_{\overline{\Omega'}}(F)$, respectively. Since $\Sc_{\overline{\Omega'}}(\Nc)$ is dense in $\Sc_{\overline{\Omega'}}'(\Nc)$ (because the conjugate of $\Sc_{\overline{\Omega'}}(\Nc)$ is reflexive and the polar of $\Sc_{\overline{\Omega'}}(\Nc)$ in the conjugate of $\Sc_{\overline{\Omega'}}(\Nc)$ is $\Set{0}$), and since $\Sc_{\overline{\Omega'}}'(F)$ is complete, the first assertion follows, as well as (1).
	Assertion (2) is a consequence of the fact that the mapping $\zeta \mapsto \langle u(\zeta,\,\cdot\,)\vert \psi'\rangle=(u*\psi^*)(\zeta,0)$ is of class $C^\infty$ on $E$ for every $\phi$ (cf.~\cite[p.\ 59, Lemma II]{Schwartz}).	
\end{proof}

\begin{lem}\label{lem:3}
	Take $p,q,p_2,q_2\in (0,\infty]$ with $p\meg p_2$ and $q\meg q_2$, and a bounded subset $B$ of $\Sc_{\overline{\Omega'}}(\Nc)$ such that $\Fc_\Nc B$ is bounded in $C^\infty_c(\Omega')$. For every $\psi \in B$ and for every $t\in T_+$, define $\psi_t\coloneqq \Fc_\Nc^{-1}((\Fc_\Nc \psi)(\,\cdot\, t^{-1}))$. Then, there is a constant $C>0$ such that
	\[
	\norm{u*\psi_t}_{L^{p_2,q_2}(F,E)}\meg C \Delta^{(1/q_2-1/q)\vect b+(1/p_2-1/p)\vect d}(t)  \norm{u*\psi_t}_{L^{p,q}(F,E)}
	\]
	and
	\[
	\norm{u*\psi_t*\psi'_{t'}}_{L^{p,q}(F,E)}\meg C\norm{u*\psi_t}_{L^{p,q}(F,E)}
	\]
	for every $u\in \Sc'(\Nc)$, for every $\phi,\phi'\in B$, and for every $t,t'\in T_+$.
\end{lem}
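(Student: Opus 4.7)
The plan is to combine a dilation symmetry of $\Nc$ arising from $T_+$ with a reproducing convolution identity, and then close with a mixed-norm Young-type inequality. First I would fix a compact subset $K$ of $\Omega'$ containing $\supp\Fc_\Nc\psi$ for every $\psi\in B$---possible by the hypothesis that $\Fc_\Nc B$ is bounded in $C^\infty_c(\Omega')$, which implicitly places $B$ inside $\widetilde \Sc_{\overline{\Omega'}}(\Nc)$---and select an auxiliary $\tilde\psi_0\in \widetilde \Sc_{\overline{\Omega'}}(\Nc)$ with $\Fc_\Nc\tilde\psi_0\equiv 1$ on a neighbourhood of $K$, setting $\tilde\psi_{0,t}\coloneqq \Fc_\Nc^{-1}((\Fc_\Nc\tilde\psi_0)(\,\cdot\,t^{-1}))$. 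Using the fact that $\pi_\lambda(\phi)=(\Fc_\Nc\phi)(\lambda)P_{\lambda,0}$ for $\phi\in\widetilde\Sc_{\overline{\Omega'}}(\Nc)$---so that convolution reduces on the Fourier side to pointwise scalar multiplication---I would verify that $\psi_t*\tilde\psi_{0,t}=\psi_t$, the identity $(\Fc_\Nc\tilde\psi_0)(\lambda\cdot t^{-1})=1$ holding whenever $(\Fc_\Nc\psi)(\lambda\cdot t^{-1})\neq 0$ because $\supp\Fc_\Nc\psi\subseteq K$. This yields the key rewriting $u*\psi_t=(u*\psi_t)*\tilde\psi_{0,t}$ for every $u\in \Sc'(\Nc)$.

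Next I would analyse how $\phi\mapsto \phi_t$ rescales mixed-norm quantities. The map $\alpha_t\colon(\zeta,x)\mapsto (g_t\zeta,t\cdot x)$ is a group automorphism of $\Nc$; comparing central characters one sees $\pi_\lambda\circ\alpha_t^{-1}\sim\pi_{\lambda\cdot t^{-1}}$, and a Jacobian computation (using $\abs{\det_\R g_t}=\Delta^{-\vect b}(t)$ on $E$ and the determinant $\Delta^{-\vect d}(t)$ of $t$ acting on $F$) gives the pointwise identity $\phi_t=\Delta^{-\vect b-\vect d}(t)\,\phi\circ\alpha_t$. Two successive changes of variables (first integrating $x$ over $F$, then $\zeta$ over $E$) then yield
\[
\norm{\phi_t}_{L^{r_1,r_2}(F,E)}=\Delta^{(1/r_2-1)\vect b+(1/r_1-1)\vect d}(t)\,\norm{\phi}_{L^{r_1,r_2}(F,E)}
\]
for every $r_1,r_2\in [1,\infty]$ and every $\phi\in B$. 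In particular, with $r_1=r_2=1$ the exponents vanish, so $\norm{\phi_t}_{L^1(\Nc)}=\norm{\phi}_{L^1(\Nc)}$ is uniformly bounded in $t$ and $\phi\in B$.

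For the last step I would use that, since $F$ is central in $\Nc$, the convolution splits (the twist $2\Im\Phi$ contributes only a harmless translation inside an $L^{r_1}(F)$-norm), so Minkowski's integral inequality in $F$ combined with classical Young's inequality on $F$ and on $E$ gives the mixed-norm Young inequality
\[
\norm{f*g}_{L^{p_2,q_2}(F,E)}\meg \norm{f}_{L^{p,q}(F,E)}\norm{g}_{L^{r_1,r_2}(F,E)}
\]
for $1/r_1=1+1/p_2-1/p$ and $1/r_2=1+1/q_2-1/q$, both in $[0,1]$ precisely because $p\meg p_2$ and $q\meg q_2$ (for $p$ or $q$ below $1$ one replaces Young with its Nikolskii-type analogue for compactly Fourier-supported factors, which is automatic here thanks to the reproducing identity). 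Applying this with $g=\tilde\psi_{0,t}$ to $u*\psi_t=(u*\psi_t)*\tilde\psi_{0,t}$ and inserting the scaling from the previous paragraph yields the first inequality, with $C$ proportional to $\norm{\tilde\psi_0}_{L^{r_1,r_2}(F,E)}$. The second inequality is immediate from Young with $r_1=r_2=1$: one has $\norm{u*\psi_t*\psi'_{t'}}_{L^{p,q}}\meg \norm{u*\psi_t}_{L^{p,q}}\norm{\psi'_{t'}}_{L^1(\Nc)}$, and $\norm{\psi'_{t'}}_{L^1(\Nc)}=\norm{\psi'}_{L^1(\Nc)}$ is bounded uniformly in $t'$ and $\psi'\in B$. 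The main delicate point, in my view, is the reproducing identity $\psi_t*\tilde\psi_{0,t}=\psi_t$, which crucially uses that on $\widetilde\Sc_{\overline{\Omega'}}(\Nc)$ the operator-valued Fourier transform becomes scalar-valued and that $\supp\Fc_\Nc\psi_t=(\supp\Fc_\Nc\psi)\cdot t$ is a right-translate by $t$ inside $\Omega'$.
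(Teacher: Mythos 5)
Your overall strategy (reproducing identity, dilation formula $\phi_t=\Delta^{-\vect b-\vect d}(t)\,\phi\circ\alpha_t$, and a mixed-norm Young inequality after scaling) is a genuinely different route from the paper, and the scaling computation is carried out correctly and matches the exponents in the statement. For $p,q\Meg 1$ the argument closes. However, there is a real gap in the range $p<1$ or $q<1$, which the lemma explicitly allows since $p,q\in(0,\infty]$. In that range the Young exponents $1/r_1=1+1/p_2-1/p$ and $1/r_2=1+1/q_2-1/q$ fall outside $[0,1]$ (for instance $p_2=\infty$, $p<1$ forces $1/r_1<0$), so the constant $\norm{\tilde\psi_0}_{L^{r_1,r_2}(F,E)}$ you write down is not defined, and the second inequality's invocation of $\norm{f*g}_{L^{p,q}}\meg\norm{f}_{L^{p,q}}\norm{g}_{L^1}$ is simply false for $p<1$ or $q<1$ (Young with an $L^1$ factor fails in the quasi-Banach range). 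Your parenthetical ``replace Young with its Nikolskii-type analogue, which is automatic'' does not repair this: a slice-wise Nikolskii estimate in the $F$-variable is available (via the $F$-Fourier support in $K\cdot t$), but there is no corresponding compact Fourier-support structure in the $E$-variable to invoke, so the analogue you gesture at is neither automatic nor even formulated.

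The paper closes this case differently and you should be aware of the mechanism. It reduces to $p_2=q_2=\infty$, $t=e$, applies the reproducing identity $f=f*\tau$, writes $\abs{f}\meg\norm{f}_{L^\infty}^{1-\ell}\abs{f}^\ell$ with $\ell=\min(1,p,q)$, and then uses H\"older's inequality in the mixed-norm $L^{p/\ell,q/\ell}$--$L^{(p/\ell)',(q/\ell)'}$ duality (both exponents now $\Meg 1$), finally dividing by $\norm{f}_{L^\infty}^{1-\ell}$. This self-improvement trick is precisely what absorbs the quasi-Banach difficulty, and it handles the mixed norm on $\Nc$ in one stroke because the mixed $L^{p,q}(F,E)$ norm is invariant under left translations of $\Nc$ (so the $\tau$-factor contributes a fixed Schwartz norm). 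For the second inequality the paper also does not use a direct Young estimate on $\Nc$; it reduces to the central slice via Lemma~\ref{lem:2} and a cited Fourier-multiplier estimate on $F$. If you want to make your proposal complete you would need to either incorporate a self-improvement step as above, or prove a genuine mixed-norm Plancherel--P\'olya inequality on $\Nc$ in the quasi-Banach range, which is harder than what you have sketched.
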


\begin{proof}
	\textsc{Step I.} It will suffice to prove the first assertion when $p_2=q_2=\infty$ and $t$ is the identity of $T_+$, by H\"older's interpolation and homogeneity. Arguing by approximation as in the proof of~\cite[Corollary 4.7]{CalziPeloso}, we may further assume that $u\in \Sc(\Nc)$. 
	Then, set $\ell\coloneqq \min(1,p,q)$ and take $\tau \in \Sc_{\overline{\Omega'}}(\Nc)$ so that $\psi*\tau=\psi$ for every $\psi\in B$, and observe that
	\[
	\begin{split}
	\abs{(u*\psi)(\zeta,x)}&\meg \norm{u*\psi}^{1-\ell}_{L^\infty(\Nc)}\int_\Nc \abs{(u*\psi)(\zeta',x')}^{\ell} \abs{\tau((\zeta',x')^{-1}(\zeta,x))}\,\dd (\zeta',x')\\
		&\meg \norm{u*\psi}^{1-\ell}_{L^\infty(\Nc)}\norm{u*\psi}_{L^{p,q}(F,E)} ^\ell \norm{\tau}_{L^{(p/\ell)',(q/\ell)'}(F,E)}
	\end{split}
	\]
	for every $(\zeta,x)\in \Nc$, whence the first assertion.
	
	\textsc{Step II.} The second assertion follows from~\cite[Corollary 4.10]{CalziPeloso} and Lemma~\ref{lem:2}.
\end{proof}

\begin{deff}\label{def:1}
	Take $p,q\in (0,\infty]$ and $\vect s\in \R^r$. Take a $(\delta,R)$-lattice $(\lambda_k)_{k\in K}$ in $\Omega'$ for some $\delta>0$ and some $R>1$, and fix a bounded family $(\phi_k)_{k\in K}$ of positive elements of $C^\infty_c(\Omega')$ such that $\sum_k \phi_k(\,\cdot\, t_k^{-1})\Meg 1$ on $\Omega'$, where $t_k\in T_+$ and $\lambda_k=e_{\Omega'}\cdot t_k$ for every $k\in K$. Define $\psi_k\coloneqq \Fc_\Nc^{-1}(\phi_k(\,\cdot\, t_k^{-1}))$. Then, we define $\mathring \Bc_{p,q}^{\vect s}(\Nc,\Omega)$ (resp.\ $\Bc_{p,q}^{\vect s}(\Nc,\Omega)$) as the space of the $u\in \Sc'_{\overline{\Omega'}}(\Nc)$ such that
	\[
	(\Delta^{\vect s}_{\Omega'}(\lambda_k) \,u*\psi_k)\in \ell^q_0(K;L^{p,q}_0(F,E)) \qquad \text{(resp.\ $(\Delta^{\vect s}_{\Omega'}(\lambda_k)\, u*\psi_k)\in \ell^q(K;L^{p,q}(F,E))$)},
	\]
	endowed with the corresponding topology.\footnote{One may prove directly that this definition does not depend on the choice of $(\lambda_k)$ and $(\phi_k)$, arguing as in the proof of~\cite[Lemma 4.14]{CalziPeloso} and using Lemma~\ref{lem:3}. Nonetheless, this follows from Remark~\ref{oss:3}, at least for $\Bc_{p,q}^{\vect s}(\Nc,\Omega)$.}
\end{deff}

In particular, $\Bc^{\vect s}_{p,p}(\Nc,\Omega)=B^{\vect s}_{p,p}(\Nc,\Omega)$ and $\mathring \Bc^{\vect s}_{p,p}(\Nc,\Omega)=\mathring B^{\vect s}_{p,p}(\Nc,\Omega)$ for every $p\in (0,\infty]$ and for every $\vect s\in \R$.
We now propose a different interpretation of $\Bc_{p,q}^{\vect s}(\Nc,\Omega)$ which is particularly useful in certain situations. 

\begin{oss}\label{oss:3}
	By Lemma~\ref{lem:2}, $\Bc_{p,q}^{\vect s}(\Nc,\Omega)$ may be equivalently defined as  the  space of $u\in \Sc'_{\overline{\Omega'}}(\Nc)$ such that 
	\[
	\norm*{\zeta \mapsto \norm{u(\zeta,\,\cdot\,)}_{B_{p,q}^{\vect s}(F,\Omega)}}_{L^q(E)}<\infty,
	\]
	where $\norm{\,\cdot\,}_{B_{p,q}^{\vect s}(F,\Omega)}$ denotes a fixed quasi-norm on $B_{p,q}^{\vect s}(F,\Omega)$. 
\end{oss}

\begin{prop}\label{prop:10}
	Take $p_1,p_2,q_1,q_2\in (0,\infty]$ and $\vect s_1,\vect s_2\in \R^r$ so that
	\[
	p_1\meg p_2, \qquad q_1\meg q_2 \qquad \text{and} \qquad \vect s_2=\vect s_1+\left(\frac{1}{q_1}-\frac{1}{q_2}\right)\vect b+\left(\frac{1}{p_1}-\frac{1}{p_2}\right)\vect d.
	\]
	Then, there are continuous inclusions $\mathring \Bc^{\vect s_1}_{p_1,q_1}(\Nc,\Omega)\subseteq \mathring\Bc^{\vect s_2}_{p_2,q_2}(\Nc,\Omega)$ and $\Bc^{\vect s_1}_{p_1,q_1}(\Nc,\Omega)\subseteq \Bc^{\vect s_2}_{p_2,q_2}(\Nc,\Omega)$.
	
	In addition, the mappings $\Bc^{\vect s}_{p,q}(\Nc,\Omega)\ni u\mapsto u(\zeta,\cdot\,)\in B^{\vect s+\vect b/q}_{p,q}(F,\Omega)$ are equicontinuous for every $p,q\in (0,\infty]$ and for every $\vect s \in \R^r$.
\end{prop}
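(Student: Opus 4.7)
The plan is to reduce both assertions to Lemma~\ref{lem:3}, which furnishes the Bernstein-type reverse-H\"older inequalities for the band-limited blocks $u*\psi_k$ entering the definition of $\Bc^{\vect s}_{p,q}(\Nc,\Omega)$. Fix a $(\delta,R)$-lattice $(\lambda_k)_{k\in K}$ in $\Omega'$ with $\lambda_k=e_{\Omega'}\cdot t_k$ and the associated family $(\psi_k)$ from Definition~\ref{def:1}; recall that $\Delta(\lambda_k)=\Delta(t_k)$.

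For the inclusions, I would apply Lemma~\ref{lem:3} with $t=t_k$ and source-target pairs $(p,q)=(p_1,q_1)$, $(p_2,q_2)$, obtaining a constant $C>0$ independent of $k$ and $u$ such that $\norm{u*\psi_k}_{L^{p_2,q_2}(F,E)}\meg C\,\Delta^{(1/q_2-1/q_1)\vect b+(1/p_2-1/p_1)\vect d}(t_k)\,\norm{u*\psi_k}_{L^{p_1,q_1}(F,E)}$. By the hypothesis on $\vect s_2-\vect s_1$, this exponent is exactly $\vect s_1-\vect s_2$, so multiplying through by $\Delta^{\vect s_2}(\lambda_k)$ gives $\Delta^{\vect s_2}(\lambda_k)\norm{u*\psi_k}_{L^{p_2,q_2}}\meg C\,\Delta^{\vect s_1}(\lambda_k)\norm{u*\psi_k}_{L^{p_1,q_1}}$. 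Taking the $\ell^{q_2}$-norm over $k$ and invoking the elementary inclusion $\ell^{q_1}(K)\subseteq\ell^{q_2}(K)$ (valid since $q_1\meg q_2$) would then yield the continuous inclusion $\Bc^{\vect s_1}_{p_1,q_1}\subseteq\Bc^{\vect s_2}_{p_2,q_2}$. For the $\mathring\Bc$ version I would approximate $u$ by Schwartz data and use that Lemma~\ref{lem:3} propagates convergence on band-limited pieces from $L^{p_1,q_1}$ to $L^{p_2,q_2}$, so the restriction to the $\mathring\Bc$ subspaces is continuous as well.

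For the equicontinuity, I would apply Lemma~\ref{lem:3} once more, this time with target $(p_2,q_2)=(p,\infty)$. The resulting estimate reads $\norm{(u*\psi_k)(\zeta,\,\cdot\,)}_{L^p(F)}\meg\norm{u*\psi_k}_{L^{p,\infty}(F,E)}\meg C\,\Delta^{-\vect b/q}(\lambda_k)\,\norm{u*\psi_k}_{L^{p,q}(F,E)}$ uniformly in $\zeta\in E$. By Lemma~\ref{lem:2}(1), $(u*\psi_k)(\zeta,\,\cdot\,)=u(\zeta,\,\cdot\,)*\tilde\psi_k$ with $\tilde\psi_k=\Fc_F^{-1}(\phi_k(\,\cdot\, t_k^{-1}))$, which is precisely a Littlewood--Paley family on $F$. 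Weighting by $\Delta^{q\vect s+\vect b}(\lambda_k)$ and summing over $k$ would then give $\norm{u(\zeta,\,\cdot\,)}^q_{B^{\vect s+\vect b/q}_{p,q}(F,\Omega)}\meg C^q\,\norm{u}^q_{\Bc^{\vect s}_{p,q}(\Nc,\Omega)}$ with $C$ independent of $\zeta$: the shift $+\vect b/q$ in the target Besov index is exactly what absorbs the $\Delta^{-\vect b/q}$ factor produced by the Bernstein inequality, in complete analogy with the shift $-\vect b/q$ appearing in Proposition~\ref{prop:2} for the Bergman spaces $\Ac^{p,q}_{\vect s}$. There is no substantial obstacle: once Lemma~\ref{lem:3} is invoked, the argument reduces to a single scaling computation, the only minor point being the density argument for the $\mathring\Bc$ inclusion.
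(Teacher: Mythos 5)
Your proof is correct and follows essentially the same route as the paper, which says only that the proposition is a consequence of Lemmas~\ref{lem:2} and~\ref{lem:3} together with the inclusion $\ell^{q_1}(K)\subseteq\ell^{q_2}(K)$. You have supplied the missing scaling computation — checking that the exponent $\left(\tfrac{1}{q_2}-\tfrac{1}{q_1}\right)\vect b+\left(\tfrac{1}{p_2}-\tfrac{1}{p_1}\right)\vect d$ produced by Lemma~\ref{lem:3} matches $\vect s_1-\vect s_2$ — and the derivation of the equicontinuity from Lemma~\ref{lem:3} with target exponents $(p,\infty)$ combined with Lemma~\ref{lem:2}(1); both are the intended arguments.
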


\begin{proof}
	This is a consequence of Lemmas~\ref{lem:2} and~\ref{lem:3}, and of the continuous inclusion $\ell^{q_1}(K)\subseteq \ell^{q_2}(K)$, which holds for every set $K$.
\end{proof}

\begin{prop}\label{prop:12}
	Take $p,q\in (0,\infty]$ and $\vect s\in \R^r$. Then, $\mathring \Bc^{\vect s}_{p,q}(\Nc,\Omega)$ and $\Bc^{\vect s}_{p,q}(\Nc,\Omega)$ are complete and embed continuously into $\Sc'_{\overline{\Omega'}}(\Nc)$.
\end{prop}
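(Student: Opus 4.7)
The argument mirrors the one for the scalar-valued Besov spaces $B^{\vect s}_{p,q}(\Nc,\Omega)$ given in~\cite[Proposition 4.16]{CalziPeloso} and~\cite[Proposition 7.12]{Besov}; the only substantive change comes from replacing $L^p(\Nc)$ by the mixed norm $L^{p,q}(F,E)$.

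First I would establish the continuous embedding $\Bc^{\vect s}_{p,q}(\Nc,\Omega) \hookrightarrow \Sc'_{\overline{\Omega'}}(\Nc)$. Choose an auxiliary bounded family $(\widetilde\psi_k)_{k\in K}$ in $\widetilde\Sc_{\overline{\Omega'}}(\Nc)$, obtained from the $(\phi_k)$ of Definition~\ref{def:1} by a straightforward partition-of-unity modification on $\Omega'$, so that
\[
\sum_k (\Fc_\Nc \widetilde\psi_k)(\Fc_\Nc \psi_k)=1 \quad \text{on } \Omega'.
\]
This yields the reproducing identity $u = \sum_k u*\psi_k*\widetilde\psi_k$ in $\Sc'_{\overline{\Omega'}}(\Nc)$. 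For any $\phi\in\Sc_{\overline{\Omega'}}(\Nc)$, pairing against $\phi$ and combining convolution duality with Hölder's inequality in the $L^{p,q}(F,E)/L^{p',q'}(F,E)$ duality (extended to the quasi-Banach range via Lemma~\ref{lem:3}) gives
\[
|\langle u,\phi\rangle| \le \sum_k \|u*\psi_k\|_{L^{p,q}(F,E)}\cdot\|\widetilde\psi_k*\phi\|_{L^{p',q'}(F,E)}.
\]
A homogeneity argument (dilating by $t_k$ and using that $\phi$ is Schwartz), combined with Lemma~\ref{lem:3}, produces, for any $\vect t\in\R^r$,
\[
\|\widetilde\psi_k*\phi\|_{L^{p',q'}(F,E)}\le C_\phi(\vect t)\,\Delta^{\vect t}_{\Omega'}(\lambda_k),
\]
where $C_\phi(\vect t)$ depends continuously on $\phi$ through finitely many Schwartz seminorms. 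Picking $\vect t$ so that $\Delta^{\vect t-\vect s}_{\Omega'}(\lambda_k)\in\ell^{q'}(K)$ and applying Hölder in $k$ yields
\[
|\langle u,\phi\rangle|\le C'_\phi\,\|u\|_{\Bc^{\vect s}_{p,q}(\Nc,\Omega)},
\]
which is the claimed embedding.

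For completeness, let $(u_n)$ be Cauchy in $\Bc^{\vect s}_{p,q}(\Nc,\Omega)$. By the previous step it is Cauchy in $\Sc'_{\overline{\Omega'}}(\Nc)$, which is complete, hence $u_n\to u$ for some $u\in\Sc'_{\overline{\Omega'}}(\Nc)$. For each fixed $k$, convolution with $\psi_k$ is continuous on $\Sc'_{\overline{\Omega'}}$, so $u_n*\psi_k\to u*\psi_k$ in $\Sc'$; on the other hand, $(u_n*\psi_k)_n$ is Cauchy (hence convergent) in $L^{p,q}(F,E)$ from the defining quasi-norm, and the two limits must coincide almost everywhere. Fatou's lemma in the $\ell^q(K;L^{p,q}(F,E))$ quasi-norm gives
\[
\|u\|_{\Bc^{\vect s}_{p,q}}\le \liminf_{n\to\infty}\|u_n\|_{\Bc^{\vect s}_{p,q}}<\infty,
\]
so $u\in\Bc^{\vect s}_{p,q}$, and the same inequality applied to $u-u_n$ shows $u_n\to u$ in $\Bc^{\vect s}_{p,q}$. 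The corresponding statement for $\mathring\Bc^{\vect s}_{p,q}$ follows because $\ell^q_0(K;L^{p,q}_0(F,E))$ is a closed subspace of $\ell^q(K;L^{p,q}(F,E))$.

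The main obstacle is the first step: establishing the rapid-decay estimate $\|\widetilde\psi_k*\phi\|_{L^{p',q'}(F,E)}\le C_\phi(\vect t)\,\Delta^{\vect t}_{\Omega'}(\lambda_k)$ for arbitrary $\vect t$ with uniform control in $k$. This requires combining the scaling and support properties of the $\widetilde\psi_k$ with standard Schwartz-decay bounds on the nilpotent group $\Nc$; everything needed is, however, already packaged in Lemmas~\ref{lem:2} and~\ref{lem:3} together with the scalar analogues in~\cite{CalziPeloso,Besov}, so no essentially new technical input is required.
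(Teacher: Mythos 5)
Your proof is essentially correct, but it takes a genuinely different and considerably longer route than the paper. The paper's proof is a one-line reduction: by Proposition~\ref{prop:10} (the monotone-index inclusion), one has a continuous embedding $\Bc^{\vect s}_{p,q}(\Nc,\Omega)\subseteq \Bc^{\vect s+\vect b/q+\vect d/p}_{\infty,\infty}(\Nc,\Omega)$, and the latter space coincides by definition with the scalar space $B^{\vect s+\vect b/q+\vect d/p}_{\infty,\infty}(\Nc,\Omega)$ (since $L^{\infty,\infty}(F,E)=L^\infty(\Nc)$); the embedding of the scalar space into $\Sc'_{\overline{\Omega'}}(\Nc)$ is then imported from~\cite[Proposition 7.12]{Besov}, and completeness follows as in~\cite[Proposition 4.16]{CalziPeloso}. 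Your proof instead re-derives the embedding from scratch, constructing a dual Littlewood--Paley family and establishing the Schwartz-decay estimate $\|\widetilde\psi_k*\phi\|\lesssim_{\phi,\vect t}\Delta^{\vect t}_{\Omega'}(\lambda_k)$; this is essentially a mixed-norm replay of the scalar argument you would otherwise cite, so the paper's approach buys a real economy of effort.

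Two points in your argument deserve tightening. First, the display $|\langle u,\phi\rangle|\leq \sum_k\|u*\psi_k\|_{L^{p,q}(F,E)}\|\widetilde\psi_k*\phi\|_{L^{p',q'}(F,E)}$ uses the $L^{p,q}/L^{p',q'}$ duality, which fails for $p<1$ or $q<1$; your parenthetical remark that Lemma~\ref{lem:3} rescues this is the right idea, but the correct statement is that one first upgrades $\|u*\psi_k\|_{L^{p,q}}$ to $\|u*\psi_k\|_{L^{\infty,\infty}}$ at the cost of a factor $\Delta^{\vect b/q+\vect d/p}(t_k)$, and then pairs against $\|\widetilde\psi_k*\phi\|_{L^1}$; the exponent shift must then be absorbed into the choice of $\vect t$. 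Second, the rapid-decay estimate is not literally ``packaged'' in Lemmas~\ref{lem:2} and~\ref{lem:3} (Lemma~\ref{lem:3} compares different Lebesgue norms of the same convolution, it does not give decay in $k$); it comes from the support and dilation structure of the $\widetilde\psi_k$ together with Schwartz estimates, as in the proofs of~\cite[Lemma 4.14, Proposition 4.16]{CalziPeloso}. Both are fixable, but they are the places where the paper's reduction to the scalar $p=q=\infty$ case quietly saves you all the work.
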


\begin{proof}
	Since $\Bc^{\vect s}_{p,q}(\Nc,\Omega)\subseteq \Bc_{\infty,\infty}^{\vect s+\vect b/q+\vect d/p}(\Nc,\Omega)=B_{\infty,\infty}^{\vect s+\vect b/q+\vect d/p}(\Nc,\Omega)$ continuously by Proposition~\ref{prop:10}, by~\cite[Proposition 7.12]{Besov} we see that $\Bc^{\vect s}_{p,q}(\Nc,\Omega)$ embeds continuously into $ \Sc'_{\overline{\Omega'}}(\Nc)$. Completeness is then proved as in~\cite[Proposition 4.16]{CalziPeloso}.
\end{proof}

\begin{deff}
	Take $(\psi_k)$ as in Definition~\ref{def:1}, and assume further that $\sum_k (\Fc_\Nc \psi_k)^2=1$ on $\Omega'$. Then, the mapping
	\[
	(u,u')\mapsto \sum_k \langle u*\psi_k\vert u*\psi_k\rangle=\int_E \langle u(\zeta,\,\cdot\,)\vert u'(\zeta,\,\cdot\,)\rangle\,\dd \zeta
	\]
	induces a well defined continuous sesquilinear form on $\Bc^{\vect s}_{p,q}(\Nc,\Omega) \times \Bc^{-\vect s}_{p',q'}(\Nc,\Omega)$, for every $p,q\in[1,\infty]$ and for every $\vect s\in \R^r$, which does not depend on the choice of $(\psi_k)$ (cf.~\cite[the proof of Proposition 4.20]{CalziPeloso}).
\end{deff}

In particular, by Proposition~\ref{prop:10}, there is a canonical sesquilinear form on   
\[
\Bc^{\vect s}_{p,q}(\Nc,\Omega) \times \Bc^{-\vect s-(1/q-1)_+\vect b-(1/p-1)_+\vect d}_{p',q'}(\Nc,\Omega)
\]
for every $p,q\in (0,\infty]$ and for every $\vect s\in \R^r$. We denote by $\sigma^{\vect s}_{p,q}$ the weak topology 
\[
\sigma(\Bc^{\vect s}_{p,q}(\Nc,\Omega),\mathring\Bc^{-\vect s-(1/q-1)_+\vect b-(1/p-1)_+\vect d}_{p',q'}(\Nc,\Omega)).
\]

\begin{lem}\label{lem:4}
	Take $p,q\in [1,\infty]$ and $\vect s\succ \frac{1}{q'}\vect b+\frac{1}{p'}\vect d+\frac{1}{2 q}\vect m'$. For every $(\zeta,z)\in D$, define $S_{(\zeta,z)}\coloneqq c \left(B^{\vect b+\vect d}_{(\zeta,z)}\right)_0$, where $c\neq 0$ is chosen so that $f(\zeta,z)=\langle f_0\vert S_{(\zeta,z)}\rangle$ for every $f\in A^{2,\infty}_{\vect 0}(D)$. Then,  the following hold:
	\begin{enumerate}
		\item[\textnormal{(1)}] the $\Delta^{\vect s-\vect b/q'-\vect d/p'}_\Omega(\rho(\zeta,z)) S_{(\zeta,z)}$, as $(\zeta,z)$ runs through $D$, stay in a bounded subset of $\mathring \Bc^{\vect s}_{p,q}(\Nc,\Omega)$;
		
		\item[\textnormal{(2)}] for every $u\in  \Bc^{-\vect s}_{p',q'}(\Nc,\Omega)$, the mapping $(\zeta,z)\mapsto \langle u \vert S_{(\zeta,z)}\rangle$ is holomorphic on $D$.
	\end{enumerate}
\end{lem}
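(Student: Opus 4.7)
The plan is to prove (1) by homogeneity, reducing to a single base point and then carrying out an explicit Fourier/Laplace computation, and (2) by a Morera / dominated--convergence argument applied to the series that realizes the pairing.

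\textbf{Part (1).} Given $(\zeta_0,z_0)\in D$, write $h_0:=\rho(\zeta_0,z_0)=t_0\cdot e_\Omega$ with $t_0\in T_+$, and pick $g_0\in GL(E)$ such that $t_0\cdot\Phi=\Phi\circ(g_0\times g_0)$. The affine biholomorphism of $D$ sending $(0,ie_\Omega)$ to $(\zeta_0,z_0)$ induces on $\Nc$ the composition of the dilation $(g_0\times t_0\cdot)$ with a left translation by $(\zeta_0,\Re z_0)$. Under this symmetry, the Cauchy--Szeg\H o kernel transforms with a factor $\Delta_\Omega^{-(\vect b+\vect d)}(h_0)$, consistently with the automorphism-invariance of the reproducing formula for $A^{2,\infty}_{\vect 0}(D)$. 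Using the interpretation of $\Bc^{\vect s}_{p,q}(\Nc,\Omega)$ from Remark~\ref{oss:3}, the $T_+$-homogeneity of the tangential Besov norm combined with the Jacobian $\Delta_\Omega^{-\vect b/q}(h_0)$ of the substitution $\zeta\mapsto g_0^{-1}\zeta$ in the outer $L^q(E)$-integral yields
\[
\bigl\|S_{(\zeta_0,z_0)}\bigr\|_{\Bc^{\vect s}_{p,q}}=\Delta_\Omega^{-\vect s+\vect b/q'+\vect d/p'}(h_0)\,\bigl\|S_{(0,ie_\Omega)}\bigr\|_{\Bc^{\vect s}_{p,q}},
\]
and similarly for $\mathring \Bc^{\vect s}_{p,q}$. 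The assertion is thus reduced to $S_{(0,ie_\Omega)}\in\mathring\Bc^{\vect s}_{p,q}(\Nc,\Omega)$. The partial Fourier transform $\Fc_F(S_{(0,ie_\Omega)}(\zeta,\,\cdot\,))$ is supported in $\overline{\Omega'}$ and decays as $e^{-\langle\lambda,e_\Omega+\Phi(\zeta)\rangle}$, with a polynomial weight in $\Delta_{\Omega'}$. Plugging this into the $\psi_k$-convolutions (which multiply it by the compactly supported $\phi_k(\,\cdot\, t_k^{-1})$ in frequency), Plancherel/Young in $F$ bounds the $L^p_x$-norm by a factor depending polynomially on $\Delta_{\Omega'}(\lambda_k)$ times $e^{-c\langle\lambda_k,\Phi(\zeta)\rangle}$; integrating in $\zeta\in E$ yields a factor $\Delta_{\Omega'}^{-\vect b/q'}(\lambda_k)$ by the Laplace formulas of~\cite[Lemma 2.32]{CalziPeloso}; and the remaining weighted $\ell^q(K)$-sum over $k$ is finite if and only if $\vect s\succ\tfrac{1}{q'}\vect b+\tfrac{1}{p'}\vect d+\tfrac{1}{2q}\vect m'$. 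Approximation by Schwartz functions (truncating the lattice and the frequencies) locates the result in the closed subspace $\mathring\Bc^{\vect s}_{p,q}$.

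\textbf{Part (2).} Using the sesquilinear pairing with $\sum_k(\Fc_\Nc\psi_k)^2\equiv 1$ on $\Omega'$,
\[
\langle u\vert S_{(\zeta,z)}\rangle=\sum_{k\in K}\bigl\langle u*\psi_k \,\big|\, S_{(\zeta,z)}*\psi_k\bigr\rangle.
\]
Since $B^{\vect b+\vect d}_{(\zeta,z)}(\zeta',z')$ is antiholomorphic in $(\zeta,z)$, so is $S_{(\zeta,z)}*\psi_k$ pointwise; differentiating under the integral defining the inner pairing (legitimate by dominated convergence, as the integrand and its $\bar\partial$-derivatives are locally bounded in $(\zeta,z)$) shows that each summand is holomorphic in $(\zeta,z)\in D$. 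By Cauchy--Schwarz in $\ell^{q'}\times\ell^q$ and part~(1),
\[
\sum_k\bigl|\langle u*\psi_k\vert S_{(\zeta,z)}*\psi_k\rangle\bigr|\meg C\,\|u\|_{\Bc^{-\vect s}_{p',q'}}\,\Delta_\Omega^{-\vect s+\vect b/q'+\vect d/p'}(\rho(\zeta,z)),
\]
which is locally bounded on $D$. A Morera/dominated-convergence argument (interchanging the sum with a contour integral) then shows that the pointwise limit is holomorphic on $D$.

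\textbf{Main obstacle.} The delicate step is the base-point estimate in Part~(1): correctly tracking the exponents of $\Delta_{\Omega'}$ that arise from successive Fourier, Young, and Laplace estimates, and verifying that they combine to produce precisely the threshold $\vect s\succ\tfrac{1}{q'}\vect b+\tfrac{1}{p'}\vect d+\tfrac{1}{2q}\vect m'$. The reduction to the base point via homogeneity and the holomorphy assertion are then routine once this bookkeeping is in place.
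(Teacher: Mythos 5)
Your strategy—reduce to the base point by homogeneity, then verify membership of $S_{(0,ie_\Omega)}$ by an explicit Fourier/Laplace estimate, and deduce holomorphy of the pairing from locally uniform convergence—is sound, and the crucial exponent bookkeeping is in fact correct: the Cauchy--Szeg\H o kernel gains a factor $\Delta^{\vect b+\vect d}(t_0)$ under the affine symmetry, the $B^{\vect s}_{p,q}(F,\Omega)$ norm scales by $\Delta^{-\vect s-\vect d/p}(t_0)$ under $x\mapsto t_0^{-1}x$, the outer $L^q(E)$ Jacobian gives $\Delta^{-\vect b/q}(t_0)$, and these combine to $\Delta^{\vect b/q'+\vect d/p'-\vect s}(t_0)$, exactly what is needed; and at the base point the Laplace transform $\int_{\Omega'}\Delta^{q(\vect s-\vect b/q'-\vect d/p')}_{\Omega'}e^{-cq\langle\cdot,e_\Omega\rangle}\,d\nu_{\Omega'}$ converges precisely under the hypothesis $\vect s\succ\frac{1}{q'}\vect b+\frac{1}{p'}\vect d+\frac{1}{2q}\vect m'$.

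That said, this is a sketch with the central step unfinished, and you acknowledge it. Three specific points deserve more care. (i) The base-point estimate hides a uniform bound $\|G_k\|_{L^p(F)}\lesssim e^{-c\langle\lambda_k,e_\Omega+\Phi(\zeta)\rangle}$ for the rescaled, frequency-localized pieces: the $\mu$-derivatives of $e^{-\langle\mu,t_ky\rangle}$ bring down powers of $|t_ky|$, which must be absorbed into the exponential $e^{-c'\langle\lambda_k,y\rangle}$; this works but needs a sentence (uniformity in $k$ and in $y=e_\Omega+\Phi(\zeta)$ is what ultimately feeds the Laplace integral). (ii) Membership in $\mathring\Bc^{\vect s}_{p,q}$ (i.e., the $\ell^q_0(K;L^{p,q}_0(F,E))$ condition) is waved at via "truncating the lattice and the frequencies"; for $q<\infty$ the $\ell^q_0$ tail is automatic from summability, but for $q=\infty$ you must check that $\Delta^{\vect s}_{\Omega'}(\lambda_k)\|S_{(0,ie_\Omega)}*\psi_k\|_{L^{p,\infty}(F,E)}\to 0$ as $\lambda_k$ escapes to both infinity and the boundary of $\Omega'$, which is not obvious from the exponential factor alone near $\partial\Omega'$. (iii) Your statement that the kernel "transforms with a factor $\Delta^{-(\vect b+\vect d)}(h_0)$" has the sign reversed (one has $S_{(\zeta_0,z_0)}=\Delta^{\vect b+\vect d}(t_0)\,\delta_{g_0^{-1},t_0^{-1}}S_{(0,ie_\Omega)}$ up to left translation), though this is only a bookkeeping slip since your final exponent is right.

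There is also a structural point worth noting. The paper's own (omitted) proof is declared analogous to the tube-domain case, and in the logic of this paper the economical route is the fiberwise one: by Remark~\ref{oss:3} one has $\|u\|_{\Bc^{\vect s}_{p,q}}\asymp\|\zeta\mapsto\|u(\zeta,\cdot)\|_{B^{\vect s}_{p,q}(F,\Omega)}\|_{L^q(E)}$; since $S_{(\zeta,z)}(\zeta',\cdot)$ is a translate of the tube-domain Cauchy--Szeg\H o kernel at imaginary part $\rho(\zeta,z)+\Phi(\zeta'-\cdot)$, one applies the already-proved tube estimate fiberwise and then integrates in $\zeta'$ using $\Phi_*\Hc^{2n}=cI^{-\vect b}_\Omega$. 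Your approach instead re-derives the tube-domain ingredient from scratch via Fourier analysis at a single base point. Both are valid; the fiberwise route is shorter given what is already available, while yours is more self-contained but leaves the hardest computation (which the fiberwise route would import) only sketched.

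Part (2) is correct. Minor nits: what you call "Cauchy--Schwarz in $\ell^{q'}\times\ell^q$" is Hölder; and the claim that each summand $\langle u*\psi_k\mid S_{(\zeta,z)}*\psi_k\rangle$ is holomorphic should be justified by observing that $(\zeta,z)\mapsto S_{(\zeta,z)}*\psi_k$ is weakly anti-holomorphic with values in $L^{p,q}(F,E)$ and then conjugating, rather than by a pointwise dominated-convergence sentence, since $u*\psi_k$ need not be a function.
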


The proof is analogous to that of~\cite[Lemma 5.1]{CalziPeloso} and is omitted.

\begin{prop}\label{prop:11}
	Take $p,q\in(0,\infty]$ and $\vect s \succ \frac 1 q \vect b+\frac 1 p \vect d+\frac{1}{2 q'}\vect m'$. Define a continuous linear mapping (cf.~Lemma~\ref{lem:4})
	\[
	\Ec \colon \Bc^{-\vect s}_{p,q}(\Nc,\Omega)\ni u \mapsto [(\zeta,z)\mapsto \langle u\vert S_{(\zeta,z)}\rangle]\in A^{\infty,\infty}_{\vect s-\vect b/q-\vect d/p}(D),
	\]
	Then, the following hold:
	\begin{enumerate}
		\item[\textnormal{(1)}]  for every $u\in \Bc^{\vect s}_{p,q}(\Nc,\Omega)$ and for every $(\zeta,z)\in D$ such that $u(\zeta,\,\cdot\,)\in B^{\vect s_2}_{p_2,q_2}(F,\Omega)$ for some $p_2,q_2\in (0,\infty]$ and for some $\vect s_2\succ \frac{1}{p_2} \vect d+\frac{1}{2 q_2'}\vect m'$, 
		\[
		(\Ec u)(\zeta,z)=[\Ec (u(\zeta,\,\cdot\,))](z-i\Phi(\zeta));
		\]

		\item[\textnormal{(2)}] the linear mappings $u\mapsto (\Ec u)_h$, as $h$ runs through $\Omega$, induce equicontinuous endomorphisms of  $\Bc^{-\vect s}_{p,q}(\Nc,\Omega)$ and $\mathring \Bc^{-\vect s}_{p,q}(\Nc,\Omega)$;
		
		\item[\textnormal{(3)}] the mapping
		\[
		\Omega\cup \Set{0}\ni h \mapsto (\Ec u)_h\in \Bc^{-\vect s}_{p,q}(\Nc,\Omega)
		\]
		is continuous if $u\in \mathring \Bc^{-\vect s}_{p,q}(\Nc,\Omega)$, and is continuous in the weak topology $\sigma^{-\vect s}_{p,q}$ if $u\in \Bc^{-\vect s}_{p,q}(\Nc,\Omega)$.
	\end{enumerate}
\end{prop}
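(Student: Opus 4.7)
The plan is to adapt the arguments from~\cite[Lemma 5.1, Theorem 5.2, and Proposition 5.4]{CalziPeloso} to the mixed-norm setting, using the slicewise description from Remark~\ref{oss:3} together with Lemma~\ref{lem:2} to reduce the essential content to the tube-domain case ($E=\Set{0}$, $\vect b=\vect 0$) already treated in~\cite{CalziPeloso}.

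For part (1), I would first verify the identity for $u\in \Sc_{\overline{\Omega'}}(\Nc)$, for which both sides unfold into explicit convergent integrals. Using the slicewise form of the pairing,
\[
\langle u \vert S_{(\zeta,z)}\rangle=\int_E \langle u(\zeta',\,\cdot\,)\vert S_{(\zeta,z)}(\zeta',\,\cdot\,)\rangle\,\dd\zeta',
\]
together with the algebraic identity $S_{(\zeta,z)}(\zeta',x')=c\, B^{\vect b+\vect d}_{(\zeta,z)}(\zeta',x'+i\Phi(\zeta'))$ and the standard decomposition relating $B^{\vect b+\vect d}_{(\zeta,z)}$ (along the slice $\zeta'$) to the tube-domain Cauchy--Szeg\H{o} kernel at the shifted point $z-i\Phi(\zeta)$, the integrand identifies with (a translate of) the tube reproducing kernel, and the $E$-integration absorbs the coupling $\Phi(\zeta,\zeta')$ via the CR structure of $u$. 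This yields $\Ec(u(\zeta,\,\cdot\,))(z-i\Phi(\zeta))$ on the right-hand side. The extension to general $u\in \Bc^{-\vect s}_{p,q}(\Nc,\Omega)$ under the stated hypothesis then follows by a density/continuity argument, using that pointwise evaluation at $(\zeta,z)\in D$ is continuous on both sides in the appropriate topologies (Lemma~\ref{lem:4}(1) and Proposition~\ref{prop:10}).

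For part (2), I would exploit the slice identity of (1) applied to each fixed $\zeta\in E$, obtaining $(\Ec u)_h(\zeta,\,\cdot\,)=\Ec(u(\zeta,\,\cdot\,))_h$. The tube-domain analogue~\cite[Proposition 5.4]{CalziPeloso} (applied to $B^{-\vect s+\vect b/q}_{p,q}(F,\Omega)$) gives
\[
\norm*{\Ec(u(\zeta,\,\cdot\,))_h}_{B^{-\vect s+\vect b/q}_{p,q}(F,\Omega)}\meg C\norm{u(\zeta,\,\cdot\,)}_{B^{-\vect s+\vect b/q}_{p,q}(F,\Omega)}
\]
uniformly in $h\in \Omega$. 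Taking the $L^q(E)$-norm in $\zeta$ and invoking Remark~\ref{oss:3} yields the equicontinuity on $\Bc^{-\vect s}_{p,q}(\Nc,\Omega)$. Stability of $\mathring \Bc^{-\vect s}_{p,q}(\Nc,\Omega)$ under $u\mapsto (\Ec u)_h$ follows by applying the tube analogue slice-by-slice to approximants and using dominated convergence in $\zeta$ to preserve the $\ell^q_0$/$L^{p,q}_0$ conditions. For part (3), continuity at an interior point $h_0\in \Omega$ is then a consequence of the local holomorphicity of $\Ec u$ together with the equicontinuity from (2). The substantive point is continuity at $h=0$: for $u\in \mathring\Bc^{-\vect s}_{p,q}(\Nc,\Omega)$, I would combine the slice identity of (1) with the tube-domain convergence in~\cite[Theorem 5.2]{CalziPeloso} (applied slicewise) to deduce that $(\Ec u)_h\to u$ in $\Sc'_{\overline{\Omega'}}(\Nc)$, and then upgrade this to norm convergence in $\mathring\Bc^{-\vect s}_{p,q}$ by a three-$\varepsilon$ argument using approximation by elements of $\Sc_{\overline{\Omega'}}(\Nc)$ (dense in the ``little'' space by construction). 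For general $u\in \Bc^{-\vect s}_{p,q}$, weak convergence for $\sigma^{-\vect s}_{p,q}$ follows by testing against elements of the predual $\mathring\Bc^{\vect s+(1/q-1)_+\vect b+(1/p-1)_+\vect d}_{p',q'}$, reducing once more to the slicewise tube statement.

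The main obstacle will be the careful bookkeeping in part (1), especially tracking the algebraic identity for $B^{\vect b+\vect d}_{(\zeta,z)}$ along the $\zeta'$-slices and justifying the Fubini-type interchange needed to identify the $E$-integral with the tube extension of $u(\zeta,\,\cdot\,)$; once that identification is secured, parts (2) and (3) follow essentially mechanically from Remark~\ref{oss:3} and the tube-domain theory of~\cite{CalziPeloso}.
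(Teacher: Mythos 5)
Your approach is genuinely different from the paper's, and it contains a gap in part~(1). You propose to prove the slice identity first for $u\in \Sc_{\overline{\Omega'}}(\Nc)$ by an explicit integral computation, then extend ``by a density/continuity argument'' to general $u\in\Bc^{-\vect s}_{p,q}(\Nc,\Omega)$. But $\Sc_{\overline{\Omega'}}(\Nc)$ is dense only in the little space $\mathring\Bc^{-\vect s}_{p,q}(\Nc,\Omega)$ (Proposition~\ref{prop:6}(1)), \emph{not} in $\Bc^{-\vect s}_{p,q}(\Nc,\Omega)$, so the proposed density argument does not cover the stated generality. (Passing to the weak topology would require a separate density claim that you neither state nor prove.) The paper sidesteps this entirely: by the very definition of $\Bc$-spaces and of the pairing against $S_{(\zeta,z)}$, it suffices to verify the identity with $u$ replaced by $u*\psi$ for $\psi\in\Sc_{\overline{\Omega'}}(\Nc)$ with $\Fc_\Nc\psi\in C^\infty_c(\Omega')$, and then the convolution identity $[\Ec(u*\psi)]_h=(u*\psi)*(S_{(0,ih)}*\psi')$ together with Lemma~\ref{lem:2} gives the slice identity for \emph{every} $u\in\Sc'_{\overline{\Omega'}}(\Nc)$, no density needed. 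This is the key ingredient you are missing; the rest of your computation for Schwartz elements (Fubini plus the algebraic form of $B^{\vect b+\vect d}_{(\zeta,z)}$) is correct in spirit but also somewhat hand-wavy about how the CR structure absorbs the $\Phi(\zeta,\zeta')$ coupling.

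For parts~(2)--(3) your slicewise strategy (use (1), apply the tube-domain bounds of~\cite[Theorem 5.2]{CalziPeloso} fiberwise, integrate in $L^q(E)$ via Remark~\ref{oss:3}) is a legitimate alternative to the paper's route, which instead reruns the proof of~\cite[Theorem 5.2]{CalziPeloso} directly on $\Nc$ with Lemma~\ref{lem:3} in place of \cite[Corollary 4.10]{CalziPeloso}. Your route is arguably more transparent for the big space $\Bc^{-\vect s}_{p,q}$, where Remark~\ref{oss:3} gives a clean slicewise norm. However, note that the footnote to Definition~\ref{def:1} explicitly warns that the slicewise characterization is established ``at least for $\Bc^{\vect s}_{p,q}$,'' not automatically for $\mathring\Bc^{\vect s}_{p,q}$; so your claim that stability of the little space and continuity at $h=0$ follow by ``dominated convergence in $\zeta$'' needs a genuine argument — the slicewise $L^{p,q}_0$/$\ell^q_0$ conditions are not obviously equivalent to membership in $\mathring\Bc$. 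The three-$\varepsilon$ approximation argument you sketch for continuity at $h=0$ is fine once this is settled, since density of $\Sc_{\overline{\Omega'}}(\Nc)$ in $\mathring\Bc$ and the equicontinuity from (2) are available. In summary: fix part~(1) by replacing the density argument with the paper's $u*\psi$ reduction, and flag the slicewise description of $\mathring\Bc$ as a point requiring proof rather than citation.
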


\begin{proof}
	(1) It will suffice to prove the assertion when $u$ is replaced by $u*\psi$ for some $\psi\in \Sc_{\overline{\Omega'}}(\Nc)$ with $\Fc_\Nc \psi\in C^\infty_c(\Omega')$. Then,
	\[
	[\Ec (u*\psi)]_h=(u*\psi)*(S_{(0,i h)}*\psi'),
	\]
	where $\psi'\in  \Sc_{\overline{\Omega'}}(\Nc)$ and $\psi=\psi*\psi'$.  It then suffices to apply Lemma~\ref{lem:2}.
	
	(2)--(3). The proof is analogous to that of~\cite[Theorem 5.2]{CalziPeloso} (replacing~\cite[Corollary 4.10]{CalziPeloso} with Lemma~\ref{lem:3}).
\end{proof}

\begin{prop}\label{prop:6}
	Take $p,q\in (0,\infty]$ and $\vect s\in \R^r$. Then, the following hold:
	\begin{enumerate}
		\item[\textnormal{(1)}] $\Sc_{\overline{\Omega'}}(\Nc) $ embeds continuously as a dense subspace of $\mathring\Bc^{\vect s}_{p,q}(\Nc,\Omega)$; 
		
		\item[\textnormal{(2)}] for every $\vect s'\in \C^r$, the mapping  $\Sc_{\overline{\Omega'}}(\Nc)\ni\phi\mapsto \phi*I^{\vect s'}_\Omega\in \Sc_{\overline{\Omega'}}(\Nc)$ induces isomorphisms of  $\mathring\Bc^{\vect s}_{p,q}(\Nc,\Omega)$ and $ \Bc^{\vect s}_{p,q}(\Nc,\Omega)$ onto $\mathring\Bc^{\vect s+\Re\vect s'}_{p,q}(\Nc,\Omega)$ and $\Bc^{\vect s+\Re\vect s'}_{p,q}(\Nc,\Omega)$, respectively;
		
		\item[\textnormal{(3)}] if $\vect s \succ \frac 1 q\vect b+\frac 1 p \vect d+\frac{1}{2 q'}\vect m'$ and $\vect s \succ \frac{1}{2 q}\vect m$ (resp.\ $\vect s \Meg \vect 0$ if $q=\infty$), then there are continuous inclusions
		\[
		\Ec(\Sc_{\overline {\Omega'}}(\Nc))\subseteq \Ac^{p,q}_{\vect s,0}(D)\subseteq \widetilde \Ac^{p,q}_{\vect s,0}(D)  \qquad \text{(resp.\ $\Ec(\Sc_{\overline {\Omega'}}(\Nc))\subseteq \Ac^{p,q}_{\vect s}(D)\subseteq \widetilde \Ac^{p,q}_{\vect s}(D)$).}
		\]
		
		\item[\textnormal{(4)}] if $\vect s \succ \frac 1 p \vect d+\frac{1}{2 q'}\vect m'$ and $A^{p,q}_{\vect s,0}(T_\Omega)=\widetilde  A^{p,q}_{\vect s,0}(T_\Omega)$ (resp.\ $A^{p,q}_{\vect s}(T_\Omega)=\widetilde  A^{p,q}_{\vect s}(T_\Omega)$), then $\Ac^{p,q}_{\vect s,0}(D)=\widetilde  \Ac^{p,q}_{\vect s,0}(D)$ (resp.\ $\Ac^{p,q}_{\vect s}(D)=\widetilde  \Ac^{p,q}_{\vect s}(D)$);
		
		\item[\textnormal{(5)}] if $\vect s \succ \frac 1 p (\vect b+\vect d)+\frac{1}{2 p'}\vect m'$ and $\Ac^{p,p}_{\vect s,0}(D)=\widetilde  \Ac^{p,p}_{\vect s,0}(D)$ (resp.\ $\Ac^{p,p}_{\vect s}(D)=\widetilde  \Ac^{p,p}_{\vect s,}(D)$), then $A^{p,p}_{\vect s-\vect b/p,0}(T_\Omega)=\widetilde  A^{p,p}_{\vect s-\vect b/p,0}(T_\Omega)$ (resp.\ $A^{p,p}_{\vect s-\vect b/p}(T_\Omega)=\widetilde  A^{p,p}_{\vect s-\vect b/p}(T_\Omega)$);

		\item[\textnormal{(6)}] if $\vect s \succ \frac{1}{2 q}\vect m+\left(\frac{1}{2 \min(p,p')}-\frac{1}{2 q} \right)_+ \vect m'$, then $\Ac^{p,q}_{\vect s}(D)=\widetilde  \Ac^{p,q}_{\vect s}(D)$  and $\Ac^{p,q}_{\vect s,0}(D)=\widetilde  \Ac^{p,q}_{\vect s,0}(D)$.
	\end{enumerate}
\end{prop}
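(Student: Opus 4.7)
The overall plan is to exploit the slicing characterization of $\Bc^{\vect s}_{p,q}(\Nc,\Omega)$ from Remark~\ref{oss:3}, which represents it as the $L^q(E)$-Bochner space with values in the standard Besov space $B^{\vect s+\vect b/q}_{p,q}(F,\Omega)$, together with the identity $(\Ec u)^{(\zeta)}=\Ec(u(\zeta,\,\cdot\,))$ from Proposition~\ref{prop:11}(1). This reduces most assertions to their tube-domain counterparts, applied fibrewise in $\zeta$, followed by a Minkowski- or Fubini-type manipulation in $\zeta$. Along the way, define $\widetilde \Ac^{p,q}_{\vect s}(D)\coloneqq \Ec(\Bc^{-\vect s-\vect b/q}_{p,q}(\Nc,\Omega))$, so that by Proposition~\ref{prop:10} a member $f=\Ec u$ satisfies $f^{(\zeta)}=\Ec(u(\zeta,\,\cdot\,))\in \widetilde A^{p,q}_{\vect s}(T_\Omega)$ fibrewise.

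Parts (1) and (2) are formal adaptations of~\cite[Proposition 4.17 and Proposition 4.18]{CalziPeloso}. For (1) I would first truncate $u\in \mathring\Bc^{\vect s}_{p,q}(\Nc,\Omega)$ by a finite partial sum $\sum_{k\in K_0} u*\psi_k*\psi_k'$ with $\psi_k'$ chosen so that $\psi_k=\psi_k*\psi_k'$, the tail going to zero by the $\ell^q_0(K;L^{p,q}_0(F,E))$ hypothesis and Lemma~\ref{lem:3}; then convolve in $\zeta$ and $x$ with mollifiers whose $\Fc_\Nc$-transforms are compactly supported in $\Omega'$. For (2), note that $\Fc_F(u*I^{\vect s'}_\Omega)(\zeta,\,\cdot\,)=\Delta^{-\vect s'}_{\Omega'}\Fc_F u(\zeta,\,\cdot\,)$, which shifts the weight in the defining norm of $B^{\vect s+\vect b/q}_{p,q}(F,\Omega)$; take $L^q(E)$-norms via Remark~\ref{oss:3}.

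Part (3) breaks into two inclusions. For $\Ec(\Sc_{\overline{\Omega'}}(\Nc))\subseteq \Ac^{p,q}_{\vect s,0}(D)$, apply Proposition~\ref{prop:11}(1): for $\phi\in \Sc_{\overline{\Omega'}}(\Nc)$, $\phi(\zeta,\,\cdot\,)\in \Sc_{\overline{\Omega'}}(F)$ depends on $\zeta$ as a Schwartz function (Lemma~\ref{lem:2}), so $\Ec(\phi(\zeta,\,\cdot\,))\in A^{p,q}_{\vect s,0}(T_\Omega)$ by~\cite[Proposition 5.4]{CalziPeloso}, with norm controlled by Schwartz seminorms of $\phi(\zeta,\,\cdot\,)$; the $L^q(E)$-integrability follows from the Schwartz decay. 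For $\Ac^{p,q}_{\vect s,0}(D)\subseteq \widetilde\Ac^{p,q}_{\vect s,0}(D)$, given $f\in \Ac^{p,q}_{\vect s,0}(D)$, apply~\cite[Proposition 5.4]{CalziPeloso} to each $f^{(\zeta)}$ to obtain boundary distributions $v_\zeta\in B^{-\vect s}_{p,q}(F,\Omega)$; verify via Proposition~\ref{prop:2} and Lemma~\ref{lem:2} that the $v_\zeta$ depend jointly measurably on $\zeta$ and assemble into $u\in \mathring\Bc^{-\vect s-\vect b/q}_{p,q}(\Nc,\Omega)$ with $u(\zeta,\,\cdot\,)=v_\zeta$, then use Proposition~\ref{prop:11}(1) to conclude $f=\Ec u$.

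For (4), given $f=\Ec u\in\widetilde \Ac^{p,q}_{\vect s,0}(D)$, Proposition~\ref{prop:11}(1) yields $f^{(\zeta)}=\Ec(u(\zeta,\,\cdot\,))\in \widetilde A^{p,q}_{\vect s,0}(T_\Omega)=A^{p,q}_{\vect s,0}(T_\Omega)$ by hypothesis; the open mapping theorem supplies a uniform constant in the corresponding norm inequality, and integrating in $\zeta$ and invoking Remark~\ref{oss:3} gives $\norm{f}_{\Ac^{p,q}_{\vect s}(D)}\meg C\norm{u}_{\mathring\Bc^{-\vect s-\vect b/q}_{p,q}(\Nc,\Omega)}$. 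Part (5) specializes to $p=q$, where Fubini and Lemma~\ref{oss:5} give $\Ac^{p,p}_{\vect s}(D)=A^{p,p}_{\vect s}(D)$ and $\Bc^{\vect s'}_{p,p}=B^{\vect s'}_{p,p}$, reducing the claim to the tube-domain transference already discussed in the second item of the transference Proposition immediately before Theorem~\ref{teo:4}. Finally, (6) follows by combining (4) with Theorem~\ref{teo:4}(1) applied to the tube case $n=0$, where the stated hypothesis coincides exactly with the condition in (6).

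The main obstacle will be in (3), namely verifying that the fibrewise boundary values $v_\zeta$ depend on $\zeta$ in a sufficiently regular manner (jointly measurable, with $L^q(E)$-integrable $B^{-\vect s}_{p,q}(F,\Omega)$-seminorms) to define a genuine element of $\Sc'_{\overline{\Omega'}}(\Nc)$; this requires synchronizing the choice of partition of unity $(\psi_k)$ on $\Omega'$ used in the fibrewise constructions. Once that point is set up cleanly, the remaining steps are essentially bookkeeping adaptations of the arguments in~\cite[Chapter 5]{CalziPeloso}.
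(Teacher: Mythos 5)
Your slicing framework via Remark~\ref{oss:3} and Proposition~\ref{prop:11}(1) is the right organising principle, and parts (1), (2), and (4) are in the same spirit as the paper's proof. However, there is a genuine gap in part (3), precisely at the point you flag: the inclusion $\Ac^{p,q}_{\vect s,0}(D)\subseteq\widetilde\Ac^{p,q}_{\vect s,0}(D)$. You propose to apply the tube-domain boundary-value map $\Bc'$ to each slice $f^{(\zeta)}$, obtaining $v_\zeta\in B^{-\vect s}_{p,q}(F,\Omega)$, and then to \emph{assemble} the family $(v_\zeta)_\zeta$ into a single $u\in\Sc'_{\overline{\Omega'}}(\Nc)$. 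But Lemma~\ref{lem:2} only goes the other way: it slices an \emph{existing} $u\in\Sc'_{\overline{\Omega'}}(\Nc)$ into a $C^\infty$ family of fibrewise distributions. There is no corresponding assembly lemma, and the usual obstructions (joint measurability is not enough; you would need a tempered, $C^\infty$ dependence on $\zeta$ consistent with the Schwartz-pairing, plus an argument that the resulting object annihilates the complement of $\Sc_{\overline{\Omega'}}(\Nc)$) make this non-trivial. Your remark about ``synchronizing the partition of unity'' suggests you sensed this, but the step is not filled in, and it is where the argument would actually fail.

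The paper avoids this entirely. It first invokes [Proposition 5.4] of~\cite{CalziPeloso} to obtain a boundary-value map $\Bc\colon\Ac^{q,q}_{\vect s,0}(D)\to\mathring\Bc^{-\vect s}_{q,q}(\Nc,\Omega)$ on the diagonal space $\Ac^{q,q}_{\vect s,0}(D)=A^{q,q}_{\vect s,0}(D)$, where the target is the ``ordinary'' space $\mathring B^{-\vect s}_{q,q}(\Nc,\Omega)$ and the map is already known. Since $\Bc u$ then \emph{exists a priori} as a single element of $\Sc'_{\overline{\Omega'}}(\Nc)$, Lemma~\ref{lem:2} can be used in its legitimate direction to slice it; one then checks $(\Bc u)(\zeta,\cdot)=\Bc'(u(\zeta,\cdot))$ a.e., which upgrades the norm estimate to $\mathring\Bc^{-\vect s}_{p,q}$, and the density assertion of Proposition~\ref{prop:1} extends the map from $\Ac^{p,q}_{\vect s,0}(D)\cap\Ac^{q,q}_{\vect s,0}(D)$ to all of $\Ac^{p,q}_{\vect s,0}(D)$. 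This ``construct on a dense sub-lattice where the object already exists, then estimate'' manoeuvre is the key idea your sketch is missing.

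Two smaller points. For (5) you appeal to the transference Proposition stated just before Theorem~\ref{teo:4}, but that Proposition is itself stated in Section 3 as a consequence of Proposition~\ref{prop:6}; the non-circular route is to cite [Theorem 6.3] of~\cite{Paralipomena} directly (as the paper does), after reducing to $A^{p,p}$ via $\Ac^{p,p}=A^{p,p}$. For (6), referring to Theorem~\ref{teo:4}(1) is again potentially circular, since Theorem~\ref{teo:4} is derived from Proposition~\ref{prop:6}; in the tube case it reduces to [Corollary 5.11] of~\cite{CalziPeloso}, which is what should be cited, combined with (4).
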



We observe explicitly that in~\cite[Corollary 5.11]{CalziPeloso} the assumption $\vect s \succ \frac1 p(\vect b+\vect d)+\frac{1}{2 q'}\vect m'$ is redundant (as the assumption $\vect s \succ \frac1 q \vect b +\frac 1 p\vect d+\frac{1}{2 q'}\vect m'$ would be redundant in (6) above), as it is implied by the condition $\vect s\succ \frac{1}{2 q}\vect m+\left(\frac{1}{2 \min(p,p')}-\frac{1}{2 q} \right)_+ \vect m'$. Indeed, $\left(\frac{1}{ \min(p,p')}-\frac{1}{ q} \right)_+ \vect m'\Meg \left(\frac 1 {q'}-\frac1 p\right)\vect m'\Meg \frac 2 p \vect d+\frac{1}{q'}\vect m' $ since $2\vect d \prec -\vect m-\vect m'$. 

We also mention that, if $r=2$ (so that $\Omega$ is isomorphic to either a quadrant or a Lorentz cone), then combining~\cite[Theorems 6.6 and 6.8]{BekolleGonessaNana} (the latter being a consequence of~\cite[Theorem 1.2]{BourgainDemeter}) with~\cite[Theorem 5.10]{CalziPeloso} (cf.~also~\cite[remarks following Remark 2.6]{Paralipomena}), we see that $A^{p,q}_{\vect s}(T_\Omega)=\widetilde  A^{p,q}_{\vect s}(T_\Omega)$  and $A^{p,q}_{\vect s,0}(T_\Omega)=\widetilde  A^{p,q}_{\vect s,0}(T_\Omega)$ if and only if
\[
\vect s \succ \frac{1}{2q}\vect m+\frac 1 2\left(\frac{1}{\min(2,p)} -\frac{1}{ q}   \right)_+\vect m',\frac 1 p \vect d+\frac{1}{2 q'}\vect m' .
\]
By transference, under the same condition we also have $\Ac^{p,q}_{\vect s}(D)=\widetilde  \Ac^{p,q}_{\vect s}(D)$  and $\Ac^{p,q}_{\vect s,0}(D)=\widetilde  \Ac^{p,q}_{\vect s,0}(D)$.

\begin{proof}
	(1) The proof is analogous to that of~\cite[Theorem 4.23, (1)]{CalziPeloso}.
	
	(2) This follows from~\cite[Theorem 4.26]{CalziPeloso} and Remark~\ref{oss:3}.
	
	(3) By~\cite[Proposition 5.4]{CalziPeloso}, there are continuous linear mappings $\Bc\colon \Ac^{q,q}_{\vect s,0}(D)\to \mathring \Bc^{-\vect s}_{q,q}(\Nc,\Omega)$ and $\Bc'\colon A^{p,q}_{\vect s,0}(T_\Omega)\to \mathring B^{-\vect s}_{p,q}(F,\Omega)$ such that $\Ec \Bc =I$ and $\Ec \Bc'=I$. If $u\in \Ac^{p,q}_{\vect s,0}(D)\cap \Ac^{q,q}_{\vect s,0}(D)$, then $u(\zeta,\,\cdot\,)\in A^{p,q}_{\vect s,0}(T_\Omega)$ for almost every $\zeta\in E$, so that Lemma~\ref{lem:2}, Remark~\ref{oss:3},   (1) of Proposition~\ref{prop:11}, and Proposition~\ref{prop:1} imply that  $(\Bc u)(\zeta,\,\cdot\,)=\Bc'(u(\zeta,\,\cdot\,))$ for almost every $\zeta\in E$, and that $\Bc$ induces a continuous linear mapping of $ \Ac^{p,q}_{\vect s,0}(D)$ into $\mathring \Bc^{-\vect s}_{p,q}(\Nc,\Omega)$.  Thus, $\Ac^{p,q}_{\vect s,0}(D)\subseteq \widetilde \Ac^{p,q}_{\vect s,0}(D)$ continuously. The inclusion $\Ac^{p,q}_{\vect s}(D)\subseteq \widetilde \Ac^{p,q}_{\vect s}(D)$ is proved similarly (cf.~the proof of~\cite[Proposition 5.4]{CalziPeloso}).
	The remaining inclusions are consequences of Lemma~\ref{lem:2} and~\cite[Proposition 5.4]{CalziPeloso}.
		
	(4) This follows from Remark~\ref{oss:3} and (1) of Proposition~\ref{prop:11}.
	
	(5) This follows from~\cite[Theorem 6.3]{Paralipomena}.
	
	(6) This follows from (4) and~\cite[Corollary 5.11]{CalziPeloso}.
\end{proof}

\begin{prop}\label{prop:7}
	Take $p,q\in (0,\infty]$, and $\vect s,\vect s'\in \R^r$ such that the following hold:
	\begin{itemize}
		\item $\vect s\succ \frac 1 q\vect b+\frac 1 p \vect d+\frac{1}{2 q'}\vect m'$;
	
		\item $\vect s' \prec \vect b+\vect d-\frac 1 2 \vect m$;
		
		\item $\vect s+\vect s'\prec \frac 1 q \vect b+\frac 1 p\vect d-\frac{1}{2 \max(1,q)}\vect m'$.
	\end{itemize}
	Then, there is a constant $c\neq 0$ such that
	\[
	\langle u\vert u'\rangle= c \int_D (\Ec u)\overline{\Ec u'*I_\Omega^{\vect s'-\vect b-\vect d}} (\Delta_\Omega^{-\vect s'}\circ \rho)\,\dd \nu_D
	\]
	for every $u,u'\in \Sc_{\overline{\Omega'}}(\Nc)$.
	In particular, the sesquilinear form
	\[
	(f,g)\mapsto \int_D f \overline g (\Delta^{-\vect s'}\circ \rho)\,\dd \nu_D
	\]
	induces a continuous sesquilinear form on $\widetilde \Ac^{p,q}_{\vect s}(D)\times \widetilde \Ac^{p',q'}_{\vect b/\min(1,q)+\vect d/\min(1,p)-\vect s-\vect s'}(D)$.
\end{prop}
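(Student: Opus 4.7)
The plan is to establish the identity on Schwartz data by a fiberwise Fourier/Plancherel computation, and then derive the continuity on the product of $\widetilde\Ac$-spaces as a consequence of that identity combined with the isomorphisms already available for boundary values.

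\textbf{Identity on Schwartz data.} For $u,u'\in \Sc_{\overline{\Omega'}}(\Nc)$, I would use Proposition~\ref{prop:11}(1) to factor $(\Ec u)(\zeta,z)=[\Ec(u(\zeta,\cdot))](z-i\Phi(\zeta))$ (and similarly for $u'$), and observe that convolution by $I_\Omega^{\vect s'-\vect b-\vect d}$, which is supported in the centre $F$ of $\Nc$ (Lemma~\ref{lem:2}), commutes with this fiberwise decomposition. Writing $d\nu_D=\Delta_\Omega^{\vect b+2\vect d}(h)\,d\zeta\,dx\,dh$ with $h=\rho(\zeta,z)$, using the standard identity $\Fc_F(\Ec(u(\zeta,\cdot))_h)(\lambda)=c_0\chi_{\Omega'}(\lambda)e^{-\langle\lambda,h\rangle}\Fc_F(u(\zeta,\cdot))(\lambda)$ together with $\Lc I_\Omega^{\vect s'-\vect b-\vect d}=\Delta_{\Omega'}^{\vect b+\vect d-\vect s'}$, and applying Parseval in the $F$-variable, the integral on $D$ becomes
\[
c_1\int_E d\zeta\int_\Omega \Delta_\Omega^{\vect b+\vect d-\vect s'}(h)\,d\nu_\Omega(h)\int_{\Omega'} e^{-2\langle\lambda,h\rangle}\Fc_F v(\lambda)\overline{\Fc_F v'(\lambda)}\Delta_{\Omega'}^{\vect b+\vect d-\vect s'}(\lambda)\,d\lambda,
\]
with $v=u(\zeta,\cdot)$ and $v'=u'(\zeta,\cdot)$. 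The hypothesis $\vect s'\prec \vect b+\vect d-\frac{1}{2}\vect m$ ensures the Laplace-transform identity
$\int_\Omega e^{-2\langle\lambda,h\rangle}\Delta_\Omega^{\vect b+\vect d-\vect s'}(h)\,d\nu_\Omega(h)=c_2\Delta_{\Omega'}^{\vect s'-\vect b-\vect d}(\lambda)$, so the $\Delta_{\Omega'}$ factors cancel; Fubini together with Plancherel on $F$ (and then Fubini on $\Nc$) then produces $c\,\langle u|u'\rangle$ for a nonzero constant $c$.

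\textbf{Continuity.} Given $f=\Ec u\in \widetilde\Ac^{p,q}_{\vect s}(D)$ with $u\in\Bc^{-\vect s}_{p,q}(\Nc,\Omega)$ and $g\in\widetilde\Ac^{p',q'}_{\vect b/\min(1,q)+\vect d/\min(1,p)-\vect s-\vect s'}(D)$, write $g=\Ec u''$ and set $u'\coloneqq u''*I_\Omega^{\vect b+\vect d-\vect s'}$. Using the elementary identity $1/\min(1,q)=1+(1/q-1)_+$ (and likewise for $p$), Proposition~\ref{prop:6}(2) then gives $u'\in\Bc^{\vect s-(1/q-1)_+\vect b-(1/p-1)_+\vect d}_{p',q'}(\Nc,\Omega)$, which is precisely the space canonically paired with $\Bc^{-\vect s}_{p,q}(\Nc,\Omega)$ by the remarks following Proposition~\ref{prop:10}. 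Since $g=\Ec u'*I_\Omega^{\vect s'-\vect b-\vect d}$, the identity from the first step rewrites the integral as $c\langle u|u'\rangle$, and continuity of the bilinear form is inherited from the continuity of $\langle\cdot|\cdot\rangle$, of $\Ec$ on each side, and of the convolution isomorphism. The equality of the integral with $c\langle u|u'\rangle$ on the full product of $\widetilde\Ac$-spaces is obtained from the Schwartz case by density: $\Sc_{\overline{\Omega'}}(\Nc)$ is dense in $\mathring\Bc^{\vect t}_{p,q}$ (Proposition~\ref{prop:6}(1)), which handles the case $\min(p,q)<\infty$, and the remaining cases follow by a bounded convergence argument exploiting the weak-$\ast$ approximation provided by Proposition~\ref{prop:11}(3).

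\textbf{Main obstacle.} The delicate point is justifying the Fourier/Plancherel manipulations for distributional boundary data and the absolute integrability of the iterated integrals in the first step. Lemmas~\ref{lem:2}--\ref{lem:3} and Proposition~\ref{prop:11}(1) supply the needed commutation rules between the $\Nc$-convolution and the fiberwise $F$-Fourier representation. All three hypotheses on $\vect s,\vect s'$ come into play: the first guarantees that $\Ec$ is well defined, the second that $I_\Omega^{\vect s'-\vect b-\vect d}$ is a bona fide Radon measure with an explicit Laplace transform on $\Omega'$, and the third that the resulting weighted $\lambda$-integration converges and that the shifted parameters indeed land in the Besov space supporting the canonical pairing.
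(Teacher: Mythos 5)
Your approach is essentially the same as the paper's, which defers to the analogous proof of \cite[Proposition~5.12]{CalziPeloso}: a fiberwise Parseval/Laplace-transform computation gives the identity on Schwartz data, and the continuity on $\widetilde\Ac$-spaces then follows by composing with the convolution isomorphism of Proposition~\ref{prop:6}(2) to land in the Besov spaces carrying the canonical pairing (paired as $\Bc^{\vect t}_{p,q}\times\Bc^{-\vect t-(1/q-1)_+\vect b-(1/p-1)_+\vect d}_{p',q'}$), followed by density/weak approximation. One small point: the support of $I_\Omega^{\vect s'-\vect b-\vect d}$ in $\overline\Omega$ (so that, extended to $\Nc$, it is supported in the centre) is a remark after the definition of $\N_\Omega$, not part of Lemma~\ref{lem:2}, which rather supplies the fiberwise restriction $u\mapsto u(\zeta,\cdot)$ and its compatibility with central convolutions; otherwise the references you invoke do the work you ascribe to them.
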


As a consequence of Corollary~\ref{cor:1}, the above sesquilinear form induces an antilinear isomorphism of $\widetilde\Ac^{p',q'}_{\vect b/\min(1,q)+\vect d/\min(1,p)-\vect s-\vect s'}(D)$ onto $\widetilde \Ac^{p,q}_{\vect s,0}(D)'$.

\begin{proof}
	The proof is analogous to that of~\cite[Proposition 5.12]{CalziPeloso}.\footnote{We mention explicitly that the statement of the cited result contains a typo: instead of $\Delta^{\vect{s''}}_\Omega$, there should be $\Delta^{\vect{s''}-(\vect b+\vect d)}_\Omega$ in the first display formula, as well as in the first line of the formula in display in the proof. }
\end{proof}

\begin{cor}\label{cor:1}
	Take $p,q\in (0,\infty]$ and $\vect s\in \R^r$. Then, the continuous sesquilinear form $\langle \,\cdot\,\vert \,\cdot\,\rangle\colon \Bc^{\vect s}_{p,q}(\Nc,\Omega)\times \Bc^{-\vect s-(1/q-1)_+\vect b-(1/p-1)_+\vect d}_{p',q'}(\Nc,\Omega)\to \C$ induces an antilinear isomorphism of $\Bc^{-\vect s-(1/q-1)_+\vect b-(1/p-1)_+\vect d}_{p',q'}(\Nc,\Omega)$ onto $\mathring \Bc^{\vect s}_{p,q}(\Nc,\Omega)' $. 
\end{cor}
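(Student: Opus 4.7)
The plan is to adapt to the vector-valued setting the retract strategy used in~\cite[Proposition 4.20]{CalziPeloso} for the scalar Besov spaces $B^{\vect s}_{p,q}(\Nc,\Omega)$. Fix a $(\delta,R)$-lattice $(\lambda_k)_{k\in K}$ in $\Omega'$ and a family $(\psi_k)_{k\in K}$ as in Definition~\ref{def:1}, normalized so that $\sum_k (\Fc_\Nc \psi_k)^2 = 1$ on $\Omega'$.

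First I would establish that $\mathring\Bc^{\vect s}_{p,q}(\Nc,\Omega)$ is a complemented subspace of $\ell^q_0(K;L^{p,q}_0(F,E))$. The map $\iota\colon u\mapsto (\Delta^{\vect s}_{\Omega'}(\lambda_k)\,u*\psi_k)_k$ is an isometric embedding by the very definition of the space. A left inverse is given by
\[
\pi\colon (f_k)_k \mapsto \sum_{k\in K} \Delta^{-\vect s}_{\Omega'}(\lambda_k)\,f_k*\psi_k,
\]
the sum understood in $\Sc'_{\overline{\Omega'}}(\Nc)$. The identity $\pi\circ\iota = I$ will come from the Calder\'on-type reproducing formula $u = \sum_k u*\psi_k*\psi_k$, which is valid on $\Sc'_{\overline{\Omega'}}(\Nc)$ since $\sum_k(\Fc_\Nc\psi_k)^2=1$ on $\Omega'$. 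Continuity of $\pi$ would be verified by estimating the components of $\iota(\pi((f_k)))$: the term $f_k*\psi_k*\psi_l$ vanishes unless $\supp(\Fc_\Nc\psi_k)\cap\supp(\Fc_\Nc\psi_l)\neq\emptyset$, which restricts $l$ to a bounded lattice-neighborhood of $k$, and these finitely many contributions have $L^{p,q}(F,E)$-norms controlled in terms of $\|f_k\|_{L^{p,q}(F,E)}$ by Lemma~\ref{lem:3}.

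Once this retract structure is in place, I would obtain the claimed duality by transposition. For $p,q\geq 1$, the dual of $\ell^{q}_0(K;L^{p,q}_0(F,E))$ is $\ell^{q'}(K;L^{p',q'}(F,E))$ under the natural pairing; composing with $\pi^{*}$ and reinterpreting the resulting sequence via $\iota$ then yields an antilinear isomorphism from $\Bc^{-\vect s}_{p',q'}(\Nc,\Omega)$ onto $\mathring\Bc^{\vect s}_{p,q}(\Nc,\Omega)'$, the sign flip on $\vect s$ reflecting that $\Delta^{\vect s}_{\Omega'}$ in $\iota$ is paired against $\Delta^{-\vect s}_{\Omega'}$ in $\pi$. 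Unwinding, the pairing takes the form $\sum_k \langle u*\psi_k\,|\,u'*\psi_k\rangle = \int_E \langle u(\zeta,\,\cdot\,)\,|\,u'(\zeta,\,\cdot\,)\rangle\,\dd\zeta$ of the preceding definition.

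The main obstacle will be the case $p<1$ or $q<1$, where $(L^{p,q}(F,E))'\neq L^{p',q'}(F,E)$ and $(\ell^q)'\neq\ell^{q'}$. To circumvent this, I would apply Proposition~\ref{prop:10} to embed continuously
\[
\Bc^{\vect s}_{p,q}(\Nc,\Omega)\hookrightarrow \Bc^{\vect s+(1/q-1)_{+}\vect b+(1/p-1)_{+}\vect d}_{\max(p,1),\max(q,1)}(\Nc,\Omega),
\]
use the duality just obtained in the larger space (where both indices are $\geq 1$), and then transport it back to the original space by transposition. The index shift $(1/q-1)_{+}\vect b+(1/p-1)_{+}\vect d$ appearing in the statement is precisely what this reduction produces.
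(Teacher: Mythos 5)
Your approach is genuinely different from the paper's. Where the paper shifts $\vect s$ via $u\mapsto u*I^{\vect s'}_\Omega$ (Proposition~\ref{prop:6}(2)), passes to Bergman spaces through the extension operator $\Ec$ (Propositions~\ref{prop:6}(6) and~\ref{prop:7}), and then invokes the atomic-decomposition duality of Proposition~\ref{prop:9} together with Theorem~\ref{teo:2}, you propose a direct retract of $\mathring\Bc^{\vect s}_{p,q}(\Nc,\Omega)$ onto a complemented subspace of $\ell^q_0(K;L^{p,q}_0(F,E))$. For $p,q\geq 1$ this is a legitimate and arguably more transparent route: the reproducing identity $u=\sum_k u*\psi_k*\psi_k$, the finite overlap of the $\Fc_\Nc\psi_k$, Young's inequality for $L^{p,q}(F,E)$, and the second estimate of Lemma~\ref{lem:3} do make $(\iota,\pi)$ a retract pair, and since $\ell^q_0(K;L^{p,q}_0(F,E))'=\ell^{q'}(K;L^{p',q'}(F,E))$ the duality transfers by transposition.

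The gap is in the passage to $p<1$ or $q<1$. Transposing the continuous dense inclusion $\mathring\Bc^{\vect s}_{p,q}(\Nc,\Omega)\hookrightarrow\mathring\Bc^{\vect s+\delta}_{\max(p,1),\max(q,1)}(\Nc,\Omega)$, with $\delta=(1/q-1)_+\vect b+(1/p-1)_+\vect d$, only produces a continuous \emph{injection} $\Bc^{-\vect s-\delta}_{p',q'}(\Nc,\Omega)\to(\mathring\Bc^{\vect s}_{p,q}(\Nc,\Omega))'$ (injectivity from density of $\Sc_{\overline{\Omega'}}(\Nc)$); surjectivity, i.e.\ that every functional continuous for the finer quasi-norm of $\mathring\Bc^{\vect s}_{p,q}$ extends continuously to the larger space, does not follow from transposition and is precisely the content that is missing. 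Nor does the retract into $\ell^q_0(K;L^{p,q}_0(F,E))$ rescue this for $p<1$: the ambient space $L^{p,q}_0(F,E)$ carries no spectral constraint and so has essentially trivial dual, and on the image of $\iota$ the $L^{p,q}$- and $L^{\max(p,1),\max(q,1)}$-norms are \emph{not} comparable — Lemma~\ref{lem:3} gives only the one-sided Nikol'skii bound $\|\,\cdot\,\|_{L^{\max(p,1),\max(q,1)}}\lesssim\|\,\cdot\,\|_{L^{p,q}}$. What is needed is a retract into a purely discrete sequence space whose dual is known for small exponents; this is exactly what the atomic decomposition into $\ell^{p,q}(J,K)$ provides, and why the paper's detour through Proposition~\ref{prop:9} and Theorem~\ref{teo:2} cannot simply be replaced by transposition of the coarser retract.
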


\begin{proof}
	By Proposition~\ref{prop:6}, we may assume that $-\vect s$  is as large as we please, so that the assertion follows from Proposition~\ref{prop:7}, combined with (6) of Proposition~\ref{prop:6}, Theorem~\ref{teo:2}, and Proposition~\ref{prop:9}.
\end{proof}

\subsection{Bergman Projectors}

\begin{prop}
	Take $p,q\in [1,\infty]$, $\vect s\in \R^r$, and $\vect s'\prec \vect b+\vect d-\frac 1 2 \vect m$. If $P_{\vect s'}$ induces an endomorphism of $\Lc^{p,q}_{\vect s,0}(D)$ (resp.\ a continuous linear mapping of $\Lc^{p,q}_{\vect s,0}(D)$ into $\Lc^{p,q}_{\vect s}(D)$), then the following hold:
	\begin{enumerate}
		\item[\textnormal{(1)}] $\vect s\succ \frac{1}{2 q}\vect m $ (resp.\  $\vect s\Meg \vect 0$ if $q=\infty$) and $\vect s \succ\frac 1 q \vect b+\frac 1 p\vect d +\frac{1}{2 q'} \vect{m'}$;
		
		\item[\textnormal{(2)}] $\vect{s'}\prec \frac{1}{\min(p,p')}\vect d-\frac{1}{2 \min(p,p')} \vect{m'}$;
		
		\item[\textnormal{(3)}] $\vect s+\vect{s'}\prec  \vect b+\vect d-\frac{1}{2 q'}\vect m$ or $\vect s+\vect{s'}\meg \vect b+\vect d$ if $q'=\infty$, and $\vect s+\vect s'\prec \frac 1 q \vect b+\frac 1 p\vect d-\frac{1}{2 q} \vect{m'}$;
		
		\item[\textnormal{(4)}] $P_{\vect s'}$ induces continuous linear projectors of $\Lc^{p,q}_{\vect s}(D)$ and   $\Lc^{p',q'}_{\vect b+\vect d-\vect s-\vect s'}(D)$ onto $\Ac^{p,q}_{\vect s}(D)$ and $\Ac^{p',q'}_{\vect b+\vect d-\vect s-\vect s'}(D)$, respectively, such that
		\[
		\int_D f \overline{P_{\vect s'}g} (\Delta^{\vect s'}\circ \rho)\,\dd \nu_D= \int_D (P_{\vect s'} f) \overline{g} (\Delta^{\vect s'}\circ \rho)\,\dd \nu_D
		\]
		for every $f\in \Lc^{p,q}_{\vect s}(D)$ and for every $g\in \Lc^{p',q'}_{\vect b+\vect d-\vect s-\vect s'}(D)$.
	\end{enumerate}
\end{prop}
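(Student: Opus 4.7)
The strategy is to extract the necessary conditions (1)--(3) by testing $P_{\vect s'}$ on atomic building blocks, and then to promote boundedness into the projector statement (4) via self-adjointness together with a density argument.

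First, I would observe that the integral kernel defining $P_{\vect s'}$ is holomorphic in the output variables wherever it converges absolutely, so the image of $P_{\vect s'}$ on $\Lc^{p,q}_{\vect s,0}(D)$ lies in $\Ac^{p,q}_{\vect s,0}(D)$ (respectively in $\Ac^{p,q}_{\vect s}(D)$). Nontriviality of this image via Remark~\ref{oss:4} forces the first half of (1): $\vect s\succ \frac{1}{2q}\vect m$, or $\vect s\Meg \vect 0$ when $q=\infty$. For the rest, I would apply $P_{\vect s'}$ to suitably normalised localised bumps centred at arbitrary points $(\zeta_0,z_0)\in D$; a local uniform convergence argument paralleling those in the proof of Theorem~\ref{teo:1}--\ref{teo:2} shows that the images so obtained approximate $B^{\vect s'}_{(\zeta_0,z_0)}$ up to a controlled error. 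Boundedness therefore forces $B^{\vect s'}_{(\zeta_0,z_0)}\in \Ac^{p,q}_{\vect s}(D)$, and Proposition~\ref{prop:4} converts this membership into the second half of (1) together with the second inequality in (3).

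For the constraints on $\vect s'$ alone in (2) and the first inequality in (3), I would exploit the pointwise identity
\[
\overline{B^{\vect s'}_{(\zeta',z')}(\zeta,z)}=B^{\vect s'}_{(\zeta,z)}(\zeta',z'),
\]
which follows from $\vect s'\in \R^r$ and $\overline{\Phi(\zeta,\zeta')}=\Phi(\zeta',\zeta)$, together with Fubini's theorem to show that $P_{\vect s'}$ is formally self-adjoint for the pairing $(f,g)\mapsto \int_D f\overline g\,(\Delta^{-\vect s'}_\Omega\circ \rho)\,\dd \nu_D$. Once integrability is verified on a dense subspace (e.g.\ via Proposition~\ref{prop:1}), transposition yields that $P_{\vect s'}$ is also continuous from $\Lc^{p',q'}_{\vect b+\vect d-\vect s-\vect s',0}(D)$ into $\Lc^{p',q'}_{\vect b+\vect d-\vect s-\vect s'}(D)$. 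Repeating the Bergman-kernel test on this dual space -- via Proposition~\ref{prop:4} applied with $p,q$ replaced by $p',q'$ and $\vect s$ replaced by $\vect b+\vect d-\vect s-\vect s'$ -- then produces the remaining inequalities in (2) and (3).

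For part (4), once conditions (1)--(3) are in hand I would invoke Proposition~\ref{prop:1} to get density of $\Ac^{p,q}_{\vect s,0}(D)\cap \Ac^{2,2}_{\vect s_0}(D)$ in $\Ac^{p,q}_{\vect s,0}(D)$ for a $\vect s_0$ chosen so that the parameter hypotheses of Proposition~\ref{prop:3} are met (which they are under (1)--(3)). On this dense subspace Proposition~\ref{prop:3} yields $P_{\vect s'}f=f$, so by continuity $P_{\vect s'}$ is the identity on the whole Bergman subspace and is therefore a projector; the symmetry identity then extends from the Fubini computation on the dense subspace to the full space by continuity. The main obstacle will be the testing step of the second paragraph: extracting $B^{\vect s'}_{(\zeta_0,z_0)}$ from the image of $P_{\vect s'}$ with norm control sharp enough to yield \emph{strict} inequalities, and carefully handling the endpoint cases $q=1$ and $q=\infty$, in which some of the conditions in (3) relax to $\meg$, consistently on both the primary and the dual side of the argument.
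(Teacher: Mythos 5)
Your overall strategy matches the one the paper invokes (the proof is cited as analogous to~\cite[Propositions 5.20 and 5.21]{CalziPeloso}): test on compactly supported data to force $B^{\vect s'}_{(\zeta_0,z_0)}\in\Ac^{p,q}_{\vect s}(D)$ and extract conditions via Proposition~\ref{prop:4}; use formal self-adjointness for the $(\Delta_\Omega^{-\vect s'}\circ\rho)\cdot\nu_D$ pairing to transfer boundedness to the index $(p',q',\vect b+\vect d-\vect s-\vect s')$; combine. Your arithmetic is correct: the membership $B^{\vect s'}_{(\zeta_0,z_0)}\in\Ac^{p,q}_{\vect s}(D)\cap\Ac^{p',q'}_{\vect b+\vect d-\vect s-\vect s'}(D)$, fed into Proposition~\ref{prop:4} on both sides, produces exactly (1)--(3), with the two $\vect s'$-constraints merging into the single $\min(p,p')$ statement in (2). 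The pairing weight you identified, $\Delta_\Omega^{-\vect s'}\circ\rho$, is indeed the right one: $\Pf_{\vect s}=P_{\vect b/2+\vect d-\vect s}$ and the $\Lf^{2,2}_{\vect s}$ inner product unravels to $\int_D f\overline g\,(\Delta_\Omega^{-\vect s'}\circ\rho)\,\dd\nu_D$ with $\vect s'=\vect b/2+\vect d-\vect s$; the $\Delta^{\vect s'}$ appearing in the proposition's display (4) appears to be a sign typo, and the rest of the paper (cf.\ Proposition~\ref{prop:9}) uses $\Delta^{-\vect s'}$.

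Two remarks on what you left as ``the main obstacle.'' First, the step you should not attempt with literal Dirac approximants: normalizing a shrinking bump so that it integrates to $1$ sends its $L^{p,q}$-norm to infinity whenever $p>1$, so the resulting estimate is vacuous. The right move is to fix a \emph{single} compactly supported nonnegative $f$ (say $f=\chi_{B((\zeta_0,z_0),\delta)}$ with $\delta$ small but fixed) and then use the kernel-comparison estimates — specifically the analogues of~\cite[Theorem 2.47, Corollary 2.49, and Lemma 3.25]{CalziPeloso}, which control both magnitude and phase oscillation of $B^{\vect s'}_{(\zeta',z')}(\zeta,z)$ as $(\zeta',z')$ ranges over a ball — to conclude $\abs{P_{\vect s'}f(\zeta,z)}\gtrsim\abs{B^{\vect s'}_{(\zeta_0,z_0)}(\zeta,z)}$ away from a compact set; membership of $B^{\vect s'}_{(\zeta_0,z_0)}$ in $\Ac^{p,q}_{\vect s}(D)$ then follows from $P_{\vect s'}f\in\Lc^{p,q}_{\vect s}(D)$. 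Second, for part (4) you invoke density unnecessarily: once (1)--(3) are established, the hypotheses of Proposition~\ref{prop:3} are met directly (with $\min(1,p)=\min(1,q)=1$ since $p,q\in[1,\infty]$), so $P_{\vect s'}f=f$ on \emph{all} of $\Ac^{p,q}_{\vect s}(D)$; density via Proposition~\ref{prop:1} is only needed to show the extended operator on $\Lc^{p,q}_{\vect s}(D)$ still has holomorphic range, not to obtain the reproducing identity itself.
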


Notice that (4) uniquely determines $P_{\vect s'}$ on $\Lc^{p,q}_{\vect s}(D)$ and $\Lc^{p',q'}_{\vect b+\vect d-\vect s-\vect s'}(D)$, since conditions (1)--(3) ensure that $B^{-\vect s'}_{(\zeta,z)}\in \Ac^{p,q}_{\vect s}(D)\cap\Ac^{p',q'}_{\vect b+\vect d-\vect s-\vect s'}(D)$ for every $(\zeta,z)\in D$ (cf.~Proposition~\ref{prop:3}).

\begin{proof}
	The proof is analogous to those of~\cite[Propositions 5.20 and 5.21]{CalziPeloso}.
\end{proof}

\begin{teo}\label{teo:3}
	Take $p,q\in [1,\infty]$ and $\vect s,\vect s'\in \R^r$.
	Assume that the following hold:
	\begin{itemize}
		\item $\vect s \succ \frac 1 q \vect b+\frac 1 p \vect d+\frac{1}{2 q'}\vect m'$ and $\vect s \succ \frac{1}{2 q}\vect m$ (resp.\ $\vect s \Meg \vect 0$ if $q=\infty$);
		
		\item $\vect s'\prec \vect b+\vect d-\frac 1 2 \vect m$;
		
		\item $\vect s+\vect s'\prec \frac 1 q \vect b+\frac 1 p \vect d-\frac{1}{2 q}\vect m'$.
	\end{itemize}
	Then, the following conditions are equivalent:
	\begin{enumerate}
		\item[\textnormal{(1)}] $\Ac^{p,q}_{\vect s,0}(D)=\widetilde \Ac^{p,q}_{\vect s,0}(D)$ (resp.\ $\Ac^{p,q}_{\vect s}(D)=\widetilde \Ac^{p,q}_{\vect s}(D)$) and $\Ac^{p',q'}_{\vect b+\vect d-\vect s-\vect s'}(D)=\widetilde\Ac^{p',q'}_{\vect b+\vect d-\vect s-\vect s'}(D)$;
		
		\item[\textnormal{(2)}] $P_{\vect s'}$ induces a continuous linear mapping of $\Lc^{p,q}_{\vect s,0}$ into $\Lc^{p,q}_{\vect s}(D)$ and $\vect s \succ \vect 0$ (resp.\ $\vect s \Meg \vect 0$);
		
		\item[\textnormal{(3)}] $P_{\vect s'}$ induces a continuous linear projector of $\Lc^{p,q}_{\vect s,0}$ onto $\Ac^{p,q}_{\vect s,0}(D)$ (resp.\ of $\Lc^{p,q}_{\vect s}$ onto $\Ac^{p,q}_{\vect s}(D)$) and of $\Lc^{p',q'}_{\vect b+\vect d-\vect s-\vect s'}(D)$ onto $\Ac^{p',q'}_{\vect b+\vect d-\vect s-\vect s'}(D)$;
		
		\item[\textnormal{(4)}] $\vect s \succ \frac{1}{2 q}\vect m$ (resp.\ $\vect s \Meg \vect 0$ if $q=\infty$) and the sesquilinear mapping
		\[
		(f,g)\mapsto \int_D f \overline g (\Delta^{-\vect s'}_\Omega\circ \rho)\,\dd \nu_D
		\]
		induces an antilinear isomorphism of $\Ac^{p',q'}_{\vect b+\vect d-\vect s-\vect s'}(D)$ onto $\Ac^{p,q}_{\vect s,0}(D)'$ (resp.\ onto the dual of the closed vector subspace of $\Ac^{p,q}_{\vect s}(D)$ generated by the $B^{\vect s'}_{(\zeta,z)}$, $(\zeta,z)\in D$);
		
		\item[\textnormal{(5)}] properties $(\Lc')^{p,q}_{\vect s,\vect s',0}$ (resp.\  $(\Lc')^{p,q}_{\vect s,\vect s'}$) and $(\Lc')^{p',q'}_{\vect b+\vect d-\vect s-\vect s',\vect s'}$ hold;
		
		\item[\textnormal{(6)}] property $(\Lc)^{p,q}_{\vect s,\vect s',0}$ (resp.\  $(\Lc)^{p,q}_{\vect s,\vect s'}$) holds.
	\end{enumerate} 
\end{teo}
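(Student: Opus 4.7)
The plan is to adapt the proof of~\cite[Theorem 5.25]{CalziPeloso} to the $\Ac^{p,q}_{\vect s}$ setting, establishing a cycle of implications among (1)--(6). Two steps are essentially immediate: (5) $\Rightarrow$ (6) since property $(\Lc')$ differs from $(\Lc)$ only by the surjectivity of $\Psi$; and (3) $\Rightarrow$ (2), where the positivity $\vect s\succ\vect 0$ (resp.\ $\vect s\Meg\vect 0$) is extracted by testing $P_{\vect s'}$ on suitable $B^{\vect s'}_{(\zeta,z)}$, whose membership in $\Lc^{p,q}_{\vect s,0}(D)$ via Proposition~\ref{prop:4} requires exactly this condition under the three bullet hypotheses.

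For (2) $\Rightarrow$ (3) I shall first verify that $P_{\vect s'}$ fixes every reproducing kernel $B^{\vect s'}_{(\zeta,z)}$: by the last bullet combined with Proposition~\ref{prop:3}, $B^{\vect s'}_{(\zeta,z)}$ lies in $\Ac^{2,2}_{(\vect b+\vect d-\vect s')/2}(D)$ on which $P_{\vect s'}$ is the orthoprojector. The span of such kernels is dense in $\Ac^{p,q}_{\vect s,0}(D)$ (Proposition~\ref{prop:1} together with Proposition~\ref{prop:4}), so the assumed continuity of $P_{\vect s'}$ forces $P_{\vect s'}f=f$ for every $f\in\Ac^{p,q}_{\vect s,0}(D)$. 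The boundedness and projector property on $\Lc^{p',q'}_{\vect b+\vect d-\vect s-\vect s'}(D)$ follow by the formal self-adjointness identity in (4) of the preceding proposition, which is first established on Schwartz-type test functions (via Fubini applied to the symmetric kernel $B^{\vect s'}$) and then extended by the obtained continuity on one side.

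The equivalence (3) $\Leftrightarrow$ (4) then follows by Hahn--Banach: any $T\in\Ac^{p,q}_{\vect s,0}(D)'$ extends to $\Lc^{p,q}_{\vect s,0}(D)$ and is represented by some $g\in\Lc^{p',q'}_{\vect b+\vect d-\vect s-\vect s'}(D)$; then $P_{\vect s'}g$ lies in $\Ac^{p',q'}_{\vect b+\vect d-\vect s-\vect s'}(D)$ by (3) and represents $T$ by self-adjointness, while injectivity is clear. The equivalence (3) $\Leftrightarrow$ (1) rests on the reproducing formula $f=P_{\vect s'}f$: using Proposition~\ref{prop:11}(3) and the convergence $f_h\to f_0$ in an appropriate boundary topology (Proposition~\ref{prop:6}), each $f\in\Ac^{p,q}_{\vect s,0}(D)$ acquires a boundary value $u\in\mathring\Bc^{-\vect s}_{p,q}(\Nc,\Omega)$ with $\Ec u=f$, proving (1); conversely, (1) combined with the pairing of Proposition~\ref{prop:7} and Corollary~\ref{cor:1} yields the duality required to reconstruct $P_{\vect s'}$.

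The cycle is closed by (4) $\Rightarrow$ (5) $\Rightarrow$ (6) $\Rightarrow$ (2): transposing the sampling map of Theorem~\ref{teo:1} through the duality of (4) and the natural $\ell^{p,q}$--$\ell^{p',q'}$ pairing yields a continuous surjective synthesis map whose atoms are the $B^{\vect s'}_{(\zeta_k,z_{j,k})}$, i.e.\ property $(\Lc')$; and given only $(\Lc)^{p,q}_{\vect s,\vect s',0}$, one discretises the integral defining $P_{\vect s'}f$ into a Riemann-type sum against the same atoms and invokes the continuity of $\Psi$ to conclude boundedness of $P_{\vect s'}$ on $\Lc^{p,q}_{\vect s,0}(D)$. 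The main technical obstacle is the discretisation step in (6) $\Rightarrow$ (2), where the $\Lc^{p,q}_{\vect s}(D)$-norm of the error between $P_{\vect s'}f$ and its discretisation must be controlled uniformly; this is handled by a Neumann-series argument paralleling \textsc{Step V} of the proof of Theorem~\ref{teo:2}, combined with Lemma~\ref{lem:46} to match indices.
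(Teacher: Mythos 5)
Your overall plan — a cycle of implications closing $(5)\Rightarrow(6)\Rightarrow(2)\Rightarrow(3)\Rightarrow(4)\Rightarrow(5)$, with $(1)$ linked via $(3)$ — matches the general structure of the argument the paper attributes to~\cite[Corollary~4.7]{Paralipomena}, and several of your individual steps ($(5)\Rightarrow(6)$ trivial; $(2)\Rightarrow(3)$ by density of $\Ac^{p,q}_{\vect s,0}(D)\cap\Ac^{2,2}_{(\vect b+\vect d-\vect s')/2}(D)$ via Proposition~\ref{prop:1} together with the reproducing property, with the dual side by formal self-adjointness; $(3)\Leftrightarrow(4)$ by Hahn--Banach) are correct in outline. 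But note a small slip: (2)'s positivity condition $\vect s\succ\vect 0$ already follows from the standing hypothesis $\vect s\succ\frac{1}{2q}\vect m\Meg\vect 0$, so there is nothing to ``extract'' by testing on kernels; and it is Proposition~\ref{prop:4}, not~\ref{prop:3}, that certifies the membership $B^{\vect s'}_{(\zeta,z)}\in\Ac^{2,2}_{(\vect b+\vect d-\vect s')/2}(D)$.

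The genuine gap is in your argument for $(3)\Rightarrow(1)$. Under the standing hypotheses, Proposition~\ref{prop:6}(3) already gives the continuous inclusion $\Ac^{p,q}_{\vect s,0}(D)\subseteq\widetilde\Ac^{p,q}_{\vect s,0}(D)$; this is exactly what your appeal to Proposition~\ref{prop:11}(3) and the boundary convergence $f_h\to f_0$ re-proves, and it holds regardless of whether $P_{\vect s'}$ is bounded. The non-trivial content of (1) is the \emph{reverse} inclusion $\widetilde\Ac^{p,q}_{\vect s,0}(D)\subseteq\Ac^{p,q}_{\vect s,0}(D)$ (together with the same for the dual parameters), i.e.\ that $\Ec u\in\Ac^{p,q}_{\vect s,0}(D)$ for every $u\in\mathring\Bc^{-\vect s}_{p,q}(\Nc,\Omega)$, with a norm bound. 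This requires an estimate of the form $\norm{\Ec u}_{\Ac^{p,q}_{\vect s}(D)}\lesssim\norm{u}_{\mathring\Bc^{-\vect s}_{p,q}}$, which does not follow from the reproducing formula alone: one has to pass through the duality (4) (pair $\Ec u_n$ against test functions via Proposition~\ref{prop:7} and Corollary~\ref{cor:1}, and use that $\Sc_{\overline{\Omega'}}(\Nc)$ is dense in $\mathring\Bc^{-\vect s}_{p,q}$ by Proposition~\ref{prop:6}(1)). So the clean implication here is $(4)\Rightarrow(1)$ rather than $(3)\Rightarrow(1)$, and your direct claim is incomplete.

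A secondary concern is $(6)\Rightarrow(2)$. Discretising $P_{\vect s'}f$ directly on a non-holomorphic $f\in\Lc^{p,q}_{\vect s,0}(D)$ produces an error operator whose kernel is pointwise $O(\delta)\cdot|B^{\vect s'}_{(\zeta',z')}|\Delta_\Omega^{-\vect s'}(\rho(\zeta',z'))$ by mean-value estimates, i.e.\ the same kind of operator as the absolute Bergman operator one is trying to bound; controlling it without a circular appeal requires a separate argument, not just the Neumann series of Step~V of Theorem~\ref{teo:2} (which works because there one already knows $P_{\vect s'}f=f$ for $f$ holomorphic). The safer route the paper suggests is to invoke the equivalence $(\Lc)^{p,q}_{\vect s,\vect s',0}\Leftrightarrow(\Lc')^{p,q}_{\vect s,\vect s',0}$ for $p,q\in[1,\infty]$ (noted after Definition~\ref{38}), then pass $(\Lc')\Rightarrow(4)$ via Proposition~\ref{prop:9}, and thence to $(3)$ and $(2)$.
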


The proof is analogous to that of~\cite[Corollary 4.7]{Paralipomena}.

In particular, we have the following transference result.

\begin{cor}\label{cor:2}
	Take $p,q\in[1,\infty]$ and $\vect s,\vect s'\in \R^r$. Then, the following hold:
	\begin{itemize}
		\item if $\vect s'\prec \vect d-\frac 1 2 \vect m$ and $P_{\vect s'}$ induces a continuous linear projector of $\Lc^{p,q}_{\vect s,0}(T_\Omega)$ onto $\Ac^{p,q}_{\vect s,0}(T_\Omega)$ (resp.\ of $\Lc^{p,q}_{\vect s}(T_\Omega)$ onto $\Ac^{p,q}_{\vect s}(T_\Omega)$), then $P_{\vect s'+\vect b}$ induces a continuous linear projector of $\Lc^{p,q}_{\vect s,0}(D)$ onto $\Ac^{p,q}_{\vect s,0}(D)$ (resp.\ of $\Lc^{p,q}_{\vect s}(D)$ onto $\Ac^{p,q}_{\vect s}(D)$);
		
		\item if $\vect s'\prec \vect b+\vect d-\frac 1 2 \vect m$ and $P_{\vect s'}$ induces a continuous linear projector of $\Lc^{p,p}_{\vect s,0}(D)$ onto $\Ac^{p,p}_{\vect s,0}(D)$ (resp.\ of $\Lc^{p,p}_{\vect s}(D)$ onto $\Ac^{p,p}_{\vect s}(D)$), then   $P_{\vect s'}$ induces a continuous linear projector of $\Lc^{p,p}_{\vect s-\vect b/p,0}(T_\Omega)$ onto $\Ac^{p,p}_{\vect s-\vect b/p,0}(T_\Omega)$ (resp.\ of $\Lc^{p,p}_{\vect s-\vect b/p}(T_\Omega)$ onto $\Ac^{p,p}_{\vect s-\vect b/p}(T_\Omega)$).
	\end{itemize}
\end{cor}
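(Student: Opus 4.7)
The strategy is to use Theorem~\ref{teo:3} at both ends to translate continuity of a Bergman projector into equalities $\Ac^{p,q}_{\vect s}=\widetilde\Ac^{p,q}_{\vect s}$ (or $A^{p,q}_{\vect s}=\widetilde A^{p,q}_{\vect s}$), and then to transfer those equalities between $T_\Omega$ and $D$ by means of parts (4) and (5) of Proposition~\ref{prop:6}.

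For the first part, assume that $P_{\vect s'}$ induces a continuous linear projector of $\Lc^{p,q}_{\vect s,0}(T_\Omega)$ onto $\Ac^{p,q}_{\vect s,0}(T_\Omega)$ (the version without the subscript $0$ is handled analogously). The tube-domain instance of Theorem~\ref{teo:3} -- formally obtained by setting $E=\Set{0}$, so that $\vect b=\vect 0$ -- converts this assumption, via the implication (3)\,$\Rightarrow$\,(1), into the two equalities
\[
A^{p,q}_{\vect s,0}(T_\Omega)=\widetilde A^{p,q}_{\vect s,0}(T_\Omega) \quad\text{and}\quad A^{p',q'}_{\vect d-\vect s-\vect s'}(T_\Omega)=\widetilde A^{p',q'}_{\vect d-\vect s-\vect s'}(T_\Omega).
\]
Both equalities may be transferred to $D$ by Proposition~\ref{prop:6}(4), yielding $\Ac^{p,q}_{\vect s,0}(D)=\widetilde\Ac^{p,q}_{\vect s,0}(D)$ and $\Ac^{p',q'}_{\vect d-\vect s-\vect s'}(D)=\widetilde\Ac^{p',q'}_{\vect d-\vect s-\vect s'}(D)$. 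Since $\vect b+\vect d-\vect s-(\vect s'+\vect b)=\vect d-\vect s-\vect s'$, the implication (1)\,$\Rightarrow$\,(3) of Theorem~\ref{teo:3} applied on $D$ with parameters $(\vect s,\vect s'+\vect b)$ then produces the desired continuous linear projector $P_{\vect s'+\vect b}$ of $\Lc^{p,q}_{\vect s,0}(D)$ onto $\Ac^{p,q}_{\vect s,0}(D)$.

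For the second part, the reasoning is reversed and relies on Proposition~\ref{prop:6}(5), which transfers equalities $\Ac^{p,p}=\widetilde\Ac^{p,p}$ from $D$ back to $A^{p,p}=\widetilde A^{p,p}$ on $T_\Omega$; this constraint is precisely what forces the statement to be restricted to $q=p$. Theorem~\ref{teo:3} on $D$ converts the hypothesis into the equalities $\Ac^{p,p}_{\vect s,0}(D)=\widetilde\Ac^{p,p}_{\vect s,0}(D)$ and $\Ac^{p',p'}_{\vect b+\vect d-\vect s-\vect s'}(D)=\widetilde\Ac^{p',p'}_{\vect b+\vect d-\vect s-\vect s'}(D)$; Proposition~\ref{prop:6}(5) transfers these respectively to $A^{p,p}_{\vect s-\vect b/p,0}(T_\Omega)=\widetilde A^{p,p}_{\vect s-\vect b/p,0}(T_\Omega)$ and, using the identity $(\vect b+\vect d-\vect s-\vect s')-\vect b/p'=\vect d-(\vect s-\vect b/p)-\vect s'$, to $A^{p',p'}_{\vect d-(\vect s-\vect b/p)-\vect s'}(T_\Omega)=\widetilde A^{p',p'}_{\vect d-(\vect s-\vect b/p)-\vect s'}(T_\Omega)$. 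These are exactly the two equalities that Theorem~\ref{teo:3} on $T_\Omega$ with parameters $(\vect s-\vect b/p,\vect s')$ needs in order to produce $P_{\vect s'}$ as a continuous linear projector of $\Lc^{p,p}_{\vect s-\vect b/p,0}(T_\Omega)$ onto $\Ac^{p,p}_{\vect s-\vect b/p,0}(T_\Omega)$.

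The main bookkeeping task, which I expect to be routine but warrants care, is verifying that the prerequisite inequalities on $\vect s$ and $\vect s'$ demanded by Theorem~\ref{teo:3} and Proposition~\ref{prop:6}(4)--(5) are available at each invocation. These follow from the necessary conditions for continuity of the Bergman projector given in the proposition immediately preceding Theorem~\ref{teo:3}, combined with the elementary observation that, since $\vect b\meg \vect 0$, adding or subtracting positive multiples of $\vect b$ to the relevant expressions only weakens inequalities of the form $\vect s\succ\cdots$ or $\vect s+\vect s'\prec\cdots$; hence the tube-domain prerequisites imply their Siegel-domain counterparts (and vice versa) in the precise directions needed.
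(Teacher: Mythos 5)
Your proposal follows what the paper itself leaves implicit (the corollary is stated after Theorem~\ref{teo:3} with only the sentence ``In particular, we have the following transference result'', so the intended proof is exactly this chaining of Theorem~\ref{teo:3} with Proposition~\ref{prop:6}(4)--(5)), and your arithmetic with the shifts by $\vect b/p$ and $\vect b/p'$ and the observation that $\vect b\meg\vect 0$ makes the Siegel-domain prerequisites weaker than the tube-domain ones are both correct and are the key bookkeeping points.

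One small imprecision worth flagging: you invoke the implication (3)$\Rightarrow$(1) of Theorem~\ref{teo:3}, but condition (3) there asks for $P_{\vect s'}$ to be a continuous projector on \emph{both} $\Lc^{p,q}_{\vect s,0}$ (resp.\ $\Lc^{p,q}_{\vect s}$) \emph{and} the dual-parameter space $\Lc^{p',q'}_{\vect b+\vect d-\vect s-\vect s'}$, whereas the corollary's hypothesis only gives one of the two. You should either note that, by item (4) of the proposition immediately preceding Theorem~\ref{teo:3}, boundedness of $P_{\vect s'}$ on one of these spaces automatically yields the self-adjoint companion on the other, or else pass through condition (2) of Theorem~\ref{teo:3} instead, checking $\vect s\succ\vect 0$ (which comes from the nontriviality condition $\vect s\succ\frac{1}{2q}\vect m$ and $\vect m\Meg\vect 0$). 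Either route closes the gap; as written, the step is not quite literal.
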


\smallskip

\section*{Concluding remarks}
On the one hand, we have compared two parallel theories of mixed norm Bergman spaces on
homogeneous Siegel domains. On the other hand, we have extended part of the
theory for the spaces $A^{p,q}_{\vect s}$ to the spaces
$\Af^{p,q}_{\vect s}$.  In doing this, we hope we have shed some light
on the technically demanding subject of function theory on such
domains.  We believe that this is a lively area of research, that in
recent times has drawn the interest of many scholars. We mention that
the \v Silov boundary of $D$ naturally appear in the extension of
the Paley--Wiener and Bernstein spaces of entire functions to higher
complex dimensions, see in particular~\cite{PWS,Bernstein,CarlesonBernstein}.

\smallskip

\end{document}